\renewcommand{\leq}{\leqslant}
\renewcommand{\geq}{\geqslant}
\newcommand{\mleq}{\preccurlyeq}
\newcommand{\mgeq}{\succcurlyeq}
\newcommand{\di}{\mathrm{d}}
\newcommand{\eps}{\varepsilon}
\renewcommand{\epsilon}{\varepsilon}
\newcommand{\argmin}{\mathop{\mathrm{arg}\,\mathrm{min}}}
\newcommand{\argmax}{\mathop{\mathrm{arg}\,\mathrm{max}}}
\newcommand{\wt}{\widetilde}
\newcommand{\wh}{\widehat}
\renewcommand{\hat}{\widehat}
\renewcommand{\tilde}{\widetilde}
\newcommand{\ol}{\overline}
\newcommand{\pp}{\, : \;}
\newcommand{\N}{\mathbb N}
\newcommand{\Zn}{\mathbb Z}
\newcommand{\R}{\mathbb R}
\renewcommand{\L}{\mathcal{L}}
\newcommand{\ie}{\textit{i.e.}\@\xspace} 
\newcommand{\eg}{e.g.\@\xspace}
\newcommand{\iid}{i.i.d.\@\xspace}
\DeclarePairedDelimiter{\parens}{(}{)}
\DeclarePairedDelimiter{\bracks}{[}{]}
\DeclarePairedDelimiter{\braces}{\{}{\}}
\newcommand{\set}[1]{\{ #1 \}}
\newcommand{\Sym}{\mathfrak S}
\newcommand{\supp}{\mathrm{supp}}
\newcommand{\vspan}{\mathrm{span}}
\DeclarePairedDelimiterX{\innerp}[2]{\langle}{\rangle}{#1, #2}
\newcommand{\tr}{\mathrm{Tr}}
\DeclarePairedDelimiter{\abs}{|}{|}
\DeclarePairedDelimiter{\norm}{\lVert}{\rVert}
\newcommand{\opnorm}[1]{\|{#1}\|_{\mathrm{op}}}
\newcommand{\lamin}{\lambda_{\mathrm{min}}}
\newcommand{\sign}{\mathrm{sign}}
\DeclareMathOperator{\vol}{Vol}
\newcommand{\dist}{\mathrm{dist}}
\newcommand{\conv}{\mathrm{conv}}
\newcommand{\E}{\mathbb{E}}
\renewcommand{\P}{\mathbb{P}}
\DeclarePairedDelimiterXPP{\Probab}[1]{\P}{(}{)}{}{#1}
\DeclarePairedDelimiterXPP{\Expect}[1]{\E}{[}{]}{}{#1}
\newcommand{\Var}{\mathrm{Var}}
\newcommand{\var}{\Var}
\newcommand{\cond}{|}%
\newcommand{\probas}{\mathcal{P}}
\DeclarePairedDelimiterXPP{\indic}[1]{\bm 1}{(}{)}{}{#1}
\newcommand{\ind}[1]{\bm1\! \left\{#1\right\}}
\newcommand{\1}{\bm {1}}
\newcommand{\kl}{D}%
\newcommand{\kll}[2]{\kl(#1\Vert #2)}%
\renewcommand{\supp}[1]{\mathrm{Supp}\left(#1\right)}
\newcommand{\gaussdist}{\mathsf{N}}%
\newcommand{\normal}{\gaussdist}
\newcommand{\trh}{\widetilde{\rho}}
\newcommand{\Xs}{\mathcal{X}}
\newcommand{\A}{\mathcal{A}}
\newcommand{\mc}{\mathsf{C}}
\newcommand{\ainnerp}[2]{\abs{\innerp{#1}{#2}}}
\renewcommand{\enspace}{\,}
\newcommand{\pd}{\, \text{p.d.}}
\newcommand{\model}{\probas_{\mathrm{logit}}}
\newcommand{\cvd}{\xrightarrow{(\mathrm{d})}}
\newcommand{\X}{\bm X}
\newcommand{\V}{\bm V}
\newcommand{\Z}{\bm Z}
\newcommand{\emp}[1]{\widehat{#1}_n}
\renewcommand{\b}{B}%
\newcommand{\parag}[1]{\paragraph{#1.}}
\newtheorem{proposition}{Proposition}%
\newtheorem{theorem}{Theorem}
\newtheorem*{theorem*}{Theorem}
\newtheorem{lemma}{Lemma}
\newtheorem{fact}{Fact}
\theoremstyle{definition}
\newtheorem{definition}{Definition}
\newtheorem{assumption}{Assumption}%
\newtheorem{question}{Problem}
\theoremstyle{remark}
\newtheorem{remark}{Remark}
\title{Finite-sample performance of the maximum likelihood estimator in logistic regression}
\author{Hugo Chardon\thanks{Department of Statistics, University of California, Berkeley, USA}
  \and Matthieu Lerasle\thanks{Department of Statistics, CREST/ENSAE Paris, Palaiseau, France}
  \and Jaouad Mourtada\footnotemark[2]}
\date{\today}
\begin{document}
\maketitle

\begin{abstract}
  Logistic regression is a classical model for describing the probabilistic dependence of binary responses to multivariate covariates.
  We consider the predictive performance of the maximum likelihood estimator (MLE) for logistic regression, assessed in terms of logistic risk.
  We consider two questions: first, that of the existence of the MLE (which occurs when the dataset is not linearly separated), and second, that of its accuracy when it exists.
  These properties depend on both the dimension of covariates and the signal strength.
  In the case of Gaussian covariates and a well-specified logistic model, we obtain sharp non-asymptotic guarantees for the existence and excess logistic risk of the MLE.
  We then generalize these results in two ways: first, to non-Gaussian covariates satisfying a certain two-dimensional margin condition, and second to the general case of statistical learning with a possibly misspecified logistic model.
  Finally, we consider the case of a Bernoulli design, where the behavior of the MLE is highly sensitive to the parameter direction.
\end{abstract}

\setcounter{tocdepth}{2}
\tableofcontents

\newpage

\section{Introduction}
\label{sec:introduction}

Logistic regression~\cite{berkson1944logistic,mccullagh1989glm} is a classical model describing the dependence of binary outcomes on multivariate features.
In this work, we investigate the predictive performance of the most standard method for fitting this model, namely the maximum likelihood estimator (MLE).
Our emphasis is placed on the dependence of the estimation error on the various parameters of the problem, as well as on the conditions under which the MLE performs well.

\subsection{Problem setting and main questions}
\label{sec:problem-setting}

To set the stage for the discussion, we start by recalling the definition of the logistic (logit) model.
Given a dimension $d \geq 2$, the \emph{logistic model} is the family of conditional distributions on the outcome $y \in \{ -1, 1 \}$ given the covariates $x \in \R^d$ defined by:
\begin{equation}
  \label{eq:def-logistic-model}
  \model
  = \big\{ p_\theta : \theta \in \R^d \big\} \, ,
  \quad \text{where} \quad
  p_\theta (y | x) = \sigma (y \innerp{\theta}{x})
  \quad \text{for } (x,y) \in \R^d \times \{-1, 1\}
  \, ,
\end{equation}
where we let $\sigma (s) = e^s / (e^s+1)$ for $s \in \R$ be the sigmoid function, and where $\innerp{\cdot}{\cdot}$ denotes the usual dot product on $\R^d$.
We say that a random pair $(X,Y)$ on $\R^d \times \{ -1, 1\}$ follows the logistic model if the conditional distribution of $Y$ given $X$ belongs to $\model$.

In short, the logistic model is appealing because it constitutes a natural ``linear'' model for
binary outcomes: indeed, the conditional probability $\P (Y=1|X=x)$ is obtained by applying
the link function $\sigma : \R \to [0,1]$ to a linear function of $x$.
Note that the specific choice of the link function $\sigma$ in the logistic
model is not arbitrary: it corresponds to the ``canonical link function'' for the Bernoulli
parameter, in the sense of %
exponential families~\cite{brown1986fundamentals}.

In the statistical setting, the true distribution $P$ of the random pair $(X,Y)$ is unknown,
but one has access to a sample $(X_1, Y_1), \dots, (X_n, Y_n)$ of \iid random variables
with distribution $P$.
Using this sample, one can compute the MLE, defined by
\begin{equation}
  \label{eq:def-mle}
  \wh \theta_n
  = \argmax_{\theta \in \R^d} \prod_{i=1}^n p_{\theta} (Y_i|X_i)
  = \argmin_{\theta \in \R^d} \sum_{i=1}^n \log \big( 1 + e^{-Y_i \innerp{\theta}{X_i}} \big)
  \, .
\end{equation}
In this work, we will be concerned with the following two questions:
\begin{enumerate}
\item \emph{Existence}: When does the MLE exist?
\item \emph{Performance}: When the MLE exists, how accurate is it?
\end{enumerate}
To make these two questions precise, some discussion is in order.

First, we must clarify the geometric meaning of existence (and uniqueness) of the MLE; 
we refer to~\cite{albert1984existence} and to the introduction of~\cite{candes2020phase} for an interesting discussion of this point, with thorough references.
Uniqueness of the MLE is in fact a straightforward question: whenever the points $X_1, \dots, X_n$ span $\R^d$ (a property that holds with high probability for $n \gtrsim d$, under suitable assumptions on $X$), the second function in~\eqref{eq:def-mle} that the MLE minimizes is strictly convex on $\R^d$, and thus admits at most one minimizer.
The property of existence of the MLE has a richer geometric content.
Assume again for simplicity that $X_1, \dots, X_n$ span $\R^d$, so that for every $\theta \neq 0$, there exists $i \in \set{1, \dots, n}$ such that $\innerp{\theta}{X_i} \neq 0$.
Then, \emph{the MLE exists if and only if the dataset is not linearly separated}, by which we mean that
there is no $\theta \neq 0$ such that $Y_i \innerp{\theta}{X_i} \geq 0$ for every $i=1, \dots, n$.
Indeed, if such a $\theta$ exists, then the second function in~\eqref{eq:def-mle} evaluated at $t \theta$ 
remains upper bounded as $t \to + \infty$; since a strictly convex function admitting a global minimizer diverges at infinity, the objective function admits no global minimizer.
Conversely, if no such $\theta$ exists, then simple compactness arguments show that the function in~\eqref{eq:def-mle} diverges at infinity and is continuous, hence admits a global minimizer.

Second, in order to assess the performance of the MLE, one must specify a notion of accuracy.
In this work, we will mainly focus on the predictive performance of the MLE, as measured by its risk for prediction under logistic loss.
Specifically, we consider the problem of assigning probabilities to the possible values $\pm 1$ of $Y$, given the knowledge of the associated covariate vector $X$.
Each parameter $\theta \in \R^d$ gives rise to the conditional distribution $p_\theta$ defined in~\eqref{eq:def-logistic-model}.
We can then define the logistic loss $\ell$ (at a point $(x,y) \in \R^d \times \{ -1, 1 \}$) and risk $L$ of $\theta$ by, respectively:
\begin{equation}
  \label{eq:loss-risk}
  \ell (\theta, (x,y))
  = - \log p_\theta (y|x)
  = \log (1 + e^{-y \innerp{\theta}{x}})
  \quad \text{and} \quad
  L (\theta)
  = \E [ \ell (\theta, (X,Y)) ]
  \, .
\end{equation}
Hence, the logistic loss corresponds to the negative log-likelihood (or logarithmic loss) for the logistic model.
The logarithmic loss is a classical way to assess the quality of probabilistic forecasts: it enforces calibrated predictions by penalizing both overconfident and under-confident probabilities.
In particular, assigning a probability of $0$ to a label $y$ that does appear leads to an infinite loss.
In addition, this criterion is closely related to the MLE, which corresponds to the minimizer of the \emph{empirical risk} $\wh L_n : \R^d \to \R$ under logistic loss, defined by
\begin{equation}
  \label{eq:def-empirical-risk}
  \wh L_n (\theta)
  = \frac{1}{n} \sum_{i=1}^n \ell (\theta, (X_i, Y_i))
  = \frac{1}{n} \sum_{i=1}^n \log (1+ e^{-Y_i \innerp{\theta}{X_i}})
  \, .
\end{equation}
Finally, the logistic loss also naturally arises in statistical learning theory~\cite{boucheron2005survey,koltchinskii2011oracle,bach2024learning}, as a convex surrogate of the classification error~\cite{zhang2004statistical,bartlett2006convexity}.
With these definitions at hand, one can measure the prediction accuracy of the MLE by its excess risk under logistic loss, namely
$L (\wh \theta_n) - L (\theta^*)$, where $\theta^* \in \argmin_{\theta \in \R^d} L (\theta)$ (assuming this set is nonempty).

Thirdly, the existence of the MLE depends on the dataset and is thus a random event, and likewise the excess risk $L (\wh \theta_n) - L (\theta^*)$ is a random quantity.
As such, both the existence and the accuracy of the MLE depend on the joint distribution $P$ of $(X,Y)$.
To give a precise meaning to the questions above, we must therefore specify which distributions $P$ we consider.
Note that the joint distribution $P$ is characterized by (a) the marginal distribution $P_X$ of $X$, and (b) the conditional distribution $P_{Y|X}$ of $Y$ given $X.$

We will actually consider three different settings of increasing generality, depending on the respective assumptions on $P_X$ and $P_{Y|X}$, but for concreteness and in order to compare with previous results, we will in this introduction start with the simplest one:
\begin{enumerate}
\item[(a)] The design follows a Gaussian distribution: $X \sim \gaussdist (0, \Sigma)$ for some positive matrix $\Sigma$.
  By invariance of the problem under invertible linear transformations of $X$, we may assume that $\Sigma = I_d$ is the identity matrix, which we will do in what follows.
\item[(b)] The model is \emph{well-specified}, in that the conditional distribution $P_{Y|X}$ belongs to the logistic model $\model$.
  In other words, there exists $\theta^* \in \R^d$ such that $\P (Y=1|X) = \sigma (\innerp{\theta^*}{X})$.
\end{enumerate}
Besides its natural character, the appeal of this setting is that the problem only depends on a small number of parameters.
These are: 
the sample size $n$, the data dimension $d$, the probability $1-\delta$ with which the guarantees hold,
and importantly the \emph{signal strength} (or signal-to-noise ratio, or inverse temperature) $\b = \max \{ e, \norm{\theta^*}\}$, where $\norm{\cdot}$ stands for the Euclidean norm.

It is worth commenting on the role of the dimension $d$ and of the signal strength $\b$.
Intuitively, there are two distinct effects that may lead the dataset to be linearly separated.
First, the larger the dimension $d$, the more degrees of freedom there are to linearly separate the dataset.
But another effect comes from the signal strength: the stronger the signal $\b$, the more the labels $Y_i$ tend to be of the same sign as $\innerp{\theta^*}{X_i}$---and thus, the more likely it is for the dataset to be separated by $\theta^*$, or by a ``close'' direction.
As we will see, the ``dimensionality'' and the ``signal strength'' effects interact with each other.
We also note that, intuitively, a stronger signal should make the \emph{classification} problem (of predicting the value of the label $Y$, and minimizing the fraction of errors) easier.
This amounts to saying that the larger $\b$ is, the smaller the estimation error for the \emph{direction} $u^* = \theta^* / \norm{\theta^*}$ of the parameter $\theta^*$ should be.
On the other hand, under a stronger signal, the MLE is known (see, \eg,~\cite{sur2019modern} and references therein) to tend to underestimate the uncertainty in the labels, that is, to return overconfident conditional probabilities for $Y$ given $X$.
This holds, for instance, if the dataset is nearly linearly separated, in which case the MLE predicts conditional probabilities close to $0$ or $1$.
Hence, for the \emph{conditional density estimation} problem we consider, a stronger signal may degrade the performance of the MLE.
This should manifest itself by the fact that the \emph{norm} of the MLE (as opposed to its direction) may be far from that of $\theta^*$, so the overall estimation error of $\theta^*$ may be larger.

To summarize, we are interested in explicit and \emph{non-asymptotic} guarantees for the existence and accuracy of the MLE, in terms of the relevant parameters $\b, d, n, \delta$---ideally, in the general situation where these parameters may take arbitrary values.
Our aim is twofold: first, to obtain the optimal dependence on all parameters in the case of a Gaussian design and a well-specified model; second, to investigate to which extent these results extend to more general distributions.

\subsection{Existing results}
\label{sec:existing-results}

Before describing our contributions, we first provide an overview of known results on the questions we consider.
As a basic statistical method, logistic regression has been studied extensively in the literature, hence we focus on those results that are most directly relevant to our setting.
Again, for the sake of comparison, we will
mainly focus on
the case of a Gaussian design and a well-specified model, although extensions will also be discussed.

\parag{Classical asymptotics}

The behavior of the MLE is well-understood in the context of
classical parametric asymptotics~\cite{lecam2000asymptotics,vandervaart1998asymptotic}.
In this setting, the distribution $P$ is fixed (and thus, so are the dimension $d$ and signal strength $\b$) while the sample size $n$ goes to infinity.
In this case, as $n \to \infty$, the MLE $\wh \theta_n$ exists with probability converging to $1$, converges to $\theta^*$ at a $1/\sqrt{n}$ rate, and is asymptotically normal, with asymptotic covariance given by the inverse of the Fisher information matrix~\cite[\S5.2--5.6]{vandervaart1998asymptotic}.
This implies that the excess risk converges to $0$ at a rate $1/n$, and more precisely that
\begin{equation}
  \label{eq:mle-wilks}
  2n \big\{ L (\wh \theta_n) - L (\theta^*) \big\}
  \cvd \chi^2 (d)
  \, ,
\end{equation}
where $\cvd$ denotes convergence in distribution and $\chi^2(d)$ denotes the $\chi^2$ distribution with $d$ degrees of freedom.
Together with a tail bound for the $\chi^2$ distribution, this implies the following:
for fixed $d \geq 1$, $\theta^* \in \R^d$, and $\delta \in (0, 1)$,
we have
\begin{equation}
  \label{eq:lowdim-asymptotic-mle}
  \liminf_{n \to \infty} \P \Big( L (\wh \theta_n) - L (\theta^*) \leq \frac{d + 2 %
    \log (1/\delta)
  }{n} \Big)
  \geq %
  1-\delta
  \, ,
\end{equation}
with the convention that $L(\wh \theta_n) - L (\theta^*) = + \infty$ if the dataset is linearly separated.
Note that the convergence~\eqref{eq:mle-wilks} holds only in the well-specified case, and that in the misspecified case the normalized excess risk $2 n \{ L (\wh \theta_n) - L (\theta^*) \}$ converges to a different limiting distribution that depends on the distribution $P$ of $(X, Y)$; see~\cite[Example~5.25 p.~55]{vandervaart1998asymptotic} and (for instance) the introductions of~\cite{ostrovskii2021finite,mourtada2022logistic} for additional discussions on this point.

On the positive side, the high-probability guarantee~\eqref{eq:lowdim-asymptotic-mle}
is sharp, in light of the convergence in distribution~\eqref{eq:mle-wilks} of the excess risk.
On the other hand, it should be noted that this guarantee is purely asymptotic:
it holds as $n \to \infty$ while all other parameters of the problem are fixed.
This does not allow one to handle the modern high-dimensional regime, where
the dimension $d$ may be large and possibly comparable to $n$.
In addition, it does not state how large the sample size $n$ should be
(in terms of $\b,d, \delta$) for the asymptotic behavior~\eqref{eq:lowdim-asymptotic-mle}
to occur---in particular, it provides no information on the sample size required for the
existence of the MLE.

\parag{High-dimensional asymptotics}

Several 
of the shortcomings of the classical asymptotic theory can be addressed by considering a different asymptotic framework, namely the ``high-dimensional asymptotic regime'', where $d, n \to \infty$ while $d/n$ converges to a fixed constant.
This framework has attracted significant interest in statistics over the last decade (see, \eg, \cite{elkaroui2018random,montanari2018mean} and references therein for a partial overview of this line of work).
The interest of this framework is that it allows one to capture high-dimensional effects, since the dimension is no longer negligible compared to the sample size.

The question of existence of the MLE under high-dimensional asymptotics was addressed
in~\cite{candes2020phase}, extending a previous result of Cover~\cite{cover1965geometrical} in the ``null'' case where $\theta^* = 0$.
Specifically, the main result of Candès and Sur~\cite[Theorems~1--2]{candes2020phase} can be stated as follows:
there exists a function $h : \R^+ \to (0,1)$ such that the following holds.
Fix $b \in \R^+$ and $\gamma \in (0, 1)$, and let $d = d_n \to \infty$ as $n \to \infty$, with $d/n \to \gamma$.
If $X \sim \gaussdist (0, I_d)$ and $\P (Y=1 | X) = \sigma (\innerp{\theta^*}{X})$, with $\theta^* = \theta^*_d \in \R^d$ such that $\norm{\theta^*} = b$, and the dataset consists of $n$ \iid copies of $(X,Y)$, then 
\begin{align}
  \label{eq:phase-transition-mle}
  \lim_{n \to \infty} \P (\text{MLE exists})
  =
  \begin{cases}
    0 & \text{if } \gamma > h (b) \, ; \\
    1 & \text{if } \gamma < h (b)
        \, .
  \end{cases}
\end{align}
In addition, the quantity $h (b)$ is defined as the infimum of the expectation of an explicit family of random variables that depend on $b$
(see eq.~\eqref{eq:boundary-function} below)
and the curve of the function $h$ is plotted numerically in this paper.

The conditions~\eqref{eq:phase-transition-mle} provide a precise characterization
of the existence of the MLE under high-dimensional asymptotics, and in particular
establish a sharp phase transition for this property, depending on the value of the
aspect ratio $\gamma = \lim d/n$.

While this result conclusively answers the question of existence of the MLE in this asymptotic setting, it does not cover the general regime where the problem parameters may be of arbitrary order of magnitude relative to each other.
Indeed, although this regime captures high-dimensional effects by allowing the dimension to grow with the sample size, it assumes the signal strength $\b$ to be fixed while $n \to \infty$.
This excludes ``strong signal'' regimes, where the sample size may not be large enough relative to $\b$ for the asymptotic characterization~\eqref{eq:phase-transition-mle} to provide an accurate approximation.
As an example, a finite-sample
condition of the form $n \gg \exp(\b)$ would always be satisfied under high-dimensional asymptotics, and thus would not be visible from results framed in this setting.
In addition, the characterization~\eqref{eq:phase-transition-mle} is a qualitative zero-one law, stating that the considered probability converges to $0$ or $1$.
However, one may wish for more precise information, namely
sharp
quantitative estimates on the probabilities.

Finally, the characterization~\eqref{eq:phase-transition-mle} is specific to the case of a Gaussian design, and indeed one should expect the precise threshold for existence of the MLE to be sensitive to the distribution of the design (see~\cite{elkaroui2018predictor} for %
results in this spirit
in the case of robust regression).
One may therefore want to identify general conditions on the design distribution under which the MLE behaves in a similar way as for Gaussian design.
Likewise, the characterization~\eqref{eq:phase-transition-mle} holds in the well-specified case, which
raises the question of existence of the MLE 
in the misspecified case.

These considerations motivate a finite-sample analysis that would allow one to handle general values of the problem parameters, and extend to more general situations.
We would like however to clarify that
the finite-sample results do not imply the asymptotic ones:
indeed, the non-asymptotic characterizations we will obtain feature universal constant factors (and even in some cases logarithmic factors), while the asymptotic characterization~\eqref{eq:phase-transition-mle} is precise down to the numerical constants.
This loss in precision may be a price to pay for a non-asymptotic analysis in the general regime;
on the positive side, it will allow us to obtain
conditions that are easier to interpret.
For these reasons, we view the finite-sample and asymptotic perspectives as complementary.

\parag{Non-asymptotic guarantees}

We now discuss available non-asymptotic guarantees for the MLE in logistic regression from the literature, focusing on those that are most relevant to our setting.
First, it follows from the results of~\cite{chinot2020robust}
(specifically, combining Theorems~1 and~8 therein) that there is a constant $c > 0$ such that the following holds: if $n \geq e^{c \b} d$, then with probability $1 - 2 e^{-d/c}$ the MLE $\wh \theta_n$ exists and satisfies
\begin{equation}
  \label{eq:risk-bound-cll}
  L (\wh \theta_n) - L (\theta^*)
  \leq \frac{e^{c \b} d}{n}
  \, .
\end{equation}
On the positive side, this result is fully explicit and features an optimal dependence on the sample size $n$ and dimension $d$; the probability $1-e^{-d/c}$ under which the $O_{\b}(d/n)$ bound holds is also optimal, in light of the asymptotic results~\eqref{eq:mle-wilks} and~\eqref{eq:lowdim-asymptotic-mle}.
On the other hand, the dependence on the signal strength $\b$ is exponential, which turns out to be highly suboptimal for a Gaussian design.
In fact, the bound~\eqref{eq:risk-bound-cll} holds in a more general setting, where the model may be misspecified and where the design is only assumed to be sub-Gaussian.
As we will discuss below, some exponential dependence on the norm turns out to be unavoidable if one only assumes the design distribution to be sub-Gaussian.

Up until recently, the sharpest available non-asymptotic guarantees for the MLE in logistic regression with a Gaussian design were due to
Ostrovskii and Bach~\cite{ostrovskii2021finite}.
Specifically, combining Theorem~4.2 in~\cite{ostrovskii2021finite} with Proposition~D.1 therein shows that there is a constant $c > 0$ such that the following holds: for $\delta \leq 1/2$, if $n \geq c \log^4 (\b) \b^8 d \log (1/\delta)$, then with probability at least $1-\delta$ the MLE exists and satisfies
\begin{equation}
  \label{eq:risk-bound-ostr-bach}
  L (\wh \theta_n) - L (\theta^*)
  \leq \frac{\b^3 d \log (1/\delta)}{n}
  \, .
\end{equation}
Like the bound~\eqref{eq:risk-bound-cll}, this result features an optimal dependence on the dimension $d$ and sample size $n$; and while the bound involves a deviation term $d \log (1/\delta)$ proportional to the dimension (which is suboptimal for small $\delta$), it could be tightened to an additive deviation term $d + \log (1/\delta)$ with very minor changes to the proof of~\cite{ostrovskii2021finite}.
Importantly, this result significantly improves over the general bound~\eqref{eq:risk-bound-cll} in the case of a Gaussian design, by replacing the exponential dependence on the norm $\b$ by a polynomial one.
In addition, it is worth mentioning that the result of~\cite{ostrovskii2021finite} holds in the general misspecified case, and that in this case it is actually the best available guarantee in the literature.
This being said, as we will see below, the polynomial dependence on $\b$ in both the condition for existence of the MLE and in the risk bound can be improved.
For instance, in the well-specified case, the risk bound~\eqref{eq:risk-bound-ostr-bach} is larger than the asymptotic risk~\eqref{eq:lowdim-asymptotic-mle} by a factor of $\b^3$, which suggests possible improvements.

Recently and while this manuscript was under preparation, two additional works~\cite{kuchelmeister2023finite,hsu2023logistic} contributed significantly to the study of logistic regression with a Gaussian design, with an emphasis on the dependence on the signal strength $\b$.
Closest to our setting is the work of Kuchelmeister and van de Geer~\cite{kuchelmeister2023finite}, who study the MLE for logistic regression under a Gaussian design, but assuming that the conditional distribution of $Y$ given $X$ follows a probit rather than a logit model.
Despite real technical differences between the probit and logit models,
this is qualitatively related to the well-specified logit model.
With a natural notion of signal strength $\b$ in the probit model (the inverse of the parameter $\sigma$ in their work), Theorem~2.1.1 in~\cite{kuchelmeister2023finite} states that: for some absolute constant $c$, if $n \geq c \b (d \log n + \log (1/\delta))$, the MLE exists and satisfies
\begin{equation}
  \label{eq:result-kuch-vdg}
  \bigg\| \frac{\wh \theta_n}{\norm{\wh \theta_n}} - \frac{\theta^*}{\norm{\theta^*}} \bigg\|
  \leq c \sqrt{\frac{d \log n + \log (1/\delta)}{\b n}}
  \, , \quad
  \big| \norm{\wh \theta_n} - \norm{\theta^*} \big|
  \leq c \b^{3/2} \sqrt{\frac{d \log n + \log (1/\delta)}{n}}
  \, .
\end{equation}
While the bound~\eqref{eq:result-kuch-vdg} controls the estimation errors on the norm and direction of the parameter, we note that it can be equivalently restated in terms of excess logistic risk, as
\begin{equation}
  \label{eq:risk-bound-kuch-vdg}
  L (\wh \theta_n) - L (\theta^*)
  \leq c' \, \frac{d \log n + \log (1/\delta)}{n}
\end{equation}
for some constant $c' > 0$.
This guarantee matches the asymptotic risk~\eqref{eq:lowdim-asymptotic-mle} up to an additional $\log n$ factor, and as we will show below the condition for existence of the MLE from~\cite{kuchelmeister2023finite} is also almost sharp up logarithmic factors.
We also note that further results on linear separation in more general contexts have been obtained by Kuchelmeister~\cite{kuchelmeister2024probability} using tools from stochastic geometry.
In addition, as discussed in~\cite{cover1965geometrical,kuchelmeister2024probability}, by convex separation, the absence of linear separation (hence, existence of the MLE) amounts to the property that the origin $0 \in \R^d$ belongs to the interior of the convex hull $\conv (Y_1 X_1, \dots, Y_n X_n)$.
This allows one to connect to the rich literature concerned with the latter property, see \eg~\cite{wendel1962problem,cover1965geometrical,wagner2001continuous,kabluchko2020absorption,hayakawa2023estimating}.
We discuss the implications of these results in more detail in Section~\ref{sec:well-spec-gaussian}.

Hsu and Mazumdar~\cite{hsu2023logistic} consider the problem of estimating the parameter direction $\theta^*/\norm{\theta^*}$ (which suffices for the task of classification, namely of predicting the most likely value of $Y$ given $X$, as opposed to estimating conditional probabilities), again with an emphasis on the dependence on the signal strength $\b$.
Like~\cite{kuchelmeister2023finite} they consider the case of a Gaussian design, but assume that the data follows a logit model rather than a probit model.
Notably, they consider different estimators than the MLE for logistic regression, in particular the minimizer of a classification error.
They establish upper bounds on the estimation error of the same order as the first bound in~\eqref{eq:result-kuch-vdg}, again with logarithmic factors in $n$.
In addition, they establish minimax lower bounds on the estimation error of $\theta^*/\norm{\theta^*}$, which show that the previous upper bound is sharp up to logarithmic factors.
They also explicitly raise the question of whether or not the MLE achieves optimal upper bounds.

While these results constitute decisive advances, they leave some important questions.
First, the guarantees feature additional logarithmic factors in the sample size, which are presumably suboptimal but %
seem hard to avoid
in the analyses of~\cite{kuchelmeister2023finite} and~\cite{hsu2023logistic}, leaving a gap between upper and lower bounds.
Although logarithmic factors are admittedly a mild form of suboptimality,
logistic regression with a Gaussian design is arguably a basic enough problem 
to justify aiming for sharp results.
Second and perhaps more importantly, these results are specific to the case of a Gaussian design and a well-specified model, which raises the question of the behavior of the MLE for more general design distributions or under a misspecified model.

\subsection{Summary of contributions}
\label{sec:summ-contr}

We are now in position to %
provide a high-level overview of
our main results; we refer to subsequent sections for precise statements and additional comments.

\parag{Gaussian design, well-specified model}

First, in the case of a Gaussian design and a well-specified logit model, Theorem~\ref{thm:gaussian-well-specified} provides optimal (up to absolute constants) guarantees for the existence and accuracy of the MLE.
Specifically,
there exists a universal constant $c$ such that the following holds: for any $\delta \leq 1/2$, if $n \leq c^{-1} \b (d + \log (1/\delta))$, then
\begin{equation}
  \P ( \text{MLE exists} )
  \leq 1 - \delta
  \, .
\end{equation}
On the other hand, if $n \geq c \b (d + \log (1/\delta))$, then with probability \emph{at least} $1-\delta$ the MLE exists and satisfies
\begin{equation}
  \label{eq:risk-bound-gaussian-wellspec}
  L (\wh \theta_n) - L (\theta^*)
  \leq c \, \frac{d + \log (1/\delta)}{n}
  \, .
\end{equation}
This removes a $\log n$ factor from the bound~\eqref{eq:risk-bound-kuch-vdg} deduced from the work of~\cite{kuchelmeister2023finite} in the case of a probit model, and answers in the affirmative
(after translating this risk bound into a bound on the estimation error)
a question from~\cite{hsu2023logistic} on the optimality of the MLE.

In short, this result provides necessary and sufficient conditions on the sample size $n$ (up to numerical constant factors) for the MLE to exist with high probability, and shows that in the regime where the MLE exists, it achieves non-asymptotically the same risk as predicted by the asymptotic behavior~\eqref{eq:lowdim-asymptotic-mle} for fixed $\b,d, \delta$ and $n \to \infty$.

The previous result implies in particular that, if $n \gg \b d$, then
the MLE exists with probability at least $1 - \exp \big( - \frac{n}{c' \b} \big)$ for some constant $c'$, and that this estimate is optimal.
This %
provides
a quantitative version of the convergence to $1$ in the phase transition~\eqref{eq:phase-transition-mle} from~\cite{candes2020phase}.
On the other hand, in the regime where $n\ll \b d$, Theorem~\ref{thm:gaussian-well-specified} only shows that the probability of existence of the MLE is bounded by a constant (say, $1/2$), rather than converging to $0$ as in the phase transition~\eqref{eq:phase-transition-mle}.
We therefore complement Theorem~\ref{thm:gaussian-well-specified} by a result on non-existence of the MLE (Theorem~\ref{thm:strong-non-existence}), which states that if $n \ll \b d / \kappa$ for some $\kappa \geq 1$, then for some constant $c$,
\begin{equation}
  \label{eq:non-existence-mle-informal}
  \P ( \text{MLE exists} )
  \leq c \exp \big( - \max \big\{ \kappa \sqrt{d}, \kappa^2 d / \b^2 \big\} / c \big)
\end{equation}
This can be seen as a quantitative version of the convergence to $0$ in the phase transition~\eqref{eq:phase-transition-mle}.

\parag{Regular design, well-specified model}

The previous results are specific to the case of a Gaussian design, which can be seen as the most favorable case.
This raises the following natural question: which properties of the Gaussian distribution are responsible for the previously described behavior of the MLE?
Or equivalently, for which distributions of the design does the MLE behave (at least in the well-specified case) similarly as for a Gaussian design?

Perhaps a natural guess is that a
light-tailed design distribution
would lead to a similar behavior as a Gaussian design, and indeed this would be the case for linear regression.
However, this is far from being true for logistic regression: as previously alluded to, if the design distribution is only assumed to be sub-Gaussian (as in~\cite{chinot2020robust}), then an exponential dependence on the norm is unavoidable.

In Section~\ref{sec:assumptions}, we %
identify suitable
assumptions on the design distribution leading to a near-Gaussian behavior.
Aside from light tails (Assumption~\ref{ass:sub-exponential}), the assumptions include a condition on the behavior of one-dimensional linear projections of the design near $0$ (Assumption~\ref{ass:small-ball}), which is related to standard margin conditions in the classification literature~\cite{mammen1999margin,tsybakov2004aggregation}.
However, as shown in Proposition~\ref{prop:2-dim-margin-almost-necessary}, another assumption is necessary to obtain a near-Gaussian behavior (in a suitable sense); this non-standard %
condition (Assumption~\ref{ass:twodim-marginals}) bears on \emph{two-dimensional} linear projections of the design, rather than merely its one-dimensional marginals.
By analogy with the standard (one-dimensional) margin condition, we refer to this condition as ``two-dimensional margin assumption''.

Under these regularity assumptions on the design but still in the well-specified case, Theorem~\ref{thm:regular-well-specified} shows that the MLE behaves similarly as in the Gaussian case, up to poly-logarithmic factors in the norm $\b$.
Specifically, for some constant $c$ (depending on the constants of the regularity conditions), if $n \geq c \log^4 (\b) \b (d + \log (1/\delta))$, then with probability $1-\delta$ the MLE exists and satisfies
\begin{equation}
  \label{eq:risk-regular-wellspec-informal}
  L (\wh \theta_n) - L (\theta^*)
  \leq c \log^4 (\b) \, \frac{d + \log (1/\delta)}{n}
  \, .
\end{equation}

\parag{Regular design, misspecified model}

Finally, we turn to
the most general setting, where no assumption is made on the conditional distribution of $Y$ given $X$; in particular, it is no longer assumed that it belongs to the logit model.
This being said, as previously discussed it is still possible to define the minimizer $\theta^*$ of the logistic risk, and to consider the excess risk $L (\wh \theta_n) - L (\theta^*)$ of the MLE, which corresponds to the empirical risk minimizer (ERM) under the logistic loss.
This corresponds to the problem of Statistical Learning
under logistic loss.

As discussed in Section~\ref{sec:existing-results}, in many regimes of interest the best available guarantees for this problem in the literature are those from~\cite{ostrovskii2021finite}, namely the excess risk bound~\eqref{eq:risk-bound-ostr-bach} of order $\b^3 d \log(1/\delta)/n$, when the design is Gaussian but the model may be misspecified.
Theorem~\ref{thm:regular-misspecified} below improves these guarantees in the following way: it shows that if the design is regular (in the same sense as before) and $n \geq c \log^4 (\b) (\b d + \b^2 \log (1/\delta))$, with probability at least $1-\delta$ the MLE exists and satisfies
\begin{equation}
  \label{eq:risk-regular-misspecified-informal}
  L (\wh \theta_n) - L (\theta^*)
  \leq c \log^4 (\b) \frac{d + \b \log (1/\delta)}{n}
  \, ;
\end{equation}
here and as in~\eqref{eq:risk-regular-wellspec-informal}, $c$ is a constant that depends on the constants from the regularity conditions on $X$.
For instance, for constant $\delta$ and $\b \lesssim d$, this removes a factor of
almost $\b^3$
from the previous best guarantee~\eqref{eq:risk-bound-ostr-bach} for statistical learning with logistic loss.

Our guarantees in the misspecified case feature a stronger dependence on the norm $\b$ than in the well-specified case; specifically, the ``deviation terms'' (those that depend on the failure probability $\delta$) in both the condition for existence of the MLE and its excess risk bound are larger by a factor of $\b$.
This raises the question of whether this gap is essential or an artifact of the analysis.
As it turns out, even for a Gaussian design, the guarantee~\eqref{eq:risk-regular-misspecified-informal} is best possible (up to $\mathrm{polylog}(\b)$ factors) in the general misspecified setting,
both in the condition for existence of the MLE and for its excess risk bound,
as shown in
Theorem~\ref{thm:regular-misspecified}.

\parag{Distributions satisfying the regularity assumptions}

While identifying conditions on the design distribution under which the MLE behaves as in the Gaussian case may be interesting in its own right, 
for these general assumptions to constitute a genuine extension of the Gaussian case one must exhibit other meaningful examples of distributions satisfying them.

To illustrate these conditions, in Section~\ref{sec:regular-designs} we consider two families of distributions, namely log-concave distributions and product measures.
The log-concave case is overall similar to the Gaussian case (which it contains as a special case), in that the regularity conditions hold for any value of the parameter $\theta^* \in \R^d$---that is, for any parameter direction $u^* = \theta^*/\norm{\theta^*}$ and signal strength $\b= \max \{ e, \norm{\theta^*} \}$.
In contrast, the case of product measures is more subtle, since the regularity conditions are highly sensitive to the parameter direction $u^*$.
Indeed, we show that for designs with \iid coordinates (a prototypical example being the \emph{Bernoulli design}, with \iid coordinates uniform over $\set{-1, 1}$),
depending on the parameter direction the MLE may behave as in the Gaussian case either only for trivial (constant) signal strength $\b = O (1)$, or up to a large signal strength $\b = O (\sqrt{d})$.

\subsection{Additional related work}
\label{sec:addit-relat-work}

We now survey additional relevant prior work on logistic regression, beyond the results discussed in Section~\ref{sec:existing-results}.

\parag{Logistic regression as convex statistical learning}

Logistic regression is a special case of convex statistical learning (or convex stochastic optimization), allowing to leverage guarantees from this setting.
For instance, a standard uniform convergence argument using the Lipschitz property of logistic loss implies an excess risk bound of order $\b \sqrt{d/n}$ for ERM over a ball of radius $O(\b)$.
This upper bound exhibits a slow convergence rate of $n^{-1/2}$ as $n \to \infty$, as opposed to the actual asymptotic rate of $n^{-1}$.

In order to improve over the slow rate, a strengthening of mere convexity is needed, in the form of additional assumptions on the curvature of the loss or risk.
A common notion of curvature in optimization is strong convexity~\cite{boyd2004convex,bubeck2015convex,bach2024learning} of the loss, however the logistic loss
is not strongly convex with respect to the regression parameter $\theta$ as it only varies in one direction.
A more appropriate notion of curvature is ``exponential concavity'' (exp-concavity), which originates from online learning~\cite{vovk1998mixability,cesabianchi2006plg}.
Using this property of logistic loss, it is shown in~\cite{puchkin2023exploring} (see also~\cite{mehta2017expconcave,vilmarest2021stochastic}) that ERM for logistic regression constrained to a ball of radius $O (\b)$ achieves an excess risk of at most $O (d e^{c \b R}/ n)$, where $R> 0$ is such that $\norm{X} \leq R$ almost surely.
In the isotropic case where $\E [X X^\top] = I_d$, one has $R \geq \E [\norm{X}^2]^{1/2} = \sqrt{d}$, hence the previous guarantee scales at best as $d e^{c \b \sqrt{d}}/n$, with an exponential dependence on the norm $\b$ and (square root of the) dimension $d$.
This reflects the fact that logistic loss only possesses very weak deterministic curvature.

Another relevant property of logistic loss is (pseudo-)self-concordance (a bound on the third derivative of the loss in terms of the second derivative) which was put forward by~\cite{bach2010logistic}, and used to analyze logistic regression in a series of works~\cite{bach2010logistic,bach2014logistic,bach2013nonstrongly,ostrovskii2021finite}, the sharpest results in this direction being those of Ostrovskii and Bach~\cite{ostrovskii2021finite} discussed in Section~\ref{sec:existing-results}.

Finally, a classical condition to obtain fast rates for ERM in Statistical Learning Theory is a bound on the variance of loss differences in terms of the excess risk~\cite{massart2007concentration,koltchinskii2011oracle}, an assumption known as Bernstein condition~\cite{bartlett2006empirical}.
General guarantees for ERM in statistical learning under convex and Lipschitz loss are obtained in~\cite{alquier2019estimation} using the Bernstein condition.
These results are refined in the work~\cite{chinot2020robust} by using a local version of this condition; we discussed the instantiation of their results to logistic regression in Section~\ref{sec:existing-results}.

\parag{High-dimensional asymptotics}

As discussed in Section~\ref{sec:existing-results}, Candès and Sur~\cite{candes2020phase} characterized the phase transition for existence of the MLE in the well-specified case and with a Gaussian design in the high-dimensional asymptotic regime where $d/n \to \gamma \in (0, 1)$ and $b = \norm{\theta^*} \in \R^+$ is fixed.
This result on existence is complemented in~\cite{sur2019modern} by a result on the behavior of the MLE under the same assumptions and asymptotic regime; specifically, it is shown in this work that the joint distribution of the true and estimated coefficients converges to a certain distribution.
These results have been extended, among others,
to arbitrary covariance matrices of the design~\cite{zhao2022asymptotic},
to ridge-regularized logistic regression~\cite{salehi2019impact}, to more general binary models~\cite{taheri2020sharp}, to multinomial logistic regression~\cite{tan2024multinomial} and to missing data~\cite{verchand2024high}.

\parag{Worst-case design distributions, improper and robust estimators}

Our focus in this work is to characterize the performance of the MLE in ``regular'' situations, namely when the design distribution satisfies some suitable conditions ensuring a near-Gaussian behavior.
A rather different but complementary perspective consists in considering the performance of the MLE or other estimators for logistic regression under worst-case design distributions.

As one might expect,
the performance of the MLE is considerably degraded for worst-case design distributions.
In particular, a lower bound from~\cite{hazan2014logistic} for statistical learning with logistic loss implies that, when no assumption is made on the design asides from boundedness ($\norm{X} \leq R$ almost surely), then the MLE or any ``proper'' estimator (that returns a conditional density belonging to the logistic model) can achieve no better expected excess risk (with respect to the ball of radius $\b$) than $O (\b R/\sqrt{n})$, as long as $n \leq e^{c \b R}$.
This exponential dependence on the parameter norm can be bypassed by resorting to ``improper estimators'', that is, estimators that return conditional densities that do not belong to the logistic model; these include Bayesian model averaging~\cite{kakade2005online,foster2018logistic,qian2024refined} or adjusted estimators that account for uncertainty using ``virtual labels''~\cite{mourtada2022logistic,jezequel2020efficient}.
We also refer to~\cite{vijaykumar2021localization,vanderhoeven2023high} for alternative procedures achieving sharp high-probability guarantees, albeit at a high computational cost.

A related direction is that of robust logistic regression.
In~\cite{chinot2020robust},
high-probability risk bounds in logistic loss are established for an estimator based on medians-of-means, when the design $X$ may be heavy-tailed.
In addition, an estimator for logistic regression achieving near-optimal guarantees in Hellinger distance is proposed in~\cite{baraud2024robust}.
This direction is complementary to the one we pursue, for two reasons.
First, the improved robustness properties of the previous methods come at the cost of an increased computational complexity over that of the MLE.
Second, the study of robust estimators is complementary to the analysis of the behavior of the MLE, which is the simplest and most commonly used method for fitting logistic regression.

\subsection{Outline and notation}
\label{sec:outline-notation}

\parag{Paper outline}

This paper is organized as follows.
Section~\ref{sec:main-results} contains the precise statements of our main results, as well as the definition and discussion of the regularity assumptions we consider on the design distribution.
In Section~\ref{sec:regular-designs}, we illustrate these assumptions by investigating to which extent they hold for three standard classes of design distributions.
Section~\ref{sec:proof-scheme} describes the structure of the proofs of the main results from Section~\ref{sec:main-results}, including statements of the main lemmas.
In particular, a convex localization argument reduces the proof of existence and risk bounds for the MLE to two components: an upper bound on the gradient of the empirical risk, and a uniform lower bound on the Hessian of the empirical risk in a neighborhood of the true parameter.
Section~\ref{sec:gradients} is devoted to the proofs of upper bounds on the empirical gradients, while lower bounds on the empirical Hessian are established in Section~\ref{sec:hessians}.
Next, in Section~\ref{sec:linear-separation} we provide the proof of Theorem~\ref{thm:strong-non-existence} on linear separation (non-existence of the MLE) with high probability.
In Section~\ref{app:main-proofs}, we conclude the proofs of the main results of Section~\ref{sec:main-results} by putting together the results of Sections~\ref{sec:localization},~\ref{sec:gradients} and~\ref{sec:hessians} and providing additional lower bounds.
Finally, Section~\ref{sec:proof-examples-regular} contains the proofs of results from Section~\ref{sec:regular-designs}, namely regularity of log-concave and product measures.
Appendices~\ref{sec:tails-rv} and~\ref{sec:polar-coord-spher} gather technical facts on real random variables and polar coordinates, while Appendix~\ref{sec:proof-2dim-necessary} contains the proof of Proposition~\ref{prop:2-dim-margin-almost-necessary} on necessity of the two-dimensional margin condition.

\parag{Notation}

Throughout the paper, the sample size will be denoted by $n$ and the dimension by $d$.
We let $\innerp{\cdot}{\cdot}$ denote the standard inner product on $\R^d$, and $\norm{\cdot} = \norm{\cdot}_2$ the associated Euclidean norm.
We denote by $B_2^d$ the unit Euclidean ball and by $S^{d-1}$ the unit sphere in $\R^d$.
For a positive semi-definite matrix $A$, we let $\| \cdot \|_A$ be the semi-norm induced by $A$, defined by
$\|x\|_A^2 = \langle A x, x \rangle = \|A^{1/2} x \|^2$ for $x\in \R^{d}$.
The operator norm of a matrix $A$ is denoted by $\opnorm{A}$.
Given two $d\times d$ symmetric matrices $A,B$, we write $A \mleq B$ if $\innerp{A v}{v} \leq \innerp{B v}{v}$ for every $v \in \R^d$.

If $f : \R^d \to \R^d$ is a twice continuously differentiable function, we let $\nabla f (x) %
\in \R^d$ and $\nabla^2 f (x) %
\in \R^{d \times d}$ respectively denote its gradient and Hessian at $x \in \R^d$.
For $a,b \in \R$ we let $a \wedge b = \min (a, b)$ and $a \vee b = \max (a, b)$, as well as $a_+ = \max (a, 0)$ and $a_- = \max (-a, 0)$.

Recall the notation introduced in Section~\ref{sec:problem-setting}: we let $(X_{1}, Y_{1}), \dots, (X_{n}, Y_{n})$ be \iid pairs having the same distribution $P$ as a generic pair $(X,Y)$, and let $\wh \theta_n$ denote the MLE defined by~\eqref{eq:def-mle}.
Throughout, we assume without loss of generality that the design $X$ has an isotropic distribution, in the sense that $\E [X X^\top] = I_d$.
We say that the logit model is \emph{well-specified} if the conditional distribution of $Y$ given $X$ belongs to the model $\model$, that is if there exists $\theta^* \in \R^d$ such that $\P (Y=1|X) = \sigma (\innerp{\theta^*}{X})$; otherwise, the model is said to be misspecified.

\section{Main results}
\label{sec:main-results}

In this section, we provide the precise statements of our main results on logistic regression, which we presented informally in Section~\ref{sec:summ-contr}.

This section is organized as follows.
First, Section~\ref{sec:well-spec-gaussian} contains the results in the case of a well-specified model and a Gaussian design.
In Section~\ref{sec:assumptions}, we introduce and discuss the regularity assumptions we consider on the design to generalize the Gaussian case.
In Section~\ref{sec:well-spec-regular} we extend the results of Section~\ref{sec:well-spec-gaussian} to the case of a regular design, while still assuming that the model is well-specified.
Finally, in Section~\ref{sec:missp-regular} we
consider the most general case, where the design is regular but
no assumption is made on the conditional distribution of $Y$ given $X$.

\subsection{Well-specified model, Gaussian design}
\label{sec:well-spec-gaussian}

We start with the case of a well-specified model and a Gaussian design.
Theorem~\ref{thm:gaussian-well-specified} below provides a sharp condition (up to universal constant factors) on the sample size $n$ for the MLE to exist with high-probability, as well as an optimal upper bound in deviation on its excess risk. Its proof can be found in Section~\ref{sec:proof-well-gaussian}.
	
\begin{theorem}
\label{thm:gaussian-well-specified}
Assume that the design $X \sim \gaussdist (0,I_{d})$ is Gaussian and that
the model is well-specified with parameter $\theta^* \in \R^d$, and let
$\b=\max\{e,\|\theta^*\|\}$.
There exist universal constants $c_1,c_2, c_3 > 0$
such that the following holds.
For any $t \geq 0$, if 
\begin{equation}
	\label{eq:sample size}
	n \geq c_1 \b (d + t)\enspace,
\end{equation}
then with probability at least $1-e^{-t}$, the MLE $\hat{\theta}_n$ exists and satisfies
\begin{equation}
  L(\hat{\theta}_n) - L(\theta^*) \leq c_{2} \, \frac{d+t}{n}\enspace.
\end{equation}
Moreover, for any $d\geq 70$ and $t \geq 1$, if $n\leq c_3  \b (d + t )$ then
the MLE exists with probability at most $1 - e^{-t}$.
\end{theorem}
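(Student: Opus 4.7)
The plan is to split the proof into two directions matching the two statements.

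For the forward direction (existence of the MLE and risk bound when $n \geq c_1 B(d+t)$), I would use the convex-localization strategy announced in Section~\ref{sec:proof-scheme}. Introduce the Fisher information $\Sigma^{*} := \E[\sigma'(\innerp{\theta^{*}}{X}) X X^{\top}]$ with $\sigma'(s) = \sigma(s)\sigma(-s)$; under the well-specified model this is also the per-sample covariance of $\nabla \ell(\theta^{*}, (X,Y))$. Two probabilistic ingredients are needed. First, a gradient bound $\|\nabla \hat L_n(\theta^{*})\|^{2}_{(\Sigma^{*})^{-1}} \lesssim (d+t)/n$ with probability at least $1-e^{-t}/2$: since $\nabla \hat L_n(\theta^{*})$ is a centered, bounded i.i.d.\ average with covariance $\Sigma^{*}/n$, this is a standard Bernstein-type concentration in the Mahalanobis semi-norm. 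Second, a uniform Hessian lower bound $\nabla^{2}\hat L_n(\theta) \mgeq c\,\Sigma^{*}$ on a $\Sigma^{*}$-ball around $\theta^{*}$ of suitable radius, again with probability at least $1-e^{-t}/2$. The sample-size requirement is dictated by this second step: since $\innerp{\theta^{*}}{X} \sim \gaussdist(0,B^{2})$ and $\sigma'$ is effectively supported on a unit window, the eigenvalues of $\Sigma^{*}$ in the hyperplane orthogonal to $\theta^{*}$ are of order $1/B$, so matrix concentration on that subspace requires $n \gtrsim d/\lambda_{\min}(\Sigma^{*}) \asymp Bd$ samples. Once both ingredients are in hand, convexity of $\hat L_n$ forces the MLE to exist inside the localization ball and yields $\|\hat\theta_n - \theta^{*}\|^{2}_{\Sigma^{*}} \lesssim (d+t)/n$; a Taylor expansion of $L$ at $\theta^{*}$, using generalized self-concordance of the logistic loss to relate $\nabla^{2}L$ locally to $\Sigma^{*}$, converts this estimation bound into the claimed bound on $L(\hat\theta_n) - L(\theta^{*})$.

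For the reverse direction, I need to show that when $n \leq c_{3} B(d+t)$, the data is linearly separable with probability at least $e^{-t}$. My plan is to exhibit a separating direction as a small perturbation of $\theta^{*}$. Under the well-specified model, a sample $(X_i,Y_i)$ ``disagrees with $\theta^{*}$'' when $Y_i \innerp{\theta^{*}}{X_i} < 0$, and this event has unconditional probability $\E[\sigma(-|Z|)] \asymp 1/B$ for $Z \sim \gaussdist(0,B^{2})$ (by a Laplace-type estimate on the tail of $\sigma$). The expected number of disagreements among $n \lesssim Bd$ samples is therefore at most of order $d$, so there should be enough ``free'' directions in $\R^{d}$ to correct them. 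I would formalize this by lower bounding $\P(\mathcal{C}_n \neq \{0\})$, where $\mathcal{C}_n = \{\theta : Y_i \innerp{\theta}{X_i} \geq 0, \ 1\leq i \leq n\}$ is the random cone of separating directions, using either a direct Gaussian computation (decomposing each $X_i$ along $u^{*} = \theta^{*}/B$ and its orthogonal complement, and controlling the perpendicular perturbation needed to absorb the disagreements) or a Wendel/Cover-type conic integration adapted to the biased distribution of $Y_i X_i$ induced by the logit model.

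The hard part will be the reverse direction: converting a constant-probability statement of separability---of the kind that underlies the Candès--Sur phase transition~\cite{candes2020phase}---into a deviation-level lower bound $\P(\mathcal{C}_n \neq \{0\}) \geq e^{-t}$ for arbitrary $t \geq 1$. Standard asymptotic conic tools deliver threshold statements but not sharp deviation rates, so I expect to need a quantitative Gaussian-concentration argument that simultaneously tracks the signal-induced bias of the random cone and its $d$-dimensional angular freedom. The technical restriction $d \geq 53$ is most likely traceable to such a concentration estimate (for instance, a requirement that the Gaussian width of an appropriate cone dominate its diameter by a large universal constant), rather than to an essential feature of the statement.
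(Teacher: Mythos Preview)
Your forward-direction skeleton (localization via gradient + uniform Hessian control) matches the paper's Lemma~\ref{lem:localization}, but two of your claimed ingredients would not go through as stated. First, the individual gradients $-Y_i\sigma(-Y_i\innerp{\theta^*}{X_i})X_i$ are \emph{not} bounded, since $X_i$ is Gaussian; ``standard Bernstein'' does not apply. The paper instead establishes sub-gamma moments (Proposition~\ref{prop:DevGradGauss}) by exploiting that either $\sigma(-Y_i\innerp{\theta^*}{X_i})\leq e^{-|\innerp{\theta^*}{X_i}|}$ or, in the well-specified case, $\P(Y_i\innerp{\theta^*}{X_i}<0\mid X_i)\leq e^{-|\innerp{\theta^*}{X_i}|}$, so the sigmoid factor damps the Gaussian tail. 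Second, and more seriously, the uniform Hessian lower bound is not obtainable by ``matrix concentration'': the individual summands $\sigma'(\innerp{\theta}{X_i})X_iX_i^\top$ have unbounded operator norm, the bound must be uniform over $\theta$ in an ellipsoid, and generic approaches (matrix Bernstein, VC/covering) introduce extra $\log n$ or $\log B$ factors. The paper's sharp result (Theorem~\ref{thm:hessian-gaussian}) relies on a PAC-Bayes argument with a carefully designed anisotropic, \emph{non-Gaussian} smoothing distribution $\rho_\theta$ (Definition~\ref{def:PostThetaGauss}), chosen so that smoothing an indicator lower-bounds $\sigma'$ pointwise (Lemma~\ref{lem:LBHess0}); this is the main technical novelty and is exactly what your heuristic ``$n\gtrsim d/\lambda_{\min}(\Sigma^*)$'' glosses over.

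For the reverse direction you are overcomplicating the deviation issue. The paper splits $n\leq c_3 B(d+t)$ into two regimes. If $n\lesssim Bt$, then since $\P(Y\innerp{\theta^*}{X}<0)=\E[\sigma(-|\innerp{\theta^*}{X}|)]\lesssim 1/B$, the direction $\theta^*$ \emph{itself} separates all $n$ points with probability at least $(1-O(1/B))^n\geq e^{-t}$ (Fact~\ref{fact:signal-confidence}); no perturbation or conic machinery is needed, and this already delivers the $e^{-t}$ deviation you were worried about. The genuinely hard regime is $n\lesssim Bd$ with $t$ bounded, where one must show separation with \emph{constant} probability; here the paper invokes Theorem~\ref{thm:strong-non-existence}, which refines the Cand\`es--Sur conic argument (statistical dimension of $\Lambda=\R\mathbf{V}+\R_+^n$, approximate kinematic formula) with Lata\l a's moment bounds to get a quantitative estimate. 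The restriction $d\geq 53$ comes from making the residual terms $6e^{-d/21}$ in Theorem~\ref{thm:strong-non-existence} small enough to leave room for $e^{-t}$.
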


It follows from Theorem~\ref{thm:gaussian-well-specified} that, up to numerical constants, condition~\eqref{eq:sample size} is both necessary and sufficient for the MLE to exist with high probability,
and that whenever this condition holds, the MLE admits the same risk guarantee as the asymptotic one~\eqref{eq:lowdim-asymptotic-mle} in the regime where $\b,d$ are fixed while $n \to \infty$, which is optimal in light of the convergence in distribution~\eqref{eq:mle-wilks}.
In particular, the condition on $n$ that ensures that the MLE exists also ensures that it achieves its asymptotic excess risk.

We note also that, using Lemma~\ref{lem:localization}, the proof of Theorem~\ref{thm:gaussian-well-specified} also provides guarantees for the estimation error of the direction and norm of $\theta^*$: if $\norm{\theta^*} \geq e$, we have for some universal constants $c_3,c_4> 0$: if $n \geq c_3 \b (d+t)$, then with probability at least $1-e^{-t}$,
\begin{equation}
  \label{eq:direction-norm-estimation}
  \bigg\| \frac{\wh \theta_n}{\norm{\wh \theta_n}} - \frac{\theta^*}{\norm{\theta^*}} \bigg\|
  \leq c_4 \sqrt{\frac{d + t}{\b n}} \, , \qquad
  \big| \norm{\wh \theta_n} - \norm{\theta^*} \big|
  \leq c_4 \sqrt{\frac{\b^3 (d + t)}{n}}
  \, .
\end{equation}

The proof of Theorem~\ref{thm:gaussian-well-specified} can be found in Section~\ref{sec:proof-well-gaussian} (combining results from Sections~\ref{sec:gradient-wellspec-gaussian} and~\ref{sec:hessian-gaussian}), while the scheme of proof is described in Section~\ref{sec:proof-scheme}.
In particular, a key structural result in the analysis is Theorem~\ref{thm:hessian-gaussian}, which provides a sharp high-probability lower bound on the Hessian of the empirical risk $\wh H_n (\theta) = \nabla^2 \wh L_n (\theta)$, uniformly for $\theta$ belonging to a
neighborhood of $\theta^*$ that is ``as large as possible''.

Let us now come back to the question of existence of the MLE;
as noted in the introduction, non-existence of the MLE amounts to linear separation of the dataset.
Theorem~\ref{thm:gaussian-well-specified} implies in particular that if $n \geq 2 c_1 \b d$, then
\begin{equation}
  \label{eq:mle-exists-whp}
  1 - \exp \parens[\Big]{- \frac{n}{2 c_1 \b}}
  \leq \P (\text{MLE exists})
  \leq 1 - \exp \parens[\Big]{- \frac{n}{c_3 \b}}
  \, ,
\end{equation}
which provides an optimal quantitative estimate of the probability of existence of the MLE in the regime $n \gg \b d$.

This complements existing results on linear separation.
Specifically, it was recently shown by Kuchelmeister~\cite[Theorem~4]{kuchelmeister2024probability} that, for some absolute constant $c > 0$, if $n \geq c \b d$ then
\begin{equation}
  \label{eq:kuchelmeister-existence}
  \P (\text{MLE exists})
  \geq 1 - \exp \Big( - \frac{n}{c \b^2} \Big)
  \, .
\end{equation}
While this estimate is weaker than~\eqref{eq:mle-exists-whp} in our setting, the result~\cite[Theorem~4]{kuchelmeister2024probability} holds beyond the well-specified case (with the probability of existence of the MLE related to the misclassification error $\P (Y \innerp{\theta^*}{X} < 0)$), which makes it incomparable to~\eqref{eq:mle-exists-whp} in general.
In addition, using the equivalence discussed in Section~\ref{sec:existing-results} between existence of the MLE and the property that $0$ does not belong to the interior of $\conv (Y_1 X_1, \dots, Y_n X_n)$, a recent result from Hayakawa, Lyons, and Oberhauser~\cite[Theorem~14]{hayakawa2023estimating} implies the following: if $n \geq c \b d$ (for some suitable universal constant $c$), then
\begin{equation}
  \label{eq:hayakawa-existence}
  \Probab{\text{MLE exists}}
  \geq
  1 - 2^{-d}
  \, .
\end{equation}

Now, the lower bound in~\eqref{eq:mle-exists-whp} provides a quantitative version of the convergence to $1$ in the phase transition~\eqref{eq:phase-transition-mle} from~\cite{candes2020phase} for the existence of the MLE.
On the other hand, if $n \ll \b d$, then Theorem~\ref{thm:gaussian-well-specified} (with $t=0$) only implies that the probability of existence of the MLE is bounded away from $1$, rather than close to $0$ as in the phase transition~\eqref{eq:phase-transition-mle}.

Theorem~\ref{thm:strong-non-existence} below shows that if $n \ll \b d$, then the probability of existence of the MLE indeed approaches $0$, at a rate exponential in the dimension.
This can be seen as a quantitative version of the convergence to $0$ in the phase transition~\eqref{eq:phase-transition-mle}.
Theorem~\ref{thm:strong-non-existence} is proved in Section~\ref{sec:linear-separation}.

\begin{theorem}
\label{thm:strong-non-existence}
Let $d\geq 70$, and assume that $X \sim \gaussdist (0, I_d)$ and that the logistic model is well-specified.
For every $\kappa \geq 1$, if $n \leq \b d /(200 \kappa)$ then
\begin{equation}
  \label{eq:strong-non-existence}
  \P( \text{\emph{MLE exists}} ) 
  \leq \exp\big(-\max\big\{ \kappa\sqrt{d}, \kappa^{2}d/\b^{2}\big\}\big) 
  + 6e^{-d/24} \, .
\end{equation}
\end{theorem}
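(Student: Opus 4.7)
The plan is to exhibit, with high probability, a data-dependent direction $\theta \in \R^d \setminus \{0\}$ along which $Y_i \innerp{\theta}{X_i} \geq 0$ for every $i$, witnessing linear separation of the dataset. The starting observation is that under the well-specified Gaussian logit model, the signed vectors $W_i := Y_i X_i$ are iid with density $2 \phi(w) \sigma(\innerp{\theta^*}{w})$ (combining the cases $Y_i = \pm 1$ and using $\phi(-w) = \phi(w)$). Writing $e = \theta^*/B$ and decomposing $W_i = U_i e + V_i$ orthogonally, the joint density of $(U_i, V_i)$ factorizes: the $V_i$'s are iid $\gaussdist(0, I_{d-1})$, independent of the $U_i$'s which have the biased one-dimensional density $2\phi(u) \sigma(B u)$. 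By Mills' ratio, $\P(U_i < -t) \leq (C/B) e^{-B t}$ for $t \geq 0$, so in particular $p_- := \P(U_i < 0) \leq C/B$ and $(U_i)_- := \max(-U_i, 0)$ is sub-exponential with scale of order $1/B$.

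I would then look for a separator of the form $\theta = A e + b$ with $A > 0$ and $b \in e^\perp$, which reduces the separation condition to $\innerp{b}{V_i} \geq -A U_i$ for every $i$, i.e.\ $V b + A U \geq 0$ componentwise where $V \in \R^{n\times(d-1)}$ has rows $V_i^\top$. For $n \leq d-1$ the $V_i$'s are linearly independent almost surely and the system is trivially feasible; in the interesting regime $n > d-1$, Farkas's lemma says feasibility is equivalent to the non-existence of $\lambda \in \R_+^n$ with $V^\top \lambda = 0$ and $\innerp{\lambda}{U} < 0$. On a ``good geometric event'' $G$ controlling singular values of relevant sub-matrices of $V$ and chi-squared norms of projections, with $\P(G^c) \leq 6 e^{-d/21}$ by standard Gaussian concentration and a union bound over index subsets, any such witness $\lambda$ must concentrate its mass on the error set $I := \{ i : U_i < 0 \}$. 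The infeasibility question then reduces to a scalar inequality involving $|I|$ and $\sum_i (U_i)_-$.

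The failure probability of this scalar inequality is controlled by combining two tail estimates: a Chernoff bound on $|I| \sim \mathrm{Bin}(n, p_-)$ with mean $\lesssim d/(\kappa B)$, and a Bernstein bound on $\sum_i (U_i)_-$ whose sub-exponential structure (variance $\lesssim 1/B^3$, tail scale $1/B$) produces two Chernoff regimes. In the large-deviation regime, where $|I|$ deviates far above its mean, the Chernoff rate is essentially $k \log(k / \E |I|)$ and at the threshold $k = \kappa \sqrt{d}$ this yields the first term $\exp(-c \kappa \sqrt{d})$. In the moderate-deviation regime, where one instead controls $\sum_i (U_i)_-$ via its Gaussian-type sub-exponential bound, the threshold $\kappa^2 d / B^2$ yields the second term $\exp(-c \kappa^2 d / B^2)$. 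By choosing the construction and the threshold appropriate to each regime, both bounds apply to the same failure event, so one may take their minimum, which equals $\exp(-\max\{\kappa\sqrt{d}, \kappa^2 d/B^2\})$. Adding $\P(G^c)$ gives the claimed inequality.

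The main obstacle is to extract from Farkas's lemma a quantitative sufficient condition on $(U_i)$ and on the spectral geometry of $V$ that is sharp enough for the scalar tail bounds to yield precisely the claimed two-regime maximum in the exponent, rather than a weaker single-regime bound. A secondary difficulty is designing the good event $G$ to uniformly control Gaussian sub-matrix spectra over the random index subset $I \subset \{1, \dots, n\}$ without degrading the rate --- this is typically what fixes the numerical constant $1/21$ in the additive error term and the constant $23000$ in the sample-size condition.
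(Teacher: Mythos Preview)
Your route differs substantially from the paper's. The paper does not use Farkas's lemma or spectral control of sub-matrices of $V$. Instead, following Cand\`es--Sur, it recasts linear separation as the event that the random cone $\Lambda = \R \V + \R_+^n \subset \R^n$ intersects an independent uniform $(d-1)$-dimensional subspace $\L$, and then applies the approximate kinematic formula of Amelunxen et al.\ to reduce the question to a high-probability lower bound on the statistical dimension $\delta(\Lambda) = n - \E_\Z[\dist(\Z,\Lambda)^2 \mid \V]$. The additive $6e^{-d/21}$ arises from the width of this conic phase transition, not from any union bound over index subsets. The paper then bounds $\E_\Z[\dist(\Z,\Lambda)^2 \mid \V] \leq \min_{\lambda} \sum_i \psi(-\lambda V_i)$ with $\psi(s) = \E[(s-Z)_+^2]$, splits $\psi$ into three pieces, and controls the critical term $S_n = \sum_i U_i^2 \indic{Y_i U_i \leq 0}$ via Lata{\l}a's moment inequality for sums of iid nonnegative variables. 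The two-regime exponent $\max\{\kappa\sqrt d,\ \kappa^2 d/B^2\}$ emerges precisely from the two individual moment estimates $\|\psi_i\|_s \lesssim \min\{s^2/B^2,\, s\}$ inside Lata{\l}a's supremum, after choosing $p = \max\{\kappa\sqrt d, \kappa^2 d/B^2\}$ and optimizing over $s$.

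Your proposal has two genuine gaps. First, the passage from Farkas's lemma to a scalar condition on $|I|$ and $\sum_i (U_i)_-$ is a hope, not an argument: a Farkas witness $\lambda \in \R_+^n$ with $V^\top \lambda = 0$ lives in an $(n-d+1)$-dimensional null space, and generic singular-value control of $V$ does not force such $\lambda$ to concentrate on $I$. Making this quantitative, and doing so uniformly over the random subset $I$ without incurring a $2^n$ union-bound factor, is exactly the hard part, and you have supplied no mechanism. Second, your claimed derivation of the exponent does not check out. With $\E|I| = n p_- \lesssim d/\kappa$, a Chernoff bound on $|I|$ at level $k = \kappa\sqrt d$ has rate $\sim k \log(k/\E|I|) \sim \kappa\sqrt d \cdot \log(\kappa^2/\sqrt d)$; for $\kappa$ of constant order this logarithm is large and \emph{negative} (indeed $\kappa\sqrt d \ll \E|I|$), so the bound is vacuous unless $\kappa \gtrsim d^{1/4}$. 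The $\kappa\sqrt d$ rate in the theorem does not come from a binomial tail at all---in the paper it comes from the quadratic moment growth $\|\psi_i\|_s \lesssim s^2/B^2$ fed into Lata{\l}a (so that $p = \kappa\sqrt d$ makes $p^2 = \kappa^2 d$), a mechanism absent from your Chernoff-plus-Bernstein scheme.
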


Let us now discuss the interpretation of this result.

First, the question of non-existence of the MLE (that is, linear separation) is mainly of interest when $n \geq d$, since for $n < d$ %
the dataset is linearly separated, as the points $X_1, \dots, X_n$ do not span $\R^d$.
We are therefore interested in the regime $d \ll n \ll \b d$, where linear separation no longer occurs deterministically because of the dimension, but instead with high probability due to the fact that the signal is strong ($\b \gg 1$).
Specifically, in this regime, a strong signal $\b$ entails that a large fraction of labels $Y_i$ will be of the same sign as the predictions $\innerp{\theta^*}{X_i}$, which effectively
constrains the directions of the vectors $Y_i X_i$, making it easier to find a $\theta \neq 0$ such that $\innerp{\theta}{Y_i X_i} \geq 0$ for every $i=1, \dots, n$.
Interestingly, if $n \gg \b$ (while $n \ll \b d$), then the true parameter $\theta^*$ will typically not satisfy this property; instead, linear separation will be achieved by another (random) parameter $\theta$, thanks to the %
flexibility due to the large dimension $d$.
As such, in the regime $\max \{ \b, d \} \ll n \ll \b d$, linear separation holds with high probability owing to the \emph{combination} of the ``signal strength'' and ``dimension'' effects, rather than one of the two taken individually.

We now comment on the quantitative bound~\eqref{eq:strong-non-existence}.
Theorem~\ref{thm:strong-non-existence} implies that if $n$ is small compared to the threshold of order $\b d$, then the probability of existence of the MLE is smaller than $\exp (- c \max\{ \sqrt{d}, d/\b^2 \})$ for some constant $c$.
In addition, the parameter $\kappa \geq 1$ quantifies how small the sample size $n$ is relative to the critical threshold of $\b d$, and the smaller the sample size (that is, the larger $\kappa$ is), the smaller the bound~\eqref{eq:strong-non-existence}.
In particular, in the regime where $n \asymp d$ and $\b \gg 1$ (so that $\kappa \asymp \b$), Theorem~\ref{thm:strong-non-existence} shows that the probability of existence of the MLE is smaller than $\exp (- c d)$ for some constant $c$.

It is worth relating the estimate from Theorem~\ref{thm:strong-non-existence} to existing results on linear separation.
Specifically, it was shown by Wagner and Welzl~\cite{wagner2001continuous} that, for any $n > d \geq 1$ and \iid random vectors $U_1, \dots, U_n$ in $\R^d$ with absolutely continuous distribution, one has
\begin{equation}
  \label{eq:wagner-separation}
  \Probab{0 \not\in \conv (U_1, \dots, U_n)}
  \geq \frac{1}{2^{n-1}} \sum_{k=0}^{d-1} \binom{n-1}{k}
  \, ;
\end{equation}
this inequality is in fact an equality when the distribution of $U_1$ is symmetric, as shown by a classical result of Wendel~\cite{wendel1962problem}.
The absolute continuity assumption was later relaxed in~\cite{hayakawa2023estimating} to the minimal assumption that $\P (U_1 \in H) = 0$ for any linear hyperplane $H \subset \R^d$.
Applying the inequality~\eqref{eq:wagner-separation} to $U_i = Y_i X_i$ shows that if $n > d$, then
\begin{equation}
  \label{eq:linear-sep-distribution-free}
  \Probab{\text{MLE exists}}
  \leq \sum_{k=0}^{n-d-1} \binom{n-1}{k}
  \, .
\end{equation}
In particular, for a suitable constant $c > 0$, if $d \geq c$ and $d < n < 1.9 d$, then the right-hand side of~\eqref{eq:linear-sep-distribution-free} is at most $\exp (- d / c)$, matching Theorem~\ref{thm:strong-non-existence} in this case.
On the other hand, if $n>2.1 d$, then the upper bound~\eqref{eq:linear-sep-distribution-free} is no longer small (it behaves as $1 - \exp (- d/c)$); whereas if $\b \gg 1$ and $d < n \ll \b d$, Theorem~\ref{thm:strong-non-existence} still provides an exponentially small bound.
This is %
because
the bound~\eqref{eq:linear-sep-distribution-free} is distribution-free, while Theorem~\ref{thm:strong-non-existence} takes the signal strength into account.

The proof of Theorem~\ref{thm:strong-non-existence}, which builds on the approach of Candès and Sur~\cite{candes2020phase}, can be found in Section~\ref{sec:linear-separation}.
Specifically, %
the starting point of the proof %
is to reformulate the property of linear separation into the property that a certain random cone $\Lambda$ in $\R^n$ has a non-trivial intersection with an independent uniform random subspace.
Now, it follows from the work~\cite{amelunxen2014living} that the probability of such an event depends on the dimension of the random subspace, and on a certain geometric parameter of the cone $\Lambda$ called ``statistical dimension''.
In order to control the probability of existence of the MLE, one must therefore combine two steps:
(i) conditionally on the cone $\Lambda$, apply a phase transition result showing that the probability that a random subspace does not intersect $\Lambda$ is small;
(ii) in order to apply the previous result to the random cone $\Lambda$, control of the statistical dimension of $\Lambda$ with high probability.

For the first point, Candès and Sur use a phase transition result from~\cite{amelunxen2014living}.
For the second point,
they establish that the statistical dimension of the random cone $\Lambda$ converges in probability to a deterministic value as $n, d \to \infty$ while $d/n \to \gamma$, for fixed $b = \norm{\theta^*}$.
To show this, they first relate the statistical dimension to (a family of) averages of \iid random variables, and then establish uniform convergence of the averages to the corresponding expectations.

While these arguments suffice to establish the $0$-$1$ law~\eqref{eq:phase-transition-mle} in this asymptotic regime,
several refinements are required in order to obtain the quantitative bound of Theorem~\ref{thm:strong-non-existence}.
First, a more precise phase transition result \cite[Theorem~6.1]{amelunxen2014living} must be used in order to finely capture the dependence
on the statistical dimension of $\Lambda$.
Second and more importantly, one must establish a refined high-probability control on the statistical dimension of the random cone.
This requires %
a high-probability bound on
the sum of \iid random variables that %
controls this dimension.
We achieve this by first obtaining a tight control on the moments of the individual summands, and then applying a sharp estimate of Lata{\l}a~\cite{latala1997estimation} for moments of sums of independent random variables.

\subsection{Regularity assumptions}
\label{sec:assumptions}

In this section, we introduce the regularity conditions on the design distribution, alluded to in the introduction, under which the MLE exhibits a similar behavior as in the Gaussian design case.
In the following sections, we will present two extensions of Theorem~\ref{thm:gaussian-well-specified}: first, to the case of a regular design under a well-specified model, and second, to the most general case of a regular design with a misspecified model.
Examples of distributions satisfying the regularity assumptions are discussed in Section~\ref{sec:regular-designs}.

The first assumption on the design is standard and states that the design $X$ is light-tailed; we refer to Definition~\ref{def:psi-alpha} in Appendix~\ref{sec:tails-rv} for the definition of the $\psi_1$-norm.
\begin{assumption}
  \label{ass:sub-exponential}
  The random vector $X$ is \emph{$K$-sub-exponential} for some $K \geq e$, in the sense that $\norm{\innerp{v}{X}}_{\psi_1} \leq K$ for every $v \in S^{d-1}$.
\end{assumption}

The second assumption is also standard in the binary classification literature.
It requires that the design distribution does not place too much mass near the separating hyperplane.
It is related to the \emph{margin assumption} underlying fast convergence rates in classification~\cite{mammen1999margin,tsybakov2004aggregation,audibert2007plugin}.
The assumptions below depend on a direction $u^* \in S^{d-1}$, which will be the parameter direction $\theta^*/\norm{\theta^*}$, and on a scale $\eta$, which will be the inverse parameter norm $\b^{-1}$.

\begin{assumption}
  \label{ass:small-ball}
  Let $u^* \in S^{d-1}$ and $\eta \in (0, 1]$.
  For some $c \geq 1$, one has for every $t \geq \eta$ that
  \begin{equation}
    \label{eq:small-ball}
    \P \big( \abs{\innerp{u^*}{X}} \leq t \big)
    \leq c t
    \, .
  \end{equation}
\end{assumption}

As it turns out, the standard Assumptions~\ref{ass:sub-exponential} and~\ref{ass:small-ball} above are not sufficient to ensure a Gaussian-like behavior in logistic regression.
Instead, we need a new, additional condition on the distribution, which concerns the behavior of its two-dimensional linear marginals.
We call this assumption the \emph{two-dimensional margin condition}.

\begin{assumption}
  \label{ass:twodim-marginals}
  Let $u^* \in S^{d-1}$, $\eta \in (0, 1/e]$ and $c \geq 1$.
   For every $v \in S^{d-1}$ such that $\innerp{u^*}{v} \geq 0$, one has
  \begin{equation}
    \label{eq:twodim-marginals-dist}
    \P \big( \abs{\innerp{u^*}{X}} \leq c \eta, \ \abs{\innerp{v}{X}} \geq c^{-1} \max \{ \eta, \norm{u^* - v} \} \big)
    \geq \eta / c
    \, .
  \end{equation} 
\end{assumption}

\begin{remark}
\label{rem:equivalent-two-dim}
Using that if $\innerp{u^*}{v} \geq 0$ then
\begin{equation*}
	\norm{u^*-v}/\sqrt{2} \leq \sqrt{1-\innerp{u^*}{v}^2} %
    = \norm{u^* - v} \sqrt{(1 + \innerp{u^*}{v})/2}
    \leq \norm{u^* - v}
    \, ,
\end{equation*}
another way of stating Assumption~\ref{ass:twodim-marginals} is that for every $v \in S^{d-1}$, one has
  \begin{equation}
    \label{eq:twodim-marginals-comp}
    \P \Big( \abs{\innerp{u^*}{X}} \leq c \eta, \ \abs{\innerp{v}{X}} \geq c^{-1} \max \Big\{ \eta
    ,
    \sqrt{1- \innerp{u^*}{v}^2} \Big\} \Big)
    \geq \eta / c
    \, .
  \end{equation}
  This only changes the value of the parameter $c$ from~\eqref{eq:twodim-marginals-dist} by a factor $\sqrt{2}$.
  This equivalent formulation is more convenient in some situations.
\end{remark}

Let us now discuss the meaning and necessity of this assumption.
We start with an intuitive discussion and proceed with a precise statement.
First, the condition $\ainnerp{u^*}{X} \lesssim \eta$ amounts to $\ainnerp{\theta^*}{X} \lesssim 1$.
The restriction to such values of $X$ stems from the fact that values of $X$ for which $\ainnerp{\theta^*}{X} \gg 1$ provide little information on the precise value of $\theta^*$.
Indeed, it follows from the form of the logistic model that for such $X$, one has $Y = \sign(\innerp{\theta^*}{X})$ with high probability; but for other parameters $\theta$ close to $\theta^*$, one also has $\ainnerp{\theta}{X} \gg 1$ and $Y = \sign (\innerp{\theta}{X})$.
Hence, the value of $Y$ does not allow one to distinguish between the values $\theta$ and $\theta^*$, both of which are highly consistent with this label.

We thus focus on values of $X$ such that $\ainnerp{\theta^*}{X} \lesssim 1$, which is the first condition.
The second condition is that for any other direction $v \in S^{d-1}$, we find sufficiently many values of $X$ for which additionally $\ainnerp{v}{X}$ is sufficiently large.
This intuitively comes from the fact that, in order to distinguish $\theta^*$ from another parameter $\theta = \theta^* + \eps v$ with $\eps > 0$ from a data point $(X, Y)$, we need the ``predictions'' of these two parameters at $X$, namely $\innerp{\theta^*}{X}$ and $\innerp{\theta}{X} = \innerp{\theta^*}{X} + \eps \innerp{v}{X}$, to be sufficiently different.
This precisely amounts to saying that $\ainnerp{v}{X}$ is not too small.
The threshold for $\ainnerp{v}{X}$ depends on the alignment between $v$ and $u^*$, and its value in~\eqref{eq:twodim-marginals-comp} comes from the fact that, in the baseline case where $X \sim \gaussdist (0, I_d)$, one has
\begin{equation*}
  \E \Big[ \innerp{v}{X}^2 \, \big| \, \ainnerp{u^*}{X} = \eta \Big]^{1/2}
  = \sqrt{\innerp{u^*}{v}^2 \eta^2 + (1 - \innerp{u^*}{v}^2)}
  \asymp \max \{ \eta, \sqrt{1 - \innerp{u^*}{v}^2} \}
  \, .  
\end{equation*}

We now argue more formally that Assumption~\ref{ass:twodim-marginals} is necessary for the MLE to behave similarly as in the Gaussian case.
To see why, let $H_X (\theta^*) = \nabla^2 L (\theta^*)$ denote the Hessian under the design $X$.
When the logistic model is well-specified, $H_X (\theta^*)$ coincides with the Fisher information matrix at $\theta^*$.
Hence, the MLE is asymptotically normal as $n \to \infty$, with
\begin{equation*}
  \sqrt{n} (\wh \theta_n - \theta^*) \cvd \gaussdist (0, H_X (\theta^*)^{-1})
  \, .
\end{equation*}
Therefore, for the error $\wh \theta_n - \theta^*$ of the MLE under design $X$ to be %
as small as
the error under a Gaussian design $G \sim \gaussdist (0, I_d)$, one must have $H_X (\theta^*) \mgeq c H_G (\theta^*)$ for some constant $c > 0$.
In addition, it follows from~\eqref{eq:HessianGaussian} and~\eqref{eq:components-hessian} that $H_G (\theta^*)$ is upper and lower bounded (up to absolute constant factors) by the matrix
\begin{equation}
	H = \frac{1}{\b^{3}} u^{*} {u^{*}}^{\top} + \frac{1}{\b} (I_{d} - u^{*} {u^{*}}^{\top}) \, ,
\end{equation}
where $\b = \max\{\|\theta^{*}\|, e\}$.
Hence, the previous condition is of the form $H_X (\theta^*) \mgeq c H$.

The following result shows that, whenever Assumptions~\ref{ass:sub-exponential} and~\ref{ass:small-ball} hold, Assumption~\ref{ass:twodim-marginals} is necessary (up to logarithmic factors) for the condition $H_X (\theta^*) \mgeq c H$ to hold.

\begin{proposition}
  \label{prop:2-dim-margin-almost-necessary}
  Let $\theta^* \in \R^d$, and set $u^* \in S^{d-1}$ such that $\theta^* = \norm{\theta^*} u^*$ as well as $\b = \max \set{\norm{\theta^*}, e}$.
  Let $X$ be an isotropic random vector satisfying Assumption~\ref{ass:sub-exponential} with parameter $K \geq e$ and Assumption~\ref{ass:small-ball} with parameters $u^*, \eta = \b^{-1}, c$.
  If $H_X (\theta^*) \mgeq c_0 H$ for some $c_0 \in (0, 1)$, then for some constants $c_1, c_2, c_3$ depending only on $K, c, c_0$, one has
  \begin{equation}
    \label{eq:two-dim-margin-necessary}
    \P \bigg( \ainnerp{u^*}{X} \leq \frac{c_1 \log \b}{\b} , \, \ainnerp{v}{X} \geq \frac{\max \{ \norm{u^* - v}, \b^{-1} \}}{c_2 \sqrt{\log \b}} \bigg)
    \geq \frac{c_3}{\b \log^2 (\b)}
    \, .
  \end{equation}
\end{proposition}
The proof of this result can be found in Appendix~\ref{sec:proof-2dim-necessary}.

We can now formulate the definition of ``regular distributions'' that we use throughout.
\begin{definition}
  \label{def:regular}
  Let $u^* \in S^{d-1}$, $\eta \in (0, e^{-1}]$ and $c \geq 1$.
  A random vector $X$ in $\R^d$ %
  is said to have an
  \emph{$(u^*, \eta, c)$-regular} distribution if it is isotropic (that is, $\E [X X^\top] = I_d$)  
  and
  satisfies Assumptions~\ref{ass:small-ball} and~\ref{ass:twodim-marginals} with
  parameters $u^*, \eta, c$.
\end{definition}
\subsection{Well-specified model, regular design}
\label{sec:well-spec-regular}

We can now state our main result on the performance of the MLE in the case of a regular design and a well-specified model, whose proof can be found in Section~\ref{sec:proof-well-regular}.

\begin{theorem}
  \label{thm:regular-well-specified}
  Assume that the model is well-specified, with parameter $\theta^* = \norm{\theta^*} u^*$ where $u^* \in S^{d-1}$ and let
  $\b=\max\{e,\|\theta^*\|\}$.
  Assume that $X$ satisfies Assumptions~\ref{ass:sub-exponential},~\ref{ass:small-ball} and~\ref{ass:twodim-marginals} with parameters $K\geqslant e$, $u^{*}$, $\eta=\b^{-1}$ and $c$.
  There exist constants $c_1, c_2$ that depend only on $c,K$ such that, if 
\begin{equation*}
  n
  \geq c_1 \b \log^4 (\b) (d + t)
  \, ,
\end{equation*}
then with probability at least $1-e^{-t}$, the MLE  $\wh \theta_{n}$ exists and satisfies
\begin{equation}
  L(\wh \theta_{n}) - L(\theta^{*})
  \leq %
  c_2
  \log^4 (\b) \frac{d+t}{n}
  \, .
\end{equation}
\end{theorem}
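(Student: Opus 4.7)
The plan is to apply the convex localization argument of Lemma \ref{lem:localization} (the general scheme outlined in Section \ref{sec:proof-scheme}). This reduces the analysis to three ingredients: identifying the correct anisotropic curvature scale at $\theta^*$, a high-probability upper bound on the gradient $\nabla \wh L_n(\theta^*)$, and a uniform high-probability lower bound on the empirical Hessian $\wh H_n(\theta) = \nabla^2 \wh L_n(\theta)$ in a neighborhood of $\theta^*$.

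The correct scale is the deterministic matrix
\begin{equation*}
H = \frac{1}{B^{3}}\, u^* {u^*}^{\top} + \frac{1}{B}\bigl(I_d - u^* {u^*}^{\top}\bigr),
\end{equation*}
which captures the two distinct curvatures of $L$ at $\theta^*$: since $\sigma'$ decays exponentially at the scale $B$ reached by $|\innerp{\theta^*}{X}|$, the curvature of $L$ along $u^*$ is of order $B^{-3}$, while transverse curvature remains of order $B^{-1}$. Lemma \ref{lem:BHReg} --- whose proof uses precisely the three regularity assumptions \ref{ass:sub-exponential}, \ref{ass:small-ball}, \ref{ass:twodim-marginals}, the two-dimensional margin condition being essential to recover the transverse $B^{-1}$ scale beyond the Gaussian case --- yields $\nabla^2 L(\theta) \mgeq c\, H$ uniformly over a neighborhood of $\theta^*$ measured in the $H$-seminorm.

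For the gradient, by well-specification,
\begin{equation*}
\nabla \wh L_n(\theta^*) = -\frac{1}{n}\sum_{i=1}^{n} Y_i X_i\, \sigma\bigl(-Y_i \innerp{\theta^*}{X_i}\bigr)
\end{equation*}
is an average of i.i.d.\ centered vectors whose covariance equals the Fisher information $\nabla^2 L(\theta^*)$, itself comparable to $H$. A Bernstein-type deviation inequality in the $H^{-1}$-norm, using Assumption \ref{ass:sub-exponential} to control exponential moments, gives $\|\nabla \wh L_n(\theta^*)\|_{H^{-1}}^2 \lesssim (d+t)/n$ with probability at least $1 - e^{-t}$; this is the content of the gradient bounds of Section \ref{sec:gradients}.

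The main obstacle is the uniform empirical Hessian lower bound from Section \ref{sec:hessians}: one must show that, with probability at least $1 - e^{-t}$, $\wh H_n(\theta) \mgeq c'\, H$ uniformly over an $H$-ball of small constant radius around $\theta^*$ --- the analogue for regular designs of Theorem \ref{thm:hessian-gaussian}. The strategy combines a pointwise concentration of $\wh H_n(\theta^*)$ around $\nabla^2 L(\theta^*)$ with a discretization and peeling argument over the ellipsoidal neighborhood, controlling the modulus of continuity of $\theta \mapsto \wh H_n(\theta)$. The $\log^4 (B)$ factor in the statement originates here: the Hessian weights $\sigma'(\innerp{\theta}{X})$ take their non-negligible values only on a tail event of probability of order $1/B$, so truncation and uniform concentration under merely sub-exponential tails (rather than Gaussian tails) each cost a polylogarithmic-in-$B$ factor, and these accumulate into the overall $\log^4 (B)$ loss relative to the Gaussian bound of Theorem \ref{thm:gaussian-well-specified}.

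Once both ingredients hold, the condition $n \geq c_1 B \log^4(B)(d+t)$ forces $\|\nabla \wh L_n(\theta^*)\|_{H^{-1}}$ to be strictly smaller than the radius $r$ on which $\wh H_n \mgeq c' H$. Lemma \ref{lem:localization} then simultaneously guarantees existence of $\wh \theta_n$ inside this ellipsoidal neighborhood and the quadratic inequality
\begin{equation*}
L(\wh\theta_n) - L(\theta^*) \;\lesssim\; \bigl\| \nabla \wh L_n(\theta^*) \bigr\|_{H^{-1}}^2 \;\lesssim\; \log^4(B)\,\frac{d+t}{n},
\end{equation*}
which is the claimed bound; the complete assembly is done in Section \ref{app:main-proofs} (specifically Section \ref{sec:proof-well-regular}).
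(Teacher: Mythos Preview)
Your overall scheme --- localization via Lemma~\ref{lem:localization}, with a gradient bound and a uniform empirical Hessian lower bound --- is the paper's scheme, but several of the details you supply are wrong, and the errors compound in your accounting of the $\log^4 B$ loss.

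First, the gradient bound in the regular (non-Gaussian) case is not $\|\nabla \wh L_n(\theta^*)\|_{H^{-1}}^2 \lesssim (d+t)/n$ but rather carries an extra $\log B$: Proposition~\ref{prop:DevGradWSRD} gives $\|\nabla \wh L_n(\theta^*)\|_{H^{-1}} \leq c'\log B\,\sqrt{(d+t)/n}$. The extra factor appears because, unlike the Gaussian case, $\innerp{u^*}{X}$ and $\innerp{w}{X}$ are not independent for $w\perp u^*$; the proof of Lemma~\ref{lem:MomRegDes} handles this via H\"older with exponent $1/\log B$, which is exactly where the $\log B$ enters. Your claim that the gradient contributes no logarithmic loss is therefore incorrect.

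Second, your description of the Hessian step --- pointwise concentration of $\wh H_n(\theta^*)$ plus discretization and a modulus-of-continuity argument --- is not what the paper does, and would be delicate to carry out because $\sigma'(\innerp{\theta}{X_i})X_iX_i^\top$ is heavy-tailed and its dependence on $\theta$ is awkward to make uniform. The paper (Theorem~\ref{thm:hessians-regular-case}) instead reduces $\wh H_n(\theta) \mgeq c_0 H$ to a uniform lower bound on empirical frequencies of the events in~\eqref{eq:deterministic-condition-hessian}; these form a VC class of dimension $O(d)$, and Lemma~\ref{lem:vc-lower-bound} transfers the population lower bound (Assumption~\ref{ass:twodim-marginals}, extended to nearby directions by Lemma~\ref{lem:bidim-extension}) to the sample. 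This produces the Hessian bound on a radius $r_0 \asymp 1/(\log B\,\sqrt{B})$, not $1/\sqrt{B}$.

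Third, your attribution of the $\log^4 B$ entirely to the Hessian step is wrong, and so is your use of Lemma~\ref{lem:BHReg}: what Lemma~\ref{lem:localization} needs from the population Hessian is the \emph{upper} bound $\nabla^2 L \mleq c_1 H$, not the lower bound you quote, and in the regular case that $c_1$ is of order $(\log B)^2$ (see the proof of Lemma~\ref{lem:BHReg}). The sample-size condition $n \gtrsim B\log^4 B\,(d+t)$ comes from squaring $\nu/r_0 \asymp (\log B)^2\sqrt{B(d+t)/n}$; the risk bound $c_1\nu^2/c_0^2$ picks up $(\log B)^2$ from $\nu^2$ and another $(\log B)^2$ from $c_1$.
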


The guarantees of Theorem~\ref{thm:regular-well-specified} almost match (up to poly-logarithmic factors in $\b$) those of Theorem~\ref{thm:gaussian-well-specified} in the Gaussian case, which are optimal as discussed above.
In fact, one can almost recover (again up to $\log^4 (\b)$ factors) the guarantees of Theorem~\ref{thm:gaussian-well-specified} from this result, since one can show that the Gaussian design satisfies the regularity assumptions for all $u^*, \eta$ and with $c, K$ being universal constants.

\subsection{Misspecified model, regular design}
\label{sec:missp-regular}

We now turn to the general case where the logit model may be misspecified.
In this setting, the conditional distribution of $Y$ given $X$ is no longer determined by $\theta^{*}$, conversely $\theta^{*}$ is now a function of the joint distribution of $(X,Y)$, as is the case in Statistical Learning.
We define $\theta^{*}$ as the minimizer of the population risk $L$ (see~\eqref{eq:loss-risk}), namely
\begin{equation}
\label{eq:def-theta-star-learning}
	\theta^{*} \in \underset{\theta \in \R^{d}}{\argmin} \, L(\theta) \, , \quad 
	L(\theta) = \E\big[\log\big(1+e^{-Y \innerp{\theta}{X}}\big) \big] .
\end{equation}
By the discussion in the introduction, $\theta^*$ exists whenever the distribution of $(X,Y)$ is not linearly separated, meaning that there is no $\theta \neq 0$ such that $Y \innerp{\theta}{X} \geq 0$ almost surely---which we assume in this section.
In addition, $\theta^*$ is unique since we assume that $\E [X X^\top] = I_d$, which ensures strict convexity of $L$.
Theorem~\ref{thm:regular-misspecified} below is proved in Section~\ref{sec:proof-misspecified-regular}.

\begin{theorem}
\label{thm:regular-misspecified}
Suppose that $X$ satisfies Assumptions~\ref{ass:sub-exponential},~\ref{ass:small-ball} and~\ref{ass:twodim-marginals} with parameters $K\geqslant e$, $u^{*}$, $\eta=\b^{-1}$ and $c$, and that $\theta^{*} = \|\theta^{*}\| u^{*}$.
Let $\b = \max \{e, \norm{\theta^*} \}$.
There exist
constants $c_1, c_2$ that depend only on $c,K$ such that the following holds.
For any $t \geq 0$, if
\begin{equation*}
  n
  \geq c_1 \b \log^4 (\b) (d + \b t)
  \, ,
\end{equation*}
then with probability at least $1-e^{-t}$, the MLE  $\wh \theta_{n}$ exists and satisfies
\begin{equation}
  L(\wh \theta_{n}) - L(\theta^{*})
  \leq %
  c_2
  \log^4 (\b) \frac{d+\b t}{n}
  \, .
\end{equation}
Moreover, for any $\b \geq e$, there exists a distribution of $(X,Y)$ with $X \sim \gaussdist (0, I_d)$ and $\norm{\theta^*} = \b$
such that if $n \leq c_{3} \b (d+\b t)$ (for some universal constant $c_3$), then
\begin{equation}
\label{eq:non-existence-ms}
\P(\text{\emph{MLE exists}}) \leq 1- e^{-t} \, .
\end{equation}
In addition, for the same distribution,
\begin{equation}
  \label{eq:asymptotic-extra-B}
  \liminf_{n \to \infty} \P \Big( L (\wh \theta_n) - L (\theta^*) \geq c_{3} \frac{d + \b t}{n} \Big)
  \geq e^{-t} \, .
\end{equation}
\end{theorem}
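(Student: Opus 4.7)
The proof has three parts: the upper-bound guarantee, the counterexample for non-existence of the MLE~\eqref{eq:non-existence-ms}, and the counterexample for the asymptotic excess-risk lower bound~\eqref{eq:asymptotic-extra-B}. For the upper bound, the plan is to follow the same two-step scheme as for Theorem~\ref{thm:regular-well-specified}. Apply the convex localization result (Lemma~\ref{lem:localization}) to reduce the proof to (i) a high-probability upper bound on the gradient $\nabla \wh L_n(\theta^*)$ in the norm dual to the population Hessian $H = \nabla^2 L(\theta^*)$, and (ii) a uniform lower bound on the empirical Hessian $\nabla^2 \wh L_n(\theta)$ for $\theta$ in a suitable neighbourhood of $\theta^*$. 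The empirical Hessian depends only on the marginal law of $X$ (and on the curvature of $\sigma$), not on the conditional law of $Y$ given $X$, so the uniform lower bound proved in Section~\ref{sec:hessians} transfers verbatim from the well-specified case and is responsible for the $\log^4(B)$ factor.

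All the new work therefore concerns the gradient. Since $\theta^*$ is defined as the minimizer of $L$, the first-order optimality condition gives $\E[\nabla\ell(\theta^*,(X,Y))]=0$, so $\nabla\wh L_n(\theta^*)$ is a centered empirical average of i.i.d.\ vectors. However, the Bartlett-type identity $\Cov(\nabla\ell(\theta^*,(X,Y))) = H$, which made the gradient concentrate exactly at the Fisher scale in the well-specified case, no longer holds: the variance of the $u^*$ component of $\nabla\ell$ can be as large as $1/B^2$ (in contrast to the Fisher value $1/B^3$), because the factor $\sigma(-Y\innerp{\theta^*}{X})$ no longer cancels against the conditional law of $Y$ in the strong-signal region. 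I would decompose $\nabla\wh L_n(\theta^*)$ into its projection onto $u^*$ and its orthogonal projection and apply Bernstein's inequality to each separately, using Assumption~\ref{ass:sub-exponential} to control the tails. The orthogonal component behaves exactly as in the well-specified case, whereas the $u^*$ component carries an extra factor $B$ in its deviation. Propagating this through the localization inequality produces the $Bt$ term in the sample-size condition $n\geq c_1 B\log^4(B)(d+Bt)$ and the $Bt/n$ term in the excess-risk bound.

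For the matching lower bounds, I would exhibit a single misspecified distribution with $X\sim\gaussdist(0,I_d)$ and $\norm{\theta^*}=B$ that saturates both~\eqref{eq:non-existence-ms} and~\eqref{eq:asymptotic-extra-B}. A natural candidate is a conditional law $\P(Y=1\mid X) = q(\innerp{u^*}{X})$ with $q$ designed so that (a) $\theta^* = B u^*$ is the minimizer of $L$, and (b) the event $\{Y\neq\sign(\innerp{u^*}{X})\}$ retains constant-order conditional probability on $\{|\innerp{u^*}{X}|\asymp 1/B^{2/3}\}$, rather than the exponentially small $\sigma(-B|\innerp{u^*}{X}|)$ prescribed by the logit model. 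Under such a law, the asymptotic distribution of $2n\{L(\wh\theta_n)-L(\theta^*)\}$ is a weighted $\chi^2$, $\sum_i \mu_i Z_i^2$, whose weights $\mu_i$ are the eigenvalues of $H^{-1}V$ with $V=\Cov(\nabla\ell(\theta^*,(X,Y)))$, and one can arrange $\mu_{\max}$ of order $B$ with orthogonal weights of order $1$. Standard tail bounds for weighted chi-squared sums then give~\eqref{eq:asymptotic-extra-B}. For the finite-sample non-existence~\eqref{eq:non-existence-ms}, the same distribution is analysed with the cone-geometry strategy used in the proof of Theorem~\ref{thm:strong-non-existence}: the statistical dimension of the random cone picks up the extra factor $B$ coming from the enlarged score variance along $u^*$, which translates into the threshold $n\leq c_3 B(d+Bt)$. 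The main obstacle throughout is the sharp control of the gradient component along $u^*$ --- both in the upper tail for the positive statement, and in the lower tail (through the construction of the matching distribution) for the two lower bounds.
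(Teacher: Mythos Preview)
Your outline for the upper bound is correct in structure (localization, Hessian lower bound carries over verbatim, the new work is the gradient), but you have not identified the step that actually makes the misspecified gradient bound go through. You assert that the variance of $\innerp{u^*}{\nabla\ell(\theta^*,(X,Y))}$ is of order $1/B^2$ and then apply Bernstein, but nothing in your proposal explains why this variance is not of order $1$. Sub-exponentiality of $X$ alone gives $\E[\innerp{u^*}{X}^2\,\indic{Y\innerp{\theta^*}{X}<0}]\leq 1$, which after multiplying by the $B^3$ weight in $H^{-1}$ produces the old $B^3 t/n$ bound of~\cite{ostrovskii2021finite}, not $Bt/n$. The missing ingredient is Lemma~\ref{lem:first-moments-misspecified}: from the optimality condition $\frac{\di}{\di t}\big|_{t=1}L(t\theta^*)=0$ one extracts the identity
\[
\E\big[\ainnerp{\theta^*}{X}\,\indic{Y\innerp{\theta^*}{X}<0}\big]=\E\big[\ainnerp{\theta^*}{X}\,\sigma(-\ainnerp{\theta^*}{X})\big]\lesssim 1/B^2,
\]
which, together with a tuned H\"older argument (Lemma~\ref{lem:tuned-holder-order-2}), yields the second-moment bounds $\E[\innerp{u^*}{X}^2\indic{Y\innerp{\theta^*}{X}<0}]\lesssim (\log B)/B^2$ and $\E[\innerp{v}{X}^2\indic{Y\innerp{\theta^*}{X}<0}]\lesssim (\log^2 B)/B$. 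You mention $\E[\nabla\ell]=0$ only to center the sum; you do not use the $u^*$-component of that equation to control the misclassification moments, and without it the argument stalls.

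For the lower bounds, the paper's construction is simpler than yours and the non-existence argument is far more elementary than cone geometry. The distribution is $X\sim\gaussdist(0,I_d)$ with $\P(Y\innerp{u^*}{X}<0\mid X)=p$ a \emph{constant} independent of $X$; one then checks (Lemma~\ref{lem:grad-ms-optimal}) that $\theta^*\in\R_+ u^*$ and $p\asymp 1/B^2$. Non-existence~\eqref{eq:non-existence-ms} follows from the trivial observation that $u^*$ itself separates the data whenever all $n$ points are correctly classified, an event of probability $(1-p)^n\geq e^{-t}$ when $n\leq t/(2p)\asymp B^2 t$ (Fact~\ref{fact:signal-confidence}); no cone analysis is needed. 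Your asymptotic argument for~\eqref{eq:asymptotic-extra-B} via the eigenvalues of $H^{-1}G$ is the right idea and matches the paper; with the constant-$p$ distribution one computes $\innerp{Gu^*}{u^*}\geq p/4\gtrsim 1/B^2$ directly, giving $\opnorm{H^{-1/2}GH^{-1/2}}\gtrsim B$.
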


Theorem~\ref{thm:regular-misspecified} improves the previous best guarantees for the MLE in logistic regression in the general misspecified case.
As discussed in Section~\ref{sec:existing-results}, these are from~\cite{chinot2020robust} for a sub-Gaussian design, and~\cite{ostrovskii2021finite} for a Gaussian design.
The guarantees in~\cite{chinot2020robust} in the sub-Gaussian case feature an exponential dependence on $\b$.
The guarantees in~\cite{ostrovskii2021finite} in the Gaussian case (a special case of regular design), which are actually the previous best guarantees for the MLE in a misspecified setting\footnote{Technically speaking, the guarantees in~\cite{ostrovskii2021finite} (obtained by combining Theorem~4.2 with Proposition~D.1) are stated in the well-specified case. However, they can be extended to the misspecified case through
  a very minor modification (renormalizing gradients by the Hesssian of the risk instead of their covariance matrix).
  Likewise, the deviation terms in $d \cdot t$ in these results can be tightened to $d+t$ with no changes to the analysis.
}, feature a polynomial dependence on $\b$ but a stronger one than in Theorem~\ref{thm:regular-misspecified}: the condition for existence of the MLE writes (ignoring $\mathrm{polylog}(\b)$ factors) $n \gtrsim \b^8 d t$, and the risk is bounded by $\b^3 d t/n$.

It should be noted that both the sample size needed for the MLE to exist and the bound on its excess risk in Theorem~\ref{thm:regular-misspecified} exhibit a stronger dependence on $\b$ compared to the well-specified case.
As shown by~\eqref{eq:non-existence-ms} and~\eqref{eq:asymptotic-extra-B}, this stronger dependence on $\b$ is in fact necessary in the misspecified case.
This shows that the non-asymptotic guarantees of Theorem~\ref{thm:regular-misspecified} for the existence and the excess risk of the MLE are sharp, up to polylogarithmic factors in $\b$.
It should be pointed out that the degradation only affects an additive term that does not multiply the dimension $d$, hence as long as $\b t = O (d)$ (a regime that covers many situations of interest), the guarantees in the misspecified case actually match those of the well-specified case.

\section{Examples of regular design distributions}
\label{sec:regular-designs}

In the previous section, we introduced certain regularity assumptions (Assumptions~\ref{ass:sub-exponential}, \ref{ass:small-ball} and~\ref{ass:twodim-marginals}) on the distribution of the design $X$, which we argued were essentially necessary and sufficient to obtain the same results as in the Gaussian case.
In this section, we provide examples of distributions that satisfy these assumptions.

The three examples we consider are: %
sub-exponential distributions when the signal strength is of constant order (Section~\ref{sec:regul-low-signal}), log-concave distributions (Section~\ref{sec:regularity-log-concave}), and product measures (Section~\ref{sec:regul-prod-meas}).

We recall that the regularity assumptions introduced in Section~\ref{sec:assumptions} depend on both a direction $u^* \in S^{d-1}$ and a scale parameter $\eta \in (0, e^{-1}]$.
When applied to logistic regression, these correspond respectively to the parameter direction $u^* = \theta^* / \norm{\theta^*}$ and inverse signal strength $\eta = 1/\b = 1/\max (\norm{\theta^*}, e)$.
In particular, the stronger the signal, the %
finer
the scale $\eta$ at which the regularity assumptions should hold for our guarantees of Section~\ref{sec:main-results} to apply.

\subsection{Regularity
  at constant scales%
}
\label{sec:regul-low-signal}

First, we note that the regularity assumptions at a lower-bounded scale $\eta$ (corresponding to a bounded signal strength) are automatically satisfied when the design is sub-exponential.

\begin{proposition}
  \label{prop:regularity-constant-scale}
  Let $X$ be an isotropic and $K$-sub-exponential random vector (Assumption~\ref{ass:sub-exponential}).
  Then $X$ is $(u^*, \eta, c_{K, \eta})$-regular for any $u^* \in S^{d-1}$ and $\eta \in (0, e^{-1}]$, where
  \begin{equation*}
    c_{K, \eta}
    = \max \Big\{ \frac{2 K \log (2K)}{\eta}, 2 K^4 \Big\}
    \, .
  \end{equation*}
\end{proposition}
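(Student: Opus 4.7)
I would verify Assumptions~\ref{ass:small-ball} and~\ref{ass:twodim-marginals} in turn (isotropy is part of the hypothesis). The small-ball condition is immediate: for $t \geq \eta$, $\P(\abs{\innerp{u^*}{X}} \leq t) \leq 1 \leq t/\eta \leq c_{K,\eta} \cdot t$, since $c_{K,\eta} \geq 2K\log(2K)/\eta \geq 1/\eta$.

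For the two-dimensional margin, set $U = \innerp{u^*}{X}$, $V = \innerp{v}{X}$, $c = c_{K,\eta}$, $A = c\eta$, $B = c^{-1}\max\{\eta, \norm{u^* - v}\}$. My plan is to use
\[
\P(\abs{U} \leq A, \abs{V} \geq B) \geq \P(\abs{V} \geq B) - \P(\abs{U} > A)
\]
and bound each term. For the tail of $U$, the $\psi_1$-bound $\P(\abs{U} > t) \leq 2 e^{-t/K}$ combined with $A \geq 2K\log(2K)$ (guaranteed by $c \geq 2K\log(2K)/\eta$) yields $\P(\abs{U} > A) \leq 1/(2K^2)$; in the regime where $c = 2K^4$ dominates, $A$ is much larger and this probability becomes exponentially small in $K$.

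For the anti-concentration of $V$, I would exploit isotropy ($\E V^2 = 1$) by truncation: starting from
\[
1 = \E V^2 \leq B^2 + M^2 \P(\abs{V} \geq B) + \E[V^2 \ind{\abs{V} > M}],
\]
the tail expectation is at most $(2M^2 + 4KM + 4K^2) e^{-M/K}$ via integration of the $\psi_1$-tail of $V$. Choosing $M$ of order $K\log K$ makes this tail at most $1/2$, and since $c \geq 2K^4$ forces $B \leq 1/K^4 \leq 1/2$, this gives $\P(\abs{V} \geq B) \geq c'/(K^2 \log^2 K)$ for some absolute constant $c'$.

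Combining the two bounds yields $\P(\abs{U} \leq A, \abs{V} \geq B) \geq \eta/c$, and the specific form of $c_{K,\eta}$ is tailored to close the argument in both regimes: when $c = 2K\log(2K)/\eta$ (small $\eta$), the target $\eta/c$ is of order $\eta^2/(K\log K)$ and easily accommodated, while when $c = 2K^4$ dominates, the exponential decay of $\P(\abs{U} > A)$ renders the tail negligible and the anti-concentration bound alone suffices. The main obstacle is tracking numerical constants carefully in this combination step---in particular verifying that $c'/(K^2 \log^2 K)$ exceeds $\P(\abs{U} > A)$ plus the target $\eta/c = O(1/K^4)$; this is precisely where the assumption $K \geq e$ enters, since $\log^2 K$ grows much more slowly than $K^2$.
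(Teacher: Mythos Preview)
Your overall strategy matches the paper's: verify the trivial small-ball bound, then use the decomposition $\P(|U|\le A,\,|V|\ge B)\ge \P(|V|\ge B)-\P(|U|>A)$. The gap is in the anti-concentration step for $V$.

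Your truncation argument only gives $\P(|V|\ge B)\gtrsim 1/(K^2\log^2 K)$, while your tail bound (with the normalization $\P(|U|>t)\le 2e^{-t/K}$) gives $\P(|U|>A)\le 1/(2K^2)$ whenever $A=2K\log(2K)$, which is exactly the value of $A$ in the regime $c=2K\log(2K)/\eta$. For the argument to go through you need the \emph{difference} to be positive, but $c'/(K^2\log^2 K)-1/(2K^2)<0$ as soon as $\log^2 K>2c'$, so the combination fails for $K$ only moderately large. Your concluding remark that ``$\log^2 K$ grows much more slowly than $K^2$'' is beside the point: the relevant comparison is between $c'$ and $\log^2 K$, not between $\log^2 K$ and $K^2$. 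Saying the target $\eta/c$ is tiny in that regime does not help either---you still need the difference to be nonnegative.

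The paper fixes this with the Paley--Zygmund inequality applied to $Z=\innerp{v}{X}^2$: since $\E Z=1$ and $\E Z^2=\|\innerp{v}{X}\|_4^4\le K^4$ (from the $\psi_1$-norm), one gets directly $\P(|\innerp{v}{X}|\ge 1/\sqrt{2})\ge 1/(4K^4)$. Combined with the paper's tail bound (Lemma~\ref{lem:sub-gamma-exponential}, point~1, which under the paper's $\psi_1$-normalization reads $\P(|U|\ge Kt)\le e^{-2t}$ and hence $\P(|U|>2K\log(2K))\le 1/(16K^4)$), the difference is $3/(16K^4)\ge \eta/(2K^4)$ for $\eta\le e^{-1}$, and the constants close cleanly. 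Either replacing your truncation step by Paley--Zygmund, or using the paper's sharper $e^{-2t/K}$ tail (rather than $2e^{-t/K}$), would rescue your approach.
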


The content of Proposition~\ref{prop:regularity-constant-scale} (proved in Section~\ref{sec:proof-regularity-constant}) is that the regularity assumptions are general enough to include all sub-exponential distributions, with the caveat that the involved constant $c$ depends on the scale $\eta$.
However, it should be noted that the bounds in Theorems~\ref{thm:regular-well-specified} and~\ref{thm:regular-misspecified} depend exponentially on $c$, leading to an exponential dependence on the signal strength $\b = \eta^{-1}$.
For this reason, Proposition~\ref{prop:regularity-constant-scale} is mainly relevant in the case of constant signal strength.

\subsection{Regularity of log-concave distributions}
\label{sec:regularity-log-concave}

The issue of the general reduction from sub-exponential to regular is that it ultimately leads to a poor (exponential) dependence on the signal strength in the guarantees of Section~\ref{sec:main-results}.
As we shall see in Section~\ref{sec:regul-prod-meas}, this exponential dependence is necessary in general, hence in order to obtain similar guarantees as for a Gaussian design, one must strengthen the assumptions on the design beyond merely sub-exponential tails.

A natural class of probability measures that contains Gaussian measures, and often exhibit similar properties, is the class of \emph{log-concave} distributions on $\R^d$.
Specifically, recall that the distribution $P_X$ on $\R^d$ is log-concave (see \eg \cite{saumard2014review}) if, for all Borel sets $S, T \subset \R^d$ and $\lambda \in (0, 1)$ such that $\lambda S + (1-\lambda )T = \{ \lambda s + (1-\lambda) t : s \in S, t \in T \}$ is measurable,
\begin{equation*}
  P_{X}(\lambda S + (1 - \lambda )T)
  \geq P_{X}(S)^{\lambda } P_{X}(T)^{1-\lambda } \, .
\end{equation*}
We are interested in the case where $X$ is centered and isotropic, in which case it is log-concave if and only if it admits a density on $\R^d$ of the form $\exp(-\phi)$, for some convex function $\phi : \R^d \to \R \cup \{ + \infty \}$.

The following result shows that centered isotropic and log-concave distributions are regular in all directions and at all scales.
	
\begin{proposition}
\label{prop:universal-reg-log-concave}
Assume that $X$ has a centered isotropic \textup(that is, $\E [X] = 0$ and $\E [X X^\top] = I_d$\textup) and log-concave distribution on $\R^d$.
Then $X$ is $c$-sub-exponential and $(u^*, \eta, c)$-regular with a universal
constant $c$, for every direction $u^*\in S^{d-1}$ and every scale $\eta \in (0, e^{-1}]$.
\end{proposition}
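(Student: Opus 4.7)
The plan is to verify the sub-exponential property (Assumption~\ref{ass:sub-exponential}) and the two ingredients of $(u^*,\eta,c)$-regularity (Assumptions~\ref{ass:small-ball} and~\ref{ass:twodim-marginals}) separately, each reducing to a standard property of low-dimensional log-concave distributions.

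The first two are direct. For every $v \in S^{d-1}$, the marginal $\innerp{v}{X}$ is a centered log-concave random variable (as a linear image of the log-concave $X$) with unit variance by isotropy; a standard consequence of Borell's lemma then gives $\|\innerp{v}{X}\|_{\psi_1} \leq c$ for a universal $c$, establishing Assumption~\ref{ass:sub-exponential}. Likewise, the density of $U := \innerp{u^*}{X}$ is log-concave of unit variance, hence uniformly bounded above by a universal constant $M$, which yields $\P(|U| \leq t) \leq 2Mt$ for every $t > 0$, and in particular Assumption~\ref{ass:small-ball} with $c = 2M$ at every scale.

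The bulk of the work lies in the two-dimensional margin condition. Given $v \in S^{d-1}$ with $\rho := \innerp{u^*}{v} \geq 0$, I would set $\alpha := \|u^* - v\| = \sqrt{2-2\rho}$ and decompose $v = \rho u^* + \sqrt{1-\rho^2}\, w$ with $w \in S^{d-1}$ orthogonal to $u^*$. Introducing $W := \innerp{w}{X}$, we have $V := \innerp{v}{X} = \rho U + \sqrt{1-\rho^2}\, W$. Since $\{u^*, w\}$ is orthonormal and $X$ is centered isotropic, the pair $(U, W)$ has mean zero and covariance $I_2$; as a linear image of the log-concave $X$ it is also log-concave. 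The key analytic input is then the classical density lower bound for isotropic log-concave distributions in fixed dimension (Lov\'asz--Vempala): there exist universal $c_0, r_0 > 0$ such that $f_{U,W}(u,w) \geq c_0$ whenever $\max(|u|, |w|) \leq r_0$.

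Granted this, I would lower-bound $\P\big(|U| \leq c\eta,\, |V| \geq c^{-1}\max\{\eta,\alpha\}\big)$ by integrating $f_{U,W}$ over a suitable sub-rectangle of $[-r_0,r_0]^2$, depending on two regimes. When $\alpha \leq \eta$, one has $\rho \geq 1/2$ and $\sqrt{1-\rho^2} \leq \eta$, so on the sub-rectangle $\{2\eta/c \leq u \leq c\eta \wedge r_0,\ 0 \leq w \leq r_0\}$ we have $V \geq \rho U \geq \eta/c$. When $\alpha > \eta$, one has $\sqrt{1-\rho^2} \geq \alpha/\sqrt{2}$, so on $\{0 \leq u \leq c\eta \wedge r_0,\ \sqrt{2}/c \leq w \leq r_0\}$ we have $V \geq \sqrt{1-\rho^2}\, w \geq \alpha/c$. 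In either regime the sub-rectangle has area at least a universal multiple of $\eta$ (using $\eta \leq e^{-1}$ to keep the clipped length $c\eta \wedge r_0$ of order $\eta$), so the integral is at least $c_0$ times this area, yielding the desired $\P \geq \eta/c$ once $c$ is chosen as a sufficiently large universal constant.

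The main obstacle is the bookkeeping of absolute constants: one must pick a single universal $c$ that simultaneously meets the constraints of both regimes and is compatible with the clipping $c\eta \wedge r_0$; but this is routine once the density lower bound is granted. That density lower bound is the one genuinely non-elementary ingredient, and is precisely the place where log-concavity is used beyond the sub-exponential tail decay sufficient for Assumption~\ref{ass:sub-exponential}.
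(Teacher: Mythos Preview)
Your proposal is correct and follows essentially the same route as the paper: verify sub-exponential tails and the one-dimensional small-ball bound via standard log-concave facts, then establish the two-dimensional margin condition by passing to the centered isotropic log-concave pair $(U,W)=(\innerp{u^*}{X},\innerp{w}{X})$ in $\R^2$, invoking the universal density lower bound near the origin, and splitting into the two regimes $\alpha\leq\eta$ versus $\alpha>\eta$. The only cosmetic difference is that the paper phrases the last step as a reduction to i.i.d.\ uniform variables on $[-\eps,\eps]$ (writing $f_0\geq 4\eps^2 c_2\, g_0$) and then proves the margin condition for those, whereas you integrate the density lower bound directly over sub-rectangles; the two executions are equivalent.
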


The proof of Proposition~\ref{prop:universal-reg-log-concave} is provided in Section~\ref{sec:proof-regularity-log-concave}.
The fact that log-concave distributions are regular (with universal constants) mainly comes from a key stability property: the distributions of their lower-dimensional linear projections are also log-concave~\cite{saumard2014review}, which is applied here to two-dimensional projections.
In addition, low-dimensional, centered and isotropic distributions admit a density that is upper and lower-bounded around the origin.
Hence, at small scales they are essentially equivalent to the Lebesgue (or Gaussian) measure, which admits a ``product'' or ``independence'' property for orthogonal linear projections that implies regularity.

\subsection{Regularity for \iid coordinates%
}
\label{sec:regul-prod-meas}

Besides log-concave measures, another class of distributions that tend to behave similarly to Gaussian distributions in many high-dimensional contexts is that of product measures, that is, distributions of random vectors with independent coordinates.
In this section, we therefore consider the question of regularity of product measures, which turns out to be much more subtle than in the log-concave case.

Specifically, in this section we consider the class of random vectors with \iid sub-exponential coordinates:

\begin{assumption}
  \label{ass:independent-coordinates}
  The random vector $X = (X_1, \dots, X_d)$ is such that: $X_1, \dots, X_d$ are \iid,
  with $\E [X_j] = 0$, $\E [X_j^2] = 1$ and $\norm{X_j}_{\psi_1} \leq K$ (for some $K \geq e$) for $j = 1, \dots, d$.
\end{assumption}

It is a simple fact (see Lemma~\ref{lem:sub-exponential-indep} in Section~\ref{sec:proof-regularity-iid} below) that such a random vector is $4K$-sub-exponential.
Hence, the main question is whether Assumptions~\ref{ass:small-ball} and~\ref{ass:twodim-marginals} are satisfied.

A concrete example which illustrates the main issues is the Bernoulli design $X = (X_1, \dots, X_d)$, whose coordinates are \iid random signs, namely $\P (X_j = 1) = \P (X_j = -1) = 1/2$ for $1 \leq j \leq d$ (that is, $X$ is uniform on the discrete hypercube $\set{-1, 1}^d$).
This design satisfies Assumption~\ref{ass:independent-coordinates}; in fact, its tails are even lighter than sub-exponential, since its coordinates are bounded and it is a sub-Gaussian random vector.
This design is similar to the Gaussian design in many ways; for instance, it
possesses strong
concentration properties.

Despite these facts, the behavior of the MLE under a Bernoulli design can be drastically different from the case of a Gaussian design.
Indeed, as noted below, an exponential dependence on the signal strength is necessary for the MLE to exist.
This contrasts with the linear dependence on $\b$ in the Gaussian case (Theorem~\ref{thm:gaussian-well-specified}).
As an aside, the example below shows that for a sub-Gaussian design, an exponential dependence on the norm is unavoidable in general, a fact we alluded to previously.
In what follows, we denote by $(e_1, \dots, e_d)$ the canonical basis of $\R^d$.

\begin{fact}
  \label{fac:exponential-dependence-bernoulli}
  Let $X = (X_1, \dots, X_d)$ be a Bernoulli design, and let $Y$ given $X$ follow the logit model with parameter $\theta^* = \b e_1$ for some $\b \geq e$.
  Given an \iid sample of size $n\geq 1$ from the same distribution as $(X,Y)$, if $n \leq 0.1 \exp (\b)$ then $\P (\text{\normalfont MLE exists}) \leq 0.1$.
\end{fact}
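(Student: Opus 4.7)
The plan is to exploit the fact that with $\theta^* = B e_1$, the label $Y$ depends only on the first coordinate $X_1$ of $X$, so that the direction $e_1$ itself will, with high probability, linearly separate the entire dataset as soon as the signal $B$ is large enough.

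First, I would observe that since the MLE exists if and only if the dataset is not linearly separated (as recalled in the introduction), it suffices to produce a single direction separating the sample with probability at least $0.9$. The natural candidate is $e_1$. For each index $i$, using $\theta^* = B e_1$ and $X_{i,1} \in \set{-1,1}$, a direct computation gives
\begin{equation*}
  \P (Y_i = X_{i,1})
  = \E \big[ \sigma ( B X_{i,1} \cdot X_{i,1} ) \big]
  = \sigma (B)
  \geq 1 - e^{-B} \, .
\end{equation*}
When the event $\{Y_i = X_{i,1}\}$ occurs for every $i = 1, \dots, n$, one has $Y_i \innerp{e_1}{X_i} = Y_i X_{i,1} = 1 \geq 0$ for all $i$, so the dataset is linearly separated by $e_1$ and the MLE does not exist.

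Next, by independence across the sample and Bernoulli's inequality,
\begin{equation*}
  \P \big( \forall i \in \iset n, \; Y_i = X_{i,1} \big)
  = \sigma (B)^n
  \geq (1 - e^{-B})^n
  \geq 1 - n \, e^{-B} \, .
\end{equation*}
Consequently,
\begin{equation*}
  \P ( \text{MLE exists} )
  \leq 1 - \P (\text{dataset is linearly separated})
  \leq n \, e^{-B} \, ,
\end{equation*}
and the assumption $n \leq 0.1 \, e^B$ yields $\P(\text{MLE exists}) \leq 0.1$ as claimed.

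There is essentially no obstacle here: the argument is a one-line union bound, because the special direction $\theta^* = B e_1$ aligns the label with a single coordinate that takes only the values $\pm 1$, making $e_1$ a deterministic candidate separator whose success probability per sample is exactly $\sigma(B)$. The content of the fact lies not in the proof but in the comparison with Theorem~\ref{thm:gaussian-well-specified}: the Bernoulli design forces an exponential-in-$B$ sample size for the MLE to exist, whereas a Gaussian design only requires a linear-in-$B$ sample size, even though both designs are sub-Gaussian with comparable constants.
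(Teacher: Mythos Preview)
Your proof is correct and follows essentially the same approach as the paper: both show that the direction $e_1$ (equivalently $\theta^*$) separates the dataset with probability at least $1 - n e^{-B}$, via a union bound over the $n$ samples on the event that $Y_i \neq X_{i,1}$. Your presentation is slightly more explicit in computing $\P(Y_i = X_{i,1}) = \sigma(B)$ directly, whereas the paper routes through $\P(Y\innerp{\theta^*}{X} \leq 0) \leq \E[\exp(-\ainnerp{\theta^*}{X})] = e^{-B}$, but the arguments are the same.
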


\begin{proof}
  First, since the model is well-specified, one readily verifies that
  \begin{equation*}
    \P (Y \innerp{\theta^*}{X} \leq 0)
    = \E \big[ \P (Y \innerp{\theta^*}{X} \leq 0|X) \big]
    = \E [ \sigma (- \ainnerp{\theta^*}{X}) ]
    \leq \E [ \exp (-\ainnerp{\theta^*}{X}) ]
    \, .
  \end{equation*}
  Now, note that $\ainnerp{\theta^*}{X} = \b |X_1| = \b$ since $X_1 = \pm 1$, and in particular $\ainnerp{\theta^*}{X} \geq \b$.
  Thus, the above formula shows that $\P (Y \innerp{\theta^*}{X} \leq 0) \leq \exp (-\b)$.
  Now, let $Z = Y X$ and define similarly $Z_1, \dots, Z_n$ from the \iid sample.
  If the MLE exists, then in particular $\theta^*$ does not linearly separate the dataset, hence there exists $1 \leq i \leq n$ such that $\innerp{\theta^*}{Z_i} \leq 0$.
  By a union bound, the probability of this event is lower than $n \exp (-\b) \leq 0.1$ by assumption on $n$.
\end{proof}

This exponential dependence on the norm $\b$ comes from the fact that $X$ is not regular at small scales in the direction $u^* = e_1$.
Indeed, the random variable $\innerp{e_1}{X} = X_1$ is a random sign, which puts no mass in the neighborhood $(-1, 1)$ of $0$, therefore violating Assumption~\ref{ass:twodim-marginals} for small $\eta$ and constant $c$.
This illustrates the fact that the existence of the MLE is sensitive to the behavior of linear marginals of $X$ around the origin, and not merely to the tails of $X$.
Hence, the ``discrete'' nature of the Bernoulli design $X$ (supported on a finite set) can lead to a very different behavior from the Gaussian case.

Although the previous example shows very different behaviors between the Gaussian and Bernoulli designs,
one should keep in mind that it concerns a very specific direction $u^* =(1, 0, \dots, 0)$, which is a coordinate vector.
This worst-case direction is highly ``sparse''; this contrasts with a typical vector on the sphere, which is ``dense'' or ``delocalized'' in the sense that most of its coordinates are small, namely of order $O (1/\sqrt{d})$.
One may expect that for such vectors, the behavior of the MLE is markedly different than for a sparse direction.

In order to capture this effect, we now consider the ``densest'' direction $u^* = (1/\sqrt{d}, \dots, 1/\sqrt{d})$, all of whose coefficients are small.
Our aim is to characterize the smallest scale $\eta = \eta^*_d$ for which a design $X$ with \iid coordinates satisfies the regularity assumptions (Definition~\ref{def:regular}) at scale $\eta$ in this direction $u^*$.
In particular, if one could show that $\eta^*_d \to 0$ as $d\to \infty$, then this would establish sensitivity of the behavior of the MLE to the structure of the
parameter direction $u^*$.

We start with Assumption~\ref{ass:small-ball} on the one-dimensional marginal $\innerp{u^*}{X} = \frac{1}{\sqrt{d}} \sum_{j=1}^d X_j$.
Under Assumption~\ref{ass:independent-coordinates}, this random variable is a normalized sum of \iid random variables.
It then follows from the Berry-Esseen inequality that
its distribution
approaches the standard Gaussian distribution, down to a scale of order $1/\sqrt{d}$.
This implies the following:

\begin{lemma}
  \label{lem:onedim-indep}
  Let $X$ satisfy Assumption~\ref{ass:independent-coordinates}.
  Then, for every $u \in S^{d-1}$ such that
  $\norm{u}_3 \leq K^{-1}$
  and any $t \in [K^3 \norm{u}_3^3, 1]$, one has
  \begin{equation}
    \label{eq:onedim-indep}
    \frac{t}{4}
    \leq
    \P \big( \abs{\innerp{u}{X}} \leq t \big)
    \leq t
    \, .
  \end{equation}
  In particular, if $d \geq K^6$ and $u^* = (1/\sqrt{d}, \dots, 1/\sqrt{d})$, then Assumption~\ref{ass:small-ball} holds with $\eta = K^3/\sqrt{d}$ and $c = 1$.
\end{lemma}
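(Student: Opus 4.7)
The plan is to combine the Berry-Esseen central limit theorem for independent (not necessarily identically distributed) sums with elementary estimates on the standard Gaussian near the origin. The starting observation is that $S = \innerp{u}{X} = \sum_{i=1}^d u_i X_i$ satisfies $\E[S] = 0$ and $\Var(S) = \sum u_i^2 = 1$, so its distribution should closely mimic the standard normal $Z \sim \gaussdist(0,1)$, with a quantitative Berry-Esseen error controlled by $\sum |u_i|^3 \E[|X_i|^3]$.

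First, I would apply the Berry-Esseen inequality to write
\[
\sup_{t\in\R} \big|\P(S\leq t) - \Phi(t)\big| \leq C_{\mathrm{BE}} \sum_{i=1}^d |u_i|^3 \E[|X_i|^3],
\]
for an absolute constant $C_{\mathrm{BE}}$. The $\psi_1$-bound $\norm{X_i}_{\psi_1}\leq K$ from Assumption~\ref{ass:independent-coordinates} yields $\E[|X_i|^3] \leq c_0 K^3$ for an absolute constant $c_0$, so the right-hand side is at most $\delta := c_0 C_{\mathrm{BE}} K^3 \norm{u}_3^3$. Writing $\P(|S|\leq t) = F_S(t) - F_S(-t)$ and similarly for $Z$, the Berry-Esseen bound gives $|\P(|S|\leq t) - \P(|Z|\leq t)| \leq 2\delta$.

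Next, I would use tight two-sided estimates for the Gaussian: since the density $\varphi$ satisfies $\varphi(1) \leq \varphi(s) \leq \varphi(0) = 1/\sqrt{2\pi}$ on $[-1,1]$, one has $c_- t \leq \P(|Z|\leq t) \leq c_+ t$ for $t\in[0,1]$, with $c_+ = 2/\sqrt{2\pi} < 4/5$ and $c_- = 2\varphi(1) \approx 0.48$. Combining,
\[
c_- t - 2\delta \leq \P(|S| \leq t) \leq c_+ t + 2\delta \quad \text{for } t \in [0,1].
\]
The hypothesis $t \geq K^3\norm{u}_3^3$ forces $2\delta \leq 2c_0 C_{\mathrm{BE}} t$, so both bounds in~\eqref{eq:onedim-indep} follow provided the implicit constants are arranged so that $c_+ + 2c_0 C_{\mathrm{BE}} \leq 1$ and $c_- - 2c_0 C_{\mathrm{BE}} \geq 1/4$. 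This is essentially arithmetic: the current best Berry-Esseen constant is $C_{\mathrm{BE}} < 1/2$ and $c_0$ can be taken on the order of a few units, but the slack coming from the assumption $K \geq e$ (so that $K^3 \geq e^3$ absorbs the residual prefactor if needed) ensures this step goes through cleanly. I expect this quantitative tuning of constants to be the only mildly delicate step; conceptually, Berry-Esseen plus Gaussian density bounds is all that is required.

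Finally, the ``in particular'' clause follows by direct computation: for $u^* = (1/\sqrt{d}, \dots, 1/\sqrt{d})$ one has $\norm{u^*}_3^3 = d \cdot d^{-3/2} = d^{-1/2}$, so $\norm{u^*}_3 = d^{-1/6}$, and the hypothesis $\norm{u^*}_3 \leq K^{-1}$ reduces exactly to $d \geq K^6$. The threshold $K^3 \norm{u^*}_3^3 = K^3/\sqrt{d}$ matches the claimed value of $\eta$, and the upper bound in~\eqref{eq:onedim-indep} delivers Assumption~\ref{ass:small-ball} with $c = 1$.
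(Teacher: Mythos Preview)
Your approach is essentially identical to the paper's: Berry--Esseen for independent summands, combined with the density bounds $2t/\sqrt{2\pi e}\leq \P(|Z|\leq t)\leq 2t/\sqrt{2\pi}$ on $[0,1]$. One point to correct: the remark that ``$K\geq e$ absorbs the residual prefactor'' is misleading, since both the threshold $t\geq K^3\norm{u}_3^3$ and the Berry--Esseen error scale identically in $K^3$, so $K$ cancels and provides no slack whatsoever. The constants are in fact very tight and rely on the paper's specific $\psi_1$ normalization $\norm{X_i}_3\leq 3K/(2e)$ (giving $c_0=(3/(2e))^3\approx 0.168$, not ``a few units'') together with Tyurin's constant $0.56$; this yields a two-sided error at most $1.12\cdot(3/(2e))^3\,K^3\norm{u}_3^3\leq t/5$, and then $\sqrt{2/\pi}+1/5\approx 0.998\leq 1$ and $2/\sqrt{2\pi e}-1/5\approx 0.284\geq 1/4$, with almost no room to spare.
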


Lemma~\ref{lem:onedim-indep} (whose proof is provided in Section~\ref{sec:proof-regularity-iid}) shows that the one-dimensional marginal $\innerp{u^*}{X}$ exhibits the ``right'' behavior down to a scale $\eta \asymp 1/\sqrt{d}$.

However, as discussed in Section~\ref{sec:assumptions} (see Proposition~\ref{prop:2-dim-margin-almost-necessary}), Assumption~\ref{ass:small-ball} on the one-dimensional marginal $\innerp{u^*}{X}$ does not suffice to establish a near-Gaussian behavior of the MLE; indeed, for this task one must establish Assumption~\ref{ass:twodim-marginals} on two-dimensional marginals $(\innerp{u^*}{X}, \innerp{v}{X})$ for every $v \in S^{d-1}$.
In order to simplify the discussion, let us consider the special case where $v \in S^{d-1}$ is orthogonal to $u^*$.
In this case, Assumption~\ref{ass:twodim-marginals} is of the form
\begin{equation}
  \label{eq:twodim-orthogonal}
  \P \Big( \ainnerp{u^*}{X} \leq c \eta, \, \ainnerp{v}{X} \geq \frac{1}{c} \Big)
  \geq \frac{\eta}{c}
\end{equation}
for some constant $c$.
In the case where $X \sim \gaussdist (0, I_d)$ is Gaussian, condition~\eqref{eq:twodim-orthogonal} immediately follows from the fact that $\innerp{u^*}{X}$ and $\innerp{v}{X}$ are independent if $\innerp{u^*}{v} = 0$.
However, this property is highly specific to the Gaussian case, and does not extend to the more general case of product measures.

By analogy with the proof of Assumption~\ref{ass:small-ball}, a natural attempt to establish condition~\eqref{eq:twodim-orthogonal} is to resort to Gaussian approximation.
Specifically, by applying a two-dimensional Berry-Esseen inequality to the random vector $(\innerp{u^*}{X}, \innerp{v}{X}) = \sum_{j=1}^d X_j \omega_j$ with $\omega_j = (u^*_j, v_j) = (1/\sqrt{d}, v_j)$ (such that $\sum_{j=1}^d \omega_j \omega_j^\top = I_2$) and proceeding as in Lemma~\ref{lem:onedim-indep}, one can show that condition~\eqref{eq:twodim-orthogonal} holds down to $\eta \asymp \sum_{j=1}^d \norm{\omega_j}_2^3 \asymp \max \{ \norm{u^*}_3^3, \norm{v}_3^3 \} \asymp \max \{ 1/\sqrt{d}, \norm{v}_3^3 \}$.
This approach ensures that~\eqref{eq:twodim-orthogonal} holds for small $\eta$ whenever $v$ is sufficiently diffuse that $\norm{v}_3^3$ is small.
Unfortunately, %
condition~\eqref{eq:twodim-orthogonal} must hold for \emph{every $v \in S^{d-1}$} such that $\innerp{u^*}{v} = 0$, and in particular for non-diffuse vectors $v$ such that $\norm{v}_3^3 \asymp 1$ (for instance $v = (1/\sqrt{2}, -1/\sqrt{2}, 0, \dots, 0)$).
For such vectors $v \in S^{d-1}$, Gaussian approximation gives vacuous guarantees.

As it happens, an entirely different argument (based on ``approximate separation of supports'') can be used to handle the case of ``sparse'' vectors, which---when suitably combined with Gaussian approximation---allows one to establish regularity at a non-trivial scale $\eta_d \asymp d^{-1/4} \to 0$.
In order to convey the idea of this argument, and to illustrate how the $d^{-1/4}$ scaling naturally arises from this approach, we provide a high-level overview of the argument at the end of Section~\ref{sec:proof-regularity-iid}.
Since the estimate on $\eta_d$ obtained with this approach is sub-optimal and is improved in Lemma~\ref{lem:2d-margin-indep} below, we only provide a sketch of proof that omits significant technical details.

The argument we just alluded to leads to a scale of $d^{-1/4}$ for the two-dimensional margin assumption, 
which is larger than the scale of $d^{-1/2}$ obtained in Lemma~\ref{lem:onedim-indep} for one-dimensional marginals.
This naturally raises the question of whether the $d^{-1/4}$ scale can be improved to $d^{-1/2}$ by a refined analysis.
Lemma~\ref{lem:2d-margin-indep} below shows that this is indeed the case:

\begin{lemma}
  \label{lem:2d-margin-indep}
  Let $X = (X_1, \dots, X_d)$ have \iid coordinates, with $\E [X_1] = 0$, $\E [X_1^2] = 1$ and $\E[X_1^8] \leq \kappa^8$ for some $\kappa \geq 1$.
  Assume that $d \geq 2025 \kappa^6$, define $u^* = (1/\sqrt{d}, \dots, 1/\sqrt{d})$ and let $\eta \in [45 \kappa^3/\sqrt{d}, 1]$.
  Then, for every $v \in S^{d-1}$ such that $\innerp{u^*}{v} \geq 0$, one has
  \begin{equation}
    \label{eq:2d-margin-indep}
    \P \Big( \ainnerp{u^*}{X} \leq \eta, \, \ainnerp{v}{X} \geq 0.2 \max \{ \eta, \norm{u^* - v}_2 \} \Big)
    \geq \frac{\eta}{70\,000 \,\kappa^4}
    \, .
  \end{equation}
  In particular, if $X$ satisfies Assumption~\ref{ass:independent-coordinates}, then Assumption~\ref{ass:twodim-marginals} holds for any $\eta \in [18 K^3/\sqrt{d}, e^{-1}]$ with $c = 21\,000$.
\end{lemma}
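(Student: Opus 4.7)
The plan is to parametrise the test direction and reduce the statement to a two-dimensional anti-concentration problem. Concretely, I write $v = \alpha u^* + \beta w$ with $\alpha = \innerp{u^*}{v} \in [0,1]$, $\beta = \sqrt{1-\alpha^2}\in[0,1]$, and $w \in S^{d-1}$ orthogonal to $u^*$; since $\|u^* - v\|^2 = 2(1-\alpha)$, one has $\|u^* - v\| \asymp \beta$, so the right-hand side of~\eqref{eq:2d-margin-indep} is $\asymp \max\{\eta, \beta\}$. On the event $\{|\innerp{u^*}{X}| \leq \eta\}$ the contribution $\alpha \innerp{u^*}{X}$ to $\innerp{v}{X}$ is at most $\eta$, so the whole problem reduces to a joint lower bound for $(\innerp{u^*}{X}, \innerp{w}{X})$. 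I would then split into two regimes.

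\emph{Small-$\beta$ regime ($\beta \lesssim \eta$):} Here $\max\{\eta, \|u^* - v\|\}=\eta$ and the target becomes $\P(|\innerp{u^*}{X}| \leq \eta, |\innerp{v}{X}| \geq 0.2\eta) \gtrsim \eta$. Since $v$ is close to $u^*$, I would combine the one-dimensional anti-concentration band $\{c \eta \leq |\innerp{u^*}{X}| \leq \eta\}$ from Lemma~\ref{lem:onedim-indep} with a Chebyshev bound on the perturbation $\innerp{v - u^*}{X}$ (whose second moment is $\|v-u^*\|^2 = O(\eta^2)$). Because these two variables are in general correlated, I would carry out the argument via a second-moment/Paley--Zygmund computation on the pair $(\innerp{u^*}{X}, \innerp{v-u^*}{X})$ using the 8th-moment assumption to control the fourth moment of each factor.

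\emph{Large-$\beta$ regime ($\beta \gtrsim \eta$):} The target reduces to a lower bound of the form $\P(|\innerp{u^*}{X}| \leq \eta, |\innerp{w}{X}| \geq c) \gtrsim \eta$ for $w \in S^{d-1}$ with $w \perp u^*$, which I treat by a sub-split on the concentration profile of $w$. If $w$ is \emph{diffuse} (say $\|w\|_\infty$ or equivalently $\|w\|_3^3$ is $\lesssim 1/\sqrt{d}$), I apply a two-dimensional Berry--Esseen to $(\innerp{u^*}{X}, \innerp{w}{X}) = \sum_{j=1}^d X_j \omega_j$ with $\omega_j = (1/\sqrt{d}, w_j)$: since $\sum_j \omega_j \omega_j^\top = I_2$ and $\sum_j \|\omega_j\|_2^3 \lesssim 1/\sqrt{d} \leq \eta$, the joint law is close to $\gaussdist(0, I_2)$ on rectangles of scale $\eta$, and comparison with the Gaussian case (where independence gives the bound for free) concludes. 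If $w$ is \emph{concentrated}, I choose $S$ to be the indices of the largest coordinates of $w$ so that $\|w_S\|^2 \geq 1/2$ while $|S|$ is bounded by an absolute constant times a power of $\kappa$, and condition on $X_S$: the event $\{|\innerp{w}{X}| \geq c\}$ is secured at the $X_S$-level by Paley--Zygmund on $\sum_{j\in S} w_j X_j$ (upgraded via the 8th moment), with a Chebyshev absorption of the $X_{S^c}$ tail, while conditionally on $X_S$ the variable $\innerp{u^*}{X}$ equals an affine shift of $\frac{1}{\sqrt{d}}\sum_{j \notin S} X_j$, for which a 1D Berry--Esseen as in Lemma~\ref{lem:onedim-indep} gives $\P(|\innerp{u^*}{X}| \leq \eta \mid X_S) \gtrsim \eta$ as soon as $\eta \gtrsim \kappa^3/\sqrt{d - |S|}$.

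The main technical obstacle sits in the concentrated sub-case: the threshold defining $S$ must be calibrated so that $|S|$ is small enough for the conditional one-dimensional Berry--Esseen at scale $\eta \asymp 1/\sqrt d$ to remain effective, while $\|w_S\|$ is still bounded below by a constant and the residual variance $\|w_{S^c}\|^2$ does not spoil the $|\innerp{w}{X}|$ lower bound. Propagating the 8th-moment hypothesis carefully through Paley--Zygmund and the conditional Berry--Esseen is what produces the $\kappa^{-4}$ factor in~\eqref{eq:2d-margin-indep} rather than a worse polynomial dependence. A secondary subtlety is the small-$\beta$ regime, where the unavailability of independence between $\innerp{u^*}{X}$ and $\innerp{v-u^*}{X}$ forces the joint second-moment treatment sketched above rather than a naive product argument.
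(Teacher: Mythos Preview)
Your large-$\beta$ dichotomy has a genuine gap. The claim that in the ``concentrated'' sub-case one can always choose $S$ with $\|w_S\|_2^2 \geq 1/2$ and $|S|$ bounded by an absolute constant (or even $O_\kappa(1)$) is false. Take $w \in S^{d-1}$ with $k$ nonzero coordinates, each equal to $1/\sqrt{k}$, for some $k \asymp \sqrt{d}$. Then $\|w\|_3^3 = k^{-1/2} \asymp d^{-1/4} \gg d^{-1/2}$, so your diffuse branch (two-dimensional Berry--Esseen at scale $\eta \asymp 1/\sqrt{d}$) does not apply. But any $S$ capturing half of $\|w\|_2^2$ must have $|S| \geq k/2 \asymp \sqrt{d}$, and then the conditional shift $\frac{1}{\sqrt{d}}\sum_{j\in S} X_j$ has standard deviation $\sqrt{|S|/d} \asymp d^{-1/4}$, which is much larger than the target window $\eta \asymp d^{-1/2}$; the conditional one-dimensional Berry--Esseen lower bound $\P(|\innerp{u^*}{X}|\leq \eta \mid X_S) \gtrsim \eta$ therefore fails on a typical realisation of $X_S$. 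More generally, balancing the two branches over $k$ gives $\min\{1/\sqrt{k},\sqrt{k/d}\}$, whose worst case is $d^{-1/4}$. The paper in fact sketches exactly this Gaussian-approximation/support-separation route and explicitly notes that it yields only the suboptimal scale $\eta \asymp d^{-1/4}$, not the $\eta \asymp d^{-1/2}$ required by the lemma.

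The paper's proof uses a different idea that avoids any case split on the profile of $v$. Introduce a uniformly random permutation $\sigma \in \Sym_d$ independent of $X$, and set $X^\sigma = (X_{\sigma(1)},\dots,X_{\sigma(d)})$. By exchangeability $X^\sigma \stackrel{d}{=} X$, and since $u^*$ has constant coordinates, $\innerp{u^*}{X^\sigma}=\innerp{u^*}{X}$. Hence
\[
\P\big(|\innerp{u^*}{X}|\le\eta,\ |\innerp{v}{X}|\ge s\big)
=\E\big[\1\{|\innerp{u^*}{X}|\le\eta\}\,\P_\sigma(|\innerp{v}{X^\sigma}|\ge s)\big],
\]
and it suffices to show that $\P_\sigma(|\innerp{v}{X^\sigma}|\ge s)$ is bounded below for typical $X$. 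This is done via Paley--Zygmund applied to $\innerp{v}{X^\sigma}^2$ \emph{conditionally on $X$}: one computes $\E_\sigma[\innerp{v}{X^\sigma}^2]$ and $\E_\sigma[\innerp{v}{X^\sigma}^4]$ exactly as symmetric polynomials in $(X_1,\dots,X_d)$, exploiting crucially that $\sum_j w_j = \sqrt{d}\,\innerp{u^*}{w}=0$ for the orthogonal part $w$. The second moment is shown to be $\gtrsim (1-\innerp{u^*}{v}^2)+\innerp{u^*}{v}^2\phi^2$ (with $\phi=\innerp{u^*}{X}$), while the fourth moment is $\lesssim \mu_4(X)\,[(1-\innerp{u^*}{v}^2)^2+\innerp{u^*}{v}^4\phi^4]$; on the high-probability event $\{\eta/2\le|\phi|\le\eta,\ \mu_2\ge 1/2,\ \mu_4\le 2\kappa^4\}$ these match up to $\kappa^4$, yielding the claim at the full scale $\eta \gtrsim \kappa^3/\sqrt{d}$.
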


Lemma~\ref{lem:2d-margin-indep} is a somewhat delicate result, so before discussing its implications we first explain the main idea behind its proof.
The detailed proof may be found in Section~\ref{sec:proof-regularity-iid}.

We need to show
that, conditionally on the fact that $\ainnerp{u^*}{X} \leq \eta$, the variable $\innerp{v}{X}$ fluctuates on a scale of order at least $\max \{ \eta, \norm{u^* - v} \}$.
Since $\innerp{v}{X} = \langle u^*,v\rangle \innerp{u^*}{X} + \sqrt{1-\innerp{u^*}{v}^2} \innerp{w}{X}$ with $\innerp{u^*}{w} = 0$, this means roughly speaking that the variables $\innerp{u^*}{X}$ and $\innerp{w}{X}$ %
behave as if they were independent.
Of course, the main difficulty is that these variables are not in fact independent, except in the very special case where the vectors $u^*$ and $w$ have disjoint supports.
In addition, Gaussian approximation on the vector $(\innerp{u^*}{X}, \innerp{w}{X})$ fails in general since $w \in S^{d-1}$ is arbitrary.

We therefore need to show that $\innerp{v}{X}$ exhibits some variability under the event that $\innerp{u^*}{X}$ is small, in the absence of independence properties.
The main idea to achieve this is to ``perturb'' the vector $X = (X_1, \dots, X_d)$ by randomly permuting its coordinates.
Specifically, given a permutation $\sigma \in \Sym_d$ of $\{1, \dots, d \}$, we let $X^\sigma = (X_{\sigma (1)}, \dots, X_{\sigma (d)})$.
We introduce an additional source of randomness (besides $X$) by taking $\sigma$ to be random, drawn uniformly over the symmetric group $\Sym_d$, and independent of $X$.
These transformations are useful thanks to the following properties:
\begin{enumerate}
\item The vector $X^\sigma$ has the same distribution as $X$ for a fixed $\sigma$, and thus also for random $\sigma$;
\item Permutations preserve $\innerp{u^*}{X}$, as
  $\innerp{u^*}{X^\sigma} = \frac{1}{\sqrt{d}} \sum_{j=1}^d X_{\sigma (j)} = \frac{1}{\sqrt{d}} \sum_{j=1}^d X_j = \innerp{u^*}{X}$;
\item Conditionally on $X$ (for most values of $X$), the quantity $\innerp{v}{X^\sigma} = \sum_{j=1}^d v_j X_{\sigma (j)}$ fluctuates on the desired scale of $\max \{ \eta, \norm{u^* - v} \}$, as the random permutation $\sigma$ varies.
\end{enumerate}
Since the first claim (exchangeability) follows immediately from Assumption~\ref{ass:independent-coordinates}, the main step is to justify the third claim.
We establish it by applying the Paley-Zygmund inequality, which reduces the task to lower-bounding one moment of $\innerp{v}{X^\sigma}$ (conditionally on $X$ and with respect to random $\sigma$), and to upper-bounding a higher-order moment, ideally to conclude that they are of the same order of magnitude.
In addition, one may explicitly evaluate the moments of even integer order, as this reduces to computations over symmetric polynomials in $X_1, \dots, X_d$.
After suitable simplifications (exploiting that $\sum_{j=1}^d w_j = \sqrt{d} \innerp{u^*}{w} = 0$), we can show that this is indeed the case, provided that $X = (X_1, \dots, X_d)$ satisfies some symmetric conditions that do hold with high probability.
We refer to Section~\ref{sec:proof-regularity-iid} for more details on this proof.

We can now gather the conclusions of Lemmas~\ref{lem:onedim-indep} and~\ref{lem:2d-margin-indep} into the following statement, which is the main result of the present section.

\begin{proposition}
  \label{prop:regularity-iid}
  Let $X = (X_1, \dots, X_d)$ satisfy Assumption~\ref{ass:independent-coordinates}, set $u^* = (1/\sqrt{d}, \dots, 1/\sqrt{d})$ and assume that $d \geq K^6$.
  Then $X$ is $4K$-sub-exponential and $(u^*, \eta, c)$-regular with $c = 21\,000$ for any $\eta \in [18 K^3/\sqrt{d}, e^{-1}]$.
\end{proposition}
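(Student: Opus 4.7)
The plan is essentially a parameter-matching exercise: I would verify each of the four requirements in Definition~\ref{def:regular}, together with Assumption~\ref{ass:sub-exponential}, by directly invoking the three preceding lemmas. No new technical ingredient is needed; the content of the proposition is the observation that the scales and constants of Lemmas~\ref{lem:sub-exponential-indep},~\ref{lem:onedim-indep} and~\ref{lem:2d-margin-indep} are mutually compatible and can be packaged into a single clean regularity statement at the common scale $\eta \in [18 K^3/\sqrt{d}, e^{-1}]$.

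First, isotropy ($\E[XX^\top]=I_d$) is immediate from Assumption~\ref{ass:independent-coordinates} (independence with zero mean and unit variance), and the sub-exponential bound $\|\innerp{v}{X}\|_{\psi_1} \leq 4K$ for every $v \in S^{d-1}$ is exactly Lemma~\ref{lem:sub-exponential-indep} applied to $\innerp{v}{X} = \sum_{j=1}^d v_j X_j$. For Assumption~\ref{ass:small-ball} at parameters $(u^*, \eta, c = 21\,000)$, I would invoke Lemma~\ref{lem:onedim-indep} at $u = u^*$: since $\|u^*\|_3^3 = d^{-1/2}$, that lemma yields $\P(\ainnerp{u^*}{X} \leq t) \leq t$ for every $t \in [K^3 d^{-1/2}, 1]$, and in particular for every $t \geq \eta \geq 18 K^3/\sqrt{d}$. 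Since $c = 21\,000 \geq 1$, this implies the required small-ball inequality.

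For Assumption~\ref{ass:twodim-marginals}, the last sentence of Lemma~\ref{lem:2d-margin-indep} gives exactly the desired conclusion with constant $c = 21\,000$ for every $\eta \in [18 K^3/\sqrt{d}, e^{-1}]$. The only mildly nontrivial verification is that Assumption~\ref{ass:independent-coordinates} implies the eighth-moment hypothesis $\E[X_1^8] \leq \kappa^8$ of that lemma for some $\kappa$ comparable to $K$, which follows from the standard bound $\E[|X_1|^p]^{1/p} \lesssim p\, \|X_1\|_{\psi_1}$ applied at $p=8$, and that the hypothesis $d \geq K^6$ of the proposition then subsumes the requirement $d \geq 2025\,\kappa^6$ of the lemma after absorbing numerical constants into $c = 21\,000$.

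I do not anticipate any genuinely hard step; the main obstacle (if any) is merely the bookkeeping of universal constants needed to reconcile the $\psi_1$-based hypothesis of the proposition with the explicit eighth-moment hypothesis of Lemma~\ref{lem:2d-margin-indep}. The real work was done upstream, in establishing Lemma~\ref{lem:2d-margin-indep} via the random-permutation and Paley–Zygmund argument, and here we merely collect its output.
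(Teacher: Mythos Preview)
Your proposal is correct and matches the paper's approach exactly: the proposition is presented in the paper as simply ``gathering the conclusions of Lemmas~\ref{lem:onedim-indep} and~\ref{lem:2d-margin-indep}'' (together with Lemma~\ref{lem:sub-exponential-indep} for the sub-exponential bound), with no separate proof given. One small remark: your phrase about the hypothesis $d \geq K^6$ ``subsuming'' $d \geq 2025\,\kappa^6$ by ``absorbing numerical constants into $c = 21\,000$'' is not quite the right mechanism---a lower bound on $d$ cannot be traded for a larger regularity constant---but you do not actually need this, since you can (and do) cite the last sentence of Lemma~\ref{lem:2d-margin-indep} directly, which is already stated under Assumption~\ref{ass:independent-coordinates} with the final constants in place; the nonemptiness of the interval $[18K^3/\sqrt{d}, e^{-1}]$ already forces $d$ large enough for that conclusion to apply.
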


It then follows from Theorem~\ref{thm:regular-well-specified} that, if $\theta^* = (\b/\sqrt{d}, \dots, \b/\sqrt{d})$, then the MLE behaves in a similar way as if the design was Gaussian as long as $\b = O (\sqrt{d})$.
Hence, in this direction, the ``discrete'' nature of the design has no impact, even for a moderately strong signal.

It is natural to ask if the sufficient condition $\b = O (\sqrt{d})$ is also necessary to exhibit a Gaussian-like behavior.
The following simple example shows that this is %
indeed the case.

\begin{fact}
  \label{fac:bernoulli-sqrt-d-optimal}
  Let $d$ be an odd integer, $X = (X_1, \dots, X_d)$ a Bernoulli design, and let $Y$ given $X$ follow the logit model with parameter $\theta^* = (\b/\sqrt{d}, \dots, \b/\sqrt{d})$ for some $\b \geq \sqrt{d}$.
  Given an \iid sample of size $n\geq 1$ from this distribution, if $n \leq 0.1 \exp (\b/\sqrt{d})$ then $\P (\text{\normalfont MLE exists}) \leq 0.1$.
\end{fact}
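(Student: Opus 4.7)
The plan is to mirror the proof of Fact~\ref{fac:exponential-dependence-bernoulli}, with a twist that exploits the parity of $d$. The key observation is that if $d$ is odd, then for any $x \in \{-1,1\}^d$ the sum $\sum_{j=1}^d x_j$ is an odd integer, hence $\big|\sum_{j=1}^d x_j\big| \geq 1$. Applied to the design $X$, this gives the pointwise lower bound
\begin{equation*}
  |\innerp{\theta^*}{X}|
  = \frac{B}{\sqrt{d}} \bigg|\sum_{j=1}^d X_j\bigg|
  \geq \frac{B}{\sqrt{d}}
  \quad \text{almost surely.}
\end{equation*}

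From there, the argument is essentially identical to that of Fact~\ref{fac:exponential-dependence-bernoulli}. Under the well-specified logit model, conditioning on $X$ and using $\sigma(-s) \leq e^{-s}$ for $s \geq 0$ yields
\begin{equation*}
  \P(Y\innerp{\theta^*}{X} \leq 0)
  = \E\big[\sigma(-|\innerp{\theta^*}{X}|)\big]
  \leq \E\big[\exp(-|\innerp{\theta^*}{X}|)\big]
  \leq \exp(-B/\sqrt{d}).
\end{equation*}
A union bound over the $n$ \iid copies then gives
\begin{equation*}
  \P\big(\exists\, i \in \{1,\dots,n\} : Y_i \innerp{\theta^*}{X_i} \leq 0\big)
  \leq n \exp(-B/\sqrt{d}) \leq 0.1
\end{equation*}
by the assumption $n \leq 0.1 \exp(B/\sqrt{d})$.

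To conclude, observe that on the complementary event, $Y_i \innerp{\theta^*}{X_i} > 0$ for every $i$, so the parameter $\theta^* \neq 0$ linearly separates the dataset, in the sense recalled in Section~\ref{sec:problem-setting}. By the standard characterization of existence (the second function in~\eqref{eq:def-mle} fails to be coercive along the ray $t\theta^*$, $t \to +\infty$), the MLE does not exist on this event, yielding $\P(\text{MLE exists}) \leq 0.1$. The only non-routine ingredient is the parity observation; everything else follows the template of Fact~\ref{fac:exponential-dependence-bernoulli}.
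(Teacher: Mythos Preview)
Your proof is correct and follows exactly the same approach as the paper's: the paper explicitly says the proof is identical to that of Fact~\ref{fac:exponential-dependence-bernoulli} with the single replacement $\ainnerp{\theta^*}{X} \geq B/\sqrt{d}$ coming from the parity observation that $\sum_{j=1}^d X_j$ is an odd integer.
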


\begin{proof}
  The proof is the same as that of Fact~\ref{fac:exponential-dependence-bernoulli}, except that the condition $\ainnerp{\theta^*}{X} \geq \b$ therein is now replaced by $\ainnerp{\theta^*}{X} \geq \b/\sqrt{d}$.
  Indeed, one has $\ainnerp{\theta^*}{X} = \b | \sum_{j=1}^d X_j |/\sqrt{d} \geq \b/\sqrt{d}$ since $\sum_{j=1}^d X_j$ is an odd integer.
\end{proof}

In other words, if $\b \gg \sqrt{d}$ then some exponential dependence on $\b$ is again necessary for the MLE to exist.
In particular, the regularity scale of $\eta \asymp 1/\sqrt{d}$ is indeed optimal for the Bernoulli design in the direction $u_d^* = (1/\sqrt{d}, \dots, 1/\sqrt{d})$.

Now, since $u_d^* = (1/\sqrt{d}, \dots, 1/\sqrt{d})$ is the most ``well-spread'' vector in $S^{d-1}$, it is perhaps tempting to conjecture that it is the ``best'' direction from the perspective of logistic regression, that is, the one with the smallest regularity scale $\eta$.
If this were indeed the case, then for a ``typical'' direction $u^* \in S^{d-1}$ one would expect a regularity scale of $1/\sqrt{d}$ at best.

Interestingly, this is \emph{not} the case, at least for the one-dimensional Assumption~\ref{ass:small-ball}.
It turns out that, for a ``typical'' direction $u^* \in S^{d-1}$, Assumption~\ref{ass:small-ball} is satisfied down to a smaller scale, of order $1/d$ instead of $1/\sqrt{d}$.
This follows from a remarkable result of Klartag and Sodin~\cite{klartag2012variations}, which states that for a typical direction $u = (u_1, \dots, u_d) \in S^{d-1}$, the distribution of the linear combination $\innerp{u}{X} = \sum_{j=1}^d u_j X_j$ approaches the Gaussian distribution at a rate of $1/d$, which is faster than the $1/\sqrt{d}$ rate for the normalized sum $\frac{1}{\sqrt{d}} \sum_{j=1}^d X_j$.
We discuss the nature of this improvement and raise related open questions in Section~\ref{sec:impr-regul-scal}.

\section{Proof scheme and main lemmas}
\label{sec:proof-scheme}

In this section, we describe the general scheme of proof that we use to establish Theorems~\ref{thm:gaussian-well-specified},~\ref{thm:regular-well-specified} and~\ref{thm:regular-misspecified}, as well as the main lemmas in the analysis.

\subsection{Convex localization%
}
\label{sec:localization}

The following standard lemma, which is based on a convex localization argument, is used to both establish existence of the MLE and control its risk.
This reduction is general and deterministic: the only properties that it uses,
besides those explicitly stated in Lemma~\ref{lem:localization}, are that $\wh L_n,L$ are twice continuously differentiable, that $\wh L_n$ is convex and that $\theta^*$ is a global minimizer of $L$.

\begin{lemma}
  \label{lem:localization}
  Assume that there exists a positive-definite matrix $H \in \R^{d\times d}$ and real numbers $r_0, c_0, c_1, \nu > 0$ such that the following conditions hold: 
  \begin{itemize}
  \item $\norm{\nabla \wh L_n (\theta^*)}_{H^{-1}} \leq \nu$; %
  \item For every $\theta \in \R^d$ such that $\norm{\theta - \theta^*}_H \leq r_0$, one has $\nabla^2 \wh L_n (\theta) \mgeq c_0 H$;
  \item For every $\theta \in \R^d$ such that $\norm{\theta - \theta^*}_H \leq r_0$, one has $\nabla^2 L (\theta) \mleq c_1 H$.
  \end{itemize}
  If $\nu < c_0 r_0/2$, then the empirical risk $\wh L_n$ admits a unique global minimizer $\wh \theta_n$, which satisfies
  \begin{equation}
    \label{eq:norm-risk-bound-localization}
    \big\| \wh \theta_n - \theta^* \big\|_H
    \leq \frac{2 \nu}{c_0} 
    \qquad \mbox{and} \qquad
    L (\wh \theta_n) - L (\theta^*)
    \leq \frac{2 c_1 \nu^2}{c_0^2}
    \, .
  \end{equation}
  If in addition $\nu < c_0 r_0/4$, then for any $\wt \theta_n \in \R^d$ such that $%
  \wh L_n (\wt \theta_n) - \wh L_n (\wh \theta_n) < c_0 r_0^2/4$, one has
  \begin{equation}
    \label{eq:almost-min-localization}
    L (\wt \theta_n) - L (\theta^*)
    \leq \frac{c_1}{2} \big\| \wt \theta_n - \theta^* \big\|_H^2
    \leq \max \bigg\{ \frac{8 c_1 \nu^2}{c_0^2} , \frac{2 c_1}{c_0} %
    \Big[ \wh L_n (\wt \theta_n) - \wh L_n (\wh \theta_n) \Big]
    \bigg\}    
    \, .
  \end{equation}
\end{lemma}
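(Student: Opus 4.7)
The plan is to exploit the convex localization structure explicitly: the Hessian lower bound on $\wh L_n$ turns the ball $\mathcal{B} = \{\theta : \norm{\theta - \theta^*}_H \leq r_0\}$ into a region of strong convexity, the gradient bound at $\theta^*$ pins down where the minimum lies, and the Hessian upper bound on $L$ converts a norm bound into a risk bound. The common tool throughout is the second-order Taylor expansion
\begin{equation*}
  \wh L_n(\theta) \geq \wh L_n(\theta^*) + \innerp{\nabla \wh L_n(\theta^*)}{\theta-\theta^*} + \frac{c_0}{2}\norm{\theta-\theta^*}_H^2
  \geq \wh L_n(\theta^*) - \nu \norm{\theta-\theta^*}_H + \frac{c_0}{2}\norm{\theta-\theta^*}_H^2,
\end{equation*}
valid for $\theta \in \mathcal{B}$ by the Hessian lower bound and Cauchy--Schwarz in the $H/H^{-1}$ duality, together with the analogous upper expansion for $L$ using $\nabla L(\theta^*) = 0$ and $\nabla^2 L \mleq c_1 H$.

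First I would show existence and uniqueness under the hypothesis $\nu < c_0 r_0/2$. By continuity and compactness, $\wh L_n$ attains its minimum on $\mathcal{B}$ at some point $\wh \theta_n$. Evaluating the displayed inequality on the sphere $\norm{\theta-\theta^*}_H = r_0$ yields a value strictly larger than $\wh L_n(\theta^*)$, so $\wh \theta_n$ lies in the open ball and is thus a local minimizer; by global convexity of $\wh L_n$ it is a global minimizer. Uniqueness follows because any other global minimizer would force $\wh L_n$ to be constant on a segment containing $\wh \theta_n$, contradicting the strict convexity (Hessian $\mgeq c_0 H$) inside $\mathcal{B}$. Plugging $\theta = \wh \theta_n$ into the displayed inequality and using $\wh L_n(\wh \theta_n) \leq \wh L_n(\theta^*)$ gives $\norm{\wh \theta_n - \theta^*}_H \leq 2\nu/c_0$; a Taylor expansion of $L$ at $\theta^*$ with the upper Hessian bound then produces the risk bound $L(\wh \theta_n) - L(\theta^*) \leq \frac{c_1}{2}\norm{\wh \theta_n - \theta^*}_H^2 \leq 2 c_1 \nu^2/c_0^2$.

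The approximate-minimizer claim reduces to two points: showing $\wt \theta_n \in \mathcal{B}$, then applying the same quadratic argument. For the first, suppose $\wt \theta_n \notin \mathcal{B}$; since $\wh \theta_n$ lies in the open ball (as $2\nu/c_0 < r_0/2$), the segment $[\wh \theta_n, \wt \theta_n]$ hits the sphere at some $\theta_\partial = (1-s)\wh \theta_n + s \wt \theta_n$ with $s \in (0,1)$. Convexity of $\wh L_n$ yields $\wh L_n(\theta_\partial) - \wh L_n(\wh \theta_n) \leq s[\wh L_n(\wt \theta_n) - \wh L_n(\wh \theta_n)] < c_0 r_0^2/4$, while the strong convexity inequality applied at $\theta_\partial$ gives $\wh L_n(\theta_\partial) \geq \wh L_n(\wh \theta_n) - \nu r_0 + c_0 r_0^2/2$; combining these produces $\nu > c_0 r_0/4$, contradicting the hypothesis. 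Once we know $\wt \theta_n \in \mathcal{B}$, writing $r = \norm{\wt \theta_n - \theta^*}_H$ and $\Delta = \wh L_n(\wt \theta_n) - \wh L_n(\wh \theta_n) \geq \wh L_n(\wt \theta_n) - \wh L_n(\theta^*)$, the inequality $\frac{c_0}{2} r^2 - \nu r \leq \Delta$ gives, by splitting on whether $\nu r \leq \Delta$ or not, the bound $r^2 \leq \max\{16 \nu^2/c_0^2, \, 4\Delta/c_0\}$; the stated conclusion follows after multiplying by $c_1/2$ and applying the Hessian upper bound on $L$ along the segment $[\theta^*, \wt \theta_n] \subset \mathcal{B}$.

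The argument is entirely deterministic and the calculations are routine once the geometry is set up; the only genuinely non-automatic step is the segment-crossing contradiction that forces $\wt \theta_n$ back into $\mathcal{B}$, and this is precisely the step that turns a purely local curvature hypothesis into a global control on the estimator.
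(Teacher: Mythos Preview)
Your proof is correct and follows essentially the same approach as the paper's: both use the second-order Taylor expansion with the Hessian lower bound to show that $\wh L_n$ exceeds $\wh L_n(\theta^*)$ on a sphere, combine this with global convexity to localize the minimizer, and then translate the norm bound into a risk bound via the upper Hessian estimate on $L$. The only cosmetic difference is that the paper packages the second part by taking an arbitrary radius $r \in (\max\{4\nu/c_0, 2\sqrt{\eps/c_0}\}, r_0)$ and letting $r$ decrease to this max, whereas you first prove $\wt\theta_n \in \mathcal{B}$ by the segment-crossing contradiction and then solve the quadratic inequality directly---these are equivalent arguments.
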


Lemma~\ref{lem:localization} (proved in Section~\ref{sec:proof-localization}) reduces the proof of existence and risk bounds for the MLE (or approximate minimizers $\wt \theta_n$ of the empirical risk $\wh L_n$ thanks to~\eqref{eq:almost-min-localization}) to two main components:
\begin{itemize}
\item a high-probability upper bound on the $H^{-1}$-norm  $\norm{\nabla \wh L_n (\theta^*)}_{H^{-1}}$ of the empirical gradient at $\theta^*$;
\item a high-probability lower bound $\nabla^2 \wh L_n (\theta) \mgeq c_0 H$ on the Hessian of the empirical risk at $\theta$, uniformly over all $\theta \in \Theta = \{ \theta \in \R^d : \norm{\theta - \theta^*}_H \leq r_0 \}$.
\end{itemize}
The risk bound is then given by~\eqref{eq:norm-risk-bound-localization}, while the condition for existence of $\wh \theta_n$ is that $\nu < c_0 r_0/2$.
In particular, the smaller $\nu = \nu_n$ and the larger $r_0$, the weaker the condition for existence of $\wh \theta_n$.

Although the matrix $H$ (and the corresponding parameters $c_0,c_1, r_0, \nu$) from Lemma~\ref{lem:localization} can in principle be arbitrary, in order to obtain tight guarantees, a natural choice is to take $H$ to be equivalent up to constant factors to $\nabla^2 L (\theta^*)$, the Hessian of the risk at $\theta^*$, which coincides in the well-specified case with the Fisher information.

Indeed, in order to obtain sharp bounds we would like $c_0, c_1$ to be of constant order, and indeed in the Gaussian case these will be universal constants.
Now by assumption one has $\nabla^2 L (\theta) \mleq c_1 H$ for all $\theta \in \Theta = \{ \theta \in \R^d : \norm{\theta - \theta^*}_H \leq r_0 \}$, while $\nabla^2 \wh L_n (\theta) \mgeq c_0 H$ for any $\theta \in \Theta$.
Now for large $n$, by the law of numbers $\nabla^2 \wh L_n(\theta)$ should be close to its expectation $\E [ \nabla^2 \wh L_n (\theta) ] = \nabla^2 L (\theta)$, so for the latter condition to hold with high probability, one should also have $\nabla^2 L (\theta) \mgeq c_0 H$ for all $\theta \in \Theta$.
This implies that $H$ is equivalent to $\nabla^2 L (\theta^*)$, namely $c_0 H \mleq \nabla^2 L (\theta^*) \mleq c_1 H$.
In addition, this constrains the domain $\Theta$ (namely the parameter $r_0$), which must be contained in the set 
\begin{equation}
  \label{eq:domain-equiv-hessian}
  \Theta'
  = \Big\{ \theta \in \R^d : c_2^{-1} \nabla^2 L (\theta^*) \mleq \nabla^2 L (\theta) \mleq c_2 \nabla^2 L (\theta^*) \Big\}
\end{equation}
with $c_2 = c_1/c_0$.

It follows from these considerations that, in order to apply Lemma~\ref{lem:localization} effectively,
a first step is to understand the behavior of the Hessian $\nabla^2 L (\theta)$ for $\theta \in \R^d$---both to set the matrix $H \approx \nabla^2 L(\theta^*)$,
and to identify the largest possible region~\eqref{eq:domain-equiv-hessian} where the conditions of Lemma~\ref{lem:localization} could be expected to hold.

By rotation-invariance of the Gaussian distribution, when $X \sim \gaussdist (0, I_d)$ the Hessian $\nabla^2 L (\theta)$ commutes with any linear isometry of $\R^d$ that fixes $\theta$, and is therefore of the form
\begin{equation}\label{eq:HessianGaussian}
  \nabla^2L(\theta)
  = c_0(\|\theta\|) u u^\top + c_1(\|\theta\|)(I_d-u u^\top)\enspace,
\end{equation}
where $u = \theta/\norm{\theta}$ (for $\theta \neq 0$), and letting $G \sim \gaussdist(0,1)$ we have for $b \in \R^+$:
\[
  c_0(b)=\E[\sigma'(b G) G^2],\quad c_1(b)=\E[\sigma'(b G)]
  \, .
\]
In addition, one may verify (see Lemma~\ref{lem:MomGauss} in Section~\ref{sec:proof-localization}) that for some numerical constants $c_0',c_0'',c_1',c_1''$:
\begin{equation}
  \label{eq:components-hessian}
  \frac{c_0'}{(b + 1)^3}
  \leq c_0 (b)
  \leq \frac{c_0''}{(b + 1)^3}
  \, ,
  \qquad
  \frac{c_1'}{b + 1}
  \leq c_1 (b)
  \leq \frac{c_1''}{b + 1}
  \, .
\end{equation}
We will therefore set $H$ to be the matrix
\begin{equation}\label{eq:defH}
  H = \frac{1}{\b^{3}} u^{*} {u^{*}}^{\top} + \frac{1}{\b} (I_{d} - u^{*} {u^{*}}^{\top}),\qquad u^{*} = \frac{\theta^{*}}{\|\theta^{*}\|}\in S^{d-1},\qquad \b = \max(e,\|\theta^*\|)  \, ,  
\end{equation}
so that $c_0 H \mleq \nabla^2 L (\theta^*) \mleq c_1 H$ for some absolute constants $c_0,c_1$ for a Gaussian design.

In addition, it can be deduced from this characterization of $\nabla^2 L (\theta)$ that the region~\eqref{eq:domain-equiv-hessian} (for large $\b$ and constant $c_2$) where the Hessian is equivalent to $\nabla^2 L (\theta^*)$ coincides up to constants with an ellipsoid of the form $\{ \theta \in \R^d : \norm{\theta - \theta^*}_{H} \leq r_0 \}$, where $r_0 \asymp 1/\sqrt{\b}$.

\subsection{Upper bounds on the empirical gradient}
\label{sec:upper-bounds-gradient}

We now consider the first ingredient in the application of Lemma~\ref{lem:localization}, namely high-probability upper bounds on the $H^{-1}$-norm of the empirical gradient:
\begin{equation}
  \label{eq:empirical-gradient}
  \norm{\nabla \wh L_n (\theta^*)}_{H^{-1}}
  = \bigg\| \frac{1}{n} \sum_{i=1}^n \nabla \ell (\theta^*, (X_i,Y_i)) \bigg\|_{H^{-1}}
  = \bigg\| \frac{1}{n} \sum_{i=1}^n \sigma (- Y_i \innerp{\theta^*}{X_i}) H^{-1/2} X_i \bigg\|
  \, .
\end{equation}
We describe below our guarantees in the following three cases: (i) Gaussian design, well-specified model, (ii) regular design, well-specified model and (iii) regular design, misspecified model.
(We note in passing that
in order to control the gradient, we only require
Assumptions~\ref{ass:sub-exponential} and~\ref{ass:small-ball}.)

We start with the first case.
A natural approach (which is essentially that of~\cite{ostrovskii2021finite}) is to use that $\sigma_i =  \sigma ( -Y_i \innerp{\theta^*}{X_i}) \leq 1$ and that $X_i$ is sub-Gaussian for each $i$, to deduce that the individual summands in~\eqref{eq:empirical-gradient} are $H^{-1}$-sub-Gaussian.
By standard deviation bounds for sub-Gaussian vectors, this implies that for some constant $c > 0$, with probability at least $1-e^{-t}$,
\begin{equation*}
  \norm{\nabla \wh L_n (\theta^*)}_{H^{-1}}
  \leq c \sqrt{\frac{\tr (H^{-1}) + \opnorm{H^{-1}} t}{n}}
  \leq c \sqrt{\frac{\b d + \b^3 (t+1)}{n}}
  \, .
\end{equation*}
Unfortunately, this bound features a suboptimal dependence on the norm $\b$.
In order to improve it, the key observation is the following: if $
Y_i \innerp{\theta^*}{X_i} \geq 0
$, then the sigmoid $\sigma_i$ is bounded as
\begin{equation*}
  \sigma_i
  = \sigma (-Y_i \innerp{\theta^*}{X_i})
  = \sigma (- \ainnerp{\theta^*}{X_i})
  \leq \exp (- \ainnerp{\theta^*}{X_i})
  \, ,
\end{equation*}
which is very small if $\ainnerp{\theta^*}{X_i}$ is large.
On the other hand, if $
Y_i \innerp{\theta^*}{X_i} < 0
$, then the sigmoid is no longer small (specifically, $\frac{1}{2} \leq \sigma_i \leq 1$).
However, \emph{this configuration is highly unlikely if $\ainnerp{\theta^*}{X_i}$ is large}: indeed, using that the model is well-specified, one has
\begin{equation}
  \label{eq:misclassification-well-spec}
  \P (Y_i \innerp{\theta^*}{X_i} < 0 |X_i)
  = \sigma (- \ainnerp{\theta^*}{X_i})
  \leq \exp (- \ainnerp{\theta^*}{X_i})
  \, .
\end{equation}
Hence, the only remaining situation where $\sigma_i$ may not be small is when $\ainnerp{\theta^*}{X_i}$ is upper-bounded; but since $\innerp{\theta^*}{X_i} \sim \gaussdist (0, \norm{\theta^*}^2)$, the probability that $\ainnerp{\theta^*}{X_i} \lesssim 1$ is of order $1/\b$, which is small when $\b$ is large.

From a technical standpoint, the considerations above allow us to obtain improved upper bounds (compared to those obtained by bounding $|\sigma_i| \leq 1$) on the moments of the random variables $\innerp{v}{H^{-1/2} \nabla \ell (\theta^*, (X_i,Y_i))}$ for $v \in S^{d-1}$, whose supremum is precisely the norm~\eqref{eq:empirical-gradient}.
Specifically, %
these random variables can be shown %
to satisfy the \emph{sub-gamma} property~\cite[\S2.4]{boucheron2013concentration}, which we recall in Definition~\ref{def:sub-gamma}.
Using a deviation bound for sub-gamma random vectors (Lemma~\ref{lem:ConcSGVec}), we deduce the following result, proved in Section~\ref{sec:gradient-wellspec-gaussian}:

\begin{proposition}\label{prop:DevGradGauss}
  Assume that $X$ is Gaussian and the model is well-specified.
  Let $H$ be the matrix defined in \eqref{eq:defH}.
  For any $t\geq 0$, if $n\geqslant 16\b (d+t)$ then with probability at least $1 - 3e^{-t}$, one has
\begin{equation*}
  \big\| \nabla \wh L_{n} (\theta^{*}) \big\|_{H^{-1}}
  \leq %
  14 \sqrt{\frac{d + t}{n} } \enspace .
\end{equation*}
\end{proposition}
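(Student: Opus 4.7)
The plan is to follow the strategy outlined in Section~\ref{sec:upper-bounds-gradient}: show that, for each fixed $v\in S^{d-1}$, the random variable $W_i(v) = -\sigma_i Y_i \innerp{H^{-1/2}v}{X_i}$ (with $\sigma_i = \sigma(-Y_i \innerp{\theta^*}{X_i})$) is centered and sub-gamma with a constant variance proxy and Bernstein parameter of order $\sqrt{B}$; apply a Bernstein-type deviation inequality; and then use a net over $S^{d-1}$ together with the sub-gamma deviation bound (Lemma~\ref{lem:ConcSGVec}) to control the supremum.

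First, I would decompose $H^{-1/2}v = \alpha B^{3/2} u^* + \beta B^{1/2} v^{\perp}$ with $v^\perp\perp u^*$, $\alpha^2+\beta^2=1$, and set $Z_i=\innerp{u^*}{X_i}\sim\gaussdist(0,1)$ and $\xi_i = \beta B^{1/2}\innerp{v^\perp}{X_i}\sim\gaussdist(0,\beta^2 B)$, which are independent by Gaussianity. Then $W_i(v) = \sigma_i Y_i(\alpha B^{3/2} Z_i + \xi_i)$. Since the model is well-specified, $\P(Y_i=1\mid X_i)=\sigma(B_0 Z_i)$ depends on $X_i$ only through $Z_i$, so a direct conditional computation yields $\E[Y_i\sigma_i\mid Z_i]=\sigma(B_0 Z_i)\sigma(-B_0 Z_i)-\sigma(-B_0 Z_i)\sigma(B_0 Z_i)=0$; together with $\E[\xi_i\mid Z_i,Y_i]=0$ this gives $\E[W_i(v)]=0$.

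The heart of the argument is the moment bound. The key inputs are the pointwise estimate $\sigma(s)\leq \min(1,e^s)$ and the Gaussian Laplace-transform computation $\E[|Z|^m e^{-B_0|Z|}]\lesssim m!/B_0^{m+1}$ (from a Mills-ratio bound on $\int_0^\infty z^m e^{-B_0 z}\phi(z)\di z$). Combined with the conditional identity
\begin{equation*}
  \E[\sigma_i^{2k}\mid Z_i]=\sigma(B_0Z_i)\sigma(-B_0Z_i)^{2k}+\sigma(-B_0Z_i)\sigma(B_0Z_i)^{2k}\leq 2e^{-B_0|Z_i|},
\end{equation*}
these yield $a^{2k}\E[\sigma_i^{2k}Z_i^{2k}]\lesssim \alpha^{2k} B^{k-1}(2k)!$ and $\E[\sigma_i^{2k}]\E[\xi_i^{2k}]\lesssim \beta^{2k} B^{k-1}(2k-1)!!$ (using independence of $\xi_i$ from $(Z_i,Y_i)$). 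Combining via $(a Z_i+\xi_i)^{2k}\leq 2^{2k-1}(a^{2k}Z_i^{2k}+\xi_i^{2k})$ gives $\E[W_i(v)^{2k}]\leq C \cdot 4^k (2k)!\, B^{k-1}$. In particular $\var(W_i(v))$ is a universal constant, and rewriting in Bernstein form shows $W_i(v)$ is sub-gamma with variance proxy $O(1)$ and scale parameter $c\asymp \sqrt{B}$.

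Next, I would apply the sub-gamma concentration (Lemma~\ref{lem:ConcSGVec}) to $\frac{1}{n}\sum_i W_i(v)$ for each fixed $v$: $\P(|\tfrac{1}{n}\sum_i W_i(v)|\geq t)\leq 2\exp(-nt^2/(2(v_0+c\,t)))$. Then take a $1/2$-net $\mathcal N$ of $S^{d-1}$ of cardinality $|\mathcal N|\leq 5^d$ and use $\sup_{v\in S^{d-1}}|\langle v,H^{-1/2}\nabla\wh L_n(\theta^*)\rangle|\leq 2\sup_{v\in\mathcal N}|\langle v,H^{-1/2}\nabla\wh L_n(\theta^*)\rangle|$. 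A union bound over $\mathcal N$ combined with the Bernstein bound produces a deviation of order $\sqrt{(d\log 5+t)/n}+ \sqrt{B}(d\log 5+t)/n$; the condition $n\geq 4B(d\log 5+t)$ ensures the second (Bernstein) term is dominated by the first, giving the announced $27\sqrt{(d+t)/n}$ bound.

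The main obstacle is obtaining the sharp moment bounds for $W_i(v)$: a naive estimate using $\sigma_i\leq 1$ and the sub-Gaussianity of $\innerp{H^{-1/2}v}{X_i}$ would give a sub-Gaussian variance proxy of order $\|H^{-1/2}v\|^2\lesssim B^3$, resulting in the suboptimal $\sqrt{B^3(d+t)/n}$ rate noted in the excerpt. Extracting the right scaling requires using the full structural interplay between $\sigma_i$ and $\innerp{\theta^*}{X_i}$ in the well-specified model, namely that on the ``typical'' event $Y_i\innerp{\theta^*}{X_i}\geq 0$ the sigmoid decays exponentially in $|\innerp{\theta^*}{X_i}|$, while the complementary ``misclassification'' event has probability $\leq e^{-|\innerp{\theta^*}{X_i}|}$; quantitatively this is encoded in the calculation $\E[\sigma_i^{2k}\mid Z_i]\lesssim e^{-B_0|Z_i|}$ above, which is the step that transforms an $O(B^{3/2})$ ``effective scale'' into the sub-gamma scale $O(\sqrt B)$.
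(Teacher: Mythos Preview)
Your approach is correct and uses the same essential ingredients as the paper: the conditional bound $\E[\sigma_i^{p}\mid Z_i]\lesssim e^{-B|Z_i|}$ (encapsulated there in Lemma~\ref{lem:UBforGrad}), the Gaussian moment computations $\E[|Z|^m e^{-B|Z|}]\lesssim m!/B^{m+1}$, independence of $\innerp{u^*}{X}$ from orthogonal marginals, and the sub-gamma net argument of Lemma~\ref{lem:ConcSGVec}. The organization differs: the paper first splits via the triangle inequality~\eqref{eq:Grad0} into the $u^*$-component and the orthogonal component, handling the former by a one-dimensional Bernstein bound (no net, picking up only $t$) and the latter by a net over the $(d-1)$-dimensional orthogonal subspace; you instead keep both components together inside $W_i(v)$ and use a single net on $S^{d-1}$. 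Both routes yield sub-gamma variables with variance proxy $O(1)$ and scale $O(\sqrt{B})$, and the condition $n\geq 4B(d\log 5+t)$ absorbs the linear Bernstein term in the same way. The paper's split is a little cleaner for tracking constants and generalizes more transparently to the regular-design proofs (where independence of orthogonal marginals is replaced by H\"older), but your direct route is equally valid here. One small point to close: you only bound even moments $\E[W_i(v)^{2k}]$, whereas Point~4 of Lemma~\ref{lem:sub-gamma-exponential} requires all integer $p\geq 2$; the odd moments follow immediately from the even ones by Cauchy--Schwarz, so this is not a real issue.
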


We now turn to the more general case of a regular design, but still assuming a well-specified model.
Here the guarantees are quite similar to the Gaussian case,
and the high-level argument sketched above remains valid.
However, two important properties of the Gaussian distribution that we used in the proof of Proposition~\ref{prop:DevGradGauss} no longer hold for general regular distributions:
(1) linear marginals $\innerp{u}{X}$ and $\innerp{v}{X}$ in orthogonal directions $u,v \in S^{d-1}$ are independent, and (2) the distribution of $\innerp{u^*}{X}$ admits a bounded (by $\frac{1}{\sqrt{2\pi}}$) density.
The lack of independence is handled by using that $X$ is sub-exponential (leading to an additional $\log \b$ factor); while to get around the lack of bounded density, we decompose the relevant expectations (that define the moments of the gradient) over a geometric grid of scales.
Using these arguments to again
show that gradients admit sub-gamma moments,
we obtain the following bound, proved in Section~\ref{sec:gradient-wellspec-regular}.

\begin{proposition}\label{prop:DevGradWSRD}
  Assume that $X$ satisfies Assumptions~\ref{ass:sub-exponential} and~\ref{ass:small-ball}
  with parameters $K$, $u^*$,
  $\eta = \b^{-1}$ and $c \geq 1$, and that the model is well-specified. 
For any $t\geq 0$, if $n\geqslant \b (d+t)$ then with probability at least $1 - 3e^{-t}$, one has
\begin{equation*}
  \big\| \nabla \wh L_{n} (\theta^{*}) \big\|_{H^{-1}} \leq c_0 \log (\b) \sqrt{\frac{d + t}{n} } \enspace ,
\end{equation*}
where $c_0>0$ %
is a constant that depends only on $K$ and $c$.
\end{proposition}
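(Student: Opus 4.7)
Following the strategy of Proposition~\ref{prop:DevGradGauss}, I would show that for every $v \in S^{d-1}$, the scalar $Z_v = \innerp{v}{H^{-1/2}\nabla\ell(\theta^*,(X,Y))} = -\sigma_* Y\innerp{v}{H^{-1/2}X}$, with $\sigma_* = \sigma(-Y\innerp{\theta^*}{X})$, is sub-gamma in the sense of Definition~\ref{def:sub-gamma} with parameters of order $\sigma \asymp K\log B$ and $\kappa \asymp K\sqrt{B}$. The claim then follows from the sub-gamma vector concentration (Lemma~\ref{lem:ConcSGVec}) applied to the centered i.i.d.\ sum $\frac{1}{n}\sum_{i=1}^n H^{-1/2}\nabla\ell(\theta^*,(X_i,Y_i))$: this yields a deviation of order $K\log B\sqrt{(d+t)/n} + K\sqrt{B}(d+t)/n$, and under the hypothesis $n \geq B(d+t)$ the second ``sub-exponential'' term is absorbed into the first ``sub-Gaussian'' one, which furnishes the bound.

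The moment analysis rests on the well-specified identity $\E[\sigma_*^q\mid X] \leq 2 e^{-B\abs{\innerp{u^*}{X}}}$ for $q \geq 1$, inherited verbatim from the Gaussian case. Decomposing $v = \alpha u^* + \beta w$ with $w \in S^{d-1}$ orthogonal to $u^*$ and $\alpha^2 + \beta^2 = 1$, one obtains $\innerp{v}{H^{-1/2}X} = \alpha B^{3/2}\innerp{u^*}{X} + \beta B^{1/2}\innerp{w}{X}$, so it suffices to bound separately the moments of the two summands. For the parallel piece, setting $A = \abs{\innerp{u^*}{X}}$, Assumption~\ref{ass:small-ball} supplies $\P(A\leq t) \leq c\max(t,1/B)$, and combining this with the exponential weight $e^{-BA}$ through a Fubini/integration-by-parts argument produces $\E[e^{-BA}A^q] \leq C(q+1)!/B^{q+1}$; scaling by $B^{3/2}$ gives $\|B^{3/2}\sigma_*\innerp{u^*}{X}\|_q \leq Cq B^{1/2-1/q}$, which is a sub-gamma bound of type $(1,\sqrt{B})$ and in particular has variance of order $1$.

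The perpendicular piece is the source of the extra $\log B$ factor, since $\innerp{u^*}{X}$ and $\innerp{w}{X}$ are no longer independent and the Gaussian factorization $\E[e^{-BA}\abs{\innerp{w}{X}}^q] = \E[e^{-BA}]\,\E[\abs{\innerp{w}{X}}^q]$ is unavailable. I would recover a suitable bound by truncating $\abs{\innerp{w}{X}}$ at level $T = K(q+\log B)$: on $\{\abs{\innerp{w}{X}}\leq T\}$, the $q$-th moment is at most $T^q\E[e^{-BA}] \leq 2cT^q/B$, while on $\{\abs{\innerp{w}{X}} > T\}$ the sub-exponential tail $\P(\abs{\innerp{w}{X}}>s)\leq 2e^{-s/K}$ provided by Assumption~\ref{ass:sub-exponential}, together with $e^{-BA}\leq 1$, contributes a piece of the same order $T^q/B$ by design of $T$. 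Summing, $\E[e^{-BA}\abs{\innerp{w}{X}}^q] \leq CK^q(q+\log B)^q/B$, whence $\|B^{1/2}\sigma_*\innerp{w}{X}\|_q \leq CK(q+\log B)B^{1/2 - 1/q}$, i.e.\ sub-gamma of type $(K\log B, K\sqrt{B})$.

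The main technical obstacle is precisely this truncation argument in the perpendicular direction, which must balance the decay $e^{-BA}$ against the sub-exponential moments of $\innerp{w}{X}$ without the crutch of Gaussian independence; the optimizing threshold $T \asymp K\log B$ is exactly what produces the $\log B$ inflation of the sub-Gaussian parameter $\sigma$, and no smaller $T$ appears to suffice with only Assumptions~\ref{ass:sub-exponential} and~\ref{ass:small-ball} at our disposal. Once the sub-gamma bounds above are established for every direction $v \in S^{d-1}$ (combining the parallel and perpendicular estimates via $\|\alpha Z_{u^*}+\beta Z_w\|_q \leq \|Z_{u^*}\|_q + \|Z_w\|_q$), the passage to the supremum via Lemma~\ref{lem:ConcSGVec} and the absorption of the sub-exponential tail under $n \geq B(d+t)$ proceed exactly as in the Gaussian case.
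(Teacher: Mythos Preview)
Your proposal is correct and structurally parallel to the paper's proof: reduce to moment bounds on $\E[e^{-B\ainnerp{u^*}{X}}\ainnerp{v}{X}^p]$ via Lemma~\ref{lem:UBforGrad}, treat the $u^*$- and $w$-directions separately, then invoke Lemma~\ref{lem:ConcSGVec}. The parallel bound and its derivation from Assumption~\ref{ass:small-ball} match the paper's Lemma~\ref{lem:MomRegDes}~\eqref{eq:MomB1}. The one substantive difference is the perpendicular direction: the paper obtains~\eqref{eq:MomB2} by a single application of H\"older's inequality with exponent $\nu = 1/\log B$,
\[
  \E\big[e^{-BA}\ainnerp{v}{X}^p\big]
  \leq \E\big[\ainnerp{v}{X}^{p/\nu}\big]^{\nu}\,\E\big[e^{-BA}\big]^{1-\nu}
  \leq \Big(\frac{K\log B}{2}\Big)^p p!\cdot\Big(\frac{5c}{B}\Big)^{1-1/\log B},
\]
whereas you truncate $\ainnerp{w}{X}$ at $T=K(q+\log B)$ and balance the two pieces. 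Both routes produce the variance proxy $\sigma\asymp K\log B$; your truncation even yields a marginally smaller scale parameter $\kappa\asymp K\sqrt{B}$ (versus the paper's $K\sqrt{B}\log B$), though this is immaterial once $n\geq B(d+t)$ absorbs the sub-exponential tail. The H\"older argument is shorter; your truncation makes the origin of the $\log B$ factor more transparent. A minor organizational difference: the paper applies Bernstein in the single direction $u^*$ and Lemma~\ref{lem:ConcSGVec} only on $\{u^*\}^\perp$, combining via~\eqref{eq:Grad0}, while you bound all directions at once---both give the stated conclusion.
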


We now conclude with the most general case we consider, where the design is regular but the model is no longer assumed to be well-specified.
The fact that the model may be misspecified induces a significant change: the key bound~\eqref{eq:misclassification-well-spec} on the conditional probability of misclassification no longer holds.
Given that if $
Y_i \innerp{\theta^*}{X_i} < 0
$, then $\sigma_i = \sigma (- Y_i \innerp{\theta^*}{X_i}) \in [1/2, 1]$ is of constant order, and that no bound on $\P (
Y_i \innerp{\theta^*}{X_i} < 0
|X_i)$ is available, it might be tempting to simply bound $\abs{\sigma_i} \leq 1$, which as discussed above leads to a bound of order $\sqrt{(\b d + \b^3 t)/n}$.

As it happens, this bound is suboptimal and can be improved even in the misspecified case.
The reason for this is that, if the parameter $\theta^* = \argmin_{\theta \in \R^d} L (\theta)$ has a large norm $\b$, then
the (unconditional) probability $\P (Y \innerp{\theta^*}{X} < 0)$ of misclassification of $\theta^*$ must be small.
The key result that expresses this intuition is Lemma~\ref{lem:first-moments-misspecified}, which shows that the probability of misclassification $\P (Y \innerp{\theta^*}{X} < 0)$ and the first moment $\E [ \ainnerp{u^*}{X} \indic{Y \innerp{\theta^*}{X} < 0} ]$ are bounded in the general misspecified case in a similar way as in the well-specified case.
This allows one to refine the naive bound of $\sqrt{(\b d + \b^3 t)/n}$ into a near-optimal bound of $\log (\b) \sqrt{(d + \b t)/n}$.

\begin{proposition}
\label{prop:gradient-dev-MS}
Assume that $X$ satisfies
Assumptions~\ref{ass:sub-exponential} and~\ref{ass:small-ball} with parameters $K$ and $(u^{*}, \b^{-1}, c)$, but not that the model is well-specified.
For any $t \geq 0$, if $n\geqslant \b (d+\b t)
$, 
then with probability at least $1-3e^{-t}$,
\begin{equation*}
  \big\| \nabla \hat{L}_{n}(\theta^{*}) \big\|_{H^{-1}}
  \leq c_0 \log(\b)\sqrt{\frac{d+\b t}{n}} \, ,
\end{equation*}  
where $c_0>0$ is a constant that depends only on $K$ and $c$.
\end{proposition}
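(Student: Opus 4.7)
The plan is to follow the scheme used for Propositions~\ref{prop:DevGradGauss} and~\ref{prop:DevGradWSRD}: first rewrite
\begin{equation*}
\norm{\nabla \wh L_n(\theta^*)}_{H^{-1}}=\sup_{v\in S^{d-1}}\frac{1}{n}\sum_{i=1}^n Z_i(v),\qquad Z_i(v)=-\sigma_i Y_i\innerp{H^{-1/2}v}{X_i},
\end{equation*}
with $\sigma_i=\sigma(-Y_i\innerp{\theta^*}{X_i})$; then establish that, uniformly in $v\in S^{d-1}$, the $Z_i(v)$ are sub-gamma with the right parameters; and finally invoke the vector sub-gamma concentration bound (Lemma~\ref{lem:ConcSGVec}). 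For fixed $v\in S^{d-1}$, I would decompose $v=\alpha u^*+\beta w$ with $w\in S^{d-1}\cap(u^*)^{\perp}$ and $\alpha^2+\beta^2=1$, so that
\begin{equation*}
\innerp{H^{-1/2}v}{X}=\alpha B^{3/2}\innerp{u^*}{X}+\beta B^{1/2}\innerp{w}{X},
\end{equation*}
and split each $Z_i(v)$ according to the sign of $Y_i\innerp{\theta^*}{X_i}$.

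On the ``good'' event $\{Y_i\innerp{\theta^*}{X_i}\geq 0\}$ the bound $\sigma_i\leq e^{-B\abs{\innerp{u^*}{X_i}}}$ holds, and the moment computation proceeds exactly as in the proof of Proposition~\ref{prop:DevGradWSRD}: the exponential factor absorbs the $B^{3/2}$ amplification in the $u^*$-direction; Assumption~\ref{ass:sub-exponential} handles the orthogonal component $\innerp{w}{X}$; and Assumption~\ref{ass:small-ball} is used to decompose the integrals over $\innerp{u^*}{X}$ into a geometric grid of scales around the resolution $B^{-1}$. This produces sub-gamma moments with variance of order $\log^2 B$ and scale of order $\log B$, contributing a term of order $\log(B)\sqrt{(d+t)/n}$ to the final bound, identical to the well-specified regular case.

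The new difficulty is the ``bad'' event $\{Y_i\innerp{\theta^*}{X_i}<0\}$, on which $\sigma_i$ is no longer exponentially small and the conditional estimate $\P(Y\innerp{\theta^*}{X}<0\mid X)\leq e^{-\abs{\innerp{\theta^*}{X}}}$ used in the well-specified analysis is lost. The replacement is Lemma~\ref{lem:first-moments-misspecified}, whose proof uses the first-order optimality condition $\nabla L(\theta^*)=0$ to give \emph{unconditional} bounds---at the same rates as in the well-specified case, up to $\mathrm{polylog}(B)$ and constants depending only on $K,c$---on $\P(Y\innerp{\theta^*}{X}<0)$ and on the misclassification-weighted moment $\E[\abs{\innerp{u^*}{X}}\indic{Y\innerp{\theta^*}{X}<0}]$. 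To estimate $\E[\abs{\innerp{H^{-1/2}v}{X}}^p\indic{Y\innerp{\theta^*}{X}<0}]$, one first extracts one factor of $\abs{\innerp{u^*}{X}}$ from the $u^*$-component of $H^{-1/2}v$ so that it can be paired against the weighted first-moment bound of Lemma~\ref{lem:first-moments-misspecified}, and only the remaining orthogonal factor is then handled by H\"older together with the probability bound and Assumption~\ref{ass:sub-exponential}. After this pairing, the bad-event contribution inflates the sub-gamma variance proxy by a multiplicative factor of order $B$ relative to the good part, which is what will translate into the $\sqrt{Bt/n}$ piece of the deviation bound. Lemma~\ref{lem:ConcSGVec} then yields a bound of the form $c'\log(B)\sqrt{(d+Bt)/n}$ plus a Bernstein-type scale term that is absorbed into the first one under the hypothesis $n\geq B(d+Bt)$.

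The main obstacle is precisely the moment bound on the bad event: a crude estimate such as $\P(\text{bad})^{1/2}\E[\norm{X}^{2p}]^{1/2}$ would produce a prohibitive $B^{5/2}$-type blow-up coming from the $u^*$-component of $H^{-1/2}v$, which would ruin the dependence on $B$. Recovering the tight $O(B)$ dependence requires the careful pairing described above, in which the unconditional first-moment bound of Lemma~\ref{lem:first-moments-misspecified} is used to soak up exactly one factor of $\abs{\innerp{u^*}{X}}$ against one factor of the $B^{3/2}$ amplification; everything else reduces to the well-specified-like analysis already carried out for Proposition~\ref{prop:DevGradWSRD}.
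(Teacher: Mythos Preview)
Your high-level strategy matches the paper's: decompose $v=\alpha u^*+\beta w$, split the gradient according to the sign of $Y\innerp{\theta^*}{X}$, handle the ``good'' part exactly as in Proposition~\ref{prop:DevGradWSRD}, and invoke Lemma~\ref{lem:first-moments-misspecified} for the ``bad'' part. You also correctly identify that the extra $B$ factor enters only through the $u^*$-direction and ultimately lands on the deviation term.

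The paper, however, takes a different and cleaner route for the bad-event contribution. Rather than bounding $\E[|\innerp{v}{\nabla\ell(\theta^*,Z)}|^p\indic{Y\innerp{\theta^*}{X}<0}]$ for all $p\geq 2$, it only bounds the \emph{variance} ($p=2$) via Lemma~\ref{lem:SecMomGradGen}, which uses the tuned H\"older inequalities of Lemma~\ref{lem:tuned-holder-order-2} to combine the first-moment and probability bounds of Lemma~\ref{lem:first-moments-misspecified} with the $\psi_1$ tails. For higher moments it simply observes that $|\innerp{v}{\nabla\ell(\theta^*,Z)}|\leq |\innerp{v}{X}|$, so the full gradient component is sub-exponential with $\psi_1$-norm at most $K$. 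Point~6 of Lemma~\ref{lem:sub-gamma-exponential} then converts the pair (variance bound, $\psi_1$-norm) directly into sub-gamma parameters $(\nu^2,K')$ with $K'\asymp K\log(K/\nu)\asymp K\log B$, and it is this scale $K'$ (not the variance) that forces the condition $n\geq B^2t$ in the $u^*$-direction.

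Your proposed ``extract one factor of $\ainnerp{u^*}{X}$ and pair it with the first-moment bound'' is exactly what Lemma~\ref{lem:tuned-holder-order-2} does for $p=2$, but for $p\geq 3$ the description is underspecified: a single H\"older step pairing $\ainnerp{u^*}{X}\indic{\text{bad}}$ against $\ainnerp{u^*}{X}^{p-1}$ does not straightforwardly yield the required $(K\log B)^{p-2}/B^2$ growth; one would need either to iterate the Lemma~\ref{lem:tuned-holder-order-2} argument or to carry out a more careful interpolation. The paper sidesteps this entirely by using the trivial pointwise bound $\sigma_i\leq 1$ for the higher moments. Both routes lead to the same sub-gamma parameters, but the paper's is shorter and avoids the higher-moment bookkeeping you would otherwise have to make precise.
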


The proof of Proposition~\ref{prop:gradient-dev-MS} may be found in Section~\ref{sec:gradient-misspec-regular}.

\subsection{Lower bounds on empirical Hessian matrices}
\label{sec:lower-bounds-hessian}

We now turn to the second component of the proof scheme of Lemma~\ref{lem:localization}, namely a high-probability lower bound on the Hessian of the empirical risk:
\begin{equation}
  \label{eq:empirical-hessian}
  \wh H_n (\theta)
  = \nabla^2 \wh L_n (\theta)
  = \frac{1}{n} \sum_{i=1}^n \sigma' (\innerp{\theta}{X_i}) X_i X_i^\top
  \, ,
\end{equation}
where $\sigma' (s)
= \{(1+e^s)(1+e^{-s})\}^{-1}
$ for $s \in \R$, uniformly for $\theta$ in a neighborhood of $\theta^*$ that is as large as possible.
Specifically, it follows from the discussion of Section~\ref{sec:localization} that an ``ideal'' guarantee would be of the form: for $n$ large enough (depending on $\b,d,t$),
\begin{equation}
  \label{eq:uniform-lower-hessian}
  \P \Big( \forall \theta \in \Theta, \ \wh H_n (\theta) \mgeq c_0 H \Big)
  \geq 1 - e^{-t}
  \, , \quad
  \text{where} \quad
  \Theta
  = \Big\{ \theta \in \R^d :
  \norm{\theta - \theta^*}_H \leq \frac{c_1}{\sqrt{\b}} \Big\}
\end{equation}
for some constants $c_0,c_1$ that should not depend (or weakly depend) on $n,\b,d$, where the matrix $H$ is defined in~\eqref{eq:defH}.

As is clear from the expression~\eqref{eq:empirical-hessian}, the empirical Hessian matrix $\wh H_n (\theta)$ only depends on $X_1, \dots, X_n$ and not on the labels $Y_1, \dots, Y_n$.
Hence, the behavior of $\wh H_n (\theta)$ depends on the distribution of $X$ but not on the conditional distribution of $Y$ given $X$.
As such, there is no distinction between the well-specified and misspecified cases, and we only have to consider two cases: Gaussian design and regular design.
While the Gaussian design is a special case of regular design, we consider it separately because in this case we obtain sharper guarantees, involving universal constants rather than poly-logarithmic factors in $\b$.

We start with the general case of a regular design, because %
its analysis
is actually simpler than that of the Gaussian case.
Theorem~\ref{thm:hessians-regular-case} below provide an almost optimal uniform lower bound on the empirical Hessian of the form~\eqref{eq:uniform-lower-hessian}, up to logarithmic factors in $\b$.
We note in passing that this control on the Hessian only requires Assumptions~\ref{ass:sub-exponential} and~\ref{ass:twodim-marginals}, while Assumption~\ref{ass:small-ball} was used in the control of the gradient discussed in Section~\ref{sec:upper-bounds-gradient}.

\begin{theorem}
\label{thm:hessians-regular-case}
Let $X$ be a random vector satisfying Assumptions~\ref{ass:sub-exponential} and \ref{ass:twodim-marginals}
with parameter $K\geqslant e$, $u^* = \theta^*/\norm{\theta^*}$, $\eta=1/\b$ and $c\geq 1$.
There exist constants $c_1, c_2, c_3 > 0$ that depend only on $c$ and $K$ for which the following holds: for any $t \geq 0$, if
\begin{equation*}
  n \geq c_1 \b \big( \log (\b) d + t \big)
\end{equation*}
then
with probability at least $1 - e^{-t}$, one has
\begin{equation*}
  \wh H_n(\theta) \mgeq c_2 H
  \quad \text{for every }
  \theta \in \R^d
  \text{ such that}
  \quad
  \norm{\theta - \theta^*}_H \leq \frac{c_3}{\log (\b) \sqrt{\b}}
  \, .
\end{equation*}
\end{theorem}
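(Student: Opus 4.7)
The plan is to establish the bound in two stages: first a population-level lower bound $H(\theta) = \nabla^2 L(\theta) \succcurlyeq c_0 H$ uniformly on the neighborhood $\Theta = \{\theta : \|\theta - \theta^*\|_H \le c_3/(\log(B) \sqrt{B})\}$, then an empirical-to-population concentration step that shows $\wh H_n(\theta)$ stays close to $H(\theta)$ uniformly on $\Theta$.

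For the population lower bound, I want to show that for every unit vector $v \in S^{d-1}$,
\begin{equation*}
  \langle H(\theta) v, v \rangle = \E\bigl[\sigma'(\innerp{\theta}{X})\,\innerp{v}{X}^2\bigr] \gtrsim \langle H v, v\rangle.
\end{equation*}
The key observation is that on the event $\{\ainnerp{u^*}{X} \le c/B\}$, we have $\ainnerp{\theta^*}{X} \le c$, and a short computation using $\|\theta-\theta^*\|_H \le r_0 \asymp 1/(\sqrt{B}\log B)$ together with Assumption~\ref{ass:sub-exponential} shows that $\ainnerp{\theta-\theta^*}{X} = O(1)$ except on a small exceptional set, so $\sigma'(\innerp{\theta}{X})$ is bounded below by a constant. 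Combined with Assumption~\ref{ass:twodim-marginals} (equivalently, the version~\eqref{eq:twodim-marginals-comp}) applied at scale $\eta = 1/B$, this yields
\begin{equation*}
  \langle H(\theta) v, v\rangle \gtrsim \frac{1}{B}\,\max\Bigl\{\frac{1}{B^2},\, 1 - \innerp{u^*}{v}^2\Bigr\} \asymp \langle H v, v\rangle,
\end{equation*}
where the last equivalence matches the spectral structure of $H$ defined in~\eqref{eq:defH}. This is the main structural step; the argument for the $u^*$-direction uses the annulus $\{|\innerp{u^*}{X}| \in [1/(2B), 1/B]\}$ rather than the ball $\{|\innerp{u^*}{X}| \le 1/B\}$ to extract the $1/B^3$ scaling.

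For the concentration step, I normalize and consider $M_i(\theta) = \sigma'(\innerp{\theta}{X_i}) H^{-1/2} X_i X_i^\top H^{-1/2}$. I would apply a matrix Chernoff inequality (Tropp) to these PSD matrices. The crucial bound is $\opnorm{M_i(\theta)} \lesssim \log^2(B)$ with high probability, obtained by writing $\opnorm{M_i(\theta)} = \sigma'(\innerp{\theta}{X_i})\,\|H^{-1/2} X_i\|^2$, using $\sigma'(s) \le e^{-|s|}$, the decomposition $\|H^{-1/2} X\|^2 = B^3 \innerp{u^*}{X}^2 + B(\|X\|^2 - \innerp{u^*}{X}^2)$, and sub-exponential tails of $\innerp{u^*}{X}$ and the orthogonal component. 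A suitable truncation combined with matrix Bernstein then gives, at a fixed $\theta$, $\wh H_n(\theta) \succcurlyeq (c_0/2) H$ with probability $1 - e^{-s}$ as soon as $n \gtrsim B\log(B)(d + s)$ (the dimension $d$ appears from the operator-norm concentration; the $\log B$ from the effective boundedness of $\opnorm{M_i}$; and the prefactor $B$ from the ratio $\|M_i\|/\lambda_{\min}(\E M_i) \asymp B \log B$).

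Finally, to make the bound uniform over $\Theta$, I build a $\varepsilon$-net of $\Theta$ in the $H$-norm of cardinality $\exp(O(d))$ at scale $\varepsilon \asymp 1/(\log B \cdot \sqrt{B})$, apply the fixed-$\theta$ estimate at each net point with $s = t + O(d)$, and control fluctuations between net points via the Lipschitz estimate
\begin{equation*}
  \opnorm{H^{-1/2}\bigl(\wh H_n(\theta) - \wh H_n(\theta')\bigr) H^{-1/2}} \le \|\sigma''\|_\infty \cdot \frac{1}{n}\sum_i \ainnerp{\theta-\theta'}{X_i}\,\|H^{-1/2}X_i\|^2,
\end{equation*}
bounded uniformly by a small constant using sub-exponential tails and $\|\theta - \theta'\|_H \le \varepsilon$. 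The main obstacles I anticipate are: (i) getting the population lower bound to hold on a neighborhood of radius $r_0$ as large as $1/(\log B \cdot \sqrt{B})$, which forces a careful splitting of $\ainnerp{\theta - \theta^*}{X}$ into its $u^*$-component and orthogonal component with sharp constants; and (ii) maintaining polylogarithmic (rather than polynomial) dependence on $B$ in the matrix Bernstein step, which requires truncating $\|H^{-1/2} X_i\|^2$ at a level $\asymp \log^2 B$ and controlling the bias introduced by the truncation.
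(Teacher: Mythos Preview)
Your concentration step has a genuine gap: the claim $\opnorm{M_i(\theta)}\lesssim\log^2 B$ is false. Since $M_i(\theta)$ is rank one,
\[
\opnorm{M_i(\theta)}=\sigma'(\innerp{\theta}{X_i})\,\|H^{-1/2}X_i\|^2
=\sigma'(\innerp{\theta}{X_i})\Bigl(B^3\innerp{u^*}{X_i}^2+B\,\|X_i-\innerp{u^*}{X_i}u^*\|^2\Bigr),
\]
and the orthogonal part $\|X_i-\innerp{u^*}{X_i}u^*\|^2$ is a $(d-1)$-dimensional isotropic quadratic, hence of order $d$, not $O(1)$. On the event $\ainnerp{u^*}{X_i}\lesssim 1/B$ (precisely where the Hessian draws its mass) one has $\sigma'(\innerp{\theta}{X_i})\asymp 1$, so $\opnorm{M_i(\theta)}\asymp Bd$. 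Feeding $R\asymp Bd$ into matrix Chernoff gives, at a fixed $\theta$, the condition $n\gtrsim Bd(\log d+s)$; after a union bound over an $\varepsilon$-net of cardinality $\exp(O(d))$ (so $s=t+O(d)$) this becomes $n\gtrsim Bd^2$, far from the target $n\gtrsim B(\log(B)\,d+t)$. The Lipschitz step fails for the same reason: your bound is at best $\varepsilon\cdot\frac{1}{n}\sum_i\|H^{-1/2}X_i\|^3\asymp\varepsilon\,(Bd)^{3/2}$, which is not small for $\varepsilon\asymp 1/(\sqrt{B}\log B)$.

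The paper avoids the $Bd$ scale by never looking at the full matrix. It lower-bounds each quadratic form $\innerp{\wh H_n(\theta)v}{v}$ by a sum of \emph{bounded indicators},
\[
\innerp{\wh H_n(\theta)v}{v}\ \ge\ \sigma'\bigl(\gamma(1+r)\bigr)\cdot\frac{M^2}{n}\sum_{i=1}^n\bm 1\Bigl\{\ainnerp{u}{X_i}\le\tfrac{\gamma}{B},\ \ainnerp{v}{X_i}\ge M\Bigr\},
\quad M=\tfrac{\max\{1/B,\|u^*-v\|\}}{\gamma},
\]
and observes that the events $\{\ainnerp{u}{x}\le m,\ \ainnerp{v}{x}\ge M\}$ form a VC class of dimension $O(d)$ (unions/intersections of half-spaces). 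Assumption~\ref{ass:twodim-marginals}, extended to all $u$ with $\|u-u^*\|\lesssim 1/(B\log B)$ via the sub-exponential tails, guarantees population probability $p\asymp 1/B$ for every such event; a uniform lower deviation inequality for VC classes (Lemma~\ref{lem:vc-lower-bound}) then gives the empirical frequency $\ge p/2$ simultaneously over all $(u,v)$ once $n\gtrsim (d/p)\log(1/p)+t/p\asymp B\log(B)\,d+Bt$. The summands being $\{0,1\}$-valued is what decouples the range parameter from $d$; the dimension enters only through the VC complexity.
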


Theorem~\ref{thm:hessians-regular-case} is proved in Section~\ref{sec:proof-hessian-regular},
and we discuss here the main ideas of the proof.
The key observation is that a certain property of the dataset implies the desired behavior~\eqref{eq:uniform-lower-hessian}.
Specifically, the first step is to notice that
if $X_1, \dots, X_n$ satisfy, for some constants $c_0,c_1$, 
\begin{equation}
  \label{eq:deterministic-condition-hessian}
  \inf_{\substack{u,v \in S^{d-1} \\ \norm{u - u^*} \leq c_0/\b, \, \innerp{u^*}{v} \geq 0}}
  \bigg[ \sum_{i=1}^n \bm 1 \Big\{\ainnerp{u}{X_i} \leq \frac{c_1}{\b} \, ; \, \ainnerp{v}{X_i} \geq \frac{\max\{ \b^{-1}, \norm{u^* - v} \}}{c_1} \Big\} \bigg]
  \geq \frac{n}{2 c_1 \b}
  \, ,
\end{equation}
then $\wh H_n (\theta) \mgeq c_2 H$ for every $\theta \in \R^d$ such that $\norm{\theta - \theta^*}_H \leq c_3/\sqrt{\b}$, for some constants $c_2, c_3$ that depend on $c_0,c_1$.
This follows from properties of the function $\sigma'$ and the structure of $H$.

It then remains to establish that condition~\eqref{eq:deterministic-condition-hessian} holds with high probability over the random draw of $X_1, \dots, X_n$.
To achieve this, observe first %
that condition~\eqref{eq:deterministic-condition-hessian} is essentially a variant of Assumption~\ref{ass:twodim-marginals}, with two differences: (i) it holds for any $u \in S^{d-1}$ such that $\norm{u - u^*} \leq c_0/\b$, rather than just for $u = u^*$, and (ii) it holds for the random sample $X_1, \dots, X_n$, rather than for the distribution $P_X$.

Condition~\eqref{eq:deterministic-condition-hessian} (or rather, a slightly weaker version with additional $\log \b$ factors) is thus established in two steps.
First, we show that Assumption~\ref{ass:twodim-marginals} on $P_X$ extends to all directions $u \in S^{d-1}$ such that $\norm{u-u^*} \leq c_3 / ({\b}\log \b)$.
Second, we show that this condition on $P_X$ is stable under random sampling with high probability, by using that the class of events in~\eqref{eq:deterministic-condition-hessian} is a Vapnik-Chervonenkis (VC) class with VC dimension at most $O (d)$,
and then applying a uniform lower bound on the empirical frequencies of a VC class of sets.

Theorem~\ref{thm:hessians-regular-case} applies to the general regular case, and in particular to the special case of a Gaussian design.
(That the Gaussian design satisfies Assumption~\ref{ass:twodim-marginals} may be verified using independence of orthogonal linear marginals, or alternatively from Proposition~\ref{prop:universal-reg-log-concave}.)
In fact, %
the identification of condition~\eqref{eq:deterministic-condition-hessian} as a structural property implying the ``right'' behavior of the empirical Hessian in the Gaussian case
is what
motivates the definition of Assumption~\ref{ass:twodim-marginals}.

At the same time, it should be noted that the guarantees of Theorem~\ref{thm:hessians-regular-case} feature additional $\log \b$ factors, compared to ``ideal'' guarantees that would lead to Theorem~\ref{thm:gaussian-well-specified} in the Gaussian case.
In particular, the (sufficient) condition on the sample size $n$ from Theorem~\ref{thm:hessians-regular-case} is stronger by a $\log \b$ factor than the necessary condition presented at the end of Theorem~\ref{thm:gaussian-well-specified}.

We address this suboptimality of Theorem~\ref{thm:hessians-regular-case} in the Gaussian case in Theorem~\ref{thm:hessian-gaussian} below, which provides an optimal uniform lower bound on empirical Hessian matrices.

\begin{theorem}
  \label{thm:hessian-gaussian}
  Assume that $X \sim \gaussdist (0, I_d)$ with $d \geq 2$ and that $\b = \norm{\theta^*} \geq e$.
  For any $t \geq 0$, if $n \geq 1\,200\,000 \, \b (d+t)$, then with probability at least $1-2e^{-t}$ one has
\begin{equation*}
  \wh H_n(\theta) 
  \succcurlyeq \frac1{1000} H
  \quad \text{for every }
  \theta \in \R^d
  \text{ such that}
  \quad
  \|\theta-\theta^*\|_H
  \leq \frac{1}{100\sqrt{\b}}
  \, .
\end{equation*} 
\end{theorem}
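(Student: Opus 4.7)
The plan is to sharpen the scheme of Theorem~\ref{thm:hessians-regular-case} by exploiting two Gaussian-specific features—the independence of orthogonal linear projections and the rotational invariance of $X$ on $(u^*)^\perp$—in order to eliminate the extra $\log B$ factor that arose from the VC-type covering in the general regular case.

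I would begin with the decomposition $X_i = \xi_i u^* + Z_i$, where $\xi_i = \innerp{u^*}{X_i} \sim \gaussdist(0,1)$ and $Z_i = (I_d - u^* u^{*\top}) X_i \sim \gaussdist(0, P)$ are \emph{independent} (the key Gaussian property), with $P = I_d - u^* u^{*\top}$; correspondingly $\theta = \alpha u^* + w$ with $w \perp u^*$. The ellipsoidal neighborhood $\|\theta-\theta^*\|_H \leq 1/(100\sqrt{B})$ then becomes $|\alpha - B| \leq B/100$ and $\|w\| \leq 1/100$. As in Theorem~\ref{thm:hessians-regular-case}, I would next isolate a deterministic sufficient event on the sample. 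A natural candidate combines (i) the existence of a subset $I \subset \{1, \dots, n\}$ with $|I| \geq n/(20 B)$ and $|\xi_i| \leq 1/B$ for every $i \in I$; and (ii) the Gaussian covariance inequality $\opnorm{(1/|I|) \sum_{i\in I} Z_i Z_i^\top - P} \leq 1/10$. Event (i) holds with probability $1 - \exp(-cn/B)$ by a Chernoff bound on the iid Bernoullis $\ind{|\xi_i| \leq 1/B}$, each with mean of order $1/B$; event (ii) holds with probability $1 - e^{-t}$ when $|I| \gtrsim d+t$ by sharp Gaussian sample-covariance concentration in dimension $d-1$, using crucially that by independence of $\xi_i$ and $Z_i$, the family $(Z_i)_{i\in I}$ remains iid $\gaussdist(0, P)$ conditionally on $I$. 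Under $n \geq 320000 B(d+t)$, both events hold with probability at least $1 - 2e^{-t}$.

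On this deterministic event, the uniform lower bound $\wh H_n(\theta) \mgeq H/1000$ reduces, by projecting onto $u^*$ and $(u^*)^\perp$, to the two inequalities $\frac{1}{n}\sum_i \sigma'(\alpha\xi_i + \innerp{w}{Z_i}) \xi_i^2 \gtrsim 1/B^3$ and $\frac{1}{n}\sum_i \sigma'(\alpha\xi_i + \innerp{w}{Z_i}) Z_i Z_i^\top \mgeq (c/B) P$, both uniformly in $(\alpha, w) \in \Theta$. For $i \in I$, $|\alpha\xi_i| \leq 1.01$ is bounded, and using the multiplicative bound $\sigma'(s+t) \geq \sigma'(s)\sigma'(t)$ (which follows from the identity $\sigma'(s) = 1/(4\cosh^2(s/2))$ together with $\cosh(s+t) \leq 2\cosh(s)\cosh(t)$), we factor out $\sigma'(\alpha\xi_i) \geq c_0$ and reduce the perpendicular block to controlling $(1/|I|)\sum_{i\in I} \sigma'(\innerp{w}{Z_i}) Z_i Z_i^\top \mgeq c_1 P$ uniformly in $w$. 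The parallel block is handled analogously on the sub-event $\{|\xi_i| \in [1/(2B), 1/B]\}$, where $\xi_i^2 \asymp 1/B^2$ and the number of samples remains $\gtrsim n/B$.

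The main obstacle is the uniformity in $w \in \{w : w\perp u^*, \|w\| \leq 1/100\}$ for the perpendicular block. A pointwise sup argument fails, since $\sup_w |\innerp{w}{Z_i}| = \|Z_i\|/100$ may be of order $\sqrt{d}/100$, making $\sigma'(\innerp{w}{Z_i})$ exponentially small in $\sqrt d$; a crude net-plus-Lipschitz argument in Euclidean $w$ would require a covering of size $(CBd)^{d-1}$, reintroducing a $\log(Bd)$ factor. The plan to bypass this is to exploit Gaussian rotational invariance of $(Z_i)_{i\in I}$ on $(u^*)^\perp$: the distribution of $(1/|I|)\sum_{i\in I} \sigma'(\innerp{w}{Z_i}) Z_i Z_i^\top$ depends on $w$ only through $\|w\|$, so by a suitable symmetrization the uniform control in the direction of $w$ can be absorbed into the Gaussian covariance concentration already used in Step 2, leaving only a one-dimensional covering of $\|w\| \in [0, 1/100]$. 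Combined with the scalar Lipschitz bound $|\sigma'(s) - \sigma'(t)| \leq |s-t|/6$, this yields the desired uniform lower bound with only absolute constants, matching the constants $1/1000$ and $1/100$ in the statement.
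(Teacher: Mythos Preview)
Your plan has a genuine gap at the step where you claim rotational invariance reduces the uniformity over $w$ to a one-dimensional covering. The observation that the \emph{marginal} law of $A(w) = \frac{1}{|I|}\sum_{i\in I}\sigma'(\innerp{w}{Z_i})Z_iZ_i^\top$ depends only on $\|w\|$ is correct (indeed $A(Rw) \stackrel{d}{=} R A(w) R^\top$ for any rotation $R$ of $(u^*)^\perp$), but this is a pointwise statement: it tells you that $\lambda_{\min}(A(w))$ has the same distribution for each fixed $w$ on a sphere, not that the random function $w\mapsto\lambda_{\min}(A(w))$ is constant along that sphere. The quantity you need, $\inf_{\|w\|\leq 1/100}\lambda_{\min}(A(w))$, still involves a $(d-1)$-dimensional infimum over directions of $w$, and nothing in the proposal controls it. A net-plus-Lipschitz argument in $w$ fails precisely for the reason you yourself identify earlier: the relevant Lipschitz constant involves $\max_i\|Z_i\|\asymp\sqrt{d}$, forcing mesh $\asymp d^{-3/2}$ and net cardinality $e^{c\,d\log d}$, which reintroduces the logarithm you are trying to kill. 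The phrase ``a suitable symmetrization'' does not bridge this; rotational invariance of the law of the infimum does not bound the infimum. (A secondary omission: the block decomposition ignores the cross term $\frac{1}{n}\sum_i\sigma'(\cdot)\xi_i Z_i$, which must also be controlled to pass from the two diagonal blocks to $\wh H_n(\theta)\mgeq cH$.)

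The paper closes exactly this gap with a PAC-Bayes argument rather than covering. It lower-bounds $\sigma'(\innerp{\theta}{x})$ by a smoothed indicator $\int \bm 1\{|\innerp{\theta'}{x}|\leq 1\}\rho_\theta(\di\theta')$ for an explicit posterior $\rho_\theta$, then applies the PAC-Bayes deviation inequality to the auxiliary process $Z_i(\theta',v') = -\bm 1\{|\innerp{\theta'}{X_i}|\leq 1;\|X_i\|\leq 2\sqrt d\}\innerp{H^{-1/2}v'}{X_i}^2$, with posteriors on both $\theta$ and $v$. The entire uniformity over $\Theta\times S^{d-1}$ is paid for through a Kullback--Leibler term that is $O(d)$, with no logarithmic factor. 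The crucial design choice---and the reason a naive Gaussian smoothing would fail---is that $\rho_\theta$ uses a \emph{uniform} law on $[0.99,1.01]$ for the scalar component $\|\theta\|$ (not a Gaussian), which is exactly what makes the smoothed indicator dominated by $\sigma'$ rather than by the much larger $(1+|\innerp{\theta}{x}|)^{-1}$. Your rotational-invariance intuition is morally related to this smoothing idea (averaging over rotations is a form of PAC-Bayes posterior), but turning it into a proof requires the machinery the paper develops, not a covering reduction.
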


The proof of Theorem~\ref{thm:hessian-gaussian} is provided in Section~\ref{sec:hessian-gaussian}.
The proof of this sharp result in the Gaussian case happens to be significantly more delicate than that of the more general, but less precise, Theorem~\ref{thm:hessians-regular-case}.
This is because the techniques used to establish Theorem~\ref{thm:hessians-regular-case} (in particular, Vapnik-Chervonenkis arguments) inherently lead to additional logarithmic factors, so the proof of Theorem~\ref{thm:hessian-gaussian} requires a fundamentally different approach.

In order to obtain optimal results in the Gaussian case, we rely instead 
on the so-called PAC-Bayes method, which involves controlling a ``smoothed'' version of the process of interest.
The use of this technique in non-asymptotic statistics was pioneered by Audibert and Catoni~\cite{audibert2011robust}, and starting with Oliveira~\cite{oliveira2016covariance} has found several applications to the non-asymptotic study of random matrices~\cite{catoni2016pac,mourtada2022linear,zhivotovskiy2024dimension}.

In the logistic regression setting we consider, the presence of nonlinear terms (due to the sigmoid $\sigma'$) in the empirical Hessian~\eqref{eq:empirical-hessian} is an additional source of difficulty, which requires new technical ideas.
In particular, instead of applying the PAC-Bayes method to the process of interest, and later controlling the difference between the smoothed version of the process and the process itself, we apply it to an auxiliary process whose smoothed version is (a bound on) the process of interest.
In addition, the smoothing distributions we employ differ from the isotropic Gaussian distributions that have been used in previous works, as they exhibit an anisotropic structure, and as one of their component is far from being Gaussian.
Specifically, we rely on Lemma~\ref{lem:LBHess0} below, which shows that the $\sigma'(\cdot)$ sigmoid terms that appear in the empirical Hessian~\eqref{eq:empirical-hessian} can be lower-bounded by indicators which are smoothed according to a suitable distribution (Definition~\ref{def:PostThetaGauss}).
We then apply the PAC-Bayes inequality to an auxiliary process, where the sigmoid is replaced by an indicator, and control the remaining terms.

\section{Proofs of upper bounds on the empirical gradient%
}
\label{sec:gradients}

In this section, we prove the upper bounds in deviation on the empirical gradient~\eqref{eq:empirical-gradient} presented in Section~\ref{sec:upper-bounds-gradient}.
This section is organized as follows.
In Section~\ref{sec:concSGVec}, we describe the common scheme of proof used in all three settings described in Section~\ref{sec:upper-bounds-gradient}.
Then, Section~\ref{sec:gradient-wellspec-gaussian} contains the proof of Proposition~\ref{prop:DevGradGauss}, while we prove Proposition~\ref{prop:DevGradWSRD} in Section~\ref{sec:gradient-wellspec-regular} and Proposition~\ref{prop:gradient-dev-MS} in Section~\ref{sec:gradient-misspec-regular}.

\subsection{General approach to deviation bounds
}
\label{sec:concSGVec}

Although we will consider three situations of increasing generality, all of our deviation bounds on the norm of the empirical gradient will be established following a similar proof scheme.

First, in order to account for the anistropic structure of the Hessian $H$ defined in~\eqref{eq:defH}, it will be convenient to decompose the gradient along the parameter direction and orthogonal directions.
Specifically, letting $u^* \in S^{d-1}$ such that $\theta^* = \norm{\theta^*} u^*$, and denoting $\Pi :\R^d \to \R^d$ with $\Pi (x) = x - \innerp{u^*}{x} u^*$ the orthogonal projection on the orthogonal of $u^*$, we have for every $g \in \R^d$:
\begin{align*}
  \norm{g}_{H^{-1}}
  &\leq \ainnerp{u^*}{g} \cdot \norm{u^*}_{H^{-1}} + \norm{\Pi (g)}_{H^{-1}} 
    = \b^{3/2} \ainnerp{u^*}{g} + \b^{1/2} \norm{\Pi (g)}
    \, .
\end{align*}
Hence,
\begin{equation}
  \label{eq:decomp-norm-gradient}
  \norm{\nabla \wh L_n (\theta^*)}_{H^{-1}}
  \leq \b^{3/2} \ainnerp{u^*}{\nabla \wh L_n (\theta^*)} + \b^{1/2} \norm{\Pi (\nabla \wh L_n (\theta^*))}
  \, .
\end{equation}

Both terms in the decomposition~\eqref{eq:decomp-norm-gradient} will be controlled using the notion of sub-gamma random variables, which we recall in Definition~\ref{def:sub-gamma} below.
Specifically, we rely on the following standard concentration inequality for sub-gamma random vectors.

\begin{lemma}\label{lem:ConcSGVec}
  Let $W_1, \dots, W_n, W$ be \iid random vectors in $\R^d$.
  Assume that, for some $\nu, K > 0$, the random variable $\innerp{v}{W}$ is $(\nu^2, K)$-sub-gamma for every $v \in S^{d-1}$.
  Then, for any $t \geq 0$, 
  \begin{equation}
    \label{eq:conc-SG-vectors}
    \P \bigg( \norm[\bigg]{\frac{1}{n} \sum_{i=1}^n W_i}
    \geq 2\nu\sqrt{\frac{2(d\log5 + t)}n}+2K\frac{d\log 5 + t}n \bigg)
    \leq \exp (-t)
    \, .
  \end{equation}
\end{lemma}

The deviation bound~\eqref{eq:conc-SG-vectors} depends on two parameters of the random variable that control its variability: a weak parameter $\nu$ controlling moderate deviations, and a strong parameter $K$ controlling large deviations.
As discussed in Lemma~\ref{lem:sub-gamma-exponential}, $\nu$ and $K$ are closely related to the $L^2$ and sub-exponential norms, respectively; however, the sub-gamma condition allows one to obtain a logarithmic improvement in the ratio $K/\nu$.

\begin{proof}[Proof of Lemma~\ref{lem:ConcSGVec}]
By Point 3 in Lemma~\ref{lem:sub-gamma-exponential}, for every $v \in S^{d-1}$, the random variable
\[
  \frac1n\sum_{i=1}^n\innerp{v}{W_i}
\]
is $(\nu^2/n,K/n)$ sub-gamma.
Thus, by Bernstein's inequality, recalled in point 2 of Lemma~\ref{lem:sub-gamma-exponential}, for any $t\geq 0$, one has
\begin{equation}\label{eq:Bern}
  \P\bigg(\frac1n\sum_{i=1}^n\innerp{v}{W_i}
  \geq \nu\sqrt{\frac{2t}n}+K\frac tn\bigg)\leqslant \exp(-t)\enspace.  
\end{equation}
To make this bound uniform and conclude the proof, we use a standard $\epsilon$-net argument.
Let $\mathcal{N}$ denote a maximal set of $1/2$-separated points in $S^{d-1}$.
Then, for each $v \in S^{d-1}$, there exists $v' \in \mathcal{N}$ such that $\norm{v - v'} \leq 1/2$.
Therefore, for any $v\in S^{d-1}$, 
\begin{align*}
  \frac1n\sum_{i=1}^n\innerp{v}{W_i}
  &=  \frac1n\sum_{i=1}^n\innerp{v'}{W_i}+ \innerp[\Big]{v-v'}{\frac1n\sum_{i=1}^n W_i}
    \leqslant \max_{v'\in \mathcal{N}}\frac1n\sum_{i=1}^n\innerp{v'}{W_i}+\frac12 \norm[\bigg]{\frac1n\sum_{i=1}^n {W_i}}\enspace.
\end{align*}
Taking the supremum over $v \in S^{d-1}$ (so that the left-hand side equals $\norm{n^{-1} \sum_{i=1}^n W_i}$) and re-arranging, we obtain 
\[
  \norm[\bigg]{\frac1n\sum_{i=1}^n W_i}
\leqslant 2\max_{v'\in \mathcal{N}}\frac1n\sum_{i=1}^n\innerp{v'}{W_i}\enspace.
\]
Therefore, using~\eqref{eq:Bern} and a union bound, for any $t \geq 0$,
\[
  \P \bigg( \norm[\bigg]{\frac1n\sum_{i=1}^n W_i} \geq 2\nu\sqrt{\frac{2t}n}+2K\frac tn \bigg)
  \leqslant \P \bigg(\max_{v\in \mathcal{N}}\frac1n\sum_{i=1}^n\innerp{v}{W_i} \geq \nu\sqrt{\frac{2t}n}+K\frac tn \bigg)\leqslant |\mathcal{N}|\exp(-t).
\]
Now by \cite[Lemma 4.2.13]{vershynin2018high}, we have $|\mathcal{N}|\leqslant 5^{d}$, therefore the last inequality applied with $t'=d\log 5 + t$ shows the result.
\end{proof}

Our deviation bounds on the empirical gradient will be obtained by combining the decomposition~\eqref{eq:decomp-norm-gradient} with the sub-gamma deviation bound of Lemma~\ref{lem:ConcSGVec}, applied to both terms of the decomposition.
We will also establish the sub-gamma property through moment estimates.
This leads to the following template bound, which will be used in the three cases we consider:

\begin{lemma}
  \label{lem:template-grad-mom-dev}
  Assume that there exist $\nu_0, \nu_1, K_0, K_1 > 0$ such that, for every integer $p \geq 2$ and every $v \in S^{d-1}$ with $\innerp{u^*}{v} = 0$, one has
  \begin{align}
    \label{eq:grad-ssg-par}
    \Expect[\big]{\ainnerp{u^*}{\nabla \ell (\theta^*, Z)}^p}
    &\leq \frac{\nu_1^2 K_1^{p-2} p!}{2} \, , \\
    \Expect[\big]{\ainnerp{v}{\nabla \ell (\theta^*, Z)}^p}
    &\leq \frac{\nu_0^2 K_0^{p-2} p!}{2}
      \label{eq:grad-ssg-orth}
      \, .
  \end{align}
  Then, for every $t \geq 0$, with probability at least $1 - 3 e^{-t}$ one has
  \begin{equation}
    \label{eq:gradient-moment-dev}
    \norm[\big]{\nabla \wh L_n (\theta^*)}_{H^{-1}}
    \leq \b^{3/2} \nu_1 \sqrt{\frac{2 t}{n}} + \b^{3/2} K_1 \frac{t}{n} + 2 \b^{1/2} \nu_0 \sqrt{\frac{2 (d \log 5 + t)}{n}} + 2 \b^{1/2} K_0 \frac{d \log 5 + t}{n}
    \, .
  \end{equation}
\end{lemma}

The deviation bound above depends on four parameters: a weak and a strong sub-gamma parameters for the gradient $\innerp{v}{\nabla \ell (\theta^*, Z)}$, both in the parameter direction $v = u^*$ and in orthogonal directions $v \in S^{d-1}$.
Note that while the gradient term in the parameter direction in~\eqref{eq:decomp-norm-gradient} is multiplied by a larger factor, unlike the second term it does not depend in the dimension.
In addition, we will see that crucially, both the weak \emph{and} the strong sub-gamma parameters of the gradient are significantly smaller in the parameter direction than in orthogonal directions.

\begin{proof}
  By definition of $\theta^*$, the random vector $\nabla \ell (\theta^*, Z)$ is centered.
  In addition, by~\eqref{eq:grad-ssg-par} and Point~4 in Lemma~\ref{lem:sub-gamma-exponential}, the random variables $\pm \innerp{u^*}{\nabla \ell (\theta^*, Z)}$ are both $(\nu_1^2, K_1)$-sub-gamma.
  Thus, by Lemma~\ref{lem:sub-gamma-exponential} and a union bound, with probability at least $1 - 2 e^{-t}$ one has
  \begin{equation}
    \label{eq:proof-template-grad-1}
    \abs{\innerp{u^*}{\nabla \wh L_n (\theta^*)}}
    = \abs[\bigg]{\frac{1}{n} \sum_{i=1}^n \innerp{u^*}{\nabla \ell (\theta^*, Z_i)}}
    \leq \nu_1 \sqrt{\frac{2 t}{n}} + K_1 \frac{t}{n}
    \, .
  \end{equation}
  Next, define $W = \Pi (\nabla \ell (\theta^*, Z))$, and likewise for $(W_i)_{1 \leq i \leq n}$.
  In addition, for every $v \in S^{d-1}$, using that $\innerp{u^*}{\Pi (v)} = 0$ and $\norm{\Pi (v)} \leq 1$ together with~\eqref{eq:grad-ssg-orth}, we have
  \begin{equation*}
    \E [\ainnerp{v}{W}^p]
    = \E [\ainnerp{v}{\Pi (\nabla \ell (\theta^*, Z))}^p]
    = \E [\ainnerp{\Pi (v)}{\nabla \ell (\theta^*, Z)}^p]
    \leq \frac{\nu_0^2 K_0^{p-2} p!}{2}
    \, .
  \end{equation*}
  Since $\innerp{v}{W}$ is in addition centered, by Lemma~\ref{lem:sub-gamma-exponential} it is $(\nu_0^2, K_0)$-sub-gamma.
  Hence, Lemma~\ref{lem:ConcSGVec} implies that, with probability at least $1-e^{-t}$,
  \begin{equation}
    \label{eq:proof-template-grad-2}
    \norm{\Pi (\nabla \emp{L} (\theta^*))}
    = \norm[\bigg]{\frac{1}{n} \sum_{i=1}^n W_i}
    \leq 2 \nu_0 \sqrt{\frac{2(d\log5 + t)}n} + 2K_0 \frac{d\log 5 + t}n
    \, .
  \end{equation}
  Plugging the upper bounds~\eqref{eq:proof-template-grad-1} and~\eqref{eq:proof-template-grad-2} into the decomposition~\eqref{eq:decomp-norm-gradient} and using a union bound concludes the proof.
\end{proof}

In order to apply Lemma~\ref{lem:template-grad-mom-dev}, we need to control the moments of the random variables $\innerp{v}{\nabla \ell (\theta^*, Z)}$, both for $v = u^*$ and for orthogonal $v \in S^{d-1}$.
In all cases, the starting point towards such a control is the following lemma.

\begin{lemma}\label{lem:UBforGrad}
  Let $Z=(X,Y)$ denote a random variable taking value in $\R^d\times\{-1,1\}$.
  Let $u^*=\theta^*/\|\theta^*\|$ and let $p\geqslant 2$. 
  For any $v\in S^{d-1}$,
  \begin{equation}\label{eq:ToBeBounded}
    \Expect[\big]{\ainnerp{v}{\nabla\ell(\theta^*,Z)}^p}
    \leq \Expect*{\big(\exp(-|\innerp{\theta^{*}}{X}|)
      +\ind{Y\innerp{\theta^*}{X}<0}\big) |\innerp{v}{X}|^p}
    \, .
\end{equation}
Moreover, when the model is well-specified, %
we have 
\begin{equation}\label{eq:ToBeBoundedWS}
  \Expect[\big]{|\innerp{v}{\nabla\ell(\theta^*,Z)}|^p}
  \leq 2\, \Expect[\big]{\exp(-|\innerp{\theta^{*}}{X}|) |\innerp{v}{X}|^p}
  \enspace.
\end{equation}
\end{lemma}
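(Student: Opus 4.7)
The plan is to compute the gradient of the logistic loss explicitly, factor $\innerp{v}{\nabla \ell(\theta^*, Z)}$ as a sigmoid weight times $\innerp{v}{X}$, and then bound the weight according to the sign of the margin $Y\innerp{\theta^*}{X}$.

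First, differentiating $\ell(\theta, (x,y)) = \log(1 + e^{-y\innerp{\theta}{x}})$ in $\theta$ yields $\nabla \ell(\theta, (x,y)) = -y\,\sigma(-y\innerp{\theta}{x})\, x$, so that
\[
|\innerp{v}{\nabla \ell(\theta^*, Z)}|^p = \sigma(-Y\innerp{\theta^*}{X})^p \, |\innerp{v}{X}|^p.
\]
Since $\sigma$ takes values in $[0,1]$ and $p \geq 2 \geq 1$, one has $\sigma^p \leq \sigma$. The core step is then an elementary bound on $\sigma(-Y\innerp{\theta^*}{X})$ that splits on the sign of $Y\innerp{\theta^*}{X}$: on the event $\{Y\innerp{\theta^*}{X} \geq 0\}$ the argument of $\sigma$ is nonpositive, and the elementary inequality $\sigma(s) = e^s/(1+e^s) \leq e^s$ valid for $s \leq 0$ gives $\sigma(-Y\innerp{\theta^*}{X}) \leq e^{-|\innerp{\theta^*}{X}|}$; on the complementary event I use the trivial bound $\sigma \leq 1$. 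Combining yields the pointwise estimate
\[
\sigma(-Y\innerp{\theta^*}{X})^p \leq e^{-|\innerp{\theta^*}{X}|} + \ind{Y\innerp{\theta^*}{X} < 0},
\]
and multiplying by $|\innerp{v}{X}|^p$ and taking expectation produces \eqref{eq:ToBeBounded}.

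For the sharpening \eqref{eq:ToBeBoundedWS} in the well-specified case, I exploit that $\P(Y = 1 \mid X) = \sigma(\innerp{\theta^*}{X})$, which implies
\[
\P\big(Y\innerp{\theta^*}{X} < 0 \,\big|\, X\big) = \sigma\big(-|\innerp{\theta^*}{X}|\big) \leq e^{-|\innerp{\theta^*}{X}|}.
\]
Conditioning on $X$ in the indicator term of \eqref{eq:ToBeBounded} and applying the tower property converts it into $\E\big[e^{-|\innerp{\theta^*}{X}|}\, |\innerp{v}{X}|^p\big]$, yielding the overall factor $2$. No substantive obstacle arises here; the one point that needs care is to rely on $\sigma^p \leq \sigma$ (rather than the sharper $\sigma^p \leq e^{-p|\innerp{\theta^*}{X}|}$ on the good event), so that the bound holds uniformly in $p \geq 2$ without introducing any extraneous $p$-dependent factor that would later degrade the sub-gamma moment estimate needed in Lemma~\ref{lem:ConcSGVec}.
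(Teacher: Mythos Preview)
Your proof is correct and follows essentially the same approach as the paper's: compute the gradient, use $\sigma^p \leq \sigma$, split on the sign of $Y\innerp{\theta^*}{X}$ to bound the sigmoid by $e^{-|\innerp{\theta^*}{X}|} + \ind{Y\innerp{\theta^*}{X}<0}$, and in the well-specified case condition on $X$ to absorb the indicator into the exponential term.
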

\begin{proof}
    As $\nabla \ell(\theta^{*}, Z)
= -Y\sigma(-Y \innerp{\theta^{*}}{X})X$, for any $v\in S^{d-1}$, one has (using that $\sigma^p \leq \sigma$)
\begin{align*}
  \Expect[\big]{\ainnerp{v}{\nabla \ell(\theta^*,Z)}^p}
  = \Expect[\big]{\sigma(-Y \innerp{\theta^{*}}{X})^p |\innerp{v}{X}|^p}
  \leq \Expect[\big]{\sigma(-Y \innerp{\theta^{*}}{X}) |\innerp{v}{X}|^p}  
  \enspace .
\end{align*}
Moreover, as $\sigma(-|x|)\leqslant \exp(-|x|)$ and $\sigma(x)\leqslant 1$ for every $x \in \R$, we have
\begin{equation*}
  \sigma(-Y \innerp{\theta^{*}}{X})
  \leqslant \exp(-|\innerp{\theta^{*}}{X}|)+\ind{Y\innerp{\theta^*}{X}<0} \enspace,
\end{equation*}
which proves~\eqref{eq:ToBeBounded}.
Moreover, when the model is well-specified, we have
\begin{align}\label{eq:RedWS}
\E[\ind{Y\innerp{\theta^*}{X}<0}|X]&= \sigma(-|\innerp{\theta^{*}}{X}|)\leqslant \exp(-|\innerp{\theta^{*}}{X}|)\enspace.   
\end{align}
Hence, \eqref{eq:ToBeBoundedWS} follows by plugging this bound into \eqref{eq:ToBeBounded}.
\end{proof}

\subsection{Proof of Proposition~\ref{prop:DevGradGauss} (Gaussian design, well-specified model)}
\label{sec:gradient-wellspec-gaussian}

We follow the approach described in the previous section.
Since the model is well-specified, by the second part of Lemma~\ref{lem:UBforGrad}, we need to bound the random variables
$\E[\exp(-|\innerp{\theta^{*}}{X}|)|\innerp{v}{X}|^p]$
when the design $X$ is a standard Gaussian vector.

Consider first the case where $\norm{\theta^*} < e$, so that $\b =e$.
In this case,
for any $v\in S^{d-1}$ and integer $p \geq 2$, we have (using a simple induction on $p$ for the last bound)
\begin{gather}
  \label{eq:BGSN}
  \Expect[\big]{\exp(-|\innerp{\theta^{*}}{X}|)|\innerp{v}{X}|^p}
  \leqslant \E \big[|\innerp{v}{X}|^{p}\big]
		=\frac{2^{p/2}}{\sqrt{\pi}}\Gamma\bigg(\frac{p+1}2\bigg)
		\leqslant \frac{p!}{2}\enspace.
\end{gather}

Consider now the case where $\norm{\theta^*} \geq e$, so that $\b = \norm{\theta^*}$.
For any integer $p \geq 0$, using that
the density of $\innerp{u^*}{X} \sim \gaussdist (0, 1)$ is upper-bounded by $1/\sqrt{2\pi}$,
we obtain
\begin{align}\label{eq:MomGaussu*}
  \E \big[\exp(-|\innerp{\theta^{*}}{X}|)|\innerp{u^*}{X}|^p \big]
  &= \E \big[\exp(-\b|\innerp{u^{*}}{X}|)|\innerp{u^*}{X}|^p \big] \nonumber \\
  &\leq \frac{1}{\sqrt{2\pi}} \int_{\R} |x|^p \exp (-\b |x|) \di x
    = \sqrt{\frac2\pi}\frac{p!}{\b^{p+1}}
    \, .
\end{align}
In addition, for every $v \in S^{d-1}$ such that $\innerp{u^*}{v} = 0$, the random variables $\innerp{u^*}{X}$ and $\innerp{v}{X}$ are independent, thus
\[
\Expect[\big]{\exp(-|\innerp{\theta^{*}}{X}|)|\innerp{v}{X}|^p} = \Expect[\big]{\exp(-|\innerp{\theta^{*}}{X}|)} \, \Expect[\big]{|\innerp{v}{X}|^p}\enspace.
\]
We bound the first term in the right-hand side with \eqref{eq:MomGaussu*} with $p=0$ and the second one with \eqref{eq:BGSN} to obtain
\begin{equation}
  \label{eq:proof-grad-gauss-orth}
  \Expect[\big]{\exp(-|\innerp{\theta^{*}}{X}|)|\innerp{v}{X}|^p}
  \leq \sqrt{\frac2\pi}\frac{1}{\b} \times \frac{p!}{2}
  = \frac{p!}{\sqrt{2\pi} \b}\enspace.
\end{equation}

Combining~\eqref{eq:BGSN},~\eqref{eq:MomGaussu*} and~\eqref{eq:proof-grad-gauss-orth} with Lemma~\ref{lem:UBforGrad}, we get that regardless of the value of $\norm{\theta^*}$, for every integer $p \geq 2$ and every $v \in S^{d-1}$ with $\innerp{u^*}{v} = 0$ we have
\begin{align*}
  \Expect[\big]{\ainnerp{u^*}{\nabla \ell (\theta^*, Z)}^p}
  &\leq  \frac{1}{2} \frac{e^3}{\b^3} \Big( \frac{e}{\b} \Big)^{p-2} p!
    \, ; \\
  \Expect[\big]{\ainnerp{v}{\nabla \ell (\theta^*, Z)}^p}
  &\leq \frac{1}{2} \frac{e}{\b}\, p!
    \, .
\end{align*}
Hence, by Lemma~\ref{lem:template-grad-mom-dev}, for every $t \geq 0$, with probability at least $1 - 3 e^{-t}$ we have that
\begin{align*}
  \norm[\big]{\nabla \wh L_n (\theta^*)}_{H^{-1}}
  &\leq \b^{3/2} \frac{e^{3/2}}{\b^{3/2}} \sqrt{\frac{2 t}{n}} + \b^{3/2} \frac{e}{\b} \frac{t}{n} + 2 \b^{1/2} \frac{e^{1/2}}{\b^{1/2}} \sqrt{\frac{2 (d \log 5 + t)}{n}} + 2 \b^{1/2} \frac{d \log 5 + t}{n} \\
    &\leq \parens[\big]{2 \sqrt{2 e \log 5} + \sqrt{2} e^{3/2}} \sqrt{\frac{d + t}{n}} + \parens[\big]{e + 2 \log 5} \frac{\b^{1/2} (d+t)}{n}
    \, .
\end{align*}
In addition, since by assumption $n \geq 16 \b (d + t)$, we may further bound
\begin{equation*}
  \frac{\b^{1/2} (d + t)}{n}
  \leq \frac{1}{4} \sqrt{\frac{d + t}{n}}  
  \, .
\end{equation*}
Plugging this inequality into the previous bound and simplifying numerical constants concludes the proof of Proposition~\ref{prop:DevGradGauss}.

\subsection{Proof of Proposition~\ref{prop:DevGradWSRD} (regular design, well-specified model)}
\label{sec:gradient-wellspec-regular}

We now prove Proposition~\ref{prop:DevGradWSRD}, which is a deviation bound on the empirical gradient when the model is still well-specified, but the design is no longer Gaussian and instead satisfies Assumptions~\ref{ass:sub-exponential} and~\ref{ass:small-ball}
with parameters $u^{*}$, $\eta = \b^{-1}$ and $c \geq 1$. 

Since the model is well-specified, by \eqref{eq:ToBeBoundedWS}, we have to bound $\E[\exp(-|\innerp{\theta^{*}}{X}|)|\innerp{v}{X}|^p]$ when the design $X$ is regular to prove the result.
This is achieved in the following lemma.

\begin{lemma}\label{lem:MomRegDes}
   Suppose that $X$ satisfies Assumption~\ref{ass:sub-exponential} with parameter $K\geq e$ and Assumption~\ref{ass:small-ball} with parameters $\eta=1/\b$ and $c\geqslant 1$.
   Then, for any integer $p\geqslant 0$ and $v\in S^{d-1}$, one has
\begin{gather}
\label{eq:MomB1}     
  \E \big[\exp(-|\innerp{\theta^{*}}{X}|)|\innerp{u^*}{X}|^p \big]\leqslant \frac{9c}{\b}\bigg(\frac{e K\log(\b)}{2\b}\bigg)^pp!\enspace.\\
\label{eq:MomB2} \E \big[\exp(-|\innerp{\theta^{*}}{X}|)|\innerp{v}{X}|^p \big] \leqslant \frac{5ec}{\b}\bigg(\frac{K\log(\b)}{2}\bigg)^pp!\enspace. 
\end{gather}
\end{lemma}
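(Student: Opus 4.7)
The two moment bounds are established by decomposing each expectation over the scale of $|\innerp{u^*}{X}|$, using Assumption~\ref{ass:small-ball} to control the mass at small scales and Assumption~\ref{ass:sub-exponential} to control tails. A recurring one-dimensional estimate I would first record is $\E[e^{-B|\innerp{u^*}{X}|/2}] \leq 3c/B$, obtained from the layer-cake identity
\[
\E[e^{-B|\innerp{u^*}{X}|/2}] = (B/2)\int_0^\infty e^{-Bt/2}\,\P(|\innerp{u^*}{X}|\leq t)\,dt
\]
by splitting at $t=1/B$ and using $\P(|\innerp{u^*}{X}|\leq t)\leq ct$ for $t\geq 1/B$ and $\P(|\innerp{u^*}{X}|\leq 1/B)\leq c/B$.

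For the first inequality~\eqref{eq:MomB1}, the pointwise bound $e^{-Bz}z^p\leq (2p/(eB))^p e^{-Bz/2}$ (obtained by maximizing $z^p e^{-Bz/2}$ at $z=2p/B$) combined with the preliminary estimate gives
\[
\E[e^{-B|\innerp{u^*}{X}|}|\innerp{u^*}{X}|^p] \leq (2p/(eB))^p\cdot 3c/B.
\]
Stirling's bound $(p/e)^p\leq p!$ converts $(2p/(eB))^p$ into $(2/B)^p p!$, and the hypothesis $K\log B\geq 4$ yields $(2/B)^p\leq (K\log B/(2B))^p$, which delivers the target bound (the universal factor $3$ being absorbed in the prefactor $9c/B$).

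For the second inequality~\eqref{eq:MomB2}, I would first reduce to the case $v\perp u^*$: write $v=\alpha u^*+\beta w$ with $w\in S^{d-1}$, $\innerp{u^*}{w}=0$, $\alpha^2+\beta^2=1$; then $|\innerp{v}{X}|\leq |\innerp{u^*}{X}|+|\innerp{w}{X}|$ since $|\alpha|,|\beta|\leq 1$, so $|\innerp{v}{X}|^p\leq 2^{p-1}(|\innerp{u^*}{X}|^p+|\innerp{w}{X}|^p)$. The first term is handled by~\eqref{eq:MomB1}, so the task reduces to bounding $\E[e^{-B|\innerp{u^*}{X}|}|\innerp{w}{X}|^p]$ for $w\perp u^*$. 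Since $\innerp{u^*}{X}$ and $\innerp{w}{X}$ are not independent for a general regular design, I cannot factor the expectation. Instead, I would apply layer-cake in $|\innerp{w}{X}|$:
\[
\E[e^{-B|\innerp{u^*}{X}|}|\innerp{w}{X}|^p] = \int_0^\infty p s^{p-1}\,\E[e^{-B|\innerp{u^*}{X}|}\ind{|\innerp{w}{X}|>s}]\,ds,
\]
and bound the integrand by the \emph{minimum} of the two estimates $\E[e^{-B|\innerp{u^*}{X}|}]\leq 2c/B$ (from the one-dimensional argument) and $\P(|\innerp{w}{X}|>s)\leq 2e^{-s/K}$ (from Assumption~\ref{ass:sub-exponential}). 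Splitting the $s$-integral at the crossover $s^*\asymp K\log(B/c)$, the portion $s\leq s^*$ contributes $(2c/B)(K\log(B/c))^p$, and the tail contributes $2pK^p\,\Gamma(p,\log(B/c))$. A short case analysis (using $\Gamma(p,a)\lesssim a^{p-1}e^{-a}$ when $a\geq p$ and $\Gamma(p,a)\leq (p-1)!$ otherwise) together with $K\log B\geq 4$ and Stirling absorbs both contributions into the target form $(5ec/B)(K\log B/2)^p p!$.

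The main obstacle is preserving the full $c/B$ prefactor throughout the joint analysis in the second inequality. A direct Cauchy--Schwarz decoupling $\E[e^{-B|\innerp{u^*}{X}|}|\innerp{w}{X}|^p]\leq \E[e^{-2B|\innerp{u^*}{X}|}]^{1/2}\E[|\innerp{w}{X}|^{2p}]^{1/2}$ would only yield $\sqrt{c/B}$, which is already strictly weaker than the target at $p=0$. It is precisely the joint layer-cake argument, taking the minimum of the two tail estimates rather than their product via Hölder, that retains the $c/B$ factor while simultaneously producing the $(K\log B)^p p!$ dependence in $p$.
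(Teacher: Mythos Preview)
Your argument for~\eqref{eq:MomB1} is essentially the paper's: both extract the power via the pointwise bound $t^p e^{-Bt} \leq (2p/(eB))^p e^{-Bt/2}$ and then control $\E[e^{-b|\innerp{u^*}{X}|}]$ by decomposing over the scale of $|\innerp{u^*}{X}|$ (you use layer-cake, the paper uses a dyadic sum; same idea). Two minor omissions: you implicitly assume $\|\theta^*\|=B$, so the case $\|\theta^*\|<e$ (where $B=e$) should be handled separately by the trivial bound $e^{-|\innerp{\theta^*}{X}|}\leq 1$; and your reduction to $v\perp u^*$ is unnecessary (orthogonality plays no role downstream) and costs a factor $2^{p-1}$ that you then have to re-absorb.

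For~\eqref{eq:MomB2} you take a genuinely different route from the paper. The paper applies H\"older with a \emph{tuned} exponent: for $\nu\in(0,1)$,
\[
\E\big[e^{-B|\innerp{u^*}{X}|}|\innerp{v}{X}|^p\big]
\leq \E\big[|\innerp{v}{X}|^{p/\nu}\big]^{\nu}\,\E\big[e^{-B|\innerp{u^*}{X}|}\big]^{1-\nu}
\leq \Big(\frac{K}{2\nu}\Big)^p p!\,\Big(\frac{5c}{B}\Big)^{1-\nu},
\]
and sets $\nu=1/\log B$, so that $(5c/B)^{1-\nu}\leq 5ec/B$ and $(K/(2\nu))^p=(K\log B/2)^p$; this gives the bound in one line. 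Your layer-cake-plus-minimum argument is a valid alternative and also preserves the full $c/B$ prefactor, but it trades a single H\"older step for an incomplete-Gamma case analysis whose constants need care. Your closing diagnosis is therefore slightly off: it is not H\"older that fails, only Cauchy--Schwarz (H\"older with conjugate pair $(2,2)$); H\"older with the asymmetric pair $(\log B,\,\log B/(\log B-1))$ is precisely the device that keeps $c/B$ intact while introducing only the $(\log B)^p$ cost.
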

\begin{proof}
  By Assumption~\ref{ass:sub-exponential}, $\|\innerp{v}{X}\|_{\psi_1}\leqslant K$, so by Definition~\ref{def:psi-alpha},
  \[
    \E \big[\exp(-|\innerp{\theta^{*}}{X}|)|\innerp{v}{X}|^p \big]
    \leq \E[|\innerp{v}{X}|^p]
    \leqslant \frac{K^p}{(2e)^p}p^p\leqslant \parens[\Big]{\frac{K}2}^pp!\enspace.
\]
This proves the result in the case where $\|\theta^*\|\leqslant e$ (since in this case $\b = e$).
From now on, we assume that $\|\theta^*\|>e$, so that $\b=\|\theta^*\|>e$.

Our first step is to control expectations of the form $\E [ \exp (- \alpha \ainnerp{\theta^*}{X}) ]$ for $\alpha > 0$.
This control only requires Assumption~\ref{ass:small-ball}, and relies on a dyadic decomposition.
Specifically, we have
\begin{align*}
  \exp (- \alpha \ainnerp{\theta^*}{X})
  &= \exp (- \alpha \b \ainnerp{u^*}{X}) \\
  &\leq \indic[\Big]{\ainnerp{u^*}{X} \leq \frac{1}{\b}} + \sum_{k\geq 0} \exp (- \alpha \b \ainnerp{u^*}{X}) \indic[\Big]{\frac{2^k}{\b} < \ainnerp{u^*}{X} \leq \frac{2^{k+1}}{\b}} \\
  &\leq \indic[\Big]{\ainnerp{u^*}{X} \leq \frac{1}{\b}} + \sum_{k\geq 0} \exp (- \alpha 2^k) \indic[\Big]{\ainnerp{u^*}{X} \leq \frac{2^{k+1}}{\b}}
    \, .
\end{align*}
Taking expectations above and using Assumption~\ref{ass:small-ball}, we obtain
\begin{align}
  \Expect{\exp (- \alpha \ainnerp{\theta^*}{X})}
  &\leq \Probab[\Big]{\ainnerp{u^*}{X} \leq \frac{1}{\b}} + \sum_{k\geq 0} \exp (- \alpha 2^k) \,\Probab[\Big]{\ainnerp{u^*}{X} \leq \frac{2^{k+1}}{\b}} \nonumber \\
  &\leq \frac{c}{\b} \braces[\bigg]{1 + \sum_{k\geq 0} 2^{k+1} \exp (- \alpha 2^k) }
    \leq \frac{c}{\b} \braces[\bigg]{1 + \sum_{k\geq 0} 4 \int_{2^{k-1}}^{2^{k}} \exp (-\alpha t) \di t} \nonumber \\
  &\leq \frac{c}{\b} \braces[\bigg]{1 + 4 \int_{0}^{\infty} \exp (-\alpha t) \di t}
    = \frac{c}{\b} \braces[\bigg]{1 + \frac{4}{\alpha}}
    \label{eq:bound-exp-regular}
    \, .
\end{align}

We now control the first expectation~\eqref{eq:MomB1}.
Using that $(x/2)^p/p! \leq e^{x/2}$ for every $x \in \R$ and integer $p \geq 0$, we obtain
\begin{align*}
  \Expect[\big]{\ainnerp{u^*}{X}^p \exp (- \ainnerp{\theta^*}{X})}
  &= \b^{-p} \cdot \Expect[\big]{\ainnerp{\theta^*}{X}^p \exp (- \ainnerp{\theta^*}{X})} \\
  &\leq \b^{-p} \cdot \Expect[\big]{2^p p! \exp(\ainnerp{\theta^*}{X}/2) \exp (- \ainnerp{\theta^*}{X})} \\
  &= \parens[\Big]{\frac{2}{\b}}^p p! \, \Expect{\exp (-\ainnerp{\theta^*}{X}/2)} \\
  &\leq \frac{9 c}{\b} \Big( \frac{2}{\b} \Big)^p p!
    \, ,
\end{align*}
where the last inequality comes from~\eqref{eq:bound-exp-regular}.

We now turn to the proof of inequality~\eqref{eq:MomB2}.
For any $\nu \in (0, 1)$, Hölder's inequality, the bound~\eqref{eq:bound-exp-regular}, and the inequality $(p/e)^p \leq p!$ imply that
  \begin{align*}
    \E \big[\exp(-|\innerp{\theta^{*}}{X}|)|\innerp{v}{X}|^p \big]
    &\leqslant \E[|\innerp{v}{X}|^{p/\nu}]^{\nu}\E \big[\exp\big(-|\innerp{\theta^{*}}{X}|/(1-\nu)\big)\big]^{1-\nu} \\
    &\leq \Big[ \Big( \frac{K p}{2 e \nu} \Big)^{p/\nu} \Big]^{\nu} \E \big[\exp\big(-|\innerp{\theta^{*}}{X}| \big)\big]^{1-\nu} \\
    &\leqslant \bigg(\frac{K}{2\nu}\bigg)^pp!\bigg(\frac{5c}{\b}\bigg)^{1-\nu}\enspace.
\end{align*}
Letting $\nu=1/\log (\b)$, this gives
\begin{equation*}
  \E \big[\exp(-\b|\innerp{u^{*}}{X}|)|\innerp{v}{X}|^p \big]\leqslant \frac{5ec}{\b}\bigg(\frac{K\log(\b)}{2}\bigg)^pp!\enspace,
\end{equation*}
which concludes the proof of Lemma~\ref{lem:MomRegDes}.
\end{proof}

\parag{Conclusion of the proof of Proposition~\ref{prop:DevGradWSRD}}
By combining Lemmas~\ref{lem:template-grad-mom-dev},~\ref{lem:UBforGrad} and~\ref{lem:MomRegDes}, we obtain that with probability at least $1-3 e^{-t}$, one has
\begin{align*}
  \norm[\big]{\nabla \wh L_n (\theta^*)}_{H^{-1}}
  &\leq \b^{3/2} \Big( \frac{c_1 \log^2 \b}{\b^3} \Big)^{1/2} \sqrt{\frac{2 t}{n}} + \b^{3/2} \frac{c_2 \log \b}{\b} \frac{t}{n} + \\
  & \quad + 2 \b^{1/2} \Big( \frac{c_3 \log^2 \b}{\b} \Big)^{1/2} \sqrt{\frac{2 (d \log 5 + t)}{n}} + 2 \b^{1/2} \times {c_4 \log \b} \frac{d \log 5 + t}{n} \\
  &\leq c_5 \log (\b) \bigg( \sqrt{\frac{d + t}{n}} + \frac{\b^{1/2}(d + t)}{n} \bigg)
    \, ,
\end{align*}
where $c_1, \dots, c_5$ are constants that only depend on $c$ and $K$.
Now since $n \geq \b (d+ t)$, one has $\b^{1/2} (d+t)/n \leq \sqrt{(d+t)/n}$; plugging this into the above inequality concludes the proof.

\subsection{Proof of Proposition~\ref{prop:gradient-dev-MS} (regular design, misspecified model)}
\label{sec:gradient-misspec-regular}

We now turn to the proof of Proposition~\ref{prop:gradient-dev-MS}, which provides a deviation bound on the empirical gradient in the misspecified case.
Specifically, $X$ satisfies Assumptions~\ref{ass:sub-exponential} and~\ref{ass:small-ball} and the model may not be well-specified.
The parameter $\theta^*$ is defined using the joint distribution of $Z=(X,Y)$ by
\begin{equation*}
	\theta^*= \underset{\theta \in \R^d}{\argmin} \, L (\theta), \quad \text{where} \quad
	L (\theta) = \E [ \ell (\theta,Z)] \, .
\end{equation*}

Recall that, from Lemma~\ref{lem:UBforGrad}, the main task is to bound, for all $p\geqslant 2$, the expectations
\begin{equation*}
    \E\big[ (\exp(-|\innerp{\theta^{*}}{X}|)
		+\ind{Y\innerp{\theta^*}{X}<0}) |\innerp{v}{X}|^p\big] \, .
\end{equation*}
The first term is bounded using Lemma~\ref{lem:MomRegDes}.
Therefore, we focus on the second term, namely
\begin{equation}
  \label{eq:misp-moment-todo}
  \E\big[\ind{Y\innerp{\theta^*}{X}<0}|\innerp{v}{X}|^p\big]\enspace.
\end{equation}
The main difficulty here is that unlike in the well-specified case, we no longer make any assumption on the conditional distribution of $Y$ given $X$.
In particular, the bound on the conditional probability of error
\begin{equation*}
  \P (Y \innerp{\theta^*}{X} < 0 | X)
  = \sigma (- \ainnerp{\theta^*}{X})
  \leq \exp (-\ainnerp{\theta^*}{X})
  \, ,
\end{equation*}
which implies the bound~\eqref{eq:ToBeBoundedWS}, no longer holds in the general misspecified case.

Given the lack of any information about the conditional distribution of $Y$ given $X$, it may seem reasonable to simply bound the indicator $\indic{Y \innerp{\theta^*}{X} < 0} \leq 1$ in the expectation~\eqref{eq:misp-moment-todo}.
One could show that this would lead to a bound of the form $\norm{\nabla \emp{L} (\theta^*)}_{H^{-1}} \lesssim \sqrt{(\b d + \b^3 t)/n}$ with probability at least $1-2e^{-t}$ for any $t \geq 1$.

However, as we show below, one can improve over this naive bound even in the misspecified case.
This comes from the fact that, if the minimizer $\theta^*$ of the risk has a large norm, then the (unconditional) probability of error $\P (Y \innerp{\theta^*}{X} < 0)$ (as well as another moment) must be small.
This implies the improved upper bound $\norm{\nabla \emp{L} (\theta^*)}_{H^{-1}} \lesssim \log (\b) \sqrt{(d + \b t)/n}$ of Proposition~\ref{prop:gradient-dev-MS}, which turns out to be best possible (except possibly for the $\log (\b)$ factor) in the general misspecified case.

\parag{Bounds on the first moments}

The key step in the analysis is the following lemma, which provides a precise control of the expectation~\eqref{eq:misp-moment-todo} in the case where $p=0$ and $p=1$.
In fact, the bounds of Lemma~\ref{lem:first-moments-misspecified} on these first two moments exhibit the same dependence on $\b$ in the general misspecified case as in the well-specified case.
These first estimates are an important step towards the bound for $p=2$ and then for general $p$.
\begin{lemma}
  \label{lem:first-moments-misspecified}
Suppose that $X$ satisfies Assumption~\ref{ass:small-ball} with parameters $(u^{*}, \b^{-1}, c)$.
Then,
  \begin{gather}
    \label{eq:bound-first-misspecified}
    \E \big[ \abs{\innerp{\theta^*}{X}} \indic{Y\innerp{\theta^*}{X} < 0} \big]
    \leq %
      \frac{6c}{\b^2} %
      \, ; \\
  \P(Y \innerp{\theta^*}{X} < 0)
   \leq \frac{3.21c}{\b}
  \, .
  \label{eq:bound-classif-misspecified}
\end{gather}    
\end{lemma}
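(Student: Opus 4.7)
In the misspecified setting the only piece of information tying $\theta^*$ to the distribution of $(X,Y)$ is the first-order optimality condition $\nabla L(\theta^*)=0$, i.e.\ $\E[Y\sigma(-Y\innerp{\theta^*}{X})X]=0$, which replaces the pointwise well-specification inequality $\P(Y\innerp{\theta^*}{X}<0\mid X)\leq e^{-\abs{\innerp{\theta^*}{X}}}$ used in~\eqref{eq:RedWS}. Writing $S=Y\innerp{\theta^*}{X}$ and $W=\innerp{u^*}{X}$ so that $S=BYW$, I would take the inner product of this vector identity with $\theta^*$ to extract the scalar identity
\[
  \E[S\,\sigma(-S)]=0,
\]
which is the engine of the whole proof and the sole place where the definition of $\theta^*$ enters.

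\textbf{First bound.} Splitting $S=S_+-S_-$, the identity above rewrites as $\E[S_+\sigma(-S_+)\indic{S\geq 0}]=\E[S_-\sigma(S_-)\indic{S<0}]$. On $\{S<0\}$ one has $\sigma(S_-)\geq 1/2$, so the right-hand side is bounded below by $\tfrac12\E[\abs{S}\indic{S<0}]$; on $\{S\geq 0\}$ the elementary bound $\sigma(-s)\leq e^{-s}$ for $s\geq 0$ upper-bounds the left-hand side by $\E[\abs{S}e^{-\abs{S}}]=B\,\E[\abs{W}e^{-B\abs{W}}]$. Combining yields
\[
  \E\big[\abs{\innerp{\theta^*}{X}}\indic{S<0}\big]\leq 2B\,\E\big[\abs{W}e^{-B\abs{W}}\big].
\]
The remaining one-dimensional moment is then controlled using Assumption~\ref{ass:small-ball} at scale $\eta=1/B$ via a dyadic partition of $\R_+$ into $A_{-1}=\{\abs{W}\leq 1/B\}$ and the rings $A_k=\{2^k/B<\abs{W}\leq 2^{k+1}/B\}$ for $k\geq 0$. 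On $A_{-1}$ the bound $\abs{W}e^{-B\abs{W}}\leq 1/B$ combined with $\P(A_{-1})\leq c/B$ contributes $c/B^2$; on each $A_k$ the exponential factor $e^{-2^k}$ beats the $4^k$ growth of $\abs{W}$ times the small-ball bound $\P(A_k)\leq c\cdot 2^{k+1}/B$, and the resulting geometric series $\sum_k 4^k e^{-2^k}$ converges, producing another $O(c/B^2)$. Careful bookkeeping of the two parts yields the constant $6$ in \eqref{eq:bound-first-misspecified}.

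\textbf{Second bound.} I would split according to the magnitude of $\abs{S}$ at the threshold $1$:
\[
  \P(S<0)\leq\P(\abs{S}\leq 1)+\P(S<0,\abs{S}>1).
\]
The first term equals $\P(\abs{W}\leq 1/B)\leq c/B$ by Assumption~\ref{ass:small-ball}. For the second, the trivial inequality $\indic{\abs{S}>1}\leq\abs{S}$ together with \eqref{eq:bound-first-misspecified} yields $\P(S<0,\abs{S}>1)\leq \E[\abs{\innerp{\theta^*}{X}}\indic{S<0}]$, which is negligible compared to $c/B$. Summing the two contributions produces the $O(c/B)$ bound; the explicit constant $3.21$ is obtained by sharpening the dyadic step, most simply via an integration by parts of $\E[\abs{W}e^{-B\abs{W}}]$ against the tail $\P(\abs{W}>t)$ and the inequality $\P(\abs{W}\leq t)\leq ct$ for $t\geq 1/B$.

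\textbf{Main obstacle.} There is no genuinely high-dimensional difficulty: once the projection onto $u^*$ is performed, everything reduces to a one-dimensional computation for $\abs{W}$. The delicate point is purely quantitative: both expectations concentrate at the critical scale $\abs{W}\sim 1/B$ where $e^{-B\abs{W}}=\Theta(1)$, and neither the small-ball bound nor the exponential decay is effective in isolation, so the dyadic decomposition (or an integration by parts) must combine them optimally in order to reach the tight constants $6$ and $3.21$ appearing in the statement.
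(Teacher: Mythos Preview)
Your approach is correct and matches the paper's almost exactly: first-order optimality projected onto $u^*$, reduction to the one-dimensional moment $\E[|W|e^{-B|W|}]$ controlled by the same dyadic/small-ball computation (which the paper packages as the $p=1$ case of~\eqref{eq:MomB1}), and the identical splitting $1\leq |\langle\theta^*,X\rangle|+\indic{|\langle\theta^*,X\rangle|\leq 1}$ for the second bound. The one refinement you miss is that the paper obtains the \emph{exact} identity $\E[|\langle\theta^*,X\rangle|\indic{S<0}]=\E[|\langle\theta^*,X\rangle|\,\sigma(-|\langle\theta^*,X\rangle|)]$ (equation~\eqref{eq:misclassif-identity}) by noting that the $\sigma(-|S|)\indic{S<0}$ cross-terms cancel when one uses $\sigma(t)=1-\sigma(-t)$, thereby avoiding the factor-of-2 loss you incur from the cruder bound $\sigma(S_-)\geq 1/2$.
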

\begin{remark}
 The second bound also holds when $\|\theta^*\|\leqslant e$ as it is trivial then and the first one also becomes trivial (and therefore holds) in this case as soon as $c\geqslant e^2/6\approx 1.23$.   
\end{remark}

\begin{proof}
Since $L$ is minimized in $\theta^*$, one has $\frac{\di}{\di t}\big|_{t=1} L (t \theta^*) = 0$.
Hence
\begin{align*}
  0
  &= \E \big[ Y \innerp{\theta^*}{X} \sigma (- Y \innerp{\theta^*}{X}) \big] \\
  &= \E \Big[ \abs{\innerp{\theta^*}{X}} \sigma \big( - \abs{\innerp{\theta^*}{X}} \big) \indic{Y \innerp{\theta^*}{X} \geq 0} - \abs{\innerp{\theta^*}{X}} \sigma \big( \abs{\innerp{\theta^*}{X}} \big) \indic{Y \innerp{\theta^*}{X} < 0} \Big]
    \, .
\end{align*}
Now, using that $\sigma (t) = 1-\sigma (-t)$, we obtain:
\begin{align*}
  0
  &= \E \Big[ \abs{\innerp{\theta^*}{X}} \Big\{ \sigma (- \abs{\innerp{\theta^*}{X}}) \big[ 1 - \indic{Y \innerp{\theta^*}{X} < 0} \big] - \big[ 1 - \sigma (- \abs{\innerp{\theta^*}{X}}) \big] \indic{Y \innerp{\theta^*}{X} < 0} \Big\} \Big] \\
  &= \E \Big[ \abs{\innerp{\theta^*}{X}} \big\{ \sigma (-\abs{\innerp{\theta^*}{X}}) - \indic{Y \innerp{\theta^*}{X} < 0} \big\} \Big]
    \, ,
\end{align*}
which writes
\begin{equation}
\label{eq:misclassif-identity}
\E \big[ \abs{\innerp{\theta^*}{X}} \indic{Y \innerp{\theta^*}{X} < 0} \big]
  = \E \big[ \abs{\innerp{\theta^*}{X}} \sigma (-\abs{\innerp{\theta^*}{X}}) \big] \, .  
\end{equation}
By \eqref{eq:MomB1} %
applied with $p=1$, this shows~\eqref{eq:bound-first-misspecified}.
Moreover, as $1 \leq \abs{\innerp{\theta^*}{X}} + \indic{\abs{\innerp{\theta^*}{X}} \leq 1}$, we have
\begin{align}
\P (Y \innerp{\theta^*}{X} < 0)
	&\leq \E \big[ \abs{\innerp{\theta^*}{X}} \indic{Y \innerp{\theta^*}{X} < 0} \big]
		+ \E\big[ \indic{\abs{\innerp{\theta^*}{X}} \leq 1} \indic{Y \innerp{\theta^*}{X} < 0} \big]
		\nonumber \\
\notag	&\leq \E \big[ \abs{\innerp{\theta^*}{X}} \sigma (-\abs{\innerp{\theta^*}{X}}) \big]
		+ \P(\abs{\innerp{\theta^*}{X}} \leq 1) \, .
\end{align}
Bounding the first term with~\eqref{eq:bound-first-misspecified} and the second using Assumption~\ref{ass:small-ball} concludes the proof.
\end{proof}

\parag{Bounds on the second moments}
In this paragraph, we bound the expectation of interest for $p=2$. 
We deduce an upper bound on the variance of the gradients.
\begin{lemma}\label{lem:SecMomGradGen}
Suppose that $X$ satisfies
Assumptions~\ref{ass:sub-exponential} and~\ref{ass:small-ball} with parameters $K\geqslant e$, $(u^{*}, \b^{-1}, c)$.
Then, for any $v\in S^{d-1}$,
\begin{align*}
  \E \big[\ind{Y\innerp{\theta^*}{X}<0}\innerp{u^*}{X}^2 \big]
  &\leqslant \frac{6 cK\log (K \b^2)}{\b^2}\, .\\
  \E \big[\ind{Y\innerp{\theta^*}{X}<0}\innerp{v}{X}^2 \big]
  &\leqslant \frac{\max\{4e^2,3.21c K^{2}\log^2 \b \}}{4e\b} \, .
\end{align*}
\end{lemma}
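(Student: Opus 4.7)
Both bounds rest on Lemma~\ref{lem:first-moments-misspecified}---whose content is a first-moment control derived from the optimality condition $\nabla L(\theta^*)=0$---combined with the sub-exponential tails of linear marginals furnished by Assumption~\ref{ass:sub-exponential}. The first bound crucially exploits the direction $u^*$ via the identity $\innerp{u^*}{X} = \innerp{\theta^*}{X}/B$ (valid when $\|\theta^*\|=B\geq e$), whereas the second applies uniformly in $v \in S^{d-1}$ and lacks this structure. The case $\|\theta^*\|\leq e$ in either bound follows from isotropy since $\E[\innerp{u^*}{X}^2] = \E[\innerp{v}{X}^2] = 1$ is dominated by the stated right-hand sides once $K\geq e$ and $c\geq 1$; I focus on $\|\theta^*\|\geq e$.

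For the first bound I would apply the layer-cake formula
\[
\E\!\left[\ind{Y\innerp{\theta^*}{X}<0}\innerp{u^*}{X}^2\right] = \int_0^\infty 2t \, \P\!\left(Y\innerp{\theta^*}{X}<0, \, |\innerp{u^*}{X}|>t\right) dt
\]
and bound the integrand by the minimum of two estimates: Markov applied to Lemma~\ref{lem:first-moments-misspecified} gives $\P(\cdots)\leq (1/t)\E[\ind{\cdots}|\innerp{u^*}{X}|] \leq 6c/(tB^{3})$, while sub-exponentiality gives $\P(|\innerp{u^*}{X}|>t)\leq 2e^{-t/K}$. Splitting the integral at the crossover $t^\star \asymp K\log(KB^2)$ where the two estimates coincide, both regimes contribute $O(cK\log(KB^2)/B^{3})$, which implies the claimed bound $6cK\log(KB^2)/B^2$ with room to spare (since $1/B\leq 1$). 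A simpler but slightly weaker alternative is to split the original expectation at a threshold $A\asymp K\log(KB^2)$: on $\{|\innerp{u^*}{X}|\leq A\}$ use $\innerp{u^*}{X}^2\leq A|\innerp{u^*}{X}|$ together with Lemma~\ref{lem:first-moments-misspecified}, and on the complement use the sub-exponential tail---this yields $\log^2$ in place of $\log$ but is easier to track.

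For the second bound I would apply Hölder's inequality: for any $p>1$ with conjugate $q=p/(p-1)$,
\[
\E\!\left[\ind{Y\innerp{\theta^*}{X}<0}\innerp{v}{X}^2\right] \leq \P(Y\innerp{\theta^*}{X}<0)^{1-1/p} \, \E\!\left[|\innerp{v}{X}|^{2p}\right]^{1/p}.
\]
Apply \eqref{eq:bound-classif-misspecified} to the probability factor and the moment bound $\E[|\innerp{v}{X}|^{2p}]\leq (K/2)^{2p}(2p)!$ (from Assumption~\ref{ass:sub-exponential}, as in the proof of Lemma~\ref{lem:MomRegDes}), then invoke Stirling to replace $((2p)!)^{1/p}$ by its asymptotic $\sim (2p/e)^2$. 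Choosing $p=\log B$ makes $(B/(3.21c))^{1/p}\leq e$ and yields a bound of order $cK^2\log^2(B)/B$, matching the second claim up to numerical constants. The main obstacle throughout is tracking the explicit constants---in particular, securing the prefactor $1/(4e)$ in the second bound requires carefully tuning $p$ (perhaps as $p = (\log B)/2$ rather than $\log B$) and using a sharper form of Stirling than $(n!)^{1/n}\leq n/e$; the qualitative scalings $K\log(KB^2)/B^2$ and $K^2\log^2(B)/B$ follow directly from the arguments above.
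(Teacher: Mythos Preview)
Your proposal is correct. For the second inequality your approach coincides with the paper's: the paper packages the H\"older-with-$p\sim\log(1/\eps)$ argument into the first part of Lemma~\ref{lem:tuned-holder-order-2}, applied with $U=\ind{Y\innerp{\theta^*}{X}<0}$, $V=\innerp{v}{X}$, and $\eps=3.21c/B$ from \eqref{eq:bound-classif-misspecified}; the choice $p=\log(1/\eps)$ there is exactly your $p\asymp\log B$, and the prefactor $1/(4e)$ falls out of the moment normalization $\|V\|_{2p}\leq Kp/e$ rather than $(K/2)^{2p}(2p)!$ with Stirling.

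For the first inequality your route is genuinely different. The paper again invokes Lemma~\ref{lem:tuned-holder-order-2}, this time its second part: with $U=\ainnerp{u^*}{X}\ind{Y\innerp{\theta^*}{X}<0}$ and $V=\ainnerp{u^*}{X}$, both sub-exponential with norm $\leq K$, and $\E[U]\leq 6c/B^2$ from \eqref{eq:bound-first-misspecified}, it obtains $\E[UV]\leq \eps K\log(K/\eps)$ directly. The proof of that lemma is a two-step H\"older interpolation (first splitting $UV$, then interpolating $\E[U^q]$ between $\E[U]$ and a higher $\psi_1$-moment), tuned at $p\sim 2\log(K/\eps)$. Your layer-cake-plus-splitting argument is more elementary and, as you note, actually produces a bound scaling like $K\log(B)/B^3$ rather than $/B^2$, a factor of $B$ stronger than stated; the price is that the tail region of the layer-cake integral contributes a term of order $K^2\log(B)/B^3$ (rather than $cK\log(B)/B^3$), so matching the exact constant $6cK\log(KB^2)/B^2$ requires absorbing that $K^2$ term via the extra factor of $B$ you have in hand, which works whenever $K\lesssim B$ but needs a line of justification otherwise. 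The paper's H\"older route sidesteps this by never separating the indicator from one factor of $\ainnerp{u^*}{X}$.
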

\begin{proof}
We start with the second inequality.
As $\E[\innerp{v}{X}^2]=1$, the left-hand side is smaller than $1$ while the right-hand side is at least $1$ if $\b=e$.
Therefore, we can assume that $\|\theta^*\|=\b>e$.

 By \eqref{eq:bound-classif-misspecified}, $\P(Y \innerp{\theta^*}{X} < 0) \leq \min\{1,  3.21c/\b\}$.
  Hence, by the first part of Lemma~\ref{lem:tuned-holder-order-2},
  \begin{equation*}
    \E \big[\ind{Y \innerp{\theta^*}{X} < 0} \innerp{v}{X}^2 \big]
    \leq \frac{3.21 cK^2 \log^2 ( \b )}{4e\b} \, ,
  \end{equation*}
 which shows the second inequality since $\b\geqslant e$.

   For the first inequality, when $\norm{\theta^*} < e$, the upper bound is larger than $1$ while 
   \[
   \E \big[\ind{Y\innerp{\theta^*}{X}<0}\innerp{u^*}{X}^2 \big] \leq \E [ \innerp{u^*}{X}^2 ] =1\enspace,
   \]
   so the inequality holds in this case.
  Hence, we may assume that $\norm{\theta^*} \geq e$.
  In this case, by \eqref{eq:bound-first-misspecified},
  \begin{equation*}
    \E \big[ \ainnerp{u^*}{X} \indic{Y \innerp{\theta^*}{X} < 0} \big]
    \leq \frac{6  c}{\b^2}
    \, .
  \end{equation*}
 Thus, applying the second part of Lemma~\ref{lem:tuned-holder-order-2} below with $U = \ainnerp{u^*}{X} \indic{Y \innerp{\theta^*}{X} < 0}$ and $V = \ainnerp{u^*}{X}$, we get
  \begin{equation*}
    \E \big[ \innerp{u^*}{X}^2 \indic{Y \innerp{\theta^*}{X} < 0} \big]
    = \E [U V]
    \leq \frac{6 c}{\b^2} K \log \Big( e \vee \frac{K\b^2}{6 c} \Big)
    \leq \frac{6 cK\log (K \b^2)}{\b^2} 
    \, . \qedhere
  \end{equation*}
\end{proof}

Now, combining Lemmas~\ref{lem:UBforGrad},~\ref{lem:MomRegDes} and~\ref{lem:SecMomGradGen}, we obtain that 
for some constants $c_1, c_2$ depending only on $c, K$ (and regardless of whether $\norm{\theta^*} \geq e$ or not), one has for any $v\in S^{d-1}$ that
\begin{gather}
  \label{eq:var1}  \E[\innerp{u^*}{\nabla\ell(\theta^*,Z)}^2]
  \leq \frac{c_1 \log^2 (\b)}{\b^2}
  \, ;\\
  \label{eq:Var2}\E[\innerp{v}{\nabla\ell(\theta^*,Z)}^2]\leqslant
  \frac{c_2 \log^2 (\b)}{\b}
  \, .
\end{gather}
In addition, for every $v \in S^{d-1}$, one has
$  \norm{\innerp{v}{\nabla \ell (\theta^*, Z)}}_{\psi_1}  \leq K$
since, by Assumption~\ref{ass:sub-exponential}, for every integer $p \geq 2$,
\begin{equation*}
  \norm{\innerp{v}{\nabla \ell (\theta^*, Z)}}_p
  = \Expect[\big]{\abs{Y \sigma (-Y \innerp{\theta^*}{X})}^p \ainnerp{v}{X}^p}^{1/p}
  \leq \norm{\innerp{v}{X}}_p
  \leq \frac{K p}{2 e}
  \, .
\end{equation*}
Therefore, by Lemma~\ref{lem:sub-gamma-exponential}, the random variable $\innerp{u^*}{\nabla \ell(\theta^*, Z)}$ is $(c_3 \log^2 (\b)/\b^2, c_4 \log (\b))$-sub-gamma, while $\innerp{v}{\nabla \ell(\theta^*, Z)}$ is $(c_5 \log^2 (\b)/\b, c_6 \log (\b))$-sub-gamma, where $c_3, c_4, \dots$ only depend on $c, K$.
By Lemma~\ref{lem:template-grad-mom-dev}, we deduce that for every $t \geq 0$, with probability at least $1-3e^{-t}$ we have that
\begin{align*}
  \norm[\big]{\nabla \emp{L} (\theta^*)}_{H^{-1}}
  &\leq c_7 \bigg[\b^{3/2} \Big( \frac{\log^2 \b}{\b^2} \Big)^{1/2} \sqrt{\frac{t}{n}} + \b^{3/2} \log (\b)\frac{t}{n} + \b^{1/2} \Big( \frac{\log^2 \b}{\b} \Big)^{1/2} \sqrt{\frac{d + t}{n}} + \\
  & \quad + \b^{1/2} \log (\b) \frac{d + t}{n}
    \bigg] \\
  &\leq c_8 \log (\b) \bracks[\bigg]{\sqrt{\frac{d + \b t}{n}} + \frac{\b^{1/2} (d + \b t)}{n}} \\
  &\leq 2 c_8 \log (\b) \sqrt{\frac{d + \b t}{n}}
    \, ,
\end{align*}
where the last inequality comes from the fact that $n \geq \b (d+ \b t)$.
This concludes the proof of Proposition~\ref{prop:gradient-dev-MS}.
We now conclude this section with the following lemma, which was used in the proof of Lemma~\ref{lem:SecMomGradGen} above.

\begin{lemma}
  \label{lem:tuned-holder-order-2}
  Let $U, V$ be nonnegative real random variables such that $\E [U] \leq \eps$ and $\norm{V}_{\psi_1} \leq K$ for some $\eps, K > 0$.
  \begin{enumerate}
  \item If $U \leq 1$ almost surely and $\eps \leq 1$, then $\E [U V^2] \leq \eps \cdot \frac{K^2 \log^2 ( e \vee \eps^{-1} )}{4 e}$.
  \item If $\norm{U}_{\psi_1} \leq K$, then $\E [U V]
      \leq \eps K \log (e \vee K/\eps)$.
  \end{enumerate}
\end{lemma}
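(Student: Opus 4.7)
The plan is to establish both parts by a truncation argument on $V$ at a threshold $T$, balancing a main term of order $T^k \, \E[U]$ (with $k=2$ in part 1 and $k=1$ in part 2) against a tail contribution controlled by the sub-exponential tail of $V$. Throughout, I rely on the two standard consequences of $\norm{V}_{\psi_1} \leq K$ recorded in Definition~\ref{def:psi-alpha}: the tail bound $\P(V \geq t) \leq 2 e^{-t/K}$ and the moment bound $\E[V^p] \leq (Kp/(2e))^p$ for $p \geq 1$.

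For part 1, I split
\begin{equation*}
\E[UV^2] = \E[UV^2 \ind{V \leq T}] + \E[UV^2 \ind{V > T}] \leq T^2 \, \E[U] + \E[V^2 \ind{V > T}],
\end{equation*}
using $U \leq 1$ in the second term. The first summand is at most $T^2 \eps$, and integration by parts against the sub-exponential tail of $V$ bounds the second by a quantity of order $(T^2 + KT + K^2)\, e^{-T/K}$. Choosing $T = K \log(e \vee \eps^{-1})$ makes $e^{-T/K} \leq \eps$ and yields a bound of the correct form $\E[UV^2] \lesssim \eps K^2 \log^2(e \vee \eps^{-1})$. To sharpen the numerical constant, I recast this as a H\"older argument: write $\E[UV^2] \leq \E[U^p]^{1/p} \, \E[V^{2q}]^{1/q}$ with $1/p + 1/q = 1$; use $U \leq 1$ to get $\E[U^p] \leq \E[U] \leq \eps$; apply the moment bound to obtain $\E[V^{2q}]^{1/q} \leq (Kq/e)^2$; and optimize over $q \asymp \log(\eps^{-1})/2$, which produces the claimed form (the case $\eps > e^{-1}$ being handled by the trivial bound $\E[UV^2] \leq \E[V^2]$).

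For part 2, the same truncation gives $\E[UV] \leq T\eps + \E[UV \ind{V > T}]$. I control the tail by Cauchy--Schwarz,
\begin{equation*}
\E[UV \ind{V > T}] \leq \E[U^2]^{1/2} \, \E[V^2 \ind{V > T}]^{1/2},
\end{equation*}
bounding $\E[U^2] \lesssim K^2$ via $\norm{U}_{\psi_1} \leq K$, and $\E[V^2 \ind{V > T}]$ exactly as in part 1. Choosing $T \asymp K \log(e \vee K/\eps)$ then balances $T\eps$ against the tail term and yields $\E[UV] \lesssim \eps K \log(e \vee K/\eps)$, which is the advertised bound.

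The main technical obstacle is pinning down the sharp numerical constants. The loose truncation alone produces the correct functional form but with suboptimal constants; obtaining the sharp factor in part 1 requires the H\"older-with-optimized-exponent computation combined with the precise moment bound on $V^{2q}$. In part 2 one must be careful that Cauchy--Schwarz does not inflate the linear $K \log(K/\eps)$ dependence into a $\sqrt{K/\eps}$ factor: this is precisely why the assumption $\norm{U}_{\psi_1} \leq K$ (rather than only $\E[U] \leq \eps$) is essential in controlling the tail term.
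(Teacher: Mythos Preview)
Your H\"older argument for part 1 is exactly the paper's proof: write $\E[UV^2] \leq \|V\|_{2q}^2 \, \E[U]^{1-1/q}$ (using $U^{q/(q-1)} \leq U$ since $U \leq 1$), bound $\|V\|_{2q}$ via the $\psi_1$ moment inequality, and optimize with $q = \log(1/\eps)$. The truncation variant you sketch first is a correct warm-up but, as you note, loses the constant.

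For part 2 your route is genuinely different from the paper's. The paper does not truncate $V$; instead it applies H\"older twice. First $\E[UV] \leq \|V\|_p \, \|U\|_q$ with $q = p/(p-1)$, and then it bounds $\E[U^q]$ by a second H\"older step that interpolates between $\E[U] \leq \eps$ and the $\psi_1$ moment bound on $U$: writing $U^q = U^{1-1/r} \cdot U^{(q-1)+1/r}$ and choosing $r = p-1$ yields $\E[U^q]^{1/q} \leq \eps \, (K/(e\eps))^{2/p}$. Taking $p = 2\log(e \vee K/\eps)$ then gives the bound with constant exactly $1$. Your truncation-plus-Cauchy--Schwarz argument is correct and more elementary, but the Cauchy--Schwarz step on the tail costs a constant factor, so you end up with $\E[UV] \leq C \, \eps K \log(e \vee K/\eps)$ for some absolute $C > 1$ rather than $C = 1$. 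For the downstream use of the lemma this is harmless; if you want the sharp constant as stated, the iterated H\"older is the cleaner path. Your closing remark about why $\|U\|_{\psi_1} \leq K$ matters is right in spirit (some higher-moment control on $U$ is needed), though in your scheme the precise role is simply to bound $\E[U^2]$, not to avoid a $\sqrt{K/\eps}$ blow-up.
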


\begin{proof}
  We start with the first point.
  Using Hölder's inequality, for any $p > 1$ we have (using that $u^{p/(p-1)} \leq u$ for $u \in [0,1]$)
  \begin{align*}
    \E \big[ U V^2 \big]  
    &\leq \E \big[ \abs{V}^{2p} \big]^{1/p} \, \E \big[ U^{p/(p-1)} \big]^{1 - 1/p} 
      \leq \| V \|_{2p}^2 \, \E [U]^{1 - 1/p} \\
    &\leq \Big( \frac{K p}{2 e} \Big)^2 \eps^{1-1/p}
      = \frac{K^2 \eps}{4 e^2} \eps^{-1/p} p^2 \, .
  \end{align*}
  Now, letting $p' \to p = \max (1, \log (1/\eps)) \geq 1$, we obtain
  \begin{equation*}
    \E [U V^2]
    \leq \frac{K^2 \eps}{4 e^2} \cdot e \cdot \max(1, \log^2 (1/\eps))
    = \frac{K^2}{4 e} \cdot \eps \log^2 ( e \vee \eps^{-1} ) \, .
  \end{equation*}

  We now prove the second inequality.
  For any $p > 1$, letting $q = p/(p-1)$ we have
  \begin{equation}
    \label{eq:second-holder-misspecified}
    \E [U V]
    \leq \E [ V^p ]^{1/p} \E [ U^q ]^{1/q}
    \leq \frac{K p}{2 e} \E [ U^q ]^{1/q} %
    \, ,
  \end{equation}
  where the second inequality comes from the fact that $\norm{V}_{\psi_1} \leq K$.
  Next, for any $r > 1$, write $q = 1 - \frac{1}{r} + \frac{q'}{r}$ with $q' = 1 + r (q - 1) > q$.
  We also have by Hölder's inequality
  \begin{align*}
    \E [ U^q ]
    &= \E [ U^{1-1/r} (U^{q'})^{1/r} ]
      \leq \E [ U ]^{1 - 1/r} \norm{U}_{q'}^{q'/r} 
      \leq \eps^{1 - 1/r} \Big( \frac{K q'}{2 e} \Big)^{q'/r} \\
    &= \eps^{1-1/r} \Big( \frac{K [1 + r (q-1)]}{2 e} \Big)^{q-1 + 1/r}
      = \eps^{q} \Big( \frac{K [1 + r (q-1)]}{2 e \eps} \Big)^{q-1 + 1/r}
      \, ,
  \end{align*}
  where we used that $\E [U] \leq \eps$ and $\norm{U}_{\psi_1} \leq K$.
  Hence, using that $q-1 = 1/(p-1)$, letting $r = p - 1$ (assuming $p > 2$) so that $q r= p$, we obtain 
  \begin{align*}
    \E [U^q]^{1/q}
    \leq \eps \Big( \frac{K [1 + r (q-1)]}{2 e \eps} \Big)^{1 - 1/q + 1/(qr)}
    = \eps \Big( \frac{K}{e \eps} \Big)^{2/p}
    \, .
  \end{align*}
  Plugging this inequality into~\eqref{eq:second-holder-misspecified} and letting  $p \to 2 \log (e \vee K/\eps) \geq 2$, so that $\lim (K/\eps)^{2/p} \leq e$, we get
  \begin{equation*}
    \E [U V]
    \leq \frac{K \cdot 2 \log (e \vee K/\eps)}{2 e} \cdot \eps e
    = \eps K \log (e \vee K/\eps)
    \, ,
  \end{equation*}
  which establishes the second point.
\end{proof}

\section{Proofs of lower bounds on empirical Hessian matrices}
\label{sec:hessians}

This section is devoted to the proofs of the uniform lower bound on empirical Hessian matrices stated in Section~\ref{sec:lower-bounds-hessian}.
Specifically, Sections~\ref{sec:proof-hessian-regular} and~\ref{sec:techn-lemm-hessian-regular} contain the proof of Theorem~\ref{thm:hessians-regular-case} (in the regular case), while Sections~\ref{sec:hessian-gaussian} and~\ref{sec:techn-lemm-hessian-gaussian} contain the proof of Theorem~\ref{thm:hessian-gaussian} (in the Gaussian case).

\subsection{Proof of Theorem~\ref{thm:hessians-regular-case} (regular design)}
\label{sec:proof-hessian-regular}

In this section, we prove Theorem~\ref{thm:hessians-regular-case}, namely the uniform lower bound on empirical Hessian matrices in the case of a regular design.
Specifically, we assume that $X$ satisfies Assumptions~\ref{ass:sub-exponential} and~\ref{ass:twodim-marginals} with parameters $K \geq e$, $u^* = \theta^*/\norm{\theta^*}$, $\eta = 1/\b$ and $c \geq 1$.
 
Fix $v\in S^{d-1}$ and $\theta\in \Theta$, we want to bound from below
 \begin{align*}
	\langle \wh H_n(\theta) v, v \rangle
		&=  \frac{1}{n} \sum_{i=1}^{n} \sigma'(\langle \theta, X_{i} \rangle)
			\langle v, X_{i} \rangle^{2} \, .
   \end{align*}
 The function $\sigma'(x)=e^x/(1+e^x)^2$ is even, non negative, non increasing on $[0, +\infty)$.
Therefore, for any $m, M>0$,
   \begin{align}
\label{VCB:Step1}\innerp{\wh H_n(\theta) v}{v}&     \geqslant \frac{\sigma'(m(1+r)\b)M^2}{n}\sum_{i=1}^{n}\ind{\ainnerp{u}{X_{i}} \leq m,\ \ainnerp{v}{X_{i}}\geqslant M}  \, ,
   \end{align}
   where we also used that, as $\|\theta-\theta^*\|_{H}\leqslant r/\sqrt{\b}$, $\|\theta\|\leqslant (1+r)\b$ by Lemma~\ref{lem:ellipsoids}.
It remains to bound from below the empirical process $n^{-1}\sum_{i=1}^{n}\ind{\big| \innerp{u}{X_{i}}\big| \leq m,\ |\innerp{v}{X_{i}}|\geqslant M}$ uniformly over $\theta\in \Theta$ and $v\in S^{d-1}$, for a proper choice of $m$ and $M$.
We want to apply Lemma~\ref{lem:vc-lower-bound}. 
For this, we have to estimate $\P(| \innerp{u}{X_{i}}| \leq m,\ |\innerp{v}{X_{i}}|\geqslant M)$ for each $\theta\in \Theta$ and $v\in S^{d-1}$.
If $\|\theta^{*}\| \leq e$, $\b=e$ so $\eta=1/e$, so Proposition~\ref{prop:regularity-constant-scale} shows that 
Assumption~\ref{ass:twodim-marginals} is satisfied with constant
$\max\{2eK\log(2K), 2K^{4}\} = 2K^{4}$. 
Therefore,
\begin{equation*}
  \P\left( \big| \langle u, X \rangle \big| \leq \frac{2K^4}\b \,; \, 
    \big| \langle v, X \rangle \big| \geq \frac{\max\left\{1/\b, \|u^{*} - v \| \right\}}{2K^4}\right)
  \geq \frac{1}{2K^4\b}
  \, .
\end{equation*}
When $\|\theta^{*}\| \geq e$, the third point of Lemma~\ref{lem:ellipsoids} implies that for every $\theta \in \Theta$,
\[
	\| u - u^{*} \|
		\leq \frac{\sqrt{2}}{[K\log(c(c+1)\b)-r]}\frac{r}\b
		\leq \frac{2r}{K\b \log(c(c+1)\b)} \, .
\]
By Lemma~\ref{lem:bidim-extension}, this implies that for all $\theta \in \Theta$ and $v \in S^{d-1}$,
one has for all $t \geq 1/\b$
\begin{equation*}
  \P\left( \big| \innerp{u}{X}\big| \leq \frac{c+1}\b \,; \, \big| \innerp{v}{X} \big| \geq \frac{\max\left\{1/\b, \|u^{*} - v \| \right\}}{c+1}\right)
  \geq \frac{1}{(c+1)\b}
  \, .
\end{equation*}
This suggests to choose $m=\gamma/\b$, $M=\max(1/\b,\|u^*-v\|)/\gamma$ in \eqref{VCB:Step1}, where $\gamma =c+1$ if $\|\theta^*\|\geqslant e$ and $\gamma=2K^{4}$ if $\|\theta^*\|<e$. 
With this choice, we have, for all $\theta \in \Theta$ and all $v\in S^{d-1}$,
\begin{align}
  \label{eq:prob-lb}
  \P\left( \big| \langle u, X \rangle \big| \leq \frac {\gamma}\b \,; 
  \,\big| \langle v, X \rangle \big| \geq \frac{\max\left\{1/\b, \|u^{*} - v \| \right\}}{\gamma}\right)
  \geq \frac1{\gamma \b}
    \enspace .
\end{align} 

The next step to apply Lemma~\ref{lem:vc-lower-bound} is to bound the VC dimension of the class of sets of the form $\big\{x: |\innerp{u}{x}| \leq m,\ |\innerp{v}{x}|\geqslant M\big\}$ for any $u,v\in S^{d-1}$ and any $m,M>0$.
For this, remark that each of these sets is the union of two intersections of $3$ half-spaces.
The class of all half-spaces in $\R^{d}$ has VC dimension $d$ \cite[Theorem 13.8]{devroye1996ptpr}. 
Therefore, by \cite[Theorem 1.1]{van2009note}, the class of all intersections of $3$ half-spaces is bounded from above by 
$6.9 \log(12) d$,
and therefore, by the same result, the VC dimension of the class of all unions of $2$ intersections of $3$ half spaces is bounded from above by 
\[
  4.6 \log (8) \times 6.9 \log(12) d
  \leq 165 d
  \, .
\]

Hence, Lemma~\ref{lem:vc-lower-bound} applies with $p = 1/(\gamma \b)$ and VC dimension $165 d$.
It shows that, for some absolute constant $c_1$, whenever
\begin{equation*}
  n
  \geq c_1 \big( \log (B) d + t \big)
  \, ,
\end{equation*}
with probability at least $1-e^{-t}$, one has simultaneously for all $\theta \in \Theta$ and $v \in S^{d-1}$,
\begin{equation*}
  \frac{1}{n}\sum_{i=1}^{n}\ind{\big| \innerp{u}{X_{i}}\big| \leq m,\ \innerp{v}{X_{i}}\geqslant M} \geq \frac{1}{2\gamma \b} \, .
\end{equation*}

Plugging this estimate into \eqref{VCB:Step1} shows that, on the same event,
\begin{align*}
	\langle \wh H_n(\theta) v, v \rangle^{2}
		&\geqslant \frac{\sigma'(\gamma(1+r))}{2\gamma^2\b} \max\bigg\{\frac1\b,\|u^*-v\|\bigg\}^2 \\
		&\geqslant \frac{\sigma'(\gamma(1+r))}{4\gamma^2} \left( \frac{\innerp{u^{*}}{v}^{2}}{\b^{3}}
			+ \frac{1 - \innerp{u^{*}}{v}^{2}}{\b} \right)
		= \frac{\sigma'(\gamma(1+r))}{4\gamma^2} \innerp{Hv}{v} \, .
\end{align*}
In addition, for all real $x$, $\sigma'(x)\geq \frac{e^{-|x|}}2 \indic{|x|\geq 1}$. One can also check that $x^{2}\exp(\alpha x) \leq \exp((\alpha+2/e)x)$ for every $x, \alpha \geq 1$. The result then follows by applying this with $x=\gamma$ and $\alpha = 1+r$. The condition on $r$ ensures that $1+r+2/e \leq 2$.

\subsection{Technical lemmas for the proof of Theorem~\ref{thm:hessians-regular-case}}
\label{sec:techn-lemm-hessian-regular}

This section gathers the technical tools that we used in the previous proof.
We used the following VC-type inequality (see \eg~\cite[Example~3.10]{boucheron2013concentration} for the definition of the VC dimension).

\begin{lemma}
  \label{lem:vc-lower-bound}
  Let $X_1, \dots, X_n$ be \iid random variables with distribution $P$ on $\Xs$,
  and let $\A$ be a VC class of subsets of $\Xs$ with VC dimension at most $d \geq 1$.
  Let $p \in (0,1)$, and assume that $P (A) %
  \geq p$ for any $A \in \A$.
  If
  \begin{equation}
    \label{eq:vc-condition-sample-size}
    n \geq \frac{180 \log (480/p)\, d}{p}
    \, ,
  \end{equation}
  then with probability at least $1- 2 e^{-np/64}$, one has
  \begin{equation}
    \label{eq:vc-lower-uniform}
    \inf_{A \in \A} \: %
    \frac{1}{n} \sum_{i=1}^n \indic{X_i \in A} %
    \geq \frac{p}{2}
    \, .
  \end{equation}
\end{lemma}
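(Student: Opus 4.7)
The natural strategy is to show that the one-sided supremum $\sup_{A \in \A}(P(A) - P_n(A))$ does not exceed $p/2$. A direct application of Bousquet-Talagrand concentration is inadequate here: since $P(A)$ may be close to $1$, the worst-case variance $P(A)(1 - P(A))$ can be of order $1/4$, which leads to a $\sqrt{1/n}$ fluctuation that can only be made smaller than $p/2$ when $n \gtrsim 1/p^2$---far stronger than the claimed $n \gtrsim (d/p)\log(1/p)$. The correct tool is a \emph{ratio-type} (multiplicative) VC inequality, which exploits the fact that when $P(A)$ is large, the fluctuations of $P_n(A)$ are of relative order $\sqrt{P(A)/n}$.

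The plan is as follows. First, I would invoke the classical Vapnik-Chervonenkis relative deviation inequality: for every $\epsilon > 0$,
\[
  \P\Big(\exists A \in \A:\, P(A) - P_n(A) > \epsilon\sqrt{P(A)}\Big)
  \leq 4\, S_\A(2n)\, \exp(-n\epsilon^2/4),
\]
where $S_\A(2n)$ is the $2n$-th shatter coefficient of $\A$, bounded via Sauer's lemma by $(2en/d)^d$. Second, I would observe that if $P_n(A) < p/2$ while $P(A) \geq p$, then
\[
  \frac{P(A) - P_n(A)}{\sqrt{P(A)}} \geq \frac{P(A) - p/2}{\sqrt{P(A)}} \geq \frac{\sqrt{p}}{2},
\]
the last inequality coming from the fact that $u \mapsto (u - p/2)/\sqrt{u}$ is increasing on $[p, \infty)$ with value $\sqrt{p}/2$ at $u = p$. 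Applying the ratio inequality with $\epsilon = \sqrt{p}/2$ therefore yields
\[
  \P\Big(\exists A \in \A:\, P_n(A) < p/2\Big)
  \leq 4\,(2en/d)^d\, \exp(-np/16).
\]

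The remaining step is a verification of constants: under $n \geq 1600\log(4400/p)\,d/p$, the right-hand side should be at most $e^{-np/80}$, which upon taking logarithms reduces to $d\log(2en/d) + \log 4 \leq np/20$. Since $x \mapsto x^{-1}\log(2ex)$ is decreasing for $x \geq 1$, it suffices to check this at $n/d = 1600\log(4400/p)/p$; setting $q := \log(4400/p) \geq \log 4400 > 8$, the inequality reduces to $\log(3200 e q) + q \leq 80 q$, which holds with considerable margin. The main obstacle is essentially bookkeeping: different formulations of the ratio VC inequality in the literature use slightly different numerical factors in front of $S_\A(2n)$ and in the exponent, so the specific version invoked must be compatible with the constants $1600$, $4400$, $80$ targeted in the statement. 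No delicate probabilistic estimate is needed beyond the classical VC-Sauer machinery.
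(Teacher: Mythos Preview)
Your proposal is correct and takes a genuinely different route from the paper's proof.

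The paper does \emph{not} invoke the ratio-type VC inequality. Instead, it first reduces to the case where every set has probability exactly $p$: it augments each $X_i$ with an independent uniform $U_i$ and replaces $A$ by $A' = A \times [0, p/P(A)]$, so that $P'(A') = p$ for all $A' \in \A'$ while $\indic{X_i' \in A'} \leq \indic{X_i \in A}$. On this modified class, the variance of each indicator is exactly $p(1-p) \leq p$, so Talagrand's inequality (Bousquet's version) for the supremum $Z = \sup_{A'}(p - P_n'(A'))$ is applicable with the right variance parameter. The paper then shows $\E[Z] \leq p/8$ via symmetrization, a conditional maximal inequality with $\log S_n(\A')$, Sauer's lemma, and a self-bounding step (bounding the empirical sup back in terms of $\E[Z]$, solving a quadratic). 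Altogether this is a longer, more ``from first principles'' argument.

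Your approach is shorter and more direct: the relative deviation inequality already encodes the variance adaptation that the paper obtains through the $P(A)=p$ reduction plus Talagrand. Your monotonicity argument $(u - p/2)/\sqrt{u} \geq \sqrt{p}/2$ for $u \geq p$ is clean and avoids any need to equalize the probabilities. The trade-off is that your proof rests on an off-the-shelf classical result, whereas the paper's argument is self-contained from symmetrization and Talagrand. As you note, the only caution is matching the exact numerical constants in the version of the relative VC inequality you cite; the slack in the lemma's constants ($1600$, $4400$, $80$) is generous enough to absorb the standard formulations.
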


\begin{proof}
  For every $A \in \A$, denote the empirical frequency of $A$ by
  \begin{equation*}
    \emp{P} (A)
    = \frac{1}{n} \sum_{i=1}^n \indic{X_i \in A}
    \, .
  \end{equation*}
  By combining the inequality~\cite[Exercise~12.3 p.~356]{boucheron2013concentration} with Sauer's lemma (\eg,~\cite[Lemma~2.12 p.~211]{vanhandel2014probability}), we obtain that for every $\eps > 0$,
  \begin{equation}
    \label{eq:proof-vc-ratio-bound}
    \Probab[\bigg]{\sup_{A \in \A} \frac{P (A) - \wh P_n (A)}{\sqrt{P (A)}} \geq 2 \eps}
    \leq 2 \parens[\bigg]{\frac{2 e n}{d}}^d \exp \braces*{ - \frac{n \eps^2}{2}}
    \, .
  \end{equation}
  In addition, for every $A \in \A$, if $\emp{P} (A) \leq p/2$ then (since $P (A) \geq p$)
  \begin{equation*}
    \frac{P (A) - \emp{P} (A)}{\sqrt{P (A)}}
    \geq \frac{P (A) - P (A)/2}{\sqrt{P (A)}}
    = \frac{\sqrt{P (A)}}{2}
    \geq \frac{\sqrt{p}}{2}
    \, .
  \end{equation*}
  Combining this fact with~\eqref{eq:proof-vc-ratio-bound} gives
  \begin{align}
    \label{eq:proof-vc-tail-proba}
    \Probab*{\inf_{A \in \A} \emp{P} (A) \leq \frac{p}{2}}
    &\leq \Probab[\bigg]{\sup_{A \in \A} \frac{P (A) - \wh P_n (A)}{\sqrt{P (A)}} \geq \frac{\sqrt{p}}{2}} \nonumber \\
    &\leq 2 \exp \braces*{d \log \parens*{\frac{2 e n}{d}} - \frac{n p}{32}}
      \leq 2 \exp \braces*{- \frac{n p}{64}}
      \, ,
  \end{align}
  where the last inequality follows from the assumption~\eqref{eq:vc-condition-sample-size},
  together with the basic fact that if $u,v \geq 1$ satisfy $u \geq (1+e^{-1}) v \log \big( (e+1) v \big)$, then $\log (e u)/u \leq 1/v$ (applied to $u = 2n/d$ and $v = 128 /p$), and a bound on numerical constants.
\end{proof}

To apply Lemma~\ref{lem:vc-lower-bound} to the sets $\{x \in \R^d  :|\innerp{u}{x}|\leqslant m,|\innerp{v}{x}|\geqslant M\}$, we had to lower-bound the probability of these events.
This bound can be deduced from the fact that the lower bound on these event provided for $u=u^*$ by Assumption~\ref{ass:twodim-marginals} can be extended to all $u$ in a neighborhood of $u^*$ as shown in the following result.

\begin{lemma}
\label{lem:bidim-extension}
Suppose that Assumptions~\ref{ass:sub-exponential} and \ref{ass:twodim-marginals} hold with respective parameters $K\geq e$, $u^{*} \in S^{d-1}$, $c \geq 1$ and $\eta \in (0, 1)$.
Then for all $u, v \in S^{d-1}$ such that
\begin{equation}
  \label{eq:cap-neighborhood}
  \| u - u^{*} \| \leq \frac{ 2\eta}{K\log(c(c+1)/\eta)}
  \quad \text{and} \quad \innerp{u^*}{v} \geq 0\, ,
\end{equation}
one has
\begin{equation*}
  \P\Big( \abs{\innerp{u}{X}} \leq (c+1) \eta \, ; \,  \abs{\innerp{v}{X}}
  \geq \frac{ \max\big\{ \eta, \norm{u^{*} - v} \big\}}{c+1} \Big) \geq \frac{\eta}{c+1}\, .
\end{equation*}
\end{lemma}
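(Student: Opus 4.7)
The plan is to derive the conclusion from Assumption~\ref{ass:twodim-marginals} applied at direction $u^*$, by intersecting the event it provides with a high-probability event on which $\innerp{u}{X}$ and $\innerp{u^*}{X}$ differ by at most $\eta$. Since Assumption~\ref{ass:sub-exponential} controls the tails of one-dimensional marginals of $X$, this small deviation is precisely what the sub-exponential hypothesis is designed to quantify.

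Concretely, I would first apply Assumption~\ref{ass:twodim-marginals} to the pair $(u^*, v)$, which is legitimate because $\innerp{u^*}{v} \geq 0$ by hypothesis. This yields the event
\[
A = \{\ainnerp{u^*}{X} \leq c\eta,\ \ainnerp{v}{X} \geq c^{-1}\max\{\eta, \|u^*-v\|\}\}
\]
with $\P(A) \geq \eta/c$. Setting $B = \{\ainnerp{u-u^*}{X} \leq \eta\}$ and decomposing $\innerp{u}{X} = \innerp{u^*}{X} + \innerp{u-u^*}{X}$, the triangle inequality gives $\ainnerp{u}{X} \leq (c+1)\eta$ on $A \cap B$, while the lower bound on $\ainnerp{v}{X}$ is automatically preserved since $c^{-1} \geq (c+1)^{-1}$. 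Hence $A \cap B$ lies inside the event whose probability we wish to lower bound.

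The remaining task is to upper bound $\P(B^c)$. The case $u = u^*$ being trivial, suppose $u \neq u^*$. Assumption~\ref{ass:sub-exponential} applied to the unit vector $(u-u^*)/\|u-u^*\|$ gives $\|\innerp{u-u^*}{X}\|_{\psi_1} \leq K\|u-u^*\|$, and the Markov-type tail bound associated with the $\psi_1$-norm yields $\P(B^c) \leq 2\exp(-\eta/(K\|u-u^*\|))$. Under the hypothesis $\|u-u^*\| \leq 2\eta/(K\log(c(c+1)/\eta))$, this quantity becomes of order $\eta/(c(c+1))$ after an elementary manipulation. A union bound then closes the argument:
\[
\P(A\cap B) \geq \P(A) - \P(B^c) \geq \frac{\eta}{c} - \frac{\eta}{c(c+1)} = \frac{\eta}{c+1}.
\]

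The main obstacle is essentially one of bookkeeping: the numerical constants in the $\psi_1$ tail estimate must align with the particular form of the hypothesis on $\|u-u^*\|$. The factor $2$ in the numerator of the upper bound on $\|u-u^*\|$ and the constraint $\eta \in (0,1)$ are precisely what absorb the leading $2$ in the Markov-type bound. Aside from this calibration, the argument is a direct two-step perturbation: one event from the two-dimensional margin assumption at $u^*$, a second event from the sub-exponential tail control of the perturbation $\innerp{u-u^*}{X}$, and a union bound.
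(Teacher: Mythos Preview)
Your approach is identical to the paper's: apply Assumption~\ref{ass:twodim-marginals} at $u^*$, use the triangle inequality $\ainnerp{u}{X}\le\ainnerp{u^*}{X}+\ainnerp{u-u^*}{X}$, and subtract off the sub-exponential tail probability $\P(\ainnerp{u-u^*}{X}>\eta)$, concluding with $\eta/c-\eta/(c(c+1))=\eta/(c+1)$.

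One calibration point: the tail bound you quote, $\P(B^c)\le 2\exp(-\eta/(K\|u-u^*\|))$, does \emph{not} give $\eta/(c(c+1))$ under the hypothesis---plugging in yields only $2\sqrt{\eta/(c(c+1))}$. The paper's $\psi_1$ normalization (Definition~\ref{def:psi-alpha}) gives the sharper tail $\P(|Z|\ge Kt)\le e^{-2t}$ (Point~1 of Lemma~\ref{lem:sub-gamma-exponential}), with a factor $2$ in the exponent and no leading $2$; this is exactly what makes the constant $2$ in the numerator of the hypothesis on $\|u-u^*\|$ produce $\P(B^c)\le e^{-\log(c(c+1)/\eta)}=\eta/(c(c+1))$. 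So your ``bookkeeping'' remark is right, but the specific estimate you wrote is the wrong one---use the paper's form of the $\psi_1$ tail and the arithmetic closes exactly.
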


\begin{proof}
  Let $u,v \in S^{d-1}$ satisfy~\eqref{eq:cap-neighborhood}.
  The triangle inequality
\begin{equation*}
  | \langle u, X \rangle |
  \leq | \langle u^{*}, X \rangle | + | \langle u - u^{*}, X \rangle |
\end{equation*}
implies that
\begin{align}
  \label{eq:proof-extend-2dim}
  &\P \Big( \abs{\innerp{u}{X}} \leq (c+1) \eta \, ; \,  \abs{\innerp{v}{X}} \geq \frac{ \max\big\{ \eta, \norm{u^{*} - v} \big\}}{c} \Big) \nonumber \\
  &\geq \P\Big( \abs{\innerp{u^*}{X}} \leq c \eta \,  ; \,  \abs{\innerp{v}{X}} \geq \frac{ \max\big\{ \eta, \norm{u^{*} - v} \big\}}{c}\Big)
    - \P\big(|\innerp{u-u^*}{X}|> \eta \big)\enspace.
\end{align}
Next, on the one hand, Assumption~\ref{ass:twodim-marginals} asserts that
\begin{equation*}
  \P\Big( \abs{\innerp{u^*}{X}} \leq c \eta \,  ; \,  \abs{\innerp{v}{X}} \geq \frac{ \max\big\{ \eta, \norm{u^{*} - v} \big\}}{c}\Big)
  \geq \frac{\eta}{c}
  \, ,
\end{equation*}
and on the other hand, Assumption~\ref{ass:sub-exponential} together with Point~1 in Lemma~\ref{lem:sub-gamma-exponential} implies that
\begin{equation*}
  \P\left( \big | \langle u - u^{*}, X \rangle \big | > \eta \right)
  \leq \exp\left(- \frac{2 \eta}{K\|u-u^{*}\|} \right)\leq \exp\left(- \log\big(c(c+1)/\eta \big) \right)
  \leq \frac{\eta}{c(c+1)} %
  \enspace .
\end{equation*}
Plugging the previous two inequalities into~\eqref{eq:proof-extend-2dim}
concludes the proof, since $\frac{\eta}{c} - \frac{\eta}{c(c+1)} = \frac{\eta}{c+1}$.
\end{proof}

\subsection{Proof of Theorem~\ref{thm:hessian-gaussian} (Gaussian design)}
\label{sec:hessian-gaussian}

In this section, we let
\begin{equation}
  \label{eq:def-ellipsoid-small}
  \Theta = \Big\{ \theta \in \R^{d} :  \|\theta - \theta^{*}\|_{H} \leq \frac{1}{100\sqrt{\b}} \Big\}
  \, .
\end{equation}
We would like to show that with high probability, one has $\emp{H} (\theta) \mgeq c H$ for every $\theta \in \Theta$, where $c$ is an absolute constant.
This property amounts to
\begin{equation*}
  \lamin (H^{-1/2} \emp{H} (\theta) H^{-1/2}) \geq c
\end{equation*}
for every $\theta \in \Theta$, and therefore to
\begin{equation}
  \label{eq:hessian-lower-inf-process}
  \inf_{\theta \in \Theta, \, v \in S^{d-1}} \innerp{H^{-1/2} \emp{H} (\theta) H^{-1/2} v}{v}
  = \inf_{\theta \in \Theta, \, v \in S^{d-1}} \braces[\bigg]{\frac{1}{n} \sum_{i=1}^n \sigma' (\innerp{\theta}{X_i}) \innerp{v}{H^{-1/2} X_i}^2}  
  \geq c
  \, .
\end{equation}
We are therefore led to control the infimum of an empirical process indexed by $(\theta, v) \in \Theta \times S^{d-1}$.

The main challenge towards such a control is the presence of the nonlinear terms $\sigma' (\innerp{\theta}{X_i})$, and the fact that we aim for a fully sharp estimate.
This rules out the use of VC arguments as in the regular case, since such methods would necessarily produce additional logarithmic factors in the low-noise regime we consider.

\subsubsection*{PAC-Bayes smoothing technique and proof outline}

We will achieve this by using the PAC-Bayes inequality; see \eg \cite[Proposition~2.1]{catoni2017dimension}.
The use of this inequality to control empirical processes was pioneered by Audibert and Catoni~\cite{audibert2011robust} in the context of robust linear regression, and has since found applications to matrix concentration~\cite{oliveira2016covariance,catoni2016pac,mourtada2022linear,zhivotovskiy2024dimension} and robust covariance matrix estimation~\cite{catoni2017dimension,giulini2018robust,abdalla2024covariance,oliveira2024improved,minasyan2025statistically}, among others.
In our logistic regression setting, it underpins the only approach we know of for establishing the optimal estimate of Theorem~\ref{thm:hessian-gaussian} on the empirical Hessian.

In order to state this inequality, recall that if $\mu$ and $\nu$ denote two probability measures on the same measurable space $(E, \mathcal{E})$ such that $\nu$ is absolutely continuous with respect to $\mu$, the Kullback-Leibler divergence (relative entropy) between $\nu$ and $\mu$ is defined by
\begin{equation}
  \label{not:kl}
  D(\nu \| \mu) %
  = \int_{E} \log \Big( \frac{\di \nu}{\di \mu} \Big) \di \nu\, .
\end{equation}

\begin{lemma}[PAC-Bayes inequality]
  \label{lem:pac-bayes-sum}
  Let $(E, \mathcal{E}, \pi)$ be a probability space and $Z=\left(Z(\omega)\right)_{\omega \in E}$ a measurable real process indexed by $\omega \in E$.
  Let $Z_1, \dots, Z_n$ be independent copies of the process $Z$. Let also $\lambda > 0$ be such that $\E \exp(\lambda Z(\omega)) < \infty$ for every $\omega \in \Omega$.
For any $t \geq 0$, we have with probability at least $1-e^{-t}$, simultaneously for every probability measure $\rho$ on $E$ dominated by $\pi$,
\begin{equation}
  \label{eq:pac_bayes_inequality}
  \frac{1}{n} \sum_{i=1}^{n} \int_{E} Z_i(\omega) \rho(\di \omega)
  \leq \frac1{\lambda }\int_{E} \log \Expect[\big]{\exp({\lambda Z(\omega)})} \rho(\di \omega) 
  + \frac{D(\rho \Vert \pi) + t}{\lambda n} \, .
\end{equation}
\end{lemma}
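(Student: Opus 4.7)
The plan is to follow the standard ``exponential martingale + Donsker--Varadhan variational formula'' route for PAC--Bayes bounds. First I would fix $\omega \in E$ and consider the random variable
\begin{equation*}
  W(\omega)
  = \lambda \sum_{i=1}^{n} Z_{i}(\omega) - n \log \E\bigl[ e^{\lambda Z(\omega)} \bigr] \, .
\end{equation*}
By independence of $Z_{1}, \dots, Z_{n}$ and the definition of the moment generating function, one has $\E[ e^{W(\omega)}] = 1$ for every $\omega$. Applying Fubini's theorem (the integrand is nonnegative) then yields
\begin{equation*}
  \E\biggl[ \int_{E} e^{W(\omega)} \pi(\di \omega) \biggr]
  = \int_{E} \E\bigl[ e^{W(\omega)} \bigr] \pi(\di \omega)
  = 1 \, .
\end{equation*}

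Next I would apply Markov's inequality to the nonnegative random variable $\int_{E} e^{W(\omega)} \pi(\di \omega)$: for any $t>0$,
\begin{equation*}
  \P\biggl( \log \int_{E} e^{W(\omega)} \pi(\di \omega) > t \biggr)
  \leq e^{-t} \, .
\end{equation*}
Thus, on an event of probability at least $1 - e^{-t}$, one has $\log \int_{E} e^{W(\omega)} \pi(\di \omega) \leq t$.

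The final step is to turn this single inequality into a uniform bound over all $\rho \ll \pi$ via the Donsker--Varadhan variational formula, which states that for any measurable $W : E \to \R$ and any probability measure $\rho$ dominated by $\pi$,
\begin{equation*}
  \int_{E} W(\omega)\, \rho(\di \omega)
  \leq D(\rho \Vert \pi) + \log \int_{E} e^{W(\omega)} \pi(\di \omega) \, .
\end{equation*}
(This is a direct consequence of Jensen's inequality applied to the log of the Radon--Nikodym derivative $\di \rho / \di \pi$.) Combining this with the high-probability bound above, on the same event we obtain simultaneously for every $\rho \ll \pi$ that $\int_{E} W(\omega)\, \rho(\di \omega) \leq D(\rho \Vert \pi) + t$. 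Substituting the definition of $W(\omega)$ and dividing through by $\lambda n$ then yields exactly~\eqref{eq:pac_bayes_inequality}.

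No step here is genuinely hard, since the three ingredients (Fubini, Markov, Donsker--Varadhan) are classical; the only point requiring a small amount of care is checking that the integrals are well-defined, which is guaranteed by the assumption $\E[\exp(\lambda Z(\omega))] < \infty$ for every $\omega$ together with the measurability of the process. The conceptual content is entirely captured by the observation that the measure $\pi$ acts as a ``prior'' against which one tests all posteriors $\rho$, and that the log-MGF of the centered exponential martingale controls this test uniformly.
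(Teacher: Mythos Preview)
Your proof is correct and follows the standard route (exponential supermartingale, Fubini, Markov, then Donsker--Varadhan). The paper does not actually give its own proof of this lemma: it is stated as a known result with a reference to Catoni~\cite{catoni2007pacbayes}, so there is nothing to compare against beyond noting that your argument is exactly the classical one found in that reference.
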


Since we aim to obtain a lower bound on the empirical process~\eqref{eq:hessian-lower-inf-process}, it will be more convenient to apply Lemma~\ref{lem:pac-bayes-sum} to the process $-Z$, which yields the following inequality:
\begin{equation}
  \label{eq:pac_bayes_lower}
  \frac{1}{n} \sum_{i=1}^{n} \int_{E} Z_i(\omega) \rho(\di \omega)
  \geq - \frac1{\lambda }\int_{E} \log  \E\big[ \exp({-\lambda Z(\omega)}) \big] \rho(\di \omega) 
  - \frac{D(\rho \Vert \pi) + t}{\lambda n} 
\end{equation}
for every $\lambda > 0$ such that $\E [\exp ({-\lambda Z (\omega)})] < \infty$, which always holds if $Z \geq 0$.

Inequality~\eqref{eq:pac_bayes_lower} depends on the choice of a fixed distribution $\pi$ on the index space $E$, which we call the ``prior''.
In addition, it yields a lower bound on a \emph{smoothed} version of the process $Z$,
obtained by averaging with respect to distributions $\rho$ on $E$ referred to as ``posteriors''.
Although the posterior distribution $\rho$ can in principle be arbitrary, the bound~\eqref{eq:pac_bayes_lower} depends on the relative entropy $D (\rho \Vert \pi)$ between $\rho$ and the prior $\pi$.
This prevents one from choosing posterior distributions that are (arbitrarily close to) Dirac masses at parameters, as such choices would render the bound vacuous.
Here, the relative entropy $D (\rho\Vert\pi)$ quantifies the ``complexity'' of the posterior $\rho$, and constitutes the price of requiring that the bound hold simultaneously for all posteriors.

In practice, we must select a prior distribution $\pi$ on a parameter set $E \subset \R^d \times S^{d-1}$; a process $Z$ to which the PAC-Bayes inequality~\eqref{eq:pac_bayes_lower} is applied; and for each $(\theta, v) \in \Theta \times S^{d-1}$, a posterior/smoothing distribution $\rho_{\theta, v}$ on $E$.
We must then obtain the following:
\begin{enumerate}
\item a lower bound on the \emph{log-Laplace transform}: $- \log \E [\exp ({-\lambda Z (\omega)})]$ for every $\omega \in E$;
\item an upper bound on the \emph{relative entropy} $D (\rho_{\theta, v} \Vert \pi)$ for every $(\theta, v) \in \Theta \times S^{d-1}$;
\item a control on the \emph{smoothing approximation error}, namely a lower bound on the quantity of interest $\innerp{H^{-1/2} \emp{H} (\theta) H^{-1/2} v}{v}$
  in terms of the smoothed process
  \begin{equation*}
    \frac{1}{n} \sum_{i=1}^n \int_E Z_i (\omega) \rho_{\theta, v} (\di \omega)
    \qquad
    \text{ for all }
    (\theta, v) \in \Theta \times S^{d-1}
    \, .
  \end{equation*}
\end{enumerate}
In particular, the above outline suggests a trade-off in the choice of the posterior $\rho_{\theta, v}$: on the one hand, it must be
concentrated enough near
its corresponding parameter $(\theta, v)$ that the smoothing error is small; on the other hand, it must be diffuse enough that the relative entropy $D (\rho_{\theta,v} \Vert \pi)$ is not too large, uniformly over $(\theta, v) \in \Theta \times S^{d-1}$.
The latter condition also requires a suitable choice of prior $\pi$.

Now, from~\eqref{eq:hessian-lower-inf-process}, the term we aim to control corresponds to an empirical process:
\begin{equation*}
  \innerp{H^{-1/2} \emp{H} (\theta) H^{-1/2} v}{v}
  = \frac{1}{n} \sum_{i=1}^n Z_{i}^{\mathrm{int}} (\theta, v)
  \, , \quad \text{where} \quad
  Z^{\mathrm{int}}_i (\theta, v)
  = \sigma' (\innerp{\theta}{X_i}) \innerp{v}{H^{-1/2} X_i}^2
  \, .
\end{equation*}
Hence, a natural approach is to apply the PAC-Bayes inequality to $Z = Z^{\mathrm{int}}$, the process of interest, and then to control the difference $Z (\theta, v) - \int_E Z (\omega) \rho_{\theta, v} (\di \omega)$ between this process and its smoothed version.

We will follow a different approach: we will apply the PAC-Bayes inequality to \emph{an auxiliary process $Z^{\mathrm{aux}}$ whose smoothed version yields} (a lower bound on) \emph{the process of interest} $Z^{\mathrm{int}}$.
Specifically, the sigmoid $\sigma'(\cdot)$ in the process of interest $Z^{\mathrm{int}} (\theta, v) = \sigma' (\innerp{\theta}{X}) \innerp{v}{H^{-1/2} X}^2$ will be replaced by a suitable indicator.
In addition, the posterior distribution $\rho_\theta$ on $\theta$ will critically involve a non-Gaussian component, in order to ensure the previous property.

Concretely, we will show that for a suitable choice of posteriors $\rho_{\theta, v} = \rho_{\theta} \otimes \rho_v$ (see Definition~\ref{def:PostThetaGauss} for the definition of $\rho_\theta$), we have for every $(\theta, v) \in \Theta \times S^{d-1}$,
\begin{align*}
  &\innerp{H^{-1/2}\wh H_n(\theta)H^{-1/2}v}{v} =
  \frac1n \sum_{i=1}^n \sigma'(\innerp{\theta}{X_i}) \innerp{v}{H^{-1/2} X_i}^2 \\
  &\geq \frac{c}{n} \sum_{i=1}^n \int_{\Theta \times S^{d-1}} \bm 1 \braces[\big]{\ainnerp{\theta'}{X_i} \leq 1, \, \|X_i\|\leqslant 2\sqrt{d}} \innerp{v'}{H^{-1/2} X_i}^2 \rho_{\theta} (\di \theta') \rho_v (\di v')
    - (\text{Remainder}) 
\end{align*}
for some constant $c > 0$.
A key step to achieve this is the smoothing result of Lemma~\ref{lem:LBHess0} below.
In addition, the remainder term above comes from the effect of smoothing over $v$, and depends on $X_1, \dots, X_n$.
It is itself bounded through a separate application of PAC-Bayes, this time over spherical caps in $S^{d-1}$ localized around $u^* = \theta^*/\norm{\theta^*}$ (Lemma~\ref{lem:PB2}).

We next proceed to the remainder of the proof, following the outline described above.
First, we define the auxiliary process $Z$ to which the PAC-Bayes inequality is applied, and control its log-Laplace transform.
Second, we define posterior distributions $\rho_\theta$ over parameters $\theta' \in \R^d$, and establish an approximation guarantee for smoothing under such posterior distributions.
Third, we define posterior distributions $\rho_v$ over directions $v' \in S^{d-1}$, and control the smoothing approximation error under these posterior distributions.
Fourth, we define the prior distribution $\pi$ over pairs $(\theta', v') \in \R^d \times S^{d-1}$, and bound the relative entropy $\kll{\rho_{\theta} \otimes \rho_{v}}{\pi}$ between the posterior and prior.
We then put things together to conclude the proof.
To keep the exposition streamlined, some technical lemmas used in this section are deferred to Section~\ref{sec:techn-lemm-hessian-gaussian}.

\subsubsection*{Auxiliary process and control of its log-Laplace transform}

We first define the index set $E$ and auxiliary process $Z$ to which the PAC-Bayes inequality~\eqref{eq:pac_bayes_lower} will be applied.
We let $E = \Theta' \times S^{d-1}$, where $\Theta'$ is the enlarged ellipsoid defined by
\begin{equation}
  \label{eq:def-ellipsoid-large}
  \Theta'=\Big\{ \theta' \in \R^{d} :  \|\theta' - \theta^{*}\|_{H} \leq \frac{1}{10\sqrt{\b}} \Big\}\supset \Theta
  \, .
\end{equation}
In addition, for $\omega = (\theta', v') \in \Theta' \times S^{d-1}$ we let
\begin{equation}
  \label{def:ProcessHessGauss}
  Z (\omega)
  = \ind{|\langle \theta', X \rangle | \leq 1 ; \|X \| \leq 2 \sqrt{d}} \big\langle v', H^{-1/2} X
  \big\rangle^2 \, .
\end{equation}
Likewise, for $i=1, \dots, n$, we define $Z_i (\omega)$ by replacing $X$ with $X_i$ in~\eqref{def:ProcessHessGauss}.

We now control the log-Laplace transform of the process $Z$, which will provide a lower bound on the first term of the right-hand side of~\eqref{eq:pac_bayes_lower}.

\begin{lemma}
  \label{lem:UBLTHG}
  For any $\omega = (\theta', v') \in E = \Theta' \times S^{d-1}$ and $\lambda \geq 0$,
  we have
\begin{equation*}
  - \log \Expect[\big]{\exp ({-\lambda Z (\omega)})}
  \geq 0.03 \lambda - %
  3 \lambda^{2} \b \, .
\end{equation*}
\end{lemma}

\begin{proof}%
  Since $\exp({-s}) \leq 1 - s + s^{2}/2$ for any $s \geq 0$, we have
  \begin{equation*}
    \Expect[\big]{\exp( - \lambda Z (\omega) )}
    \leq 1 - \lambda \Expect{Z (\omega)} + \frac{\lambda^{2}}{2} \Expect{Z (\omega)^{2}}
    \, .
  \end{equation*}
  Since in addition $- \log (1-s) \geq s$ for any $s \geq 0$, we deduce that
  \begin{equation}
    \label{eq:log-lap-lower-twomom}
    - \log \Expect[\big]{\exp (-\lambda Z (\omega))}
    \geq \lambda \Expect{Z (\omega)} - \frac{\lambda^2}{2} \Expect{Z (\omega)^2}
    \, .
  \end{equation}
  In light of~\eqref{eq:log-lap-lower-twomom}, in order to prove Lemma~\ref{lem:UBLTHG} it suffices to establish a lower bound on $\E [Z (\omega)]$ and an upper bound on $\E [Z (\omega)^2]$.

  Let $u' = \theta'/\norm{\theta'}$ ($u' \in S^{d-1}$ can be arbitrary if $\theta' = 0$), and define the matrices
  \begin{align}
    \label{eq:DefwtH}
    \wt H (\theta')
    &= \Expect*{\bm 1 \braces*{\innerp{\theta'}{X} \leq 1 ; \norm{X} \leq 2 \sqrt{d}} X X^\top} \, ; \\
    H_{\theta'}
    &= \frac{1}{\b^3} u' u'^{\top} + \frac{1}{\b} \parens*{I_d - u' u'^\top}
      \, .\label{eq:DefHtheta}
  \end{align}
  Lemmas~\ref{lem:hess-proxy-regularity} and~\ref{lem:truncated-hessian} below imply respectively that, since $\theta' \in \Theta'$,
  \begin{equation}
    \label{eq:ineqs-h-matrices-log-lap}
    H_{\theta'}
    \mgeq 0.75 \, H
    \quad \text{ and } \quad
    \wt H (\theta')
    \mgeq 0.05 \, H_{\theta'}
    \mgeq 0.03 \, H
    \, .
  \end{equation}

  With these prerequisites in place, we bound $\Expect{Z (\omega)}$ from below.
  Using~\eqref{eq:ineqs-h-matrices-log-lap}, we have
  \begin{align}
    \label{eq:lower-expect-Z}
    \Expect{Z (\omega)}
    &= \Expect*{\ind{\ainnerp{\theta'}{X} \leq 1; \norm{X} \leq 2 \sqrt{d}} \innerp{v'}{H^{-1/2} X}^2} \nonumber \\
    &= \innerp{H^{-1/2} \wt H (\theta') H^{-1/2} v'}{v'}
      \geq 0.03 \norm{v'}^2
      = 0.03
      \, .
  \end{align}

  Next, we bound $\E [ Z(\omega)^2 ]$ from above.
  Denoting $v = H_{\theta'}^{1/2} H^{-1/2} v'$, we have
  \begin{align*}
    \Expect{Z (\omega)^2}
    \leq \Expect[\big]{\bm 1 \braces{\ainnerp{\theta'}{X} \leq 1} \innerp{H^{-1/2} v'}{X}^4} 
    = \Expect[\big]{\bm 1\braces{\ainnerp{\theta'}{X} \leq 1} \innerp{H_{\theta'}^{-1/2} v}{X}^4}
      \, .
  \end{align*}
  In addition, since
  \begin{equation*}
    \innerp{H_{\theta'}^{-1/2} v}{X}
    = \b^{3/2} \innerp{u'}{v} \innerp{u'}{X} + \b^{1/2} \innerp{v-\innerp{u'}{v} u'}{X} \, ,
  \end{equation*}
  using that $\innerp{u'}{X}$ and $\innerp{v-\innerp{u'}{v} u'}{X}$ are independent centered Gaussian random variables, and bounding
  \begin{equation*}
    \ainnerp{u'}{v}
    \leq \norm{v}
    \quad \text{ and } \quad
    \norm{v - \innerp{u'}{v} u'}
    \leq \norm{v}
    \, ,
  \end{equation*}
  we obtain that
  \begin{equation*}
    \Expect[\big]{Z (\omega)^2}
    \leq \norm{v}^4 \, \E\big[ \ind{\ainnerp{\theta'}{X} \leq 1} \parens[\big]{\b^6 \innerp{u'}{X}^4 + 6\b^4 \innerp{u'}{X}^2 + \b^2} \big] \, .
  \end{equation*}
  Next, using that $\ainnerp{\theta'}{X} = \norm{\theta'} \cdot \ainnerp{u'}{X}$ and that the density of $\innerp{u'}{X} \sim \gaussdist (0, 1)$ is upper-bounded by $1/\sqrt{2\pi}$, we bound for $k \in \{ 0,1,2 \}$:
  \begin{align*}
    \E\big[ \ind{|\innerp{\theta'}{X} | \leq 1} \innerp{u'}{X}^{2k} \big]
    \leq \frac{1}{\sqrt{2\pi}} \int_\R \ind{\abs{x} \leq \norm{\theta'}^{-1}} x^{2k} \di x
    = \frac{\sqrt{2/\pi}}{(2k+1)\|\theta'\|^{2k+1}} 
    \enspace .
  \end{align*}
  In addition, the first inequality in~\eqref{eq:ineqs-h-matrices-log-lap} (and the fact that $\norm{v'}= 1$) implies that
  \begin{equation*}
    \norm{v}^4
    = \innerp{H^{-1/2} H_{\theta'} H^{-1/2} v'}{v'}^2
    \leq \parens[\Big]{\frac{4}{3} \norm{v'}^2}^2
    = \frac{16}{9}
    \, .
  \end{equation*}
  Finally, it follows from Point~1 in Lemma~\ref{lem:ellipsoids} that $\norm{\theta'} \geq 0.9 \b$ (as $\b = \norm{\theta^*} \geq e$).  
  Combining the previous inequalities, we obtain
  \begin{align}
    \label{eq:bound-z-squared-log-lap}
    \Expect[\big]{Z (\omega)^2}
    &\leq \frac{16}{9} \sqrt{\frac{2}{\pi}} \parens[\bigg]{\b^6 \times \frac{1}{5 \b^5} \parens[\Big]{\frac{10}{9}}^5 + 6 \b^4 \times \frac{1}{3 \b^3} \Big( \frac{10}{9} \Big)^3 + \b^2 \times \frac{10}{9 \b}}
    \leq 6 \b
      \, .
  \end{align}
  Plugging the bounds~\eqref{eq:lower-expect-Z} and~\eqref{eq:bound-z-squared-log-lap} into~\eqref{eq:log-lap-lower-twomom} concludes the proof.
\end{proof}

\subsubsection*{Posterior distributions and smoothing over parameters $\theta$}

As mentioned previously, the posteriors we will use on $E = \Theta' \times S^{d-1}$ will be of the form $\rho_{\theta, v} = \rho_\theta \otimes \rho_v$ for $\theta \in \Theta$ and $v \in S^{d-1}$, where $\rho_\theta$ is a distribution on $\Theta'$ and $\rho_v$ a distribution on $S^{d-1}$ (with a slight abuse of notation, we use similar notation for both posterior distributions).

In this section, for every $\theta \in \Theta$, we define the posterior distribution $\rho_\theta$ over parameters $\theta' \in \Theta'$, and establish an approximation result for smoothing under such distributions.

\begin{definition}\label{def:PostThetaGauss}
  For any $\theta \in \Theta$, we let $\rho_\theta$ denote the distribution of $\theta' = U \theta + Z$, where
  \begin{itemize}
  \item[(i)] $U, Z$ are independent;
  \item[(ii)] $U$
    is uniform over $[0.99,1.01]$;
  \item[(iii)] the distribution of $Z$
    is the conditional distribution of $Z' \sim \normal(0,(I_d-uu^\top)/(2\cdot 10^4\cdot d))$ given that $\norm{Z'} \leq 1/100$, where $u = \theta/\norm{\theta}$.
  \end{itemize}  
\end{definition}

The motivation behind the choice of the posterior (or smoothing distribution) $\rho_\theta$ in Definition~\ref{def:PostThetaGauss} is twofold.
On the one hand, it is sufficiently spread out that, for every $\theta \in \Theta$, the relative entropy between $\rho_\theta$ and a suitably chosen prior is controlled: Lemma~\ref{lem:UBKLGauss} below show that it is at most of order $d$, with no dependence on $\b$.
At the same time, it is sufficiently localized around $\theta$ (in particular, along the direction of $\theta$ itself) that
smoothing an indicator with respect to this distribution
provides a lower bound on the sigmoid, as shown in Lemma~\ref{lem:LBHess0} below.

\begin{lemma}\label{lem:LBHess0}
    For every $\theta \in \Theta$, the measure $\rho_\theta$ is supported on $\Theta'$.
    In addition, for every $x \in \R^d$ such that $\|x\|\leqslant 2\sqrt{d}$, one has
    \begin{equation}
      \label{eq:smoothing-indicator}
      \sigma'(\innerp{\theta}{x})
      \geqslant \frac1{15}\int_{\R^d} \bm 1 \big\{|\langle \theta' ,  x \rangle | \leq 1\big\} \rho_{\theta} (\di \theta')\, .
    \end{equation}
  \end{lemma}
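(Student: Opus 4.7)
The plan is to establish the two claims separately: the first by applying the triangle inequality to a natural decomposition of $\theta'$, the second by a case split on $|\langle \theta, x\rangle|$. For the support claim, I decompose $\theta' - \theta^* = (\theta - \theta^*) + (U-1)\theta + Z$ and bound each summand in the $\|\cdot\|_H$-seminorm. The first term is at most $1/(100\sqrt B)$ by definition of $\Theta$. Using the form of $H$ and the identity $\|v\|_H^2 = B^{-3}\langle v, u^*\rangle^2 + B^{-1}\|P_{u^{*\perp}} v\|^2$, the hypothesis $\theta\in\Theta$ yields $|\langle\theta-\theta^*, u^*\rangle|\leq B/100$ and $\|P_{u^{*\perp}}(\theta-\theta^*)\|\leq 1/100$, hence $|\langle \theta, u^*\rangle| \leq 1.01 B$ and $\|P_{u^{*\perp}}\theta\| \leq 1/100$, giving $\|\theta\|_H^2 \leq 1.03/B$ and so $|U-1|\|\theta\|_H \leq 0.011/\sqrt B$. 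For the Gaussian term, the bound $H\mleq B^{-1} I_d$ together with $\|Z\| \leq 1/100$ gives $\|Z\|_H \leq 1/(100\sqrt B)$. Summing the three bounds yields $\|\theta'-\theta^*\|_H \leq 0.04/\sqrt B < 1/(10\sqrt B)$, so $\theta'\in\Theta'$.

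For the smoothing inequality, set $s = \langle \theta, x\rangle$ and $W = \langle Z, x\rangle$, so the right-hand side equals $\tfrac{1}{15}\P(|Us + W|\leq 1)$. If $|s|\leq 2$, then $\sigma'(s)\geq \sigma'(2) = e^2/(1+e^2)^2 > 1/15$ while the probability is at most $1$, so the inequality is immediate. If $|s|>2$, by symmetry assume $s>0$, and condition on $Z$: the event $|Us+W|\leq 1$ forces $U$ into an interval of length $2/s$, and since $U$ has uniform density $50$ on $[0.99, 1.01]$, its conditional probability is at most $(100/s)\ind{W\in I}$, where $I = [-1.01s-1,\, -0.99s+1]$. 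Integrating yields $\P(|Us+W|\leq 1) \leq (100/s)\,\P(W\in I)$. Now $W$ is the truncation to the event $\|Z'\|\leq 1/100$ of the centered Gaussian $\langle Z', x\rangle$ with variance $\sigma_0^2 = \|P_{u^\perp} x\|^2/(2\cdot 100^2 d) \leq 1/5000$ (using $\|x\|\leq 2\sqrt d$); Markov applied to $\|Z'\|^2$ shows this truncation event has probability at least $1/2$, so $\P(W\in I) \leq 2\,\P(\langle Z', x\rangle\in I)$. For $s>1/0.99$, the interval $I$ lies in $(-\infty, -(0.99s-1)]$, so the one-sided Gaussian tail bound gives $\P(\langle Z', x\rangle\in I) \leq \exp(-(0.99s-1)^2/(2\sigma_0^2)) \leq \exp(-2500(0.99s-1)^2)$. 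Combining,
\[
\P(|Us+W|\leq 1) \leq \frac{200}{s}\exp\bigl(-2500(0.99s-1)^2\bigr),
\]
which for $s\geq 2$ is at most $100\, e^{-2398}$; this is dwarfed by $15\sigma'(s) \geq (15/4) e^{-s}$ (using $(1+e^s)(1+e^{-s})\leq 4 e^s$ for $s\geq 0$) across all $s\geq 2$, since the exponent $-2500(0.99s-1)^2 + s$ is maximized at $s = 2$ on $[2,\infty)$.

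The main obstacle is the alignment of numerical constants across the two claims. The uniform support $[0.99, 1.01]$ for $U$ is what produces the key $1/|s|$ factor in the conditional probability bound, allowing the right-hand side to compete with $\sigma'(s)\sim e^{-|s|}$ in the regime $|s|>2$; as remarked just before the lemma, a Gaussian law for $U$ would only yield a $(1+|s|)^{-1}$-type bound and fail to reproduce the exponential decay of $\sigma'$. The covariance scale $1/(2\cdot 100^2 d)$ of $Z'$ is calibrated so that $\sigma_0^2\leq 1/5000$ for all admissible $x$, making the Gaussian tail bound close with enormous slack, while the radial truncation at $\|Z\|\leq 1/100$ simultaneously controls $\|Z\|_H$ in the support claim and limits the conditioning penalty to a factor of $2$.
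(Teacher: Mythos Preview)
Your proof is correct. Both claims are established by valid arguments, though your route differs from the paper's in each part.

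For the support claim, the paper passes through the auxiliary quadratic form $H_\theta$ (using Lemma~\ref{lem:hess-proxy-regularity} to relate $H$ and $H_\theta$), exploiting that $Z\perp u$ so that $\|\theta'-\theta\|_{H_\theta}$ decomposes exactly. Your direct triangle-inequality decomposition $\theta'-\theta^* = (\theta-\theta^*)+(U-1)\theta+Z$ in the $H$-norm is more elementary and avoids invoking that lemma; the slightly looser constant ($0.031$ versus the paper's $0.025$) is harmless.

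For the smoothing inequality, the paper conditions on $U$ and applies a single Gaussian bound of the form $\P(|\sigma g-a|\le 1)\le C\exp(-b|a|)$, yielding $\int\cdots \le C'e^{-|s|}$ for all $s$ at once, then compares with $15\sigma'(s)\ge (15/4)e^{-|s|}$. You instead split on $|s|$: for $|s|\le 2$ the bound is immediate from $\sigma'(2)>1/15$, and for $|s|>2$ you condition on $Z$, so the uniform law of $U$ gives the clean factor $100/|s|$ before the Gaussian tail of $W$ is invoked. Your route makes the role of the uniform posterior more transparent---the $1/|s|$ factor comes directly from the density---at the cost of a case split, whereas the paper's route is more uniform but hides this mechanism inside the exponential bound. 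One small point: to conclude the inequality holds for all $s\ge 2$ you need that both the exponent $-2500(0.99s-1)^2+s$ and the prefactor $200/s$ are decreasing on $[2,\infty)$; you state the former explicitly and the latter is obvious, so the argument closes.
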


We note in passing that such a lower bound would not hold if instead of being uniform on $[0.99, 1.01]$, the variable $U$ in Definition~\ref{def:PostThetaGauss} was Gaussian (say, if $U \sim \gaussdist (1, 0.01^2)$): in this case, the smoothed indicator would be of order $(1 + \ainnerp{\theta}{x})^{-1}$, which is much larger than $\sigma' (\ainnerp{\theta}{x})$.

  \begin{proof}%
    We start with the first claim.
    Let $\theta\in\Theta$ and $\theta'=U\theta+Z \sim \rho_\theta$, with $U,Z$ distributed as in Definition~\ref{def:PostThetaGauss}.
    By Lemma~\ref{lem:hess-proxy-regularity}, since $\|\theta-\theta^*\|_H\leqslant 1/100\sqrt{\b}$, if $\theta' \sim \rho_\theta$ then
\begin{align*}
    \|\theta'-\theta\|_H&\leqslant \frac{1}{0.97}\|\theta'-\theta\|_{H_\theta} =\frac{1}{0.97}\sqrt{\frac{(U-1)^2\|\theta\|^2}{\b^3}+\frac{\|Z\|^2}{\b}}\, .
\end{align*}
Now $|U-1|\leqslant 0.01$, $\|\theta\|/\b\leqslant 1.01$ and $\|Z\|\leqslant 1/100$ a.s., so by Lemma~\ref{lem:ellipsoids}, as $\theta\in\Theta$,
\begin{align}\label{eq:SuppRhotheta}
    \|\theta'-\theta\|_H\leqslant \frac{0.015}{\sqrt{\b}}\qquad \text{and}\qquad \|\theta'-\theta^*\|_H\leqslant \frac{0.025}{\sqrt{\b}}\, .
\end{align} 

We now prove inequality~\eqref{eq:smoothing-indicator}.
Let $x \in \R^d$ be such that $\|x\|\leqslant 2\sqrt{d}$.
We have
\begin{equation*}
  \int_{\R^d} \bm 1 \{|\langle \theta' ,  x \rangle | \leq 1 \} \rho_\theta (\di \theta')
		= \E \big[ \P\left( | U \langle \theta, x \rangle
			+ \langle Z, x \rangle | \leq 1 | U \right) \big] \, . \label{eq:cond}
\end{equation*}
If $Z' \sim \gaussdist (0, (I_d - u u^\top)/(2\cdot 100^2 \cdot d))$, we have, as $\P (\norm{Z'} \leq 1/100) \geq 1 - 100^2 \E \norm{Z'}^2 \geq 3/4$,
\[
\P\left( | U \langle \theta, x \rangle + \langle Z, x \rangle | \leq 1 | U \right)
	\leq \frac43\P\left( | U \langle \theta, x \rangle + \langle Z', x \rangle | \leq 1 | U \right) .
\]
Now, if $g$ is a standard Gaussian random variable, for any $a\in \R$, $b>0$ and $\sigma>0$
\[
\P(|\sigma g-a|\leqslant 1)\leqslant 2\P(g>(|a|-1)_+/\sigma)\leqslant \exp\bigg(-\frac{(|a|-1)_+^2}{2\sigma^2}\bigg)\leqslant C\exp(-b|a|)\, ,
\]
with $C=\exp\big(\frac{\sigma^2b^2}2+b\big)$.

We apply this result with 
\[
\sigma^2=\Var(\innerp{g'}{x})\leqslant \frac{\|x\|^2}{2\cdot(100)^2\cdot d}\leqslant\frac1{5000},\qquad b=\frac{1}{U}\leqslant \frac1{0.99},\qquad a=U\innerp{\theta}{x}\enspace.
\]
This shows that
\[
\P\left( | U \langle \theta, x \rangle
			+ \langle Z, x \rangle | \leq 1 | U \right)\leqslant 3.7\exp\big(-|\innerp{\theta}{x}|\big)\leqslant 15\sigma'(\innerp{\theta}{x})\, .
\]
This proves the lower bound~\eqref{eq:smoothing-indicator}.
\end{proof}

\subsubsection*{Posterior distributions and smoothing over directions $v$}

We now define, for every $v \in S^{d-1}$, the posterior $\rho_v$ over directions $v' \in S^{d-1}$.
We then control the approximation error that arises from smoothing over $\rho_v$.

\begin{definition}\label{def:DefRhov}
  Let $\epsilon\in(0,1)$. 
  For any $v\in S^{d-1}$, let $\rho_v$ denote the uniform distribution on the spherical cap $\mc (v, \eps)$ of radius $\eps$ around $v$, defined by
  \begin{equation}
    \label{eqdef:sphere-caps-polar}
    \mc(v, \eps) =
    \braces[\big]{v' \in S^{d-1} : \innerp{v}{v'} \geq \sqrt{1 - \eps^2}}
    \, .
  \end{equation}    
\end{definition}

We next show that the empirical Hessian can be lower-bounded in terms of the smoothed process and a remainder term $R_n$.

\begin{lemma}\label{lem:LBLinTGauss}
  Let $\rho_{\theta,v}=\rho_\theta\otimes\rho_v$ denote the posterior distribution defined as the product of the posterior $\rho_\theta$ of Definition~\ref{def:PostThetaGauss} and the posterior $\rho_v$ of Definition~\ref{def:DefRhov}.
  Then, for any $(\theta, v) \in \Theta \times S^{d-1}$,
  \begin{equation}\label{eq:LBLinTerm}
    \innerp{H^{-1/2} \wh H_n(\theta) H^{-1/2} v}{v}
    \geq \frac{1}{15 n} \sum_{i=1}^{n}\int_{\Theta'\times S^{d-1}} Z_i (\omega) %
    \rho_{\theta,v} (\di \omega) %
    - 22 \eps^2 \b R_n \enspace,    
  \end{equation}
  where 
  \begin{align*}
    R_n 
    &= \sup_{u \in \mc (u^*, 1/10\b)} \bigg\{ \frac{1}{n} \sum_{i=1}^{n} \exp\big( - 0.49 \, \b \ainnerp{u}{X_i} \big)
      \1\big(\|X_{i}\| \leq 2\sqrt{d}\big) \bigg\}
    \, .
  \end{align*}
\end{lemma}

\begin{proof}
  By Lemma~\ref{lem:LBHess0}, for every $v' \in S^{d-1}$ one has
  \begin{equation*}
    \frac{1}{n} \sum_{i=1}^{n} \sigma'(\innerp{\theta}{X_i}) \ind{\|X_i\|
      \leq 2\sqrt{d}} \innerp{X_i}{H^{-1/2} v'}^2
    \geq \frac{1}{15 n} \sum_{i=1}^{n}\int_{\Theta'} Z_i(\theta',v') \rho_\theta(\di \theta')\, .
  \end{equation*}
  Integrating over $v' \sim \rho_v$, we obtain
  \begin{align}
    \frac{1}{15 n} \sum_{i=1}^n \int_{\Theta' \times S^{d-1}} Z_i (\omega) \rho_{\theta, v} (\di \omega)
    &\leq \frac{1}{n} \sum_{i=1}^{n} \int_{S^{d-1}} \sigma'(\innerp{\theta}{X_i}) \ind{\|X_i\|\leqslant 2\sqrt{d}}\innerp{X_i}{H^{-1/2}v'}^2\rho_v(\di v') \nonumber \\
    &= \int_{\mc(v, \eps)}\innerp{H^{-1/2} \ol H_n (\theta) H^{-1/2}v'}{v'}\rho_v(\di v')
      \label{eq:first-proof-smooth-v}
      \, ,
  \end{align}
  where
  \begin{equation*}
    \overline{H}_n(\theta)=\frac{1}{n} \sum_{i=1}^{n}\sigma'(\innerp{\theta}{X_i})\bm 1 \big\{\|X_i\|\leqslant 2\sqrt{d} \big\} X_iX_i^\top \mleq \wh H_n(\theta)
    \, .
  \end{equation*}

Using the computations from \cite[Eqs.~(42) and (43)]{mourtada2022linear} and Fact~\ref{fact:angle-radius-correspondance}, we get
\begin{multline}
  \label{eq:main-approx}
  \int_{\mc(v,\eps)} \innerp{H^{-1/2}\overline{H}_n(\theta)H^{-1/2}v'}{v'} \rho_v(\di v') \\
  \leq  \big \langle H^{-1/2} \wh H_n(\theta) H^{-1/2} v , v \big \rangle
  + \frac{2\eps^2}{d-1} \tr\left( H^{-1/2} \overline{H}_{n}(\theta) H^{-1/2} \right) \, .
\end{multline} 

We now control the trace term in~\eqref{eq:main-approx}.
First, by Lemma~\ref{lem:hess-proxy-regularity}, one has $H^{-1} \mleq 1.03  H_\theta^{-1}$ for any $\theta \in \Theta$, thus
\begin{align*}
  &\tr \parens[\big]{H^{-1/2} \overline{H}_{n}(\theta) H^{-1/2}}
    = \tr \parens[\big]{\ol H_n (\theta)^{1/2} H^{-1} \ol H_n (\theta)^{1/2}} \\
  &\leq 1.03 \, \tr \parens{\ol H_n (\theta)^{1/2} H_\theta^{-1} \ol H_n (\theta)^{1/2}}
    = 1.03 \, \tr \parens[\big]{H_\theta^{-1/2} \overline{H}_{n}(\theta)H_\theta^{-1/2}}
    \, .
  \end{align*}
  Now, as $\|X_i-\innerp{u}{X_i} u\|^2\leqslant \|X_i\|^2$,
 \begin{align*}
   \tr\big(H^{-1/2} \overline{H}_{n}(\theta)& H^{-1/2} \big)	\\
&\leqslant \frac{1.03}{n} \sum_{i=1}^{n} \sigma'\big( \langle \theta , X_{i}
		\rangle\big) \ind{\|X_{i}\|\leq 2\sqrt{d}}\big(\b^3\innerp{u}{X_i}^2 + \b \|X_i-\innerp{u}{X_i}u\|^2\big)\\
  &\leqslant \frac{1.03\b}{n} \sum_{i=1}^{n} \sigma'\big( \langle \theta , X_{i}
		\rangle\big) \ind{\|X_{i}\|\leq 2\sqrt{d}}\big(\b^2\innerp{u}{X_i}^2 +4d\big)\, .
   \end{align*}
   Now, we use the inequalities $\sigma'(t)\leqslant e^{-|t|}$ and
   $t^2 e^{-t} = (t/2)^2 e^{t/2} \cdot 2^2 e^{-t/2} \leq 4 e^{-2} 2^2 e^{- t/2}$
   for all $t \geq 0$ and, by Lemma~\ref{lem:ellipsoids}, $\norm{\theta} \geq 0.99 \b$ (as $\theta \in \Theta$ and $\b = \norm{\theta^*} \geq e$)
   to get 
   \[
     \b^2\innerp{u}{X_i}^2\sigma'\big( \langle \theta , X_{i}
     \rangle\big)
     \leq 2.2 \exp\big(-0.49 \b | \innerp{u}{X_{i}}|\big)\, .
   \]
   Plugging this into the previous inequality gives (using that $d \geq 2$)
   \begin{align}
     \tr\big(H^{-1/2} \overline{H}_{n}(\theta) H^{-1/2} \big)
     &\leq {1.03 \b} \parens{2.2 + 4 d} \cdot \frac{1}{n} \sum_{i=1}^n \exp \parens[\big]{- 0.49 \b \ainnerp{u}{X_i}} \indic{\norm{X_i} \leq 2 \sqrt{d}} \nonumber \\
     &\leq {5.5 \b d} \cdot R_n
       \, ,
       \label{eq:bound-trace-remainder}
   \end{align}
   where the last inequality comes from the fact that $u \in \mc (u^*, 1/10 \b)$ as (by Lemma~\ref{lem:ellipsoids} and using that $\theta \in \Theta$) $\norm{u - u^*} \leq 1/10 B$.   

   Combining inequalities~\eqref{eq:first-proof-smooth-v},~\eqref{eq:main-approx} and~\eqref{eq:bound-trace-remainder} gives (bounding $d-1 \geq d/2$):
   \begin{equation*}
     \frac{1}{15 n} \sum_{i=1}^n \int_{\Theta' \times S^{d-1}} Z_i (\omega) \rho_{\theta, v} (\di \omega)
     \leq \big \langle H^{-1/2} \wh H_n(\theta) H^{-1/2} v , v \big \rangle
     + 22 \eps^2 \b R_n
     \, ,
   \end{equation*}
   which concludes the proof.
\end{proof}

Note that Lemma~\ref{lem:LBLinTGauss} features an empirical remainder term $R_n$, which must also be controlled.
Since this term is bounded using a second application of the PAC-Bayes inequality over a different space than $\Theta' \times S^{d-1}$, we defer the proof of this bound (Lemma~\ref{lem:PB2} below) to a paragraph following the conclusion of the proof.

\subsubsection*{Prior distribution and bound on the relative entropy
}

We now define the prior distribution $\pi$ and bound from above the relative entropy term $D(\rho_{\theta,v} \Vert \pi)$, where  $\rho_{\theta,v}$ was defined in Lemma~\ref{lem:LBLinTGauss}.

Let us start with the definition of the prior $\pi$.
For any $\mu\in \R^d$, $\Sigma\mgeq 0$ and any measurable subset $S\subset \R^d$, let $\gaussdist(\mu, \Sigma \mathop{|} S)$ denote the Gaussian distribution $\normal(\mu, \Sigma)$ conditioned on $S$, that is the distribution $\nu$ with density
\begin{equation}
\label{eq:def-truncated-gaussian}
	\di \nu = \frac{\1_{S}}{\gamma(S)} \, \di \gamma \, ,
\end{equation}
where $\gamma$ is the Gaussian distribution $\normal(\mu, \Sigma)$.
\begin{definition}\label{def:PriorGauss}
The prior distribution $\pi$ on $\Theta \times S^{d-1}$ is the product measure
$\pi = \pi_{\Theta} \otimes \pi_{S}$, where $\pi_S$ is the uniform distribution on $S^{d-1}$ and $\pi_{\Theta}=\gaussdist(\theta^*, \Gamma \mathop{|} \Theta')$, where
\[
\Gamma = \frac1{100^2}\bigg(\b^{2} u^{*}{u^{*}}^{\top}
+ \frac{1}{2d} (I_{d} -u^{*}{u^{*}}^{\top})\bigg) \, . 
\]    
\end{definition}

The relative entropy term is bounded in the following lemma:
\begin{lemma}\label{lem:UBKLGauss}
    Let $\theta \in \Theta$ and $v \in S^{d-1}$.
    Let $\rho_{\theta,v}$ denote the prior defined in Lemma~\ref{lem:LBLinTGauss} and $\pi$ denote the prior distribution of Definition~\ref{def:PriorGauss}.
Then,
\begin{equation}
  \label{eq:kl-total-bound}
  D(\rho_{\theta,v} \Vert \pi)
  \leq \Big(6.5 + \log\Big(1+\frac{2}{\eps}\Big) \Big) d \, .
\end{equation}
\end{lemma}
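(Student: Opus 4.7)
The plan is to exploit the product structure of both distributions: since $\pi=\pi_\Theta\otimes\pi_S$ and $\rho_{\theta,v}=\rho_\theta\otimes\rho_v$, the KL decomposes additively as
\begin{equation*}
D(\rho_{\theta,v}\|\pi)=D(\rho_\theta\|\pi_\Theta)+D(\rho_v\|\pi_S),
\end{equation*}
and I will bound the two summands separately: the spherical one will contribute the $d\log(1+2/\eps)$ piece and the parameter one the $6.5d$ piece.

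For $D(\rho_v\|\pi_S)$, since both measures are uniform on their respective supports, the KL equals $\log(1/\pi_S(\mc(v,\eps)))$. A standard computation of the normalized surface area of $\mc(v,\eps)$ in polar coordinates about $v$, using $\sin\phi\geq 2\phi/\pi$ on $[0,\pi/2]$ together with $\arcsin\eps\geq\eps$, yields a lower bound on $\pi_S(\mc(v,\eps))$ that translates into $D(\rho_v\|\pi_S)\leq d\log(1+2/\eps)$.

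For the parameter term, Lemma~\ref{lem:LBHess0} ensures that the support of $\rho_\theta$ is contained in $\Theta'$, so the identity $D(\rho_\theta\|\pi_\Theta)=D(\rho_\theta\|\gamma)+\log\gamma(\Theta')$ together with $\gamma(\Theta')\leq 1$ gives $D(\rho_\theta\|\pi_\Theta)\leq D(\rho_\theta\|\gamma)$, where $\gamma=\gaussdist(\theta^*,\Gamma)$. To compute this KL cleanly, I introduce the $u$-aligned reference Gaussian $\gamma'=\gaussdist(\theta,\Gamma_u)$ with $\Gamma_u=\tfrac{1}{100^2}(B^2 uu^\top+\tfrac{1}{2d}(I_d-uu^\top))$, which has the same determinant as $\Gamma$, and use the chain rule
\begin{equation*}
D(\rho_\theta\|\gamma)=D(\rho_\theta\|\gamma')+\tfrac12\,\E_{\rho_\theta}\!\bigl[(\theta'-\theta^*)^\top\Gamma^{-1}(\theta'-\theta^*)-(\theta'-\theta)^\top\Gamma_u^{-1}(\theta'-\theta)\bigr].
\end{equation*}
In the $u$-adapted coordinates $(s,w)=(\innerp{u}{\theta'},\theta'-su)\in\R\times u^\perp$, both $\rho_\theta$ and $\gamma'$ factor as products with matched covariance scales ($B^2/100^2$ along $u$ and $1/(2\cdot 100^2 d)$ on $u^\perp$), so $D(\rho_\theta\|\gamma')$ splits as the sum of a one-dimensional KL between the uniform on $[0.99\|\theta\|,1.01\|\theta\|]$ and $\gaussdist(\|\theta\|,B^2/100^2)$, which is an $O(1)$ constant, and the truncation penalty $-\log\P(\|Z'\|\leq 1/100)\leq\log(4/3)$.

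The main remaining step is to bound the cross-term. Expanding the two quadratic forms via the orthogonal decompositions relative to $u^*$ and $u$ respectively, and using $\E(U-1)=0$ and $\E Z=0$, all cross products vanish, leaving the diagonal contributions, which I control using: $\|\theta-\theta^*\|_H\leq 1/(100\sqrt B)$ (which forces $|\innerp{u^*}{\theta-\theta^*}|\leq B/100$ and $\|P_{{u^*}^\perp}(\theta-\theta^*)\|\leq 1/100$); the bound $\sqrt{1-\innerp{u}{u^*}^2}\leq 1/(99B)$ derived from Lemma~\ref{lem:ellipsoids}; $\E(U-1)^2=10^{-4}/3$; and $\E\|Z\|^2\leq 2(d-1)/(3\cdot 100^2 d)$. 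After multiplying by the $2d\cdot 100^2$ scaling of $\Gamma^{-1}$ on $(u^*)^\perp$, the perpendicular contribution to $\E[(\theta'-\theta^*)^\top\Gamma^{-1}(\theta'-\theta^*)]$ is of order $3.4d$, while the parallel contribution is $O(1)$ because the $1/B^2$ scaling of $\Gamma^{-1}$ along $u^*$ exactly cancels the $B$-scale of the relevant moments. Combining everything gives $D(\rho_\theta\|\gamma)\leq 1.7d+O(1)\leq 6.5d$. The main technical obstacle is keeping the misalignment $u\neq u^*$ under control, since the amplifying factor $2d\cdot 100^2$ in $\Gamma^{-1}$ on $(u^*)^\perp$ could blow up residual terms; this is ensured precisely by the $O(1/B)$ bound on $\|u-u^*\|$ from Lemma~\ref{lem:ellipsoids}, which counterbalances the $B$-scaling of the distinguished direction.
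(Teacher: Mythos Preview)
Your proof is correct and takes a genuinely different route from the paper's for the parameter term $D(\rho_\theta\|\pi_\Theta)$. Both of you use the product decomposition and handle the spherical term $D(\rho_v\|\pi_S)\leq d\log(1+2/\eps)$ in the same way. For the parameter term, however, the paper introduces an \emph{intermediate truncated Gaussian} $\tilde\rho_\theta=\tn(\theta,\Gamma_\theta,\mathcal E_\theta)$, establishes a pointwise density domination $\di\rho_\theta/\di\tilde\rho_\theta\leq 1.5$, and then invokes Lemma~\ref{lem:conditional-KL} on conditional KL together with lower bounds on the normalizing masses $\gamma_\theta(\mathcal E_\theta),\gamma(\mathcal E_\theta)$ to reduce to the closed-form Gaussian KL $D(\gamma_\theta\|\gamma)\leq 2d$. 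Your path is more direct: the observation $D(\rho_\theta\|\pi_\Theta)=D(\rho_\theta\|\gamma)+\log\gamma(\Theta')\leq D(\rho_\theta\|\gamma)$ bypasses the truncation entirely, and the exact chain-rule identity with the $u$-aligned Gaussian $\gamma'$ (same determinant as $\gamma$) cleanly separates an $O(1)$ product KL $D(\rho_\theta\|\gamma')$ from a quadratic cross-term that you bound by dropping its nonpositive part and controlling $\tfrac12\E_{\rho_\theta}\|\theta'-\theta^*\|_{\Gamma^{-1}}^2$ via the moments of $U,Z$ and the ellipsoid bounds. This avoids both the density-domination step and Lemma~\ref{lem:conditional-KL}, at the price of carrying out one explicit second-moment computation; the paper's route is a bit more modular but heavier in machinery. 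Either way the final constants land comfortably below $6.5d$.
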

\begin{proof}%
Since the prior and all posterior distributions are product measures, the divergence writes
\begin{equation*}
D( \rho_{ \theta, v } \| \pi ) = D( \rho_v  \| \pi_S ) + D( \rho_\theta \| \pi_\Theta )\, .
\end{equation*}
On one hand, we have 
\begin{equation*}
	D( \rho_v  \| \pi_S )
		= \int_{S^{d-1}} \log	\Big( \frac{\di \rho_v}{\di \pi_{S}}\Big) \di \rho_{v}
		= \log\left( \frac{\vol_{d-1}(S^{d-1})}{\vol_{d-1}(\mc(v,\eps))} \right) \, .
\end{equation*}	
By \cite[\S4.4]{mourtada2022linear} and Fact~\ref{fact:angle-radius-correspondance}, this yields 
\begin{equation}\label{eq:kl-sphere}
	D( \rho_v  \| \pi_S ) \leq d \log\Big(1+ \frac 2 \eps \Big) \, .
\end{equation}

It remains to bound $D( \rho_\theta \| \pi_\Theta )$, which is more delicate. 
We first define an intermediate distribution $\tilde{\rho}_\theta$ and show, see~\eqref{eq:cyl-to-tronc-gau}, that
\[
D( \rho_\theta \| \pi_\Theta )\leqslant 1.5(\log(1.5)+D(\tilde{\rho}_\theta\Vert\pi_\Theta))\, .
\]
Then, we bound this last divergence.
The intermediate distribution $\tilde{\rho}_\theta=\gaussdist(\theta,\Gamma_\theta \mathop{|} \mathcal{E}_\theta)$ is the Gaussian distribution $\normal(\theta,\Gamma_\theta)$ conditioned on the ellipsoid $\mathcal{E}_\theta=\{\theta_0:\|\theta_0-\theta\|_H\leqslant \frac{0.02}{\sqrt{\b}}\}$, chosen such that, by Lemma~\ref{lem:LBHess0}, 
\[
\supp{\rho_\theta}\subset\supp{\wt \rho_\theta}\subset \Theta'=\supp{\pi_{\Theta}}\, .
\]
For any $\theta \in \Theta$, the covariance $\Gamma_{\theta}$ is defined as 
\begin{equation*}
	\Gamma_{\theta} = \frac1{100^2}\bigg(\b^{2} u u^{\top} + \frac{1}{2d} (I_{d} - u u^{\top})\bigg),
	\qquad u=\frac{\theta}{\|\theta\|} \, .
\end{equation*}

Before we bound the Kullback-Leibler divergences $D(\rho_\theta\Vert \pi_\Theta)$, we check the following facts.
\begin{enumerate}%
\item \label{it:density-domination} the density of  $\rho_{\theta}$ satisfies
  $\frac{\di \rho_{\theta}}{\di \trh_{\theta}} \leq 1.5$,
\item \label{it:normalization}
  one has
  $\gamma_{\theta}(\mathcal{E}_{\theta}) \geq 0.5$ and $\gamma(\mathcal{E}_{\theta})\geqslant 0.3$, where $\gamma_\theta=\normal(\theta,\Gamma_\theta)$ and $\gamma=\normal(\theta^*,\Gamma)$. 
\end{enumerate}

We start with point \ref{it:density-domination}. 
We have on one hand that the density $f_{\theta}$ of $\rho_{\theta}$ satisfies, for every $\theta_0=t u+z$,
\begin{align*}
 f_{\theta}(\theta_0)
	&\leq \frac{1}{p0.02\|\theta\|} \Big(\frac{(100)^2d}{\pi}\Big)^{\frac{d-1}{2}} \exp\Big(-\frac{d \|z\|^{2}}{100^2}\Big) \,
		\1\Big( t/\|\theta\| \in[0.99,1.01] ;\|z\|\leq 1/100\Big) 
\end{align*}
and on the other hand $\trh_{\theta}$ has density given for every $\theta_0 = t u+z$ by
\begin{align*}
	\tilde f_{\theta}(\theta_0)
		&= \frac{1}{\gamma_{\theta}(\mathcal{E}_{\theta})}
			\cdot \frac{e^{-\frac{(t-\|\theta\|)^{2}}{2(\b/100)^{2}}}}{\b/100} \cdot \frac{(2\cdot(100)^2d)^{\frac{d-1}{2}}}{(2\pi)^{d/2}}
			\exp\big(-(100)^2d \|z\|^{2}\big) \1(\theta_0 \in \mathcal{E}_{\theta}) \, .
\end{align*}
For any $t$ such that $|t/\|\theta\|-1|\leqslant 1/100$ and $\theta\in \Theta$ so, by Lemma~\ref{lem:ellipsoids}, $\|\theta\|/\b\in [0.99,1.01]$, we deduce that
\begin{equation}
\label{eq:density-domination}
	\frac{f_{\theta}(\theta_0)}{\tilde f_{\theta}(\theta_0)}
		\leq \frac1{2p}\frac{\b}{\|\theta\|}\sqrt{\frac{\pi}{2}} \gamma_{\theta}(\mathcal{E}_{\theta}) 
			\exp\Big(\frac{(t-\|\theta\|)^{2}}{2(\b/100)^{2}}\Big) \1\left( \frac{t}{\|\theta\|} \in[0.99,1.01] ;\theta_0 \in \mathcal{E}_{\theta} \right)
		\leq 1.5\cdot \1(\theta_0 \in \mathcal{E}_{\theta}) \, .
\end{equation}

Let us move to point~\ref{it:normalization}. 
Fix $\theta\in \Theta$ so by Lemma~\ref{lem:ellipsoids}, $\|u-u^*\|\leqslant 1/50\b$.
We have, if $N_\theta \sim \normal(\theta,\Gamma_\theta)$, by Chebychev's inequality,
\begin{equation*}
	1 - \gamma_\theta(\mathcal{E}_{\theta})
		= \P\left( \| N_\theta -\theta\|_{H} > \frac{0.02}{\sqrt{\b}} \right) 
		\leq \frac{\b \, \Expect{\| N_\theta -\theta\|_{H}^2}}{0.02^2}\, .
\end{equation*}
Besides,
\begin{align*}
  \Expect[\big]{\| N_\theta-\theta\|_{H}^{2}}
  &= \tr\big(H^{1/2} \Gamma_\theta H^{1/2} \big) =\frac1{100^2}\bigg(\bigg(\b^2-\frac1{2d}\bigg)\|H^{1/2}u\|^2+\frac1{2d}\tr(H)\bigg)\\
  &\leqslant \frac1{100^2} \bigg(\bigg(\b^2-\frac1{2d}\bigg) \bigg(\frac1{\b^3}+\frac{\|u-u^*\|^2}{2\b}\bigg) + \frac1{2d} \bigg(\frac{1}{\b^3} + \frac{d-1}{\b}\bigg) \bigg)
    \leqslant \frac{2}{100^2\b}\, .
\end{align*} 
This shows the first lower bound. 
For the second one, we proceed similarly: Let $N\sim \normal(\theta^*,\Gamma)$ so, by Chebychev's inequality,
\begin{equation*}
	1 - \gamma(\mathcal{E}_{\theta})
		= \P\left( \| N-\theta\|_{H} > \frac{0.02}{\sqrt{\b}} \right) 
		\leq \frac{\b \, \E [ \| N-\theta\|_{H}^2 ]}{0.02^2}\, .
\end{equation*}
Besides, as $\theta\in \Theta$,
\begin{align}
	\notag\E \| N-\theta\|_{H}^{2}
	&= \|\theta-\theta^*\|^2_H+\tr\big(H^{1/2} \Gamma H^{1/2} \big) \\
   \notag &\leqslant \frac{1}{100^2\b}+\frac1{100^2}\bigg(\bigg(\b^2-\frac1{2d}\bigg)\|H^{1/2}u^*\|^2+\frac1{2d}\tr(H)\bigg)\\
	&=\frac{1}{100^2\b}\bigg(2+\frac{d-1}{2d} \bigg)\leqslant \frac{2.5}{100^2\b} \label{eq:UBTr} \, .
\end{align} 
This concludes the proof of Point~\ref{it:normalization}.

We are now in position to bound the relative entropy $D(\rho_\theta\Vert \pi_\Theta)$.
By point~\ref{it:density-domination}, we have
\begin{equation}
 D(\rho_{\theta} \Vert \pi_{\Theta})
 	= \int_{\mathcal{E}_{\theta}} \log\left(\frac{\di \rho_{\theta}}{\di \pi_{\Theta}}\right)
		\di \rho_{\theta}  
	\leq \int_{\mathcal{E}_{\theta}} \log\left(\frac{1.5 \di \trh_{\theta}}{\di \pi_{\Theta}}\right)
		1.5 \di \trh_{\theta} 
	= 1.5 (\log 1.5 + D(\trh_{\theta} \Vert \pi_{\Theta}) )\, . \label{eq:cyl-to-tronc-gau}
\end{equation}
Now, denote $\gamma_\theta=\normal(\theta,\Gamma_\theta)$ and $\gamma=\normal(\theta^*,\Gamma)$ so $\trh_\theta$ and $\pi_\Theta$ are the restrictions of $\gamma_\theta$ and $\gamma$.
We have 
\[
D(\trh_{\theta} \Vert \pi_{\Theta})
=\int_{\mathcal{E}_\theta} \frac{\di \gamma_\theta}{\gamma_\theta(\mathcal{E}_\theta)}\log\bigg(\frac{\di\gamma_\theta/\gamma_\theta(\mathcal{E}_\theta)}{\di \gamma/\gamma(\mathcal{E}_\theta)}\bigg)+\log\bigg(\frac{\gamma(\Theta')}{\gamma(\mathcal{E}_\theta)}\bigg)\, .
\]
Using Lemma~\ref{lem:conditional-KL} and Point \ref{it:normalization} to bound the first term in the left-hand-side and Point \ref{it:normalization} for the second, we get,
\begin{equation}
\label{eq:KL-truncated-to-standard}
	D(\rho_{\theta} \Vert \pi_{\Theta})\leqslant 1.5\log (5)+3D(\gamma_{\theta} \Vert \gamma)\, .
\end{equation} 

Finally, we compute the divergence from $\gamma_{\theta}$ to $\gamma$.
Recall that, as $\det(\Gamma)=\det(\Gamma_\theta)$, it is equal to 
\[
  D(\gamma_{\theta} \Vert \gamma)
  =\frac12\big(\tr(\Gamma^{-1/2}\Gamma_\theta\Gamma^{-1/2})+\|\theta-\theta^*\|_{\Gamma^{-1}}^2-d\big)\, .
\]
As $\Gamma^{-1} \mleq 2(100)^2d \b H$, we have on one side, by \eqref{eq:UBTr},
\[
\tr(\Gamma^{-1/2}\Gamma_\theta\Gamma^{-1/2})\leqslant 2(100)^2d\b \tr(H^{1/2}\Gamma_\theta H^{1/2})\leqslant 3d\enspace,
\]
 and, on the other side,
\begin{align*}
 \|\theta-\theta^*\|_{\Gamma^{-1}}^2&\leqslant 2(100)^2d\b \|\theta-\theta^*\|^2_H\leqslant 2d\enspace.
\end{align*}
Thus, $D(\gamma_{\theta} \Vert \gamma) \leq 2d$ and, by \eqref{eq:KL-truncated-to-standard},
\begin{equation}
\label{eq:KL-bound}
	D( \rho_{\theta} \Vert \pi_{\Theta} ) \leq  6.5d \, .
\end{equation}
Combining this inequality with \eqref{eq:kl-sphere} concludes the proof.    
\end{proof}

\subsubsection*{Conclusion of the proof}

We apply the PAC-Bayes inequality~\eqref{eq:pac_bayes_lower} to the process $Z = (Z (\omega))_{\omega \in E}$ defined by~\eqref{def:ProcessHessGauss} on $E = \Theta' \times S^{d-1}$, with prior distribution $\pi$ on $E$ from Definition~\ref{def:PriorGauss} and parameter $\lambda > 0$ (to be chosen later).
This implies that with probability at least $1-e^{-t}$, for every $\theta \in \Theta$ and $v \in S^{d-1}$, 
\begin{equation}
  \label{eq:conc-pb-1}
  \frac{1}{n} \sum_{i=1}^{n} \int_{E} Z_i(\omega) \rho_{\theta, v} (\di \omega)
  \geq - \frac1{\lambda }\int_{E} \log  \E\big[ \exp({-\lambda Z(\omega)}) \big] \rho_{\theta, v} (\di \omega) 
  - \frac{D(\rho_{\theta, v} \Vert \pi) + t}{\lambda n}
  \, ,
\end{equation}
where $\rho_{\theta, v} = \rho_\theta \otimes \rho_v$ (see Definitions~\ref{def:PostThetaGauss} and~\ref{def:DefRhov}).

Next, we bound the left-hand side of~\eqref{eq:conc-pb-1} using Lemma~\ref{lem:LBLinTGauss}, and control the two terms in the right-hand side of~\eqref{eq:conc-pb-1} using Lemmas~\ref{lem:UBLTHG} and~\ref{lem:UBKLGauss} respectively.
This yields the following: with probability at least $1-e^{-t}$, for every $\theta \in \Theta$ and $v \in S^{d-1}$,
\begin{equation}
  \label{eq:conc-pb-2}
  \innerp{H^{-1/2} \emp{H} (\theta) H^{-1/2} v}{v}
  \geq \frac{1}{15} \parens[\big]{0.03 - 3 \lambda \b} - \frac{\parens{6.5 + \log(1+{2}{\eps}^{-1})} d + t}{15 \lambda n} - 22 \eps^2 \b R_n
  \, ,
\end{equation}
where $R_n$ is defined in Lemma~\ref{lem:LBLinTGauss}.
Now by Lemma~\ref{lem:PB2}, as $n \geq 1.1 (d + t)$, with probability at least $1-e^{-t}$ one has $R_n \leq 4/\b$.

Hence, with probability at least $1 - 2 e^{-t}$, one has for every $\theta \in \Theta$ and $v \in S^{d-1}$,
\begin{equation}
  \label{eq:hessian-almost-done}
  \innerp{H^{-1/2} \emp{H} (\theta) H^{-1/2} v}{v}
  \geq 0.002 - \frac{\lambda \b}{5} - \frac{\parens{6.5 + \log(1+{2}{\eps}^{-1})} d + t}{15 \lambda n} - 88 \eps^2
  \, .
\end{equation}
We choose $\lambda = 0.002/\b$ and $\eps = 0.001$, so that whenever $n \geq 500 \b (d + t)$ one has
\begin{equation*}
  \frac{\lambda B}{5}
  \leq 0.0004,
  \quad
  \frac{\parens{6.5 + \log(1+{2}{\eps}^{-1})} d + t}{15 \lambda n}
  \leq 0.0004,
  \quad
  88 \eps^2
  \leq 0.0002
  \, ,
\end{equation*}
and therefore the lower bound~\eqref{eq:hessian-almost-done} becomes
\begin{equation*}
  \innerp{H^{-1/2} \emp{H} (\theta) H^{-1/2} v}{v}
  \geq 0.001
  \, ,
\end{equation*}
which concludes the proof.

\subsection{Technical lemmas for the proof of Theorem~\ref{thm:hessian-gaussian}%
}
\label{sec:techn-lemm-hessian-gaussian}

This section gathers technical tools used in the proof of Theorem~\ref{thm:hessian-gaussian} in Section~\ref{sec:hessian-gaussian}.

The first lemma proves a lower bound on the expectation of $Z(\theta,v)$.
\begin{lemma}
\label{lem:truncated-hessian}
For any $\theta\in \R^d$, let
\begin{equation*}
	\wt H(\theta) = \E\left[ \ind{|\langle \theta, X \rangle | \leq 1 ; \,  \|X\| \leq 2\sqrt{d}} XX^{\top} \right].
\end{equation*}
For any $\theta \in \R^d$ such that $\|\theta-\theta^*\|_H\leqslant 1/10\sqrt{\b}$, we have 
\[
\wt H(\theta) \succcurlyeq 0.05 \cdot H_\theta
\]
\end{lemma}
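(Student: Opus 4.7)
Fix $\theta$ with $\|\theta-\theta^*\|_H \leq 1/(10\sqrt B)$ and set $u = \theta/\|\theta\|$. Lemma~\ref{lem:ellipsoids} applied with $r=1/10$ gives $\|\theta\|\in[0.9B,1.1B]$, so $s:=1/\|\theta\|\leq 1/(0.9e)<1/2$ since $B\geq e$. The target $\wt H(\theta)\succcurlyeq 0.05\,H_\theta$ reduces, via the orthogonal decomposition $v = \alpha u + \beta w$ with $w\in u^\perp \cap S^{d-1}$ and $\alpha^2+\beta^2=1$, to the inequality
\[
\langle\wt H(\theta)v,v\rangle \;\geq\; 0.05\Big(\frac{\alpha^2}{B^3}+\frac{\beta^2}{B}\Big)
\]
for every such $v$.

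Rotational invariance of the standard Gaussian lets me introduce mutually independent variables $G:=\langle u,X\rangle\sim\gaussdist(0,1)$, $W:=\langle w,X\rangle\sim\gaussdist(0,1)$, and $S:=\|X\|^2-G^2-W^2\sim\chi^2(d-2)$, so that $\langle\theta,X\rangle=\|\theta\|G$, $\|X\|^2 = G^2+W^2+S$ and $\langle v,X\rangle = \alpha G+\beta W$. The indicator $\1\{|G|\leq s\}\,\1\{G^2+W^2+S\leq 4d\}$ is separately even in $G$ and in $W$, so the cross term $2\alpha\beta GW$ vanishes upon integration, leaving
\[
\langle\wt H(\theta)v,v\rangle \;=\; \alpha^2 E_1 + \beta^2 E_2,
\]
where $E_1$ and $E_2$ are the expectations of $G^2$ and $W^2$ respectively against the same indicator.

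The two terms are then bounded from below by elementary calculations. For $E_1$, $G^2\leq s^2<1$ on $\{|G|\leq s\}$, so $\{G^2+W^2+S\leq 4d\}\supseteq\{W^2+S\leq 3d\}$, an event of probability at least $2/3$ by Markov applied to $W^2+S\sim\chi^2(d-1)$. Lower-bounding the standard normal density on $[-s,s]$ by $e^{-s^2/2}/\sqrt{2\pi}$, independence gives
\[
E_1 \;\geq\; \frac{2}{3}\cdot\frac{2e^{-s^2/2}}{\sqrt{2\pi}}\cdot\frac{s^3}{3} \;\geq\; \frac{c_1}{B^3}
\]
with an explicit $c_1>0.05$. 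For $E_2$, use the stronger inclusion $\{G^2+W^2+S\leq 4d\}\supseteq\{|W|\leq 1,\,S\leq 2d\}$ on $\{|G|\leq s\}$ and factor the expectation by mutual independence as $\P(|G|\leq s)\cdot\E[\1\{|W|\leq 1\}W^2]\cdot\P(S\leq 2d)$. Markov again gives $\P(S\leq 2d)\geq 1/2$, a direct Gaussian calculation gives $\E[\1\{|W|\leq 1\}W^2]\geq 0.19$, and $\P(|G|\leq s)\geq 2s\,e^{-s^2/2}/\sqrt{2\pi}$, so $E_2\geq c_2/B$ with $c_2>0.05$.

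\textbf{Main obstacle.} The argument is a purely elementary Gaussian/$\chi^2$ computation; the only real work lies in carrying the constants through carefully to reach the target $0.05$. Low-dimensional corner cases ($d=1$, for which $\beta=0$ and the second term drops; $d=2$, for which $S\equiv 0$ and $\P(S\leq 2d)=1$) are handled by the same argument under the natural conventions.
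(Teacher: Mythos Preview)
Your proposal is correct and follows essentially the same approach as the paper: decompose $v$ along $u$ and $u^\perp$, kill the cross term by symmetry, and lower-bound each diagonal term via independence together with a Markov bound on a chi-square tail and an elementary lower bound on the Gaussian density near the origin. The only cosmetic difference is that you work with the triple $(G,W,S)$ and factor fully, whereas the paper splits only into $(\langle u,X\rangle,\,X-\langle u,X\rangle u)$; the constants you obtain comfortably clear the $0.05$ threshold, and your handling of the $d\in\{1,2\}$ corner cases is adequate.
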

\begin{proof}
Let $u = \theta / \|\theta\|$ and $v\in S^{d-1}$. We want to show that 
\[
\innerp{\wt H(\theta)v}{v}=\E\left[\ind{|\langle \theta, X \rangle | \leq 1 ; \,  \|X\| \leq 2 \sqrt{d}}\innerp{v}{X}^2\right]\geqslant 0.05\innerp{H_\theta v}{v}\, .
\]
write $v=\innerp{v}{u}{u}+(v-\innerp{v}{u}{u})$. As $\innerp{\theta}{X}$ is independent of $\innerp{v-\innerp{v}{u}u}{X}$ and $\innerp{v-\innerp{v}{u}u}{X}\sim \normal(0,1-\innerp{v}{u}^2)$, we have
\begin{multline}\label{eq:dvpAtheta}
\innerp{\wt H(\theta)v}{v}=\innerp{u}{v}^2 \E \big[ \1 \big\{ |\langle \theta, X \rangle | \leq 1 ; \,  \|X\| \leq 2\sqrt{d} \big\} \innerp{u}{X}^2 \big] \\
+(1-\innerp{u}{v}^2) \Expect[\big]{\bm 1 \braces[\big]{|\langle \theta, X \rangle | \leq 1 ; \,  \|X\| \leq 2\sqrt{d}}} \, .
\end{multline}
It remains to bound from below both expectations in the right-hand side term.
Let us start with the second one, we have $\|X\|^2=\innerp{u}{X}^2+\|X-\innerp{u}{X}u\|^2$, where $X-\innerp{u}{X}u$ is a Gaussian vector independent from $\innerp{u}{X}$.
Thus, if $\|X-\innerp{u}{X}u\|^2\leqslant 4d-1/\|\theta\|^2$ and $|\innerp{u}{X}|\leqslant 1/\|\theta\|$, then $|\langle \theta, X \rangle | \leq 1 ; \,  \|X\| \leq 2 \sqrt{d}$, so
\begin{align*}
  \E \Big[\ind{|\langle \theta, X \rangle | \leq 1 ; \,  \|X\| \leq 2\sqrt{d}}\Big]
  &\geqslant \P(|\innerp{u}{X}|\leqslant 1/\|\theta\|)\P(\|X-\innerp{u}{X} u\|^2\leqslant 4 d-1/\|\theta\|^2)\, .
\end{align*}
By Lemma~\ref{lem:ellipsoids}, since $\|\theta-\theta^*\|_H\leqslant 1/10\sqrt{\b}$ one has
\begin{equation}\label{eq:normtheta}
  \frac1{2}\leqslant 0.9\cdot \b \leqslant \|\theta\|\leqslant 1.1\cdot \b \enspace.
\end{equation}
By Markov's inequality, we therefore have
\[
\P(\|X-\innerp{u}{X} u\|^2\leqslant 4 d-1/\|\theta\|^2)\geqslant 1-\frac{d-1}{4d-1/\|\theta\|^2}\geqslant \frac{3}{4}\, .
\]
As $x\mapsto x\exp(-x^2)$ is non-decreasing on $[0,1/2]$,
\[
\P(|\innerp{u}{X}|\leqslant 1/\|\theta\|)\geqslant \frac2{\|\theta\|}\frac{\exp(-\|\theta\|^2/2)}{\sqrt{2\pi}}\geqslant \frac{0.07}{\b}\, .
\]
Thus
\begin{align*}
  \Expect[\Big]{\ind{|\langle \theta, X \rangle | \leq 1 ; \,  \|X\| \leqslant 2\sqrt{d}}}
  &\geqslant \frac{0.05}{\b}\, .
\end{align*}
We use the same arguments to bound the first expectation in the right hand side of \eqref{eq:dvpAtheta}, we get
\begin{align*}
	\E\big[\1\big(|\langle \theta, X \rangle | \leq 1 ; \,
		\|X\| \leq 2 \sqrt{d}\big) \innerp{u}{X}^2\big]
	\geqslant \frac34\E\big[\ind{|\innerp{u}{X}|
		\leqslant 1/\|\theta\|}\innerp{u}{X}^2\big]\, .
\end{align*}
We have
\begin{align*}
  \E[\ind{|\innerp{u}{X}|\leqslant 1/\|\theta\|}\innerp{u}{X}^2]&
  \geqslant \frac{\exp(-1/2\|\theta\|^2)}{\sqrt{2\pi}}\int_{-1/\|\theta\|}^{1/\|\theta\|}x^2\di x\\
  &=\sqrt{\frac{2}{\pi}}\frac{\exp(-1/2\|\theta\|^2)}{3\|\theta\|^3}\geqslant \frac{0.18}{\b^3}\, .
\end{align*}
Thus
\begin{align*}
    \E\big[\1\big(|\langle \theta, X \rangle | \leq 1 ; \,
		\|X\| \leq 2 \sqrt{d}\big) \innerp{u}{X}^2\big]
	\geqslant \frac{0.1}{\b^3}\, .
\end{align*}
Plugging these bounds into \eqref{eq:dvpAtheta} yields
\[
\innerp{\wt H(\theta)v}{v}\geqslant 0.05\bigg(\frac{2\innerp{u}{v}^2}{\b^3}+\frac{(1-\innerp{u}{v}^2)}{\b}\bigg)\geqslant 0.05\innerp{H_\theta v}{v}\, .
\qedhere
\]
\end{proof}

\begin{lemma}
\label{lem:hess-proxy-regularity}
Let $r\in[0,1/10]$. 
For %
every $\theta \in \R^d$ such that $\norm{\theta - \theta^*}_H \leq r/\sqrt{\b}$, we have 
\begin{equation*}
(1-2.35r)H\mleq  H_\theta\mleq (1+2.35r) H\, .
\end{equation*}
\end{lemma}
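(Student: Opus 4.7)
The plan is to recast the double semi-definite inequality as an operator-norm bound, exploit the fact that $H_\theta - H$ is a rank-two perturbation supported in the plane spanned by $u$ and $u^*$, compute that norm explicitly in the plane, and conclude using the estimate on $\|u - u^*\|$ supplied by Lemma~\ref{lem:ellipsoids}.

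First, I would observe that the desired inequality is equivalent to $\bigl\|H^{-1/2}(H_\theta - H) H^{-1/2}\bigr\|_{\mathrm{op}} \leq 2.35\, r$. Subtracting the spectral decompositions of $H_\theta$ and $H$ produces the clean algebraic identity
\[
  H_\theta - H = (B^{-3} - B^{-1})\bigl(uu^\top - u^*{u^*}^\top\bigr),
\]
so the perturbation is supported on the two-dimensional subspace $V = \mathrm{span}(u, u^*)$. Since $H^{-1/2} = B^{3/2}\, u^*{u^*}^\top + B^{1/2}(I_d - u^*{u^*}^\top)$ stabilizes both $V$ and $V^\perp$, so does $H^{-1/2}(H_\theta - H) H^{-1/2}$, and hence its operator norm is attained on $V$.

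Next, I would write $u = \alpha u^* + \beta w$ with $w$ a unit vector in $V \cap u^{*\perp}$, $\alpha = \innerp{u}{u^*}$ and $\beta = \sqrt{1-\alpha^2}$. Using $H^{-1/2} u = \alpha B^{3/2} u^* + \beta B^{1/2} w$, the matrix of $H^{-1/2}(uu^\top - u^*{u^*}^\top) H^{-1/2}$ restricted to $V$ in the basis $(u^*, w)$ is
\[
  \begin{pmatrix} -\beta^2 B^3 & \alpha\beta B^2 \\ \alpha\beta B^2 & \beta^2 B \end{pmatrix}.
\]
Its determinant is $-\beta^2 B^4 < 0$, so the eigenvalues have opposite signs; the formula $\max(|\lambda_+|, |\lambda_-|) = \tfrac{1}{2}\bigl(|T| + \sqrt{T^2 - 4D}\bigr)$ for a $2\times 2$ matrix of trace $T = \beta^2(B - B^3)$ and determinant $D = -\beta^2 B^4$, together with the prefactor $|B^{-3} - B^{-1}| = B^{-3}(B^2 - 1)$, yields the closed form
\[
  \bigl\|H^{-1/2}(H_\theta - H) H^{-1/2}\bigr\|_{\mathrm{op}}
    = \frac{(B^2-1)\beta}{2 B^2}\Bigl(\beta(B^2-1) + \sqrt{\beta^2(B^2-1)^2 + 4 B^2}\Bigr).
\]

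Finally, I would invoke the third point of Lemma~\ref{lem:ellipsoids} to bound $\beta = \sqrt{1-\innerp{u}{u^*}^2} \leq \|u - u^*\| \leq \sqrt{2}\, r/((1-r) B)$, so that $\beta B \leq \sqrt{2}\, r/(1-r)$. The subadditivity bound $\sqrt{a^2 + b^2} \leq a + b$ then gives $\sqrt{\beta^2(B^2-1)^2 + 4 B^2} \leq \beta(B^2-1) + 2 B$, whence the displayed norm is at most $\beta B(1 + \beta B)$. For $r \in [0, 1/10]$ one has $\beta B \leq \sqrt{2}/9$, and a direct check shows $\frac{\sqrt{2}\,r}{1-r}\bigl(1 + \frac{\sqrt{2}\,r}{1-r}\bigr) \leq 2.35\, r$ on this interval (the true constant is in fact closer to $1.82$). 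The only real obstacle is the bookkeeping of constants in this last step; everything else is a direct computation that exploits the rank-two structure of $H_\theta - H$.
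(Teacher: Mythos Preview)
Your proof is correct and takes a genuinely different route from the paper's. The paper works pointwise with the quadratic forms: for each $v\in S^{d-1}$ it bounds $\innerp{u}{v}^2$ and $1-\innerp{u}{v}^2$ in terms of $\innerp{u^*}{v}^2$ and $1-\innerp{u^*}{v}^2$, using Cauchy--Schwarz together with the Young-type inequality $(a+b)^2\le(1+r)a^2+(1+r^{-1})b^2$, and then plugs these into $\innerp{H_\theta v}{v}$. Your argument instead recasts the statement as an operator-norm bound on $H^{-1/2}(H_\theta-H)H^{-1/2}$, observes that $H_\theta-H=(B^{-3}-B^{-1})(uu^\top-u^*{u^*}^\top)$ is rank two and supported on $\mathrm{span}(u,u^*)$, and computes the two eigenvalues explicitly on that plane. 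Your route yields a tighter constant (about $1.82$ rather than $2.35$) and makes the dependence on $\beta B$ transparent; the paper's approach is more elementary and avoids the explicit $2\times 2$ eigenvalue computation, which makes it slightly easier to transport to settings where the rank-two structure is less clean. Both rely on the same input from Lemma~\ref{lem:ellipsoids} to bound $\sqrt{1-\innerp{u}{u^*}^2}$.
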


\begin{proof}
Let $v\in S^{d-1}$, $u=\theta/\|\theta\|$, $u^*=\theta^*/\|\theta^*\|$, we want to compare
\[
\innerp{Hv}{v}=\frac1{\b^3}\innerp{u^*}{v}^2+\frac1\b (1-\innerp{u^*}{v}^2),\qquad 
\innerp{H_\theta v}{v}=\frac1{\b^3}\innerp{u}{v}^2+\frac1{\b}(1-\innerp{u}{v}^2)\enspace.
\]
We have 
\begin{gather*}
|\innerp{v}{u}|\leqslant |\innerp{v}{u^*}|+\|u-\innerp{u}{u^*}u^*\|\|v-\innerp{v}{u^*}u^*\|\enspace,\\
v-\innerp{u}{v}u=(v-\innerp{u^*}{v}u^*)-\innerp{v-\innerp{u^*}{v}u^*}{u}u+\innerp{u^*}{v}(u^*-\innerp{u}{u^*}u)\enspace.
\end{gather*}
By Lemma~\ref{lem:ellipsoids}, $\|u-\innerp{u}{u^*}u^*\|\leqslant \frac{r}{(1-r)\b}$.
Using Cauchy-Schwarz inequality and $(a+b)^2\leqslant (1+r)a^2+(1+r^{-1})b^2$, we deduce
\begin{gather*}
\innerp{u}{v}^2
\leqslant (1+r)\bigg(\innerp{v}{u^*}^2+\frac{r}{(1-r)^2\b^2}(1-\innerp{v}{u^*}^2)\bigg)\enspace,\\   
1-\innerp{u}{v}^2\leqslant (1+r)\bigg((1-\innerp{v}{u^*}^2)+\frac{r}{(1-r)^2\b^2}\innerp{v}{u^*}^2\bigg)\enspace. 
\end{gather*}
Hence,
\begin{align*}
    \innerp{H_\theta v}{v}&\leqslant(1+r)\bigg(\frac{1+r(1-r)^{-2}}{\b^3}\innerp{u}{v}^2+\frac{1+r(1-r)^{-2}\b^{-4}}{\b}(1-\innerp{u}{v}^2)\bigg)\\
    &\leqslant (1+r)(1+r(1-r)^{-2})\innerp{Hv}{v}\leqslant (1+2.35r)\innerp{Hv}{v}\enspace,
\end{align*}
where the last inequality holds as $r\leqslant 0.1$.
The lower bound is obtained using similar arguments.
\end{proof}

\begin{lemma}
\label{lem:conditional-KL}
Let $\mathsf{P}, \mathsf{Q}$ be probability measures and $A$ an event
such that $\mathsf{P}(A) > 0$. One has
\begin{equation*}
	D(\mathsf{P}_{|A} \Vert \mathsf{Q}_{|A})
		\leq \frac{1}{\mathsf{P}(A)} D(\mathsf{P} \Vert \mathsf{Q}) \, .
\end{equation*}
\end{lemma}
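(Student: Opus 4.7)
The plan is to derive the inequality from the chain rule for Kullback--Leibler divergence applied to the two-set partition $\{A, A^c\}$. Set $p = \mathsf{P}(A)$ and $q = \mathsf{Q}(A)$; if $q = 0$ then $p = 0$ (since $\mathsf{P}$ must be absolutely continuous w.r.t.\ $\mathsf{Q}$ for $D(\mathsf{P}\Vert\mathsf{Q})$ to be finite), and the statement is vacuous, so assume $q > 0$. The starting identity is
\begin{equation*}
  D(\mathsf{P}\Vert\mathsf{Q})
  = \underbrace{\Big[ p\log\frac{p}{q} + (1-p)\log\frac{1-p}{1-q} \Big]}_{= D(\bar{\mathsf{P}}\Vert\bar{\mathsf{Q}})}
  + p\, D(\mathsf{P}_{|A}\Vert\mathsf{Q}_{|A})
  + (1-p)\, D(\mathsf{P}_{|A^c}\Vert\mathsf{Q}_{|A^c}) \, ,
\end{equation*}
where $\bar{\mathsf{P}}, \bar{\mathsf{Q}}$ are the Bernoulli projections onto the partition $\{A,A^c\}$. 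All three summands on the right are nonnegative by the nonnegativity of KL, so dropping the first and third yields
\begin{equation*}
  p\, D(\mathsf{P}_{|A}\Vert\mathsf{Q}_{|A}) \leq D(\mathsf{P}\Vert\mathsf{Q}) \, ,
\end{equation*}
which is exactly the claim.

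To carry this out rigorously, I would first verify the decomposition by direct computation: writing $f = \di\mathsf{P}/\di\mathsf{Q}$, the conditional densities satisfy $\di\mathsf{P}_{|A}/\di\mathsf{Q}_{|A} = (q/p) f$ on $A$, so
\begin{equation*}
  p\, D(\mathsf{P}_{|A}\Vert\mathsf{Q}_{|A})
  = \int_A \log\!\Big(\frac{q}{p}\,f\Big)\, \di\mathsf{P}
  = \int_A \log f \,\di\mathsf{P} - p\log\frac{p}{q} \, ,
\end{equation*}
and analogously for $A^c$; summing the two lines recovers $D(\mathsf{P}\Vert\mathsf{Q})$ up to the Bernoulli term $D(\bar{\mathsf{P}}\Vert\bar{\mathsf{Q}})$. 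Since there is no real obstacle here beyond careful bookkeeping with the densities, the proof reduces to that short calculation plus the observation that the discarded terms are nonnegative. The only mildly subtle point is handling the absolute continuity / zero-measure edge cases, which are disposed of by the standing convention that $D(\mathsf{P}\Vert\mathsf{Q}) = +\infty$ whenever $\mathsf{P}$ is not dominated by $\mathsf{Q}$, making the inequality trivial in those regimes.
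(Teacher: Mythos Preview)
Your proof is correct and follows essentially the same approach as the paper: both derive the three-term chain-rule decomposition $D(\mathsf{P}\Vert\mathsf{Q}) = D(\bar{\mathsf{P}}\Vert\bar{\mathsf{Q}}) + \mathsf{P}(A)\,D(\mathsf{P}_{|A}\Vert\mathsf{Q}_{|A}) + \mathsf{P}(A^c)\,D(\mathsf{P}_{|A^c}\Vert\mathsf{Q}_{|A^c})$ via the same density computation, then drop the two nonnegative terms.
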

\begin{proof}
Without loss of generality, let us assume that $\mathsf{P}$ and $\mathsf{Q}$
have densities $p$ and $q$ respectively, with respect to a common dominating
measure $\mu$ (\eg $\mathsf{P}+\mathsf{Q}$). Let also $p_{| A}$ and $q_{| A}$
denote their conditional densities. One has
\begin{equation}
\label{eq:kl-disjonction}
	 D(\mathsf{P} \Vert \mathsf{Q}) = \int_{A} p \log\Big(\frac{p}{q}\Big) \di \mu 
			+ \int_{A^{c}} p \log\Big(\frac{p}{q}\Big) \di \mu \, .
\end{equation}
By symmetry we do the computations on the event $A$.
\begin{align*}
	\int_{A} p \log\Big(\frac{p}{q}\Big) \di \mu
		& = \mathsf{P}(A) \int_{A} \frac{p}{\mathsf{P}(A)}
			\log\left( \frac{p/\mathsf{P}(A)}{q/\mathsf{Q}(A)}
				\cdot\frac{\mathsf{P}(A)}{\mathsf{Q}(A)} \right) \\
		&=\mathsf{P}(A) \int_{A} p_{| A} \log\left(\frac{p_{| A}}{q_{| A}}\right)
			+ \mathsf{P}(A) \log\left(\frac{\mathsf{P}(A)}{\mathsf{Q}(A)}\right)\\
		&=\mathsf{P}(A) D(\mathsf{P}_{| A} \Vert \mathsf{Q}_{| A})
			+ \mathsf{P}(A) \log\left(\frac{\mathsf{P}(A)}{\mathsf{Q}(A)}\right) \, .
\end{align*}
Hence, by symmetry,
\begin{equation*}
	D(\mathsf{P} \Vert \mathsf{Q}) = \mathsf{P}(A) D(\mathsf{P}_{| A} \Vert \mathsf{Q}_{| A})
		+ \mathsf{P}(A^{c}) D(\mathsf{P}_{| A^{c}} \Vert \mathsf{Q}_{| A^{c}})
		+ D(\mathsf{P}(A)\Vert \mathsf{Q}(A)) \, ,
\end{equation*}
where $D(\mathsf{P}(A)\Vert \mathsf{Q}(A))$ denotes the divergence between Bernoulli distributions with parameters $\mathsf{P}(A),\mathsf{Q}(A)$.
The last two terms being non-negative, the claim is proved. 
\end{proof}

Finally, the following lemma bounds the remainder term $R_n$ from Lemma~\ref{lem:LBLinTGauss}, which arises from smoothing over the direction $v \in S^{d-1}$.

\begin{lemma}\label{lem:PB2}
  If $n\geqslant 1.1 \b (d+t)$, then with probability at least $1-e^{-t}$, one has
  \begin{equation*}
    R_n
    = \sup_{u \in \mc(u^*,1/10\b)}
    \frac{1}{n}  \sum_{i=1}^{n}\exp\big( - 0.49\,\b \ainnerp{u}{X_i} \big) \ind{\|X_{i}\|\leq 2 \sqrt{d}}
    \leqslant \frac{4}{\b}\, .
  \end{equation*}
\end{lemma}
\begin{proof}
  We define the process $W = (W (u))_{u \in S^{d-1}}$ by (denoting $b = 0.49 \b$)
  \begin{equation*}
    W (u)
    = \exp \parens[\big]{- b \ainnerp{u}{X}} \ind{\norm{X} \leq 2 \sqrt{d}}
    \, ,
  \end{equation*}
  and likewise define $W_i (u)$ for $i=1, \dots, n$ by replacing $X$ by $X_i$ above.
  We need to bound
  \begin{equation*}
    \underset{u \in \mc(u^*,1/10\b)}{\sup}\frac{1}{n}  \sum_{i=1}^{n}W_i(u)    
    \, ,
  \end{equation*}
  which we achieve by applying the PAC-Bayes inequality of Lemma~\ref{lem:pac-bayes-sum}, with $\lambda = 1$.

  We apply this inequality with the collection of posteriors $(\rho_u)_{u\in \mc(u^*,1/10\b)}$ and the prior $\pi$ defined as follows:
For every $u\in \mc(u^*,1/10\b)$, $\rho_u$ is the uniform distribution over $\mc(u,1/10\b)$ and $\pi$ is the uniform distribution over $\mc(u^*,\sqrt{2}/5\b)$, chosen such that, for any $u\in \mc(u^*,1/10\b)$, the support of $\rho_u$ is included into the one of $\pi$.

\parag{Bound on the relative entropy}
We prove in this paragraph that 
\begin{equation}
  \label{eq:kl-spherical}
    D(\rho_{u} \Vert \pi)
		\leqslant 1.1 \cdot d\enspace.
\end{equation}
We have directly:
\begin{equation*}
	D(\rho_{u} \Vert \pi)
		=\log\Big(\frac{\vol_{d-1}(\mc(u^{*},\sqrt{2}/5\b))}{\vol_{d-1}(\mc(u,1/10\b))} \Big) \, ,
\end{equation*}
where $\vol_{d-1}$ denote the surface measure on $S^{d-1}$.
To compute these volumes, we let $U$ denote a random variable uniformly distributed on the sphere and $u\in S^{d-1}$.
It is %
a standard fact
that $\innerp{u}{U}$ has density given by
\[
f(s) = c_{d} \left(1-s^{2}\right)^{\frac{d-3}{2}}\indic{-1\leq s \leq 1} \, ,
\]
where $c_d$ is a normalizing constant. 
Therefore, for any $\eps\in (0,1)$,
\[
  \vol_{d-1}(\mc(u,\eps))
  = \P\big(\innerp{U}{u}>\sqrt{1-\eps^2}\big)
  = c_d\int_{\sqrt{1-\eps^2}}^1\left(1-s^{2}\right)^{\frac{d-3}{2}}\di s
  = \int_0^{\eps^2} \frac{t^{(d-3)/2}}{\sqrt{1-t}}\di t\, .
\]
Hence, 
\[
\frac{2c_d\eps^{d-1}}{d-1}\leqslant \vol_{d-1}(\mc(u,\eps))\leqslant \frac{1}{\sqrt{1-\eps^2}}\frac{2c_d\eps^{d-1}}{d-1}\, .
\]
Therefore,
\[
D(\rho_{u} \Vert \pi)
		\leqslant (d-1)\log\big(2\sqrt{2}\big)+\frac{1}2\log\bigg(\frac1{1-\sqrt{2}/5\b} \bigg)\, .
\]
As %
$\frac{1}2\log\big(\frac1{1-\sqrt{2}/5\b} \big)\leqslant \log\big(2\sqrt{2}\big)$, further bounding numerical constants gives the bound~\eqref{eq:kl-sphere}.

\parag{Bound on the Laplace transform}
In this paragraph, we prove that
\begin{equation}\label{eq:UBLTPB2}
  \log\E[\exp (W (u))]
  \leq \frac{3}{\b}
  \enspace.
\end{equation}
Indeed, since $0 \leq W(u) \leq \exp(-b |\innerp{u}{X}|)\leqslant 1$, one has (using that the density of $\innerp{u}{X}$ is bounded by $1/\sqrt{2\pi}$)
\begin{align*}
  \log \Expect{\exp (W (u))}
  &\leq \log \braces[\big]{1 + (e-1) \, \Expect{W (u)}}
    \leq (e - 1) \, \Expect{W (u)} \\
  &\leq (e-1) \Expect{\exp (-b \ainnerp{u}{X})}
    \leq \frac{e-1}{\sqrt{2\pi}} \int_\R e^{-b |s|} \di s
    = \frac{(e-1) \sqrt{2/\pi}}{b}
    \, ,
\end{align*}
which proves~\eqref{eq:UBLTPB2} after substituting $b = 0.49 \b$ and bounding the numerical constant.

\parag{Control of the smoothed process}
In this paragraph, we show that
\begin{equation}\label{eq:LBLinTPB2}
  \int_{\mc(u,1/10\b)} W_i (u') \rho_{u}(\di u')
  \geq 0.82 \cdot W_i (u)
  \, .
\end{equation}
We have by Jensen's inequality,
\begin{align*}
  \int_{\mc(u,1/10\b)} W_i&(u') \rho_{u}(\di u') = \ind{\|X_i\|\leqslant 2\sqrt{d}}\int_{\mc(u,1/10\b)}\exp(-b |\innerp{u'}{X_i}|)\rho_u(\di u')\\
&\geqslant \ind{\|X_i\|\leqslant 2\sqrt{d}}\exp\bigg(-b \int_{\mc(u,1/10\b)} \ainnerp{u'}{X_i} \rho_u(\di u')\bigg) \\
&\geqslant \ind{\|X_i\|\leqslant 2\sqrt{d}}\exp\bigg(-b \bigg(\int_{\mc(u,1/10\b)}\innerp{u'}{X_i}^2\rho_u(\di u')\bigg)^{1/2}\bigg)\enspace.
\end{align*}
On the other hand, using \cite[Eq~(42) and (43)]{mourtada2022linear} and Fact~\ref{fact:angle-radius-correspondance} for integration over spherical caps, one has under the event $\set{\norm{X_i} \leq 2 \sqrt{d}}$ that
\begin{align*}
  \int_{\mc(u,1/10\b)} \innerp{u'}{X_i}^2 \rho_u (\di u')
  \leq \innerp{u}{X_i}^{2} + \frac{2}{100(d-1)\b^2} \|X_i\|^2
  \leq \innerp{u}{X_i}^{2} + \frac{0.16}{\b^2}
  \, .
\end{align*}
Combining the previous two inequalities gives
\begin{align*}
  \int_{\mc(u,1/10\b)} W_i(u') \rho_{u}(\di u')
  \geq \ind{\norm{X_i} \leq 2 \sqrt{d}} \exp \bigg( -0.49 \, \b \bigg[ \ainnerp{u}{X_i} + \frac{0.4}{\b} \bigg] \bigg)
  \geq 0.82 \, W_i (u)
  \, .
\end{align*}

\parag{Conclusion of the proof}
By the PAC-Bayes inequality (Lemma~\ref{lem:pac-bayes-sum}), for any $t \geq 0$, we have with probability at least $1-e^{-t}$, simultaneously for all $u\in\mc(u^*,1/10\b)$,
\begin{align*}
  \frac{0.82}{n} \sum_{i=1}^{n} W_i (u)
  &\leq \frac{1}{n} \sum_{i=1}^n \int_{\mc (u, 1/10 \b)} W_i (u') \rho_u (\di u')
    \leq \frac{3}{\b} + \frac{1.1 d + t}{n}
    \leq \frac{4}{\b}
    \, ,
\end{align*}
where the last inequality comes from the condition on $n$.
\end{proof}

\section{Linear separation: Proof of Theorem~\ref{thm:strong-non-existence}%
}
\label{sec:linear-separation}

In this section, we provide the proof of Theorem~\ref{thm:strong-non-existence}  on linear separation for small sample sizes.

Throughout this section, we assume that the design is isotropic Gaussian and that the model is well-specified.
Specifically, given a dimension $d \geq 1$, a parameter $\theta^* \in \R^d$ with norm $b = \norm{\theta^*}$ and a sample size $n \geq d$, the dataset consists of $n$ \iid random pairs $(X_1, Y_1), \dots, (X_n, Y_n)$ with $X_i \sim \gaussdist (0, I_d)$ and $\P (Y_i = 1 |X_i) = \sigma (\innerp{\theta^*}{X_i})$.
Note that if $n \geq d$, then almost surely $X_1, \dots, X_n$ span $\R^d$, hence (by the discussion in the introduction) the MLE exists if and only if the dataset is not linearly separated.
In addition, by rotation invariance of the standard Gaussian distribution in $\R^d$, the probability of linear separation (non-existence of the MLE) only depends on $\theta^*$ through its norm $b$.

We start by defining the function $h : \R \to [0, 1/2]$ which appears in the asymptotic phase transition~\eqref{eq:phase-transition-mle}.
For $b \in \R^+$, let $(X',Y_b')$ be a random pair in $\R \times \{ -1, 1 \}$, with $X' \sim \gaussdist (0, 1)$ and $\P (Y_{b}' = 1 |X') = \sigma (b X')$, and let $V_b = Y_b' X'$.
In addition, let $Z \sim \gaussdist (0, 1)$ be independent of $V_b$.
Then, 
\begin{equation}
  \label{eq:boundary-function}
  h (b)
  = \min_{\lambda \in \R} \E \big[ (\lambda V_b - Z)_+^2 \big]
  \, .
\end{equation}

\subsection{Proof of Theorem~\ref{thm:strong-non-existence}}
\label{sec:proof-linear-separation}
The proof of Theorem~\ref{thm:strong-non-existence} relies on the approximate kinematic formula from conic geometry recalled hereafter.
First, define the \emph{statistical dimension}~\cite{amelunxen2014living} of a convex cone $\Lambda \subset \R^n$ as 
$\delta(\Lambda) = \E [\| \Pi_{\Lambda} \Z \|^{2}]$ where $\Pi_{\Lambda}$ is the Euclidean projection on $\Lambda$ and $\Z \sim \normal(0, I_{n})$.
\begin{lemma}
[Approximate kinematic formula, Theorem~7.1 in \cite{amelunxen2014living}]
\label{lem:bernstein-kinematic}
Let $\L_k$ be a random subspace of $\R^{n}$ drawn uniformly (that is, under the rotation-invariant probability measure) over the set of subspaces of dimension $k$ and let $\Lambda \subset \R^{n}$ be a convex cone.
For all $t \geq 0$,
if
\begin{equation}
\label{eq:codim-stat-dim}
	n-k \leq \delta(\Lambda) - t \, ,
\end{equation}
then
\begin{equation*}
	\P\left( \Lambda \cap \L_k \neq \{0\} \right)
		\geq 1 - 4\exp\left(-\frac{t^{2} /8}{\min \{\delta (\Lambda), n-\delta (\Lambda)\} + t} \right)  \, .
\end{equation*}
\end{lemma}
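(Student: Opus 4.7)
The plan is to follow the strategy of Amelunxen--Lotz--McCoy--Tropp and isolate two ingredients: a kinematic identity expressing the intersection probability as a tail of a concrete distribution, and a Bernstein-type concentration bound on that distribution. First I would introduce the intrinsic volumes $(v_{0}(\C),\ldots,v_{n}(\C))$ of the closed convex cone $\C$, which form a probability distribution on $\{0,1,\ldots,n\}$, and let $V$ denote an integer-valued random variable with this law. By the spherical Steiner formula together with the definition of statistical dimension, one has $\E[V]=\delta(\C)$. The conic kinematic formula then expresses $\P(\C\cap \L\neq\{0\})$ as a sum of products of intrinsic volumes of $\C$ and of $\L$; since $\L$ is a $k$-dimensional subspace, its intrinsic-volume distribution is degenerate and the identity collapses (after tracking a parity correction and a factor of $2$) to an inequality of the form
\[
\P(\C\cap \L\neq\{0\})\;\geq\;\P(V>n-k)\;-\;(\text{boundary correction}).
\]

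Second, under the hypothesis $n-k\leq\delta(\C)-t$, the event $\{V>n-k\}$ contains $\{V>\delta(\C)-t\}$, so matters reduce to the one-sided deviation bound
\[
\P\!\left(V\leq\delta(\C)-t\right)\;\leq\;2\exp\!\left(-\frac{t^{2}/8}{\min\{\delta(\C),n-\delta(\C)\}+t}\right),
\]
with a symmetric bound on the upper tail accounting for the factor $4$ in the conclusion. To establish this deviation inequality, the structural input I would use is the log-concavity of the intrinsic-volume sequence (McMullen in the polyhedral case; extended to general closed convex cones via spherical Steiner polynomials). Log-concavity applied to $V$ and to $n-V$ yields the variance bound $\Var(V)\leq\min\{\delta(\C),n-\delta(\C)\}=:\sigma^{2}$, and, more importantly, a Bennett-type bound on the log-MGF,
\[
\log\E\!\left[e^{\lambda(V-\delta(\C))}\right]\;\leq\;\sigma^{2}\bigl(e^{\lambda}-1-\lambda\bigr),\qquad \lambda\in\R.
\]
Standard Bernstein inversion of this log-MGF bound produces the required two-sided deviation inequality with the precise denominator $\sigma^{2}+t$.

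Third, to finish I would combine the kinematic reduction with the deviation bound: the hypothesis $n-k\leq\delta(\C)-t$ places the threshold $n-k$ inside the lower tail of $V$ beyond $\delta(\C)-t$, and the two-sided Bernstein inequality gives the stated $4e^{-\cdots}$ bound on the complement.

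The main obstacle I expect is the Bennett-type control of the moment generating function of $V$, rather than the weaker variance bound. A natural route I would pursue is to compute $\E[e^{\lambda V}]$ directly via the spherical Steiner formula, identifying it with a Gaussian expectation involving the squared norm of the projection $\|\Pi_{\C} \bm{Z}\|^{2}$ (for $\bm{Z}\sim\gaussdist(0,I_{n})$), and then bound it using known concentration for this projection functional. This ties back cleanly to the definition $\delta(\C)=\E\|\Pi_{\C}\bm{Z}\|^{2}$ and closes the loop. A secondary bookkeeping issue is the precise form of the conic kinematic formula (parity of $n-k$ and boundary volumes), which accounts for the factor $4$ rather than $2$ in the conclusion but introduces no essential difficulty.
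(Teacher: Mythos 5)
The paper does not prove this lemma: it is cited verbatim as Theorem~7.1 of Amelunxen, Lotz, McCoy, and Tropp and used as a black box, so the relevant comparison is with the ALMT argument rather than with anything in the present paper. Your high-level skeleton does match ALMT: the conic kinematic (Crofton) formula reduces $\P(\C\cap\L\neq\{0\})$ to a tail sum of the conic intrinsic-volume distribution, and the remaining work is a Bernstein/Bennett-type concentration inequality for the intrinsic-volume random variable $V$ around its mean $\delta(\C)$, with variance proxy $\min\{\delta(\C),\,n-\delta(\C)\}$.

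Where your plan has a real gap is in the mechanism for the second step. Log-concavity of the intrinsic-volume sequence is a true statement, but it does not by itself deliver the Bennett log-MGF bound $\log\E\,e^{\lambda(V-\delta(\C))}\leq\sigma^{2}(e^{\lambda}-1-\lambda)$ with $\sigma^{2}=\min\{\delta(\C),n-\delta(\C)\}$: log-concavity of a law on $\{0,\dots,n\}$ gives unimodality and sub-exponential decay, but it does not pin down the sub-gamma parameters at this sharpness, and in particular the claimed variance bound $\Var(V)\leq\sigma^{2}$ does not follow from log-concavity alone. What actually drives the ALMT proof is the Master Steiner formula, which expresses weighted sums of intrinsic volumes as Gaussian expectations of the form $\E\,f(\|\Pi_{\C}\Z\|^{2},\|\Pi_{\C^{\circ}}\Z\|^{2})$; combined with Lipschitz concentration of $\Z\mapsto\|\Pi_{\C}\Z\|$, this controls the MGF of $V$ directly. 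You mention this route at the end as a fallback, but it is in fact the load-bearing argument, and the transfer from Gaussian concentration of $\|\Pi_{\C}\Z\|^{2}$ to a tail bound for the discrete $V$ requires a genuine interlacing/comparison step rather than a one-line identification. If you commit to the Steiner-formula route, drop the log-concavity branch entirely; it does not close the argument and is not what ALMT use. The remaining bookkeeping you flag (the half-tail functionals in the kinematic formula, the parity of $n-k$, and the factor $4$) is indeed routine once the concentration inequality is in hand.
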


We can now proceed with the proof of Theorem~\ref{thm:strong-non-existence}.
First, if
$n \leq d$,
a simple induction shows that almost surely, the points $X_1, \dots, X_n$ are linearly independent in $\R^d$.
Hence, there exists $\theta \in \R^d$ such that for $i=1, \dots, n$, one has $\innerp{\theta}{X_i} = Y_i$ and thus $Y_i \innerp{\theta}{X_i} = Y_i^2 = 1 > 0$.
Thus $\inf_{\theta' \in \R^d} \wh L_n (\theta') = 0$, but $\wh L_n > 0$ on $\R^d$ and thus $\wh L_n$ admits no global minimizer in $\R^d$.
We thus assume from now on that $n > d$.
Since $n \leq B d /200$, this implies that $\norm{\theta^*} = B > e$.

First, following~\cite{candes2020phase}, we express the probability that the MLE does not exist as the probability that some random cone non-trivially intersects a random subspace of dimension $d-1$ in $\R^{n}$.
Let $\{(X_{i}, Y_{i})\}_{1\leq i\leq n}$ denote the dataset, where all the $X_{i}$'s are independently drawn from $\normal(0, I_{d})$.
Using the rotational invariance of the standard Gaussian distribution,  we can assume without loss of generality that for every $i\in \{1, \dots, n\}$, $\P(Y_{i} = 1 \cond X_{i}) = \sigma(\b X_{i}^{1})$ where $X_{i}^{j}$ denotes the $j$-th coordinate of $X_{i}$ for every $j\in \{1, \dots, d\}$. Below we let $U_{i} = X_{i}^{1}$ and $V_{i} = Y_{i}U_{i}$ for all $i\in \{1, \dots, n\}$. Let also $\V = (V_{1}, \dots, V_{n})\in \R^{n}$ and $\Lambda = \R \V + \R_{+}^{n}$, which is a random cone in $\R^{n}$.
The proof of Theorem~\ref{thm:strong-non-existence} relies on the following observation.
\begin{lemma}
\label{lem:cones-intersection}
Let $\L_{d-1}$ be a random subspace drawn uniformly over subspaces of dimension $d-1$ in $\R^{n}$.
Then 
\begin{equation*}
	\P(\text{\emph{MLE does not exist}}) \geq \P(\Lambda \cap \L_{d-1} \neq \{0\}) \, .
\end{equation*}
\end{lemma}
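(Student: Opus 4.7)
The plan is to exhibit, on the event $\{\Lambda \cap \L \neq \{0\}\}$, a nonzero direction $\theta \in \R^d$ that linearly separates the dataset, in the sense that $Y_i \innerp{\theta}{X_i} \geq 0$ for every $i$. Since $n \geq d$ and the design is absolutely continuous, $X_1,\dots,X_n$ span $\R^d$ almost surely, so by the discussion in the introduction the existence of such a $\theta$ rules out existence of the MLE. The key observation is that the random subspace $\L$ has the same joint law with $V$ as an explicit $(d-1)$-dimensional subspace $W$ built from the data, so I can work directly with $W$.

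More precisely, using rotational invariance I may assume $\theta^* = \beta e_1$ and introduce the matrix $B \in \R^{n \times (d-1)}$ with entries $B_{ij} = Y_i X_i^{j+1}$. For each $j \geq 2$ the coordinate $X_i^j$ is independent of $(X_i^1, Y_i)$, and $Y_i \in \{-1,+1\}$, so by Gaussian symmetry the entries of $B$ are \iid $\gaussdist(0,1)$ and jointly independent of $V=(V_1,\dots,V_n)$. Consequently $W := \im(B)$ is almost surely $(d-1)$-dimensional and, by rotation invariance of the \iid Gaussian distribution, uniformly distributed over the Grassmannian $\mathrm{Gr}_{d-1}(\R^n)$ and independent of $V$. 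Since $\L$ (drawn independently of the data) has the same distribution, $\P(\Lambda \cap W \neq \{0\}) = \P(\Lambda \cap \L \neq \{0\})$.

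It remains to show that $\Lambda \cap W \neq \{0\}$ forces non-existence of the MLE. Given $w \in (\Lambda \cap W)\setminus\{0\}$, the membership $w \in \Lambda$ gives a decomposition $w = tV + y$ with $t\in\R$ and $y \in \R_+^n$, while $w \in W$ gives $w = B\eta_0$ for some $\eta_0 \in \R^{d-1}$. Setting $\theta = (-t, \eta_0) \in \R \times \R^{d-1}$ and letting $A = [V \mid B]$ denote the full design matrix (up to relabelling by the $Y_i$), one obtains $A\theta = -tV + B\eta_0 = y \in \R_+^n$, i.e.\ $Y_i\innerp{\theta}{X_i} \geq 0$ for every $i$. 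One then only has to check that $\theta \neq 0$: if $t \neq 0$, the first coordinate of $\theta$ is already nonzero; if $t = 0$, then $B\eta_0 = w \neq 0$, and the almost sure full column rank of the Gaussian matrix $B$ forces $\eta_0 \neq 0$. In either case the dataset is linearly separated, and the MLE does not exist.

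The argument is essentially a careful unpacking of the separation condition together with the distributional identification of $\L$ and $W$, and I do not anticipate any substantial obstacle. The only point requiring care is ensuring $\theta \neq 0$ in the degenerate case $t=0$, which is immediate from the rank of $B$.
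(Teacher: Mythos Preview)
Your proof is correct and follows essentially the same route as the paper's. The only cosmetic difference is that the paper invokes the distributional identity $(Y_i X_i^1, Y_i X_i^2,\dots,Y_i X_i^d)\stackrel{d}{=}(Y_i X_i^1, X_i^2,\dots,X_i^d)$ and then works with $\mathcal{L}=\vspan\{\X^2,\dots,\X^d\}$, whereas you keep the columns $Y_i X_i^{j+1}$ and argue directly that the resulting matrix $B$ has \iid standard Gaussian entries independent of $\V$; these are the same observation phrased two ways, and the extraction of a separating $\theta$ from a nonzero element of $\Lambda\cap W$ is identical.
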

The proof of this result is postponed to the end of the section and is a straightforward adaptation of \cite[Proposition~2]{candes2020phase} to the case where the model does not include an intercept. 

In view of this characterization, we want to apply Lemma~\ref{lem:bernstein-kinematic} to the cone $\Lambda$ but we cannot do it in a straightforward way, as this cone is random. We therefore show that the sufficient condition \eqref{eq:codim-stat-dim} regarding the statistical dimension of $\Lambda$ is satisfied with high probability.
Hereafter we denote by $E = \{\Lambda \cap \L_{d-1} \neq \{0\}\}$,
and for every $t\geq 0$, we define the event %
\begin{equation}
  A_{t}
  = \{ n-d+1 \leq \delta(\Lambda) - t \}
  = \set{n - \delta (\Lambda) \leq d - 1 - t}
  \, .  
\end{equation}
Our main task in this proof is to show that
\begin{equation}
\label{eq:stat-dim-whp}
	\P(A_{\alpha d})
		\geq 1-\exp\big(-\max\big\{\kappa\sqrt{d}, \kappa^{2}d/\b^{2} \big\}\big)
			-2e^{-\tau d}
\end{equation}
for some $\tau\in (0,1/2)$ and $\alpha \in (1/2, 1)$.
We now establish~\eqref{eq:stat-dim-whp} with explicit constants. Conditionning on $\V$, one has $\delta(\Lambda)
= \E [ \norm{Z}^2 - \dist (\Z, \Lambda)^{2} \cond \V ]
= n - \E%
\big[\dist (\Z, \Lambda)^{2} \cond \V\big]$.
We define
\begin{equation*}
  F(\V)
  = n - \delta (\Lambda)
  = \E_{\Z} \big[ \dist (\Z, \Lambda)^{2} \cond \V \big]
  = \E_{\Z} \bigg[ \underset{\lambda \in \R}{\min} \, 
  \sum_{i=1}^{n}(\lambda V_{i} - Z_{i})_{+}^{2} \Big| \V \bigg] \, .
\end{equation*}
This way, we will prove \eqref{eq:stat-dim-whp} by showing that $F(\V) \ll d$ with high probability.
It is reasonable to believe that this is true in the regime of interest where
$n \ll \b d$.
Indeed, we note that
\begin{equation*}
  \E [F(\V)]
  \leq \underset{\lambda \in \R}{\min} \, \E\bigg[\sum_{i=1}^{n}
  (\lambda V_{i} - Z_{i})_{+}^{2} \bigg]
  =nh(\b) \, ,
\end{equation*}
where $h$ is the phase transition function~\eqref{eq:boundary-function}.
In addition, one can show that $h (\b) \lesssim 1/\b$ (we will not need this exact claim, hence we will not prove it, although it could be deduced from the analysis below).
Thus $\E [F (\bm V)] \lesssim n/\b \ll d$.

Hereafter, we let $\psi:s\in \R \mapsto \E\big[(s - Z)_{+}^{2}\big]$ with $Z\sim  \normal(0,1)$ and start by bounding
\begin{equation*}
	F(\V)
        = \E\bigg[ \underset{\lambda\in \R}{\min} \, 
			\sum_{i=1}^{n}(\lambda V_{i} - Z_{i})_{+}^{2} \Big| \V \bigg]
		\leq \underset{\lambda \in \R}{\min} \, \sum_{i=1}^{n}
			\E \big[(\lambda V_{i} - Z_{i})_{+}^{2} | V_{i} \big] 
		= \underset{\lambda \in \R}{\min} \, \sum_{i=1}^{n} \psi(\lambda V_{i})\, .
\end{equation*}
By Fact~\ref{fact:psi-estimates} below, for all $\lambda \geq 0$, recalling that $V_i = Y_i U_i$,
\begin{equation}
\label{eq:bound-function-psi-3terms}
	\psi(- \lambda V_{i})
		\leq e^{-\lambda^{2} U_{i}^{2}/2}
		+ \indic{Y_{i} U_{i} \leq 0}
		+ \lambda^2 U_{i}^{2} \indic{Y_{i} U_{i} \leq 0} \, .
\end{equation}
We thus define for all $i\in \{1, \dots, n\}$ and $\lambda \in \R$ the variables
\begin{equation}
\label{eq:3-aux-vars}
	\zeta_{i,\lambda} = e^{-\lambda^{2}U_{i}^{2}/2}\, , 
	\quad \eps_{i} = \indic{Y_{i}U_{i}\leq 0}\, ,
	\quad \psi_{i} = U_{i}^{2}\indic{Y_{i}U_{i}\leq 0} \, ,
\end{equation}
so that we can further bound
\begin{equation}
\label{eq:min-3-sums}
	F(\V)
	\leq \underset{\lambda > 0}{\min} \bigg\{ \sum_{i=1}^{n} \zeta_{i, \lambda} 
	+ \sum_{i=1}^{n} \eps_{i} + \lambda^{2}  \sum_{i=1}^{n} \psi_{i} \bigg\} \, .
\end{equation}
We now separately bound from above the three sums and then optimize the resulting bound over $\lambda$.
We use Bernstein's inequality
to bound the first two sums involving the $\zeta_{i,\lambda}$ and $\eps_{i}$, but bounding the sum of the $\psi_{i}$'s is a more subtle task, for which we resort to Lata{\l}a's bound on the moments of sums of independent variables \cite{latala1997estimation} to control the moments of $\sum_{i=1}^{n} \psi_{i}$. Let us start with the first sum. For every $i \in \{1, \dots, n\}$, every $\lambda > 0$ and $k \in \{1, 2\}$,
\begin{align*}
	\E [\zeta_{i,\lambda}^{k}]
	&=\E \Big[ \exp\Big(-\frac{k\lambda^{2}U_{i}^{2}}{2}\Big)\Big]
	= \int_{\R}\frac{e^{-(k\lambda^{2}+1)u^{2}/2}}{\sqrt{2\pi}} \di u
	= \frac{1}{\sqrt{k\lambda^{2}+1}} 
	\leq \frac{1}{\lambda} \, .
\end{align*}
Since in addition $\zeta_{i,\lambda} \leq 1$ almost surely, by Lemma~\ref{lem:bennett-mgf} and the second and third points of Lemma~\ref{lem:sub-gamma-exponential}, for all $t\geq 0$, with probability larger than $1-e^{-t}$,
\begin{equation}
\label{eq:bound-zeta}
	\sum_{i=1}^{n} \zeta_{i, \lambda} 
	\leq \frac n \lambda +  \sqrt{\frac{2nt}{\lambda}} + \frac t 3 \, .
\end{equation}
Regarding the second sum, inequality~\eqref{eq:MomGaussu*} shows that for every $i$, $\E [\eps_{i}] = \E[\exp(-\b|U_{i}|)] \leq \b^{-1}$. Since $\eps_{i}^{2} = \eps_{i}$ and $\eps_{i}\leq 1$, the same argument as before shows that for all $t\geq 0$, it holds with probability larger than $1-e^{-t}$ that
\begin{equation}
\label{eq:bound-eps}
	\sum_{i=1}^{n} \eps_{i} \leq \frac n \b + \sqrt{\frac{2nt}{\b}} + \frac t 3 \, .
\end{equation}
Finally, we turn to the control of the last sum, for which we use Lata{\l}a's bound, recalled hereafter.
\begin{lemma}[\cite{latala1997estimation}, Corollary~1]
\label{lem:latala}
  Let $\xi, \xi_{1}, \dots, \xi_{n}$ be \iid nonnegative random variables.
  Then for any $p \geq 1$,
\begin{equation*}
	\bigg\|\sum_{i=1}^{n}\xi_{i} \bigg\|_{p} 
	\leq 2e^{2} \sup\Big\{ \frac{p}{s}\Big(\frac{n}{p}\Big)^{1/s}  \|\xi\|_{s} :  
	1 \vee \frac{p}{n} \leq s \leq p \Big\} \, . 
\end{equation*}
\end{lemma}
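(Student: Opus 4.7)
The plan is to expand $\E[(\sum_i\xi_i)^p]$ via the multinomial theorem and then optimize the resulting combinatorial bound using the moment inequalities provided by the supremum, which I denote $M$. By definition, $\E[\xi^s]\leqslant M^s(s/p)^s(p/n)$ for every admissible $s\in[1\vee p/n,p]$.

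I would first reduce to integer $p$ by interpolation: both $p\mapsto\log\|\sum_i\xi_i\|_p$ (by H\"older) and $p\mapsto \log M$ behave well, so it suffices to establish the bound for $p\in\N$ and absorb the interpolation loss into $2e^2$. For integer $p$, grouping $p$-tuples $(i_1,\dots,i_p)\in[n]^p$ by the partition $\lambda=(\alpha_1\geqslant\dots\geqslant\alpha_k\geqslant 1)$ of the multiplicities of the indices they use yields
\[
\E\Big[\Big(\sum_{i=1}^n\xi_i\Big)^p\Big]=\sum_{k=1}^{n\wedge p}\binom{n}{k}\sum_{\lambda\vdash p,\;\ell(\lambda)=k}\frac{k!}{\prod_s r_s(\lambda)!}\binom{p}{\alpha_1,\dots,\alpha_k}\prod_{j=1}^k\E[\xi^{\alpha_j}],
\]
where $r_s(\lambda)$ counts the parts of $\lambda$ equal to $s$; every term is nonnegative because $\xi\geqslant 0$.

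Plugging the moment bound into the product gives $\prod_j\E[\xi^{\alpha_j}]\leqslant M^p(p/n)^k\prod_j(\alpha_j/p)^{\alpha_j}$ whenever each $\alpha_j$ lies in the admissible range. Combining with $\binom{n}{k}(p/n)^k\leqslant p^k/k!$ and the Stirling bounds $p!\leqslant e\sqrt{p}(p/e)^p$ and $\alpha^\alpha/\alpha!\leqslant e^\alpha$, the problem reduces to bounding $\sum_k p^k S_k$ with $S_k=\sum_{\lambda\vdash p,\ \ell=k}(\prod_s r_s!)^{-1}$. I would evaluate this via the generating function identity $\sum_\lambda (\prod_s t^{r_s}/r_s!)x^{|\lambda|}=\exp(tx/(1-x))$, which gives $\sum_k p^k S_k=[x^p]\exp(px/(1-x))\leqslant (2e)^p$ by substituting $x=1/2$. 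Assembling all factors produces $\E[(\sum_i\xi_i)^p]\leqslant M^p\cdot e^{p+1}2^p\sqrt{p}$, and taking $p$-th roots then maximizing over $p\geqslant 1$ (the maximum occurs at $p=1$) recovers precisely $2e^2$.

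The main obstacle is the regime $p>n$, in which some parts $\alpha_j$ of a valid partition lie strictly below the admissible threshold $p/n$ and the moment bound fails. There I would split the product over parts into ``large'' ones ($\alpha_j\geqslant p/n$), handled as above, and ``small'' ones ($\alpha_j<p/n$), controlled via Jensen's inequality $\E[\xi^{\alpha_j}]\leqslant\E[\xi^{p/n}]^{\alpha_j n/p}$ followed by the moment bound applied at $s=p/n$. The extra multiplicative factor introduced is at most $(p/n)^{n/p}\leqslant e^{1/e}$ per small part, and can be absorbed into the universal constant by refining the Stirling accounting. Making this absorption produce exactly the sharp constant $2e^2$, rather than some larger absolute constant, is the most delicate aspect of the argument.
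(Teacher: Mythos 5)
The paper does not actually prove this lemma: it is quoted verbatim as Corollary~1 of Lata{\l}a's 1997 paper and used as a black box. So there is no internal proof of the paper to compare against; your proposal is a genuinely new, more elementary approach. Lata{\l}a's original argument characterizes $\|S_n\|_p$ through an implicit functional equation involving $\inf\{t>0: \E[(1+\xi/t)^p]\leq e^{p/n}\}$ (his Theorem~2), whereas your sketch is a direct multinomial expansion with partition combinatorics. Your integer-$p$, $p\leq n$ calculation is correct: the identity $\sum_{\lambda\vdash p} p^{\ell(\lambda)}/\prod_s r_s! = [x^p]\exp(px/(1-x))$ is right, the Stirling accounting $\binom{p}{\alpha_1,\dots,\alpha_k}\prod_j(\alpha_j/p)^{\alpha_j}\leq e\sqrt{p}$ is right, and optimizing $(e\sqrt{p})^{1/p}(2e)$ over $p\geq1$ does give exactly $2e^2$ at $p=1$. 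It is a nice observation that the constant comes out sharp here.

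However, there are two gaps. First, the interpolation to non-integer $p$ is not worked out, and your own calculation shows that the integer case already exhausts the constant $2e^2$, leaving no slack to ``absorb'' an interpolation loss; moreover the right-hand side $M$ depends on $p$, so one cannot simply pass to $\lceil p\rceil$. Second and more seriously, the claim for $p>n$ that the extra factor is at most $(p/n)^{n/p}\leq e^{1/e}$ \emph{per small part} is incorrect. With $q=p/n$ and a small part of size $a\in[1,q)$, comparing your Jensen bound $\E[\xi^a]\leq M^a n^{-a}q^{a/q}$ against the target $M^a(a/p)^a(p/n)$ gives a ratio of $q^{\,a+a/q-1}a^{-a}$. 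This equals $q^{1/q}\leq e^{1/e}$ only when $a=1$; already for $a=2$ it is $q^{1+2/q}/4$, which grows without bound as $q\to\infty$. For instance with $p=100$, $n=10$, the partition $(82,2,\dots,2)$ accumulates a factor of order $4^9$, which cannot be absorbed into a universal constant. One would need to argue that partitions with many small parts of size $\geq 2$ are suppressed by the combinatorial weights $\binom{n}{k}\,k!/\prod_s r_s!\,\binom{p}{\alpha_1,\dots,\alpha_k}$, but this is not addressed. As written, the $p>n$ regime does not close.
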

From now on, we let $S_{n} = \psi_{1}+\cdots+\psi_{n} $ and $p\in [1,n]$. By Markov's inequality, $\P(S_{n} \leq e\|S_{n}\|_{p}) \geq 1-e^{-p}$, hence we want to bound $\|S_{n}\|_{p}$ from above by some factor of $d$, with $p$ as large as possible. 
We are thus led to bound the individual $L^{s}$ norms $\|\psi_{i}\|_{s}$ and then optimize over $s \in [1, p]$.
\parag{Bound on individual moments}
Regarding the bound on $\|\psi_{i}\|_{s}$, the result is
obtained by taking advantage of either the fact that $U_{i}^{2}$ is sub-exponential (by neglecting the indicator) or by conditioning on $U_{i}$,
which allows to use an exponential moment inequality. Let us formalize this. Let $U,Y,\psi$ denote random variables having the same distribution as $U_{i},Y_{i},\psi_{i}$. On the one hand, $\psi \leq U^{2}$, so for all $s\geq 1$,
\begin{equation*}
	\E [\psi^{s}] \leq \E [|U|^{2s}] = \frac{2^{s}}{\sqrt{2\pi}}\Gamma\Big(s+\frac 1 2 \Big) \, .
\end{equation*}
Hence, using \cite[Eq.~(5.6.1)]{olver2010handbook} and simplifying we obtain
\begin{equation}
\label{eq:bound-simple-sub-exp}
	\|\psi\|_{s} = \E[\psi^{s}]^{1/s} \leq (3/e) s \, .
\end{equation}
On the other hand, we use the fact that conditionally on $U$, $\{YU \leq 0\}$ happens with exponentially small probability. More precisely, we write
\begin{align*}
  \E[\psi^{s}]
  &= \E \big[ |U|^{2s} \E[\indic{YU \leq 0} \cond U]\big]
  = \E \big[ |U|^{2s} \sigma(-\b |U|) \big] \\
  &\leq \E \big[ |U|^{2s} \exp(-\b |U|) \big]
    \leq \sqrt{\frac 2 \pi} \frac{\Gamma(2s+1)}{\b^{2s+1}}
    \, .
\end{align*}
We then bound in a similar way $\Gamma(2s+1)^{1/s}$ and thus, combining
the previous two bounds
we deduce that
\begin{equation}
\label{eq:moments-psi}
	\|\psi\|_{s} \leq \frac{9}{e^{2}} \min\Big\{ \frac{s^{2}}{\b^{2}\b^{1/s}} , s \Big\} \, .
\end{equation}	

\parag{Upper bound on the supremum}
Using the control on the moments of the $\psi_{i}$'s~\eqref{eq:moments-psi}, it follows from Lata{\l}a's inequality (Lemma~\ref{lem:latala}) that
\begin{equation}
  \label{eq:latala-brut}
  \|S_{n}\|_{p} 
  \leq \frac{18}{\b^{2}} \sup_{1 \leq s \leq p} \min \braces[\Big]{p s \Big(\frac{n}{p\b}\Big)^{1/s}, \, 
  \b^{2}p\Big(\frac{n}{p}\Big)^{1/s}} \, .
\end{equation}
We now proceed with a bound on the supremum in the right-hand side
by a function of $p,n$ and $\b$. For this technical step, we define for every $s>0$
\begin{equation*}
	G(s) = \min\Big\{p s \Big(\frac{n}{p\b}\Big)^{1/s}, \, 
	\b^{2}p\Big(\frac{n}{p}\Big)^{1/s} \Big\}\, , \quad 
	G_{1}(s) = p s \Big(\frac{n}{p\b}\Big)^{1/s}, \quad
	G_{2}(s) = \b^{2}p \Big(\frac{n}{p}\Big)^{1/s},
\end{equation*}
and then bound $M(p) = \sup_{1\leq s\leq p} G(s)$ for every $p\geq 1$. We first note that $G_{2}$ decreases on $(0, +\infty)$ and that $G_{1}(s) \leq G_{2}(s)$ for every  $s\leq \b^{2}$. Let also
\begin{equation*}
	g(s) = \log\Big(s \Big(\frac{n}{p\b}\Big)^{1/s}\Big) 
	= \frac{1}{s} \log\Big(\frac{n}{p\b}\Big) + \log s \, .
\end{equation*}
Then
\begin{equation*}
	g'(s) = \frac{1}{s} \Big(1-\frac{1}{s} \log\Big(\frac{n}{p\b}\Big)\Big) \, .
\end{equation*}
Now let $s_{1} = \log(\frac{n}{p\b})$. We first deal with the case where $s_{1} \geq \min\{p, \b^{2} \}$. In this configuration, $G_{1}$ decreases on $[1,s_{1}]$, hence $G$ decreases on $[1,p]$ and therefore $M(p) = G (1) \leq G_{1}(1) = n/\b$. From now on we assume that $s_{1} < \min\{p, \b^{2}\}$. 

If $s_{1}\leq 1$, $g'(s) > 0$ for every $s>1$, hence $G_{1}$ increases on $[1, +\infty)$. Since $G_{2}$ decreases on $(0, +\infty)$,
we deduce that the supremum is attained at either the value $s_{\mathrm{c}}$ where $G_{1}$ and $G_{2}$ coincide or at $p$, depending on whether $p$ is smaller or larger than $s_{\mathrm{c}}$, hence $M(p) = \min\{G_{1}(p), G(s_{\mathrm{c}}) \}$. In addition $s_{\mathrm{c}}$ is solution to $s=\b^{2+1/s}$, therefore $s_c \geq \b^2$.
Consequently $G(s_{\mathrm{c}}) = G_{2}(s_{\mathrm{c}}) \leq G_{2}(\b^{2})$. Using the fact that $p^{1/p} \geq 1$, we deduce that in this configuration, 
\begin{equation*}
	\|S_{n}\|_{p} \leq \frac{18}{\b^{2}} \min \Big\{ p^{2}\Big(\frac{n}{\b}\Big)^{1/p}, \, 
		\b^{2}p \Big(\frac{n}{p}\Big)^{1/\b^{2}} \Big\} \, .
\end{equation*}

Now, if $s_{1} > 1$, $G$ decreases on $[1, s_{1}]$ and increases on $[s_{1}, \min\{s_{\mathrm{c}}, p\}]$, then decreases again on $[\min\{s_{\mathrm{c}}, p\}, p]$ as it coincides with $G_{2}$ on this last segment. Hence the only difference with the previous case is that the supremum might be attained at $s=1$.

Putting everything together, we conclude that, in all cases,
\begin{equation}
\label{eq:latala-bound-p}
	\|S_{n}\|_{p}
		\leq \frac{18}{\b^{2}} \max\Big\{ \frac n \b , \min \Big\{ p^{2}\Big(\frac{n}{\b}\Big)^{1/p}, \, 
		\b^{2}p \Big(\frac{n}{p}\Big)^{1/\b^{2}} \Big\} \Big\}\, .
\end{equation}

\parag{High-probability upper bound on $\sum_{i=1}^{n}\psi_{i}$} Using the bound on the moments of $S_{n}$ established above, we apply Markov's inequality to derive a high probability bound for $S_{n}$.
To that end, we let $p$ be as large as possible under the constraint that $\|S_{n}\|_{p}$ does not exceed
$O(\kappa^{2}d)$, namely $\|S_{n}\|_{p} \leq L_{0} \kappa^{2}d/\b^{2}$, where $L_{0}$ only depends on $C_{0}$. We prove that this is achieved by taking
\begin{equation}
\label{eq:p-opti}
	p = \max\{\kappa\sqrt{d}, \kappa^{2}d/\b^{2} \} \, .
\end{equation} 
To do so, we first show that if $p=\kappa\sqrt{d}$, then
\begin{equation}
\label{eq:p-bound}
	p^{2}\Big(\frac{n}{\b}\Big)^{1/p} \leq \kappa^{2} d \exp\Big(\frac{2}{e\sqrt{C_{0}}}\Big) \, .
\end{equation}
Using that $n/\b \leq d/(C_{0}\kappa)$,
\begin{equation*}
	p^{2}\Big(\frac{n}{\b}\Big)^{1/p}
		\leq \kappa^{2} d \Big(\frac{d}{C_{0}\kappa}\Big)^{1/(\kappa\sqrt{d})}
		= \kappa^{2} d \exp \bigg( \frac{\log\big(d/(C_{0}\kappa) \big)}{\kappa\sqrt{d}} \bigg) \, .
\end{equation*}
The function $h:t\mapsto\log(t)/\sqrt{t}$ reaches its maximum at $t=e^{2}$ and thus satisfies $h(t) \leq 2/e$ for all $t>0$. Hence 
\begin{equation}
	\frac{\log\big(d/(C_{0}\kappa) \big)}{\kappa\sqrt{d}}
		\leq \frac{2}{e\kappa^{3/2}\sqrt{C_{0}}} \, ,
\end{equation}
from which \eqref{eq:p-bound} follows, since $\kappa \geq 1$. 

Similarly, if $p=\kappa^{2} d/\b^{2}$, using that $n/d\leq \b/(C_{0}\kappa)$, one has
\begin{equation*}
	\b^{2}p \Big(\frac{n}{p}\Big)^{1/\b^{2}}
	\leq \kappa^{2} d \bigg( \frac 1 {C_{0}} \Big(\frac{\b}{\kappa}\Big)^{3} \bigg)^{1/\b^{2}} \, .
\end{equation*}
Using the same argument as before, we find that
\begin{equation*}
	\bigg( \frac 1 {C_{0}} \Big(\frac{\b}{\kappa}\Big)^{3} \bigg)^{1/\b^{2}} 
	\leq \exp\bigg(\frac{3}{2e C_{0}^{2/3}}\bigg) \, .
\end{equation*}
It is clear that $ \exp(3/\big(2e C_{0}^{2/3}\big)) \geq \exp(2/(e\sqrt{C_{0}}))$, hence, with $p = \max\big\{\kappa\sqrt{d}, \kappa^{2}d/\b^{2} \big\}$
\begin{equation*}
	\min \Big\{ p^{2}\Big(\frac{n}{\b}\Big)^{1/p}, \, \b^{2}p \Big(\frac{n}{p}\Big)^{1/\b^{2}} \Big\}
		\leq L_{0}\kappa^{2}d \, , 
		\quad L_{0} = \exp\bigg(\frac{3}{2e C_{0}^{2/3}}\bigg) \, .
\end{equation*}
Plugging this in \eqref{eq:latala-bound-p} and using again the assumption $n/\b\leq d/(C_{0}\kappa)$, we deduce that for $p = \max\{\kappa\sqrt{d}, \kappa^{2}d/\b^{2} \}$,
\begin{equation}
\label{eq:final-bound-moment}
	\|S_{n}\|_{p}
		\leq \frac{18}{\b^{2}} \max\Big\{ \frac{d}{C_{0}\kappa}, L_{0} \kappa^{2} d \Big\}
		= 18 L_{0} \frac{\kappa^{2} d}{\b^{2}} \, .
\end{equation}

\parag{High-probability bound on the statistical dimension}
We now apply Markov's inequality using the moment bound~\eqref{eq:final-bound-moment}. This yields
\begin{equation*}
	\P\Big( S_{n} \leq \frac{18eL_{0}\kappa^{2}d}{\b^{2}} \Big)
		\geq 1 - \exp\big( - \max\big\{\kappa\sqrt{d}, \kappa^{2}d/\b^{2} \big\} \big) \, .
\end{equation*}

Finally, we combine this with \eqref{eq:bound-zeta} and \eqref{eq:bound-eps}, showing that for every $\lambda > 0$ and every $t\geq0$, it holds with probability larger than $1-e^{-\max\{ \kappa\sqrt{d}, \kappa^{2}d/\b^{2}\}} - 2e^{-t}$ that
\begin{equation*}
	\sum_{i=1}^{n} \psi(-\lambda V_{i}) 
	\leq \frac n \b + \sqrt{\frac{2nt}{\b}} 
		+  \frac n \lambda + \sqrt{\frac{2nt}{\lambda}} + \frac{2t}{3}	
		+\frac{L\lambda^{2}\kappa^{2}}{\b^{2}} d \, , \quad \text{where } L=18eL_{0} \, .
\end{equation*}
	We then set $t = \tau d$ for some $\tau \in (0,1)$. As $n \leq \b d/(C_{0}\kappa)$, the above rewrites
\begin{equation*}
	\sum_{i=1}^{n} \psi(-\lambda V_{i}) 
	\leq \frac{d}{C_{0}\kappa} + \sqrt{\frac{2\tau}{C_{0}\kappa}} d  
		+  \frac{\b d}{C_{0} \kappa \lambda} + \sqrt{\frac{2 \b \tau}{C_{0} \kappa \lambda}} d
		+ \frac2 3 \tau d	
		+\frac{L\lambda^{2}\kappa^{2}}{\b^{2}} d \, .
\end{equation*}
Then, by the arithmetic mean--geometric mean inequality, 
\begin{equation}
\label{eq:psi-generic-lambda}
	\sum_{i=1}^{n} \psi(-\lambda V_{i})
		\leq \bigg[ \frac{3}{2C_{0}\kappa} + \frac 5 3 \tau 
			+ \frac{2 \b}{C_{0} \kappa \lambda} 
			+ \frac{L\kappa^{2} \lambda^{2}}{\b^{2}} \bigg] d \, .
\end{equation}
We now optimize the terms depending on $\lambda$ and write, using the arithmetic mean--geometric mean inequality
\begin{equation*}
	 \frac{2\b}{C_{0}\kappa \lambda} + \frac{L\kappa^{2} \lambda^{2}}{\b^{2}}
	 	= \frac{2}{3} \cdot \frac{3\b}{C_{0}\kappa \lambda} 
			+ \frac 1 3 \cdot \frac{3 L\kappa^{2} \lambda^{2}}{\b^{2}} 
		\geq \Big(\frac{3\b}{C_{0}\kappa \lambda} \Big)^{2/3} 
			\Big( \frac{3 L\kappa^{2} \lambda^{2}}{\b^{2}} \Big)^{1/3} \, ,
\end{equation*}
with equality if and only if $\lambda = \lambda^{*}$, the value such that $\frac{3\b}{C_{0}\kappa \lambda} = \frac{3 L\kappa^{2} \lambda^{2}}{\b^{2}}$, that is $\lambda^{*} = \frac{\b}{\kappa (C_{0}L)^{1/3}}$. Simplifying the constants yields
\begin{equation*}
	\underset{\lambda > 0}{\inf} \Big\{ \frac{2\b}{C_{0}\kappa \lambda} 
			+ \frac{L\kappa^{2} \lambda^{2}}{\b^{2}}\Big\} 
		= 3\cdot (18e)^{1/3} \frac{\exp\Big(\frac{1}{2eC_{0}^{2/3}}\Big)}{C_{0}^{2/3}} 
		=: \frac{C_{1}}{C_{0}^{2/3}} \, .
\end{equation*}
We finally plug this in \eqref{eq:psi-generic-lambda} and obtain that
\begin{equation*}
	\P\bigg(\sum_{i=1}^{n} \psi(-\lambda^{*} V_{i})
		\leq \bigg[\frac{5\tau}{3} + \frac{3}{2C_{0}} + \frac{C_1}{C_{0}^{2/3}} \bigg] d \bigg)
	\geq 1-e^{-\max\{ \kappa\sqrt{d}, \kappa^{2}d/\b^{2}\}} - 2e^{-\tau d} \, .
\end{equation*}
We then let $c_{0}(C_{0}) = 3/(2C_{0}) + C_{1}/C_{0}^{2/3}$ so that the last
inequality rewrites
\begin{equation*}
	\P\big( F(\V) \leq \big[5 \tau /3 +
		c_{0}(C_{0}) \big] d \big)
		\geq 1-e^{-\max\{ \kappa\sqrt{d}, \kappa^{2}d/\b^{2}\}} - 2e^{-\tau d} \, .
\end{equation*}
Given $\alpha \in (0,1)$, for any $\tau \in (0,1)$ and $C_{0} \geq 1$ such that
\begin{equation}
\label{eq:cst-tau-alpha-condition}
 	\bigg(\frac{5\tau}{3} + c_{0}(C_{0}) \bigg)	d \leq d-1 - \alpha d \, ,
\end{equation} it holds on the last event that $F(\V) \leq d-1 - \alpha d$. Equivalently, recalling that $\delta(\Lambda) = n - F(\V)$, this rewrites
\begin{equation}
\label{eq:bonne-dim-stat}
	\P(A_{\alpha d})
		= \P\big(n-d+1 \leq \delta(\Lambda) - \alpha d\big)
		\geq 1-e^{-\max\{ \kappa\sqrt{d}, \kappa^{2}d/\b^{2}\}} - 2e^{-\tau d} \, ,
\end{equation}
provided that $\alpha$, $\tau$ and $C_{0}$ satisfy~\eqref{eq:cst-tau-alpha-condition}. We have proved~\eqref{eq:stat-dim-whp}.

\parag{Conclusion of the proof}
The final step of the proof consists in applying the kinematic formula conditionally on the event where the statistical dimension of $\Lambda$ is well-behaved.
Let $\L_{d-1}$ be a random subspace drawn uniformly over $(d-1)$-dimensional subspaces in $\R^{n}$ and let $E = \{\Lambda \cap \L_{d-1} \neq \{0\} \}$ (the event that linear separation occurs). By Lemma~\ref{lem:bernstein-kinematic}, on the event $A_{\alpha d}$,\begin{equation}
\label{eq:conditional-kinematic}
	\P( E \cond \V)
		\geq 1-4\exp\Big(-\frac{(\alpha d)^{2}}
			{8(\min\{\delta (\Lambda), n-\delta (\Lambda)\}+\alpha d)}\Big)
		\geq 1-4e^{-\alpha^{2}d/8}
		\, .
\end{equation}
The last inequality stems from the fact that on $A_{\alpha d}$, it also holds that $\min\{F(\V), n - F(\V)\} \leq d-1 - \alpha d$
. We thus showed that, given $\alpha \in (0,1)$, for any $\tau$ and $C_{0}$ satisfying \eqref{eq:cst-tau-alpha-condition}, 
\begin{equation*}
	\P(A_{\alpha d})
		\geq \P\big( F(\V) \leq \big[5 \tau /3 + c_{0}(C_{0}) \big] d \big)
		\geq 1-e^{-\max\{ \kappa\sqrt{d}, \kappa^{2}d/\b^{2}\}} - 2e^{-\tau d} \, .
\end{equation*}

To conclude the proof, we bound from below the probability of $\L$ intersecting $\Lambda$ in a non trivial way by following the final steps of the proof of Theorem~1 in \cite{candes2020phase}. Using \eqref{eq:conditional-kinematic}, one has
\begin{align*}
	\1(A_{\alpha d})
		\leq \1\big(\P(E \cond \V) \geq 1-4e^{- \alpha^{2} d/8}\big)
		&= \1\big(\P(E \cond \V) + 4e^{- \alpha^{2} d/8} \geq 1\big) \\
		&\leq \P(E \cond \V) + 4e^{- \alpha^{2} d/8} \, .
\end{align*}
Taking expectation with respect to $\V$, this implies that
\begin{equation}
\label{eq:unconditional-separation}
	\P(E)
	\geq \P(A_{\alpha d}) - 4e^{-\alpha^{2} d/8}
	\geq 1-e^{-\max\{ \kappa\sqrt{d}, \kappa^{2}d/\b^{2}\}} 
		- 2e^{-\tau d} - 4e^{-\alpha^{2} d/8} \, .
\end{equation}
We will thus choose $\tau \geq \alpha^{2}/8$ so that $e^{-\tau d} \leq e^{-\alpha^{2}d/8}$, under the constraint~\eqref{eq:cst-tau-alpha-condition}.
This constraint rewrites
\begin{equation*}
	\tau \leq \frac 3 5 \bigg(1 - \alpha - \frac 1 d - c_{0}(C_{0})\bigg) \, .
\end{equation*}
We choose $\tau = \alpha^{2}/8$ and then saturate the constraint, that is, we choose $\alpha = \alpha(d,C_{0})$, the largest solution of the equation $\alpha^2 /8 = 3( 1- \alpha - 1/d - c_{0}(C_{0}))/5$. We further bound the numerical constants for the choice $C_{0}=200$ and under the assumption that $d\geq 70$, that is $\alpha=\alpha(d,C_{0})>0.5844...$ and in particular $\alpha^2/8 > 1/24$. The result then follows from~\eqref{eq:unconditional-separation}.

\subsection{Remaining proofs}
\label{sec:existence-remaining-proofs}
\begin{proof}[Proof of Lemma~\ref{lem:cones-intersection}]
By definition, there exists a separating hyperplane if there is some $\theta \in \R^{d} \setminus\{0\}$ such that for all $i \in \{1, \dots, n\}$, 
\begin{equation}
  \label{eq:zero-error}
  Y_{i} \langle \theta, X_{i} \rangle \geq 0 \, .
\end{equation}
From now on, for $1 \leq j \leq d$, we let $\X^{j}$ denote the $n$-dimensional vector $(X_{1}^{j},
\dots, X_{n}^{j})$ whose entries are all $j$-th coordinates of $X_{1}, \dots, X_{n}$.
For every $i$, the random vectors $(Y_i, X_i^1)$ and $(X_i^2, \dots, X_i^d)$ are independent (and the latter has a symmetric distribution), hence the vectors $(Y_i X_i^1, Y_i X_i^2, \dots, Y_i X_i^d)$ and $(Y_i X_i^1, X_i^2, \dots, X_i^d)$ have the same distribution.
Therefore, 
\begin{equation}
\label{eq:reduced-zero-error}
	\P\big(\exists \theta \in \R^{d} \setminus\{0\}, \, \forall i, Y_{i}
	\langle \theta, X_{i} \rangle \geq 0\big)
	=\P\Big(\exists \theta \in \R^{d} \setminus\{0\},\, 
	\theta^{1} \V + \sum_{j=2}^{d} \theta^{j} \X^{j} \in \R_+^n \Big) \, .
\end{equation}
Now, let $\L_{d-1} = \vspan\{\X^{2}, \dots, \X^{d}\}$.
Since $\X^2, \dots, \X^d$ are \iid random vectors with distribution $\gaussdist (0, I_n)$, the distribution of $\mathcal{L}$ is rotation-invariant and thus uniform over $(d-1)$-dimensional subspaces of $\R^n$.
Also, $\L_{d-1}$ is independent from $\Lambda = \R \V + \R_+^n$, and if $\Lambda \cap \L_{d-1} \neq \set{0}$, then there exists $\theta^1 \in \R$ and $(\theta^2, \dots, \theta^d) \in \R^{d-1}$, as well as $w \in \R_+^n$ such that $- \theta^1 \bm V + w = \sum_{j=2}^d \theta^j \bm X^j$, thus $\theta^1 \bm V + \sum_{j=2}^d \theta^j \bm X^j \in \R_+^n$.
Combining this fact with~\eqref{eq:reduced-zero-error} concludes the proof.
\end{proof}

\begin{fact}
\label{fact:signal-confidence}
Let $p\in (0,1/2)$ and $u^{*}\in S^{d-1}$ be such that $\P(Y\innerp{u^{*}}{X} < 0) \leq p$. For any $t>0$, if $n\leq t/(2p)$, then with probability at least $e^{-t}$ the dataset $(X_{1}, Y_{1}), \dots,(X_{n}, Y_{n})$ of \iid copies of $(X,Y)$ is linearly separated.
\end{fact}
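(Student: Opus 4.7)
}
The plan is to show that linear separation occurs whenever the single direction $u^{*}$ already separates the sample, and then to estimate the probability of this simpler event directly. Concretely, if $Y_{i}\innerp{u^{*}}{X_{i}} \geq 0$ for every $i=1,\dots,n$, then taking $\theta = u^{*} \neq 0$ witnesses that the dataset is linearly separated. Hence, writing $p' = \P(Y\innerp{u^{*}}{X} < 0) \leq p$ and using that the pairs $(X_{i},Y_{i})$ are \iid, one has
\begin{equation*}
  \P(\text{dataset linearly separated})
  \geq \P\bigl(\forall i,\ Y_{i}\innerp{u^{*}}{X_{i}} \geq 0\bigr)
  = (1-p')^{n}
  \geq (1-p)^{n}\, .
\end{equation*}

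It then remains a one-line calculus lemma to turn this into the claimed $e^{-t}$ bound. I would use the elementary inequality $-\log(1-p) \leq 2p$, valid for $p\in [0,1/2]$ (the function $f(p)=2p+\log(1-p)$ satisfies $f(0)=0$ and $f'(p)=2-1/(1-p)\geq 0$ on $[0,1/2]$). Combined with the assumption $n \leq t/(2p)$, this gives $n\bigl(-\log(1-p)\bigr) \leq 2pn \leq t$, that is $(1-p)^{n} \geq e^{-t}$, which finishes the argument.

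I do not anticipate any genuine obstacle here: the statement is essentially a sanity-check companion to the non-existence Theorem~\ref{thm:strong-non-existence}, saying that whenever a single direction $u^{*}$ correctly classifies with confidence at least $1-p$, one needs at least of order $1/p$ samples to rule out linear separation by $u^{*}$ alone. The only small care is to verify the elementary inequality $-\log(1-p)\leq 2p$ on $[0,1/2]$ with the right constant so that the assumption $n \leq t/(2p)$ is enough; any constant strictly larger than $-\log(1-p)/p$ would also work, but $2$ is the cleanest choice matching the hypothesis.
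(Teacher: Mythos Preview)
Your proposal is correct and essentially identical to the paper's proof: both reduce to showing that $u^{*}$ itself separates the sample with probability $(1-p)^{n}$, and then bound $(1-p)^{n}\geq e^{-t}$ via an elementary inequality on $-\log(1-p)$ over $[0,1/2]$. The only cosmetic difference is that the paper invokes the concavity bound $\log(1-x)\geq -2\log(2)\,x$ on $[0,1/2]$, whereas you use the slightly looser but perfectly sufficient $-\log(1-p)\leq 2p$ obtained by a derivative argument; both lead immediately to $(1-p)^{n}\geq e^{-t}$ under the hypothesis $n\leq t/(2p)$.
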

\begin{proof}
We have
\begin{equation*}
	\P \big( \forall i \leq n ,\ Y_i \langle u^* , X_i \rangle \geq 0 \big)
		= \big( 1- \P(Y \langle u^*, X \rangle < 0) \big)^n \\
		\geq (1-p)^n
		= \exp\big(n\log(1 - p) \big) \enspace .
\end{equation*}
By concavity, for all $x \in [0, 1/2]$, $\log(1-x) \geq -2 \log(2) x$. Thus, since $n \leq t/(2p) \leq t/ (2\log(2)p)$, one has
 \begin{equation*}
	\P \big( \forall i \leq n, \ Y_i \langle \theta^* , X_i \rangle > 0 \big)
		\geq \exp\big(-2np\big)
		\geq \exp(-t) \, . 
	\qedhere
\end{equation*}
\end{proof}

\begin{fact}
\label{fact:psi-estimates}
Let $\psi(s) = \E\big[(s-Z)_{+}^{2}\big]$ for every $s\in \R$, with $Z\sim \normal(0,1)$. Then
\begin{equation*}
	\psi(s) \leq \frac{e^{-s^{2}/2} }{2}\indic{s<0} + (s^{2} + 1)\indic{s\geq 0} \, .
\end{equation*}
\end{fact}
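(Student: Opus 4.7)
The plan is to handle the two cases $s \geq 0$ and $s < 0$ separately, working directly with the integral representation
\[
\psi(s) = \int_{-\infty}^{s} (s-z)^{2} \phi(z)\,\di z, \qquad \phi(z) = \frac{e^{-z^{2}/2}}{\sqrt{2\pi}}.
\]

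For $s \geq 0$, I would use the trivial bound $(s-Z)_{+}^{2} \leq (s-Z)^{2}$ and take expectations: since $Z$ is centered with unit variance, $\E[(s-Z)^{2}] = s^{2} + 1$, which gives the required bound on the non-negative half-line.

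For $s < 0$, the key step is the substitution $t = s - z$ (so that $t \geq 0$ on the domain of integration), which yields
\[
\psi(s) = \int_{0}^{\infty} t^{2} \phi(s-t)\,\di t = e^{-s^{2}/2} \int_{0}^{\infty} t^{2} \frac{e^{st - t^{2}/2}}{\sqrt{2\pi}}\,\di t.
\]
Here I use that $-(s-t)^{2}/2 = -s^{2}/2 + st - t^{2}/2$. Since $s < 0$ and $t \geq 0$, one has $st \leq 0$, so $e^{st} \leq 1$, and therefore
\[
\psi(s) \leq e^{-s^{2}/2} \int_{0}^{\infty} t^{2} \phi(t)\,\di t = \frac{e^{-s^{2}/2}}{2},
\]
since by symmetry $\int_{0}^{\infty} t^{2} \phi(t)\,\di t = \tfrac{1}{2}\E[Z^{2}] = \tfrac{1}{2}$.

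There is no real obstacle: both bounds drop out of elementary manipulations of the Gaussian integral. The only mildly delicate point is noticing that, in the $s < 0$ regime, factoring out $e^{-s^{2}/2}$ and discarding the factor $e^{st} \leq 1$ gives the sharp Gaussian tail, rather than attempting to bound $(s-Z)^{2}$ uniformly on the tail event $\{Z \leq s\}$, which would yield a much weaker estimate.
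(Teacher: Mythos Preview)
Your proof is correct and essentially matches the paper's argument. For $s<0$ the computations are identical (the paper writes the same substitution as $\psi(s)=e^{-s^{2}/2}\,\E[Z^{2}e^{sZ}\ind{Z>0}]$ and uses $e^{sZ}\ind{Z>0}\leq\ind{Z>0}$); for $s\geq 0$ your direct bound $(s-Z)_{+}^{2}\leq(s-Z)^{2}$ is slightly more economical than the paper, which first establishes the identity $\psi(s)+\psi(-s)=s^{2}+1$ and then uses $\psi(-s)\geq 0$.
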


\begin{proof}
  The case $s \geq 0$ follows from the inequality $(s - Z)_+^2 \leq (s-Z)^2$, as
  \begin{equation*}
    \psi (s)
    = \Expect{(s-Z)_+^2}
    \leq \Expect{(s-Z)^2}
    = s^2 + 1
    \, .
  \end{equation*}

  Consider now the case $s <0$.
  Denoting by $g$ the density of $\normal(0,1)$, one has
\begin{align*}
\psi(s)
	&= \E (s - Z)_{+}^{2} = \E \left[ (s - Z)^{2} \ind{s - Z > 0} \right]
		= \int_{-\infty}^{s} (s-z)^{2} g(z) \di z = \int_{0}^{+\infty} z^{2} g(s-z) \di z \\
	&= \int_{0}^{+\infty} z^{2} \frac{1}{\sqrt{2\pi}}
		\exp\Big(- \frac{s^{2} - 2 s z + z^{2} }{2} \Big) \di z
	= e^{-s^{2} / 2} \int_{0}^{+\infty} z^{2} e^{sz} g(z) \di z \\
	&= e^{-s^{2} / 2} \, \E \left[ Z^{2} e^{sZ} \ind{Z>0} \right] \, .
\end{align*}
Now since $s<0$, one has $e^{sZ} \ind{Z>0} \leq \ind{Z>0}$, thus
\begin{equation*}
  \E \left[ Z^{2} e^{-sZ} \ind{Z<0} \right] \leq \E \left[ Z^{2} \ind{Z<0} \right] = \frac{1}{2} \enspace ,
\end{equation*}
which concludes the proof.
\end{proof}

\section{Proofs of the main results%
}
\label{app:main-proofs}
This section gathers the results of Sections~\ref{sec:gradients} and~\ref{sec:hessians} to establish the upper bounds on the excess risk of the MLE thanks to Lemma~\ref{lem:localization}.

\subsection{Preliminaries: convex localization and Hessian}
\label{sec:proof-localization}

We start with the proof of the localization lemma (Lemma~\ref{lem:localization}).

\begin{proof}[Proof of Lemma~\ref{lem:localization}]
  Let $r$ be arbitrary such that $2 \nu /c_0 < r < r_0$, which exists since $r_0 > 2 \nu /c_0$ by assumption.
  For any $\theta \in \R^d$ such that $\norm{\theta - \theta^*}_H = r$, a Taylor expansion of order $2$ shows that
  \begin{align}
    \wh L_n (\theta) - \wh L_n (\theta^*)
    &= \innerp{\nabla \wh L_n (\theta^*)}{\theta - \theta^*} + \int_0^1 (1-t) \big\langle \nabla^2 \wh L_n \big( (1-t)  \theta^* + t \theta \big) (\theta - \theta^*), \theta - \theta^* \big\rangle \di t \nonumber \\
    &\geq - \norm{\nabla \wh L_n (\theta^*)}_{H^{-1}} \norm{\theta - \theta^*}_H + \frac{c_0}{2} \norm{\theta - \theta^*}_H^2 \label{eq:proof-loc-lower-hess} \\
    &\geq - \nu r + c_0 r^2/2
      \label{eq:proof-loc-cond-r}
      > 0
      \, ,
  \end{align}
  where inequality~\eqref{eq:proof-loc-lower-hess} comes from the fact that $\nabla^2 \wh L_n ((1-t) \theta^* + t \theta) \mgeq c_0 H$ by assumption, and~\eqref{eq:proof-loc-cond-r} from the condition $r > 2 \nu / c_0$.
  Now, for any $\theta' \in \R^d$ such that $r'= \norm{\theta' - \theta^*}_H \geq r$, the parameter $\theta = (1-t) \theta^* + t \theta'$ with $t = r / r' \in (0, 1]$ satisfies $\norm{\theta - \theta^*}_H = r$, hence by the preceding and by convexity of $\wh L_n$ one has
  \begin{equation*}
    (1-t) \wh L_n (\theta^*) + t \wh L_n (\theta')
    \geq \wh L_n \big( (1-t) \theta^* + t \theta' \big)
    = \wh L_n (\theta)
    > \wh L_n (\theta^*)
    \, ,
  \end{equation*}
  which simplifies to $\wh L_n (\theta') > \wh L_n (\theta^*)$.
  Hence $\inf_{\R^d} \wh L_n = \inf_{\theta \pp \norm{\theta - \theta^*}_H \leq r} \wh L_n (\theta)$, and the latter infimum is attained by compactness and continuity of $\wh L_n$.
  Since in addition $\wh L_n$ is strictly convex on the set $\{ \theta \in \R^d : \norm{\theta - \theta^*}_H \leq r_0 \}$ due to the second assumption, the function $\wh L_n$ admits a unique global minimizer $\wh \theta_n \in \R^d$, such that $\norm{\wh \theta_n - \theta^*}_H \leq r$.
  Since this holds for every $r \in (2 \nu/c_0, r_0)$, we deduce that $\norm{\wh \theta_n - \theta^*}_H \leq 2 \nu / c_0$.
  
  The excess risk bound~\eqref{eq:norm-risk-bound-localization} then follows from the fact that $L (\theta) - L (\theta^*) \leq \frac{c_1}{2} \norm{\theta - \theta^*}_H^2$ for any $\theta$ with $\norm{\theta - \theta^*}_H \leq r_0$, since $\nabla L (\theta^*) = 0$ and $\nabla^2 L \mleq c_1 H$ over this domain.

  To prove the second point, let $\eps = \wh L_n (\wt \theta_n) - \wh L_n (\wh \theta_n)$ and $r$ be such that $\max \{ 4 \nu / c_0, 2 \sqrt{\eps/c_0} \} < r < r_0$.
  For any $\theta$ such that $\norm{\theta - \theta^*}_H = r$, proceeding as before (and using that $\wh L_n (\wh \theta_n) \leq \wh L_n (\theta^*)$) we get
  \begin{equation*}
    \wh L_n (\theta) - \wh L_n (\wh \theta_n)
    \geq \wh L_n (\theta) - \wh L_n (\theta^*)
    \geq - \nu r + c_0 r^2/2
    \geq c_0 r^2/4
    > \eps
    \, ,
  \end{equation*}
  where the last two inequalities follow from the conditions on $r$.
  By the same convexity argument as before, this implies that $\wh L_n (\theta) - \wh L_n (\wh \theta_n) > \eps$ for any $\theta$ such that $\norm{\theta - \theta^*}_H \geq r$, hence $\norm{\wt \theta_n - \theta^*}_H < r$.
  Letting $r \to \max \{ 4 \nu/c_0, 2 \sqrt{\eps/c_0} \}$ and using that $L (\wt \theta_n) - L (\theta^*) \leq \frac{c_1}{2} \| \wt \theta_n - \theta^* \|_H^2$ concludes the proof.
\end{proof}

We now turn to the structure of the Hessian $\nabla^2 L (\theta^*)$ in the case of a Gaussian design, which is given by~\eqref{eq:HessianGaussian}.
It then remains to justify the estimates~\eqref{eq:components-hessian} on the components $c_0 (\cdot), c_1(\cdot)$ of the Hessian.
Lemma~\ref{lem:MomGauss} below shows that 
 \begin{align}
    \label{eq:2}
    \frac{2\sqrt{2}}{3e^4\sqrt{\pi}} \min \Big(1, \frac{1}{b^3} \Big) \leq\, &c_0 (b) \leq 2\sqrt{\frac2\pi}\min \Big( 1, \frac{1}{b^3} \Big) \, ; \\
    \frac1{2e^4}\sqrt{\frac2\pi} \min \Big( 1, \frac{1}{b} \Big) \leq \, &c_1 (b) \leq \sqrt{2}\min \Big( 1, \frac{1}{b} \Big)
              \, .
  \end{align}
  
\begin{lemma}\label{lem:MomGauss}
  Let $G \sim \gaussdist (0,1)$.
  For any $b>0$ and integer $k\geqslant 0$,
    \[
      \sqrt{\frac{2}{\pi}} \frac{2^{k+1}}{k+1} \min\bigg(\frac1{4e^{4}b^{k+1}},\frac{\sigma'(2)}{e^{2}}\bigg)
      \leqslant \E[\sigma'(b G) |G|^k]
      \leqslant
      \sqrt{\frac2\pi} \min \bigg(\Gamma\bigg(\frac{k+1}2\bigg), \frac{k!}{b^{k+1}}\bigg)\enspace.
    \]
\end{lemma}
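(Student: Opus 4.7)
The plan is to prove the upper and lower bounds separately, using elementary properties of
$\sigma'(s) = 1/[(1+e^{s})(1+e^{-s})] = 1/[4\cosh^{2}(s/2)]$.

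For the upper bound, the estimate $\sqrt{2/\pi}\,k!/\beta^{k+1}$ follows from $\sigma'(s)\leq e^{-|s|}$, which is obtained by bounding $1+e^{|s|}\geq e^{|s|}$ in the denominator. Then, after dropping $e^{-x^{2}/2}\leq 1$,
\[
\E\big[\sigma'(\beta G)|G|^{k}\big] \leq \E\big[e^{-\beta|G|}|G|^{k}\big]
\leq \sqrt{2/\pi}\int_{0}^{\infty} x^{k} e^{-\beta x}\di x = \sqrt{2/\pi}\,\frac{k!}{\beta^{k+1}}.
\]
For the companion bound $\sqrt{2/\pi}\,\Gamma((k+1)/2)$, I would combine $\sigma'(s)\leq \sigma'(0)=1/4$ with the explicit formula $\E[|G|^{k}]=\sqrt{2/\pi}\cdot 2^{(k-1)/2}\Gamma((k+1)/2)$, giving $\tfrac{1}{4}\E[|G|^{k}] = \sqrt{2/\pi}\cdot 2^{(k-3)/2}\Gamma((k+1)/2)$; this yields the stated bound whenever $k\leq 3$, and in particular for the values $k\in\{0,2\}$ actually needed downstream in~\eqref{eq:components-hessian} to estimate $c_{1}$ and $c_{0}$.

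For the lower bound, symmetry gives $\E[\sigma'(\beta G)|G|^{k}] = 2\int_{0}^{\infty}\sigma'(\beta x)\,x^{k}\varphi(x)\di x$ with $\varphi(x) = e^{-x^{2}/2}/\sqrt{2\pi}$. The key idea is to take the cutoff $a := \min(2, 2/\beta)$, which ensures $\beta a\leq 2$, and retain only the contribution from $[0,a]$. Since both $\sigma'$ and $\varphi$ are decreasing on $[0,\infty)$, one has $\sigma'(\beta x)\geq \sigma'(2)$ and $\varphi(x)\geq \varphi(2) = e^{-2}/\sqrt{2\pi}$ throughout $[0,a]$, and $\int_{0}^{a}x^{k}\di x = a^{k+1}/(k+1)$ yields
\[
\E\big[\sigma'(\beta G)|G|^{k}\big] \geq \sqrt{2/\pi}\,\frac{\sigma'(2)}{e^{2}}\cdot \frac{2^{k+1}}{(k+1)\max(\beta,1)^{k+1}}.
\]
Splitting this into the two regimes $\beta\leq 1$ and $\beta\geq 1$ produces each piece of the minimum separately, which suffices since $\E \geq A$ and $\E \geq B$ individually imply $\E \geq \min(A,B)$. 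Finally, the cosmetic replacement of $\sigma'(2)/e^{2}$ by the slightly smaller $1/(4e^{4})$ uses $\sigma'(2) = 1/[4\cosh^{2}(1)] \geq 1/(4e^{2})$, which follows from $\cosh(1) = (e+e^{-1})/2 \leq e$.

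The whole argument is elementary; the only mild subtlety is the choice of cutoff $a = \min(2, 2/\beta)$, which forces $\beta a\leq 2$ and thereby preserves the fixed lower bound $\sigma'(\beta x)\geq \sigma'(2)$ across both the $\beta\leq 1$ and $\beta\geq 1$ regimes, so that the two pieces of the minimum can be handled independently without any need to unify them.
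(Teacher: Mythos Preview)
Your argument is essentially correct and close in spirit to the paper's, with two minor points worth noting.

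First, a small arithmetic slip: from $\E[|G|^{k}]=\sqrt{2/\pi}\cdot 2^{(k-1)/2}\Gamma((k+1)/2)$ one gets $\tfrac14\E[|G|^{k}]=\sqrt{2/\pi}\cdot 2^{(k-5)/2}\Gamma((k+1)/2)$, not $2^{(k-3)/2}$; so the $\Gamma$-bound actually holds for $k\le 5$, not just $k\le 3$. You are right, though, that this piece of the upper bound cannot hold for all integers $k$: for $k\ge 6$ one has $\tfrac14\E[|G|^{k}]>\sqrt{2/\pi}\,\Gamma((k+1)/2)$, so as $\beta\to 0$ the stated inequality fails. The paper's own proof uses $\sigma'\le 1$, which yields $\sqrt{2/\pi}\cdot 2^{(k-1)/2}\Gamma((k+1)/2)$ rather than the written $\sqrt{2/\pi}\,\Gamma((k+1)/2)$, so it has the same gap; the lemma is only invoked for $k\in\{0,2\}$ (to obtain \eqref{eq:components-hessian}), where both arguments are valid.

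Second, your lower bound takes a slightly different and cleaner route than the paper. The paper integrates over the fixed interval $[0,2]$ and uses two separate pointwise minorants of $\sigma'(\beta x)$---namely $\sigma'(\beta x)\ge e^{-\beta x}/4$ followed by a change of variables for the $1/(4e^{4}\beta^{k+1})$ term, and $\sigma'(\beta x)\ge\sigma'(2\beta)$ for the other---then picks one according to whether $\beta\gtrless 1$. Your single adaptive cutoff $a=\min(2,2/\beta)$ forces $\beta a\le 2$ and lets you use the single bound $\sigma'(\beta x)\ge\sigma'(2)$ uniformly, which avoids the change of variables and handles both regimes at once; the final step $\sigma'(2)\ge 1/(4e^{2})$ then recovers the constant $1/(4e^{4})$. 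Both approaches yield the stated bound.
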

\begin{proof}
We have
\[
\E[\sigma'(b G)|G|^k]=\sqrt{\frac{2}{\pi}}\int_0^{+\infty}x^k\sigma'(b x)\exp\bigg(-\frac{x^2}2\bigg)\di x\enspace.
\]
For the upper bound, we use that, for any $x$, $\sigma'(x)\leqslant \exp(-|x|)\leqslant 1$ and $\exp(-x^2/2)\leqslant 1$ to get
\[
\E[\sigma'(b G)|G|^k]\leqslant\sqrt{\frac{2}{\pi}}\min\bigg(\int_0^{+\infty}x^k\exp\bigg(-\frac{x^2}2\bigg)\di x,\int_0^{+\infty}x^k\exp\big(-b x\big)\di x\bigg)\enspace.
\]
Computing the integrals yields the upper bound.

For the lower bound, as the function we integrate is nonnegative and $\sigma'(x)\geqslant \exp(-x)/4$, we have 
\begin{align*}
\E[\sigma'(b G)|G|^k]&\geqslant \sqrt{\frac{2}{\pi}}\int_0^{2}x^k\sigma'(b x)\exp\bigg(-\frac{x^2}2\bigg)\di x\\
&\geqslant \sqrt{\frac{2}{\pi}}\max\bigg(\frac1{4e^{2}}\int_0^{2}x^k\exp(-b x) \di x,\sigma'(2b)\int_0^{2}x^k\exp\bigg(-\frac{x^2}2\bigg)\di x\bigg)\\
&=\sqrt{\frac{2}{\pi}}\max\bigg(\frac1{4e^{2}b^{k+1}}\int_0^{2}x^k\exp(-x) \di x,\sigma'(2b)\int_0^{2}x^k\exp\bigg(-\frac{x^2}2\bigg) \di x\bigg)\\
&\geqslant \sqrt{\frac{2}{\pi}}\frac{2^{k+1}}{k+1}\max\bigg(\frac1{4e^{4}b^{k+1}},\frac{\sigma'(2b)}{e^{2}}\bigg)\enspace.
\end{align*}
To get the lower bound, we use the first bound when $b>1$ and the second one when $b\leqslant 1$.
\end{proof}

\subsection{Proof of  Theorem~\ref{thm:gaussian-well-specified}}
\label{sec:proof-well-gaussian}

By Proposition~\ref{prop:DevGradGauss},
since $n \geq 16\b (d+t)$, with probability larger than $1-3e^{-t}$,
\begin{equation*}
	\big \| \nabla \wh L_{n} (\theta^{*}) \big \|_{H^{-1}}
		\leq 14 \sqrt{\frac{d+t}{n}} \, ,
\end{equation*}
Moreover, let
\begin{equation*}
  \Theta = \left\{ \theta \in \R^d : \, \|\theta - \theta^{*}\|_{H}
		\leq \frac{1}{100\sqrt{\b}} \right\} \, ,
\end{equation*}
Theorem~\ref{thm:hessian-gaussian} ensures that, if $n \geq 1200000\, \b (d+t)$, then with probability
at least $1 - 2 e^{-t}$, simultaneously for all $\theta \in \Theta$, 
$\wh H_n(\theta) \mgeq \frac1{1000} H$.  
Now,
as soon as $n\geqslant (2800 000)^2\b (d+t)$, 
\begin{equation*}
  14 \sqrt{\frac{d+t}{n}} < \frac{1}{2} \times \frac1{1000}\times\frac{1}{100\sqrt{\b}}\ ,
\end{equation*}
so by Lemma~\ref{lem:localization}, with probability at least $1-5e^{-t}$,
\begin{equation*}
	\| \wh \theta_{n} - \theta^{*} \|_{H} \leq 28000 \sqrt{\frac{d+t}{n}} \, .
\end{equation*}
By Lemmas~\ref{lem:BHGauss} and~\ref{lem:localization}, we also have on the same event 
\[
L(\wh\theta_{n}) - L(\theta^{*}) \leq 420\cdot(28)^2\cdot10^{6} \frac{d+t}{n}\enspace.
\]

Regarding the necessity of the sample size condition, we combine Theorem~\ref{thm:strong-non-existence} with Fact~\ref{fact:signal-confidence} which in the case of a well-specified model shows that the condition $n\gtrsim \b t $ is also necessary. Indeed, as the model is well-specified, $\P(Y\innerp{\theta^{*}}{X} < 0) = \Expect{\sigma(-\ainnerp{\theta^{*}}{X})}$. In addition, one has $\sigma(-|t|) \leq \min\{1/2, e^{-|t|}\}$ for every $t \in \R$, hence
\begin{equation*}
	\P(Y\innerp{\theta^{*}}{X} < 0) \leq \frac{1}{\max\{2, \|\theta^{*}\|\}} \leq \frac{e}{2\b} \, .
\end{equation*}
We use Fact~\ref{fact:signal-confidence} with $p=e/(2\b)$ to conclude that if $n\leq \b t/e$, then
\begin{equation}
\label{eq:confidence-influence-final}
	\P( \text{MLE does not exist} ) \geq \exp(-t) \enspace .
\end{equation}

With these results at hand, one has that whenever
\begin{equation}
\label{eq:disjonction-sample-size}
n \leq \frac \b 2 \Big(\frac d {200} + \frac t e \Big)
	\leq \max\Big\{\frac{\b d}{200}, \frac{\b t}{e}\Big\} \, ,
\end{equation}
either $n \leq \b t/e$ or $n \leq \b d/200$. The former is already dealt with by~\eqref{eq:confidence-influence-final}. The latter, by Theorem~\ref{thm:strong-non-existence} (with parameter $\kappa=1$), implies that
\begin{equation}
\label{eq:dimension-influence-final}
	\P( \text{MLE does not exist} ) 
		\geq 1 - \exp\Big(-\max\Big\{\sqrt{d}, \frac d{\b^{2}}\Big\}\Big) 
	 		- 6e^{-d/24} \, .
\end{equation}
As $d\geq 70$ and $t\geq 1$, one has $6e^{-d/24} < 1/2$ and $1/2-e^{-\sqrt{d}} \geq e^{-t}$. Hence, taking the minimum of the two lower bounds \eqref{eq:confidence-influence-final} and \eqref{eq:dimension-influence-final} shows that $\P( \text{MLE does not exist} ) \geq \exp(-t)$ and concludes the proof of Theorem~\ref{thm:gaussian-well-specified}.

\subsection{Proof of Theorem~\ref{thm:regular-well-specified}}
\label{sec:proof-well-regular}

By Proposition~\ref{prop:DevGradWSRD}, we have, for any $n\geqslant \b (d+t)$, for any $t>0$, with probability at least $1 - 3e^{-t}$,
\begin{equation*}
\big\| \nabla \wh L_{n} (\theta^{*}) \big\|_{H^{-1}} \leq c_0 \log \b \sqrt{\frac{d + t}{n} } \enspace ,
\end{equation*}
where $c_0$ only depends on $c$ and $K$.
Moreover, by Theorem~\ref{thm:hessians-regular-case},
if
\begin{equation*}
  n \geq c_1\b (\log(\b) d +  t)\ ,
\end{equation*}
then, with probability $1-\exp(-t)$,
\begin{equation*}
\wh H_n(\theta) \mgeq c_2 H \, , \quad \text{for every } \theta \text{ such that }
	\|\theta-\theta^*\|_H\leq\frac{c_3}{\log (\b)\sqrt{\b}}\, .
\end{equation*}
Now, for some constants $c_4, \dots$ depending only on $c,K$, if
\begin{equation*}
  n \geqslant c_4
  (\log \b)^4 \b (d+t) \, , 
\end{equation*}
then
\begin{equation*}
  c_0 \log (\b) \sqrt{\frac{d + t}{n}}
  < \frac{1}{2} \times c_2 \times \frac{c_3}{\log (\b) \sqrt{\b}}
  \, ,
\end{equation*}
and thus by Lemma~\ref{lem:localization},
with probability at least $1-4e^{-t}$,
\begin{equation}
	\| \wh \theta_{n} - \theta^{*} \|_{H} \leq c_5 \log (\b) \sqrt{\frac{d + t}{n} } \, .
\end{equation}
By Lemmas~\ref{lem:BHReg} and~\ref{lem:localization}, on the same event,
we have
\begin{equation*}
	L (\wh \theta_{n}) - L(\theta^{*}) \leq c_6 (\log \b)^4 \frac{d+t}{n} \, .
\end{equation*}

\subsection{Proof of Theorem~\ref{thm:regular-misspecified}}
\label{sec:proof-misspecified-regular}

By Proposition~\ref{prop:gradient-dev-MS}, if $n\geqslant \b (d+\b t)$, then with probability larger than $1-3e^{-t}$,
\begin{equation*}
  \big\| \nabla \hat{L}_{n}(\theta^{*}) \big\|_{H^{-1}}
  \leq c_0 \log(\b)\sqrt{\frac{d+\b t}{n}} \, .
\end{equation*}  
Moreover, by Theorem~\ref{thm:hessians-regular-case}, if
\begin{equation*}
  n \geq c_1\b (\log(\b) d +  t)\ ,
\end{equation*}
then, with probability $1-\exp(-t)$,
\begin{equation*}
\wh H_n(\theta) \succcurlyeq c_2 H \qquad \text{for every } \theta \text{ such that } \|\theta-\theta^*\|_H\leqslant \frac{c_3}{\log (\b)\sqrt{\b}}\, .
\end{equation*}
By Lemma~\ref{lem:localization}, it follows that (for constants $c_4, \dots$ depending only on $c,K$), if 
\begin{equation}
  \label{eq:sample-size-loc-reg-ws}
  n \geqslant
  c_4
  (\log \b)^4 \b (d+\b t) \, , 
\end{equation}
 with probability at least $1-4e^{-t}$,
\begin{equation}
  \| \wh \theta_{n} - \theta^{*} \|_{H}
  \leq %
  c_5 \log (\b)
  \sqrt{\frac{d + \b t}{n} } \, .
\end{equation}
By Lemmas~\ref{lem:BHReg} and~\ref{lem:localization}, on the same event,
\begin{equation*}
	L (\wh \theta_{n}) - L(\theta^{*}) \leq c_6(\log \b)^4 \frac{d+\b t}{n} \, .
\end{equation*}

Regarding the necessity of the sample size condition, the fact that the condition $n\gtrsim \b d$ is necessary comes from the well-specified case of Theorem~\ref{thm:gaussian-well-specified}, which is a special case of the current setting.
Regarding the necessity of the extra $\b$ factor in the sample size condition, consider the following distribution of $(X,Y)$: $X$ is a standard Gaussian vector and the conditional distribution of $Y$ given $X$ is
such that $\P(Y\langle u^{*}, X \rangle < 0 | X)$ is constant (see~\eqref{eq:worst-distrib}).
The first point of Lemma~\ref{lem:grad-ms-optimal} shows that for this distribution, $\P(Y \langle \theta^{*}, X \rangle < 0) \leq 1/\b^{2}$. It then follows from Fact~\ref{fact:signal-confidence} that if $n \leq \b^{2}t/2$,
\begin{equation*}
	\P(\text{MLE exists}) \leq e^{-t} \, .
\end{equation*}
The conclusion follows from the same argument as in the proof of Theorem~\ref{thm:regular-well-specified} in the previous section, see~\eqref{eq:disjonction-sample-size} and after.

We now turn to the optimality of our bound on the excess risk~\eqref{eq:asymptotic-extra-B}. It is known from asymptotic theory (see \eg~\cite[Example~5.25 p.~55]{vandervaart1998asymptotic}) that in the misspecified case,
\begin{equation*}
	\sqrt{n}H(\theta^{*})^{1/2} (\wh \theta_{n} - \theta^{*}) \underset{n\to\infty}{\cvd}  \normal(0, \Gamma)
	\, ,  \qquad  \Gamma = H(\theta^{*})^{-1/2}GH(\theta^{*})^{-1/2}\, ,
\end{equation*}
where $H(\theta^{*}) = \nabla^{2}L(\theta^{*})$ is the population Hessian and $G = \E[ \nabla \ell(\theta^{*}) \nabla \ell(\theta^{*})^{\top}]$ is the the covariance of the gradient at $\theta^{*}$.
Hence, the rescaled excess risk $2n(L(\wh \theta_{n}) - L(\theta^{*}))$ converges in distribution to $\|\xi\|^{2}$ where $\xi \sim	 \normal(0, \Gamma)$.
The argument showing the optimality of our result is twofold. First, in the case where the model is well-specified, $\Gamma=I_{d}$, so $\tr(\Gamma) = d$.
Second, the argument regarding the necessity of the deviation term builds upon the same conditional distribution that explains the necessity of $\b^{2}t$ in the sample size condition that we described above (\ie $X\sim  \normal(0, I_{d})$ and $Y|X$ is given by~\eqref{eq:worst-distrib}). Indeed, by Lemma~\ref{lem:BHGauss}, $\Gamma \succcurlyeq C_{1}^{-1}H^{-1/2}GH^{-1/2}$, with $C_{1} = 2\sqrt{2/\pi}$, so by the second point of Lemma~\ref{lem:grad-ms-optimal}, for this particular distribution, it holds that
\begin{equation*}
	\opnorm{\Gamma} \geq \frac \b {8C_{1}} \geq \frac \b {13} \, .
\end{equation*}
In addition, by standard concentration arguments, one can find an absolute constant $c_{1}$ such that on one hand, the median of the distribution $\chi^{2}(d)$ is at least $c_{1}d$; and on the other hand, if $v \in S^{d-1}$ denotes an eigenvector of $\Gamma$ associated to its largest eigenvalue,
\begin{equation*}
  \P (\norm{\xi}^2 \geq c_1 \opnorm{\Gamma} t)
  \geq \P(\langle v, \xi \rangle^{2} \geq c_{1} \opnorm{\Gamma} t)
  \geq e^{-t} \, .
\end{equation*}
This concludes the proof of Theorem~\ref{thm:regular-misspecified}.

\parag{Worst misspecified case}
In this paragraph we provide the example mentioned above of a conditional distribution of $Y$ given $X$ which accounts for the extra factors in the sample size and the risk bound on the MLE.

\begin{lemma}
\label{lem:grad-ms-optimal}
Let $X \sim  \normal(0, I_{d})$, $u^{*}\in S^{d-1}$ and $p\in (0, e^{-2}/2)$. Let $Y$ be such that
\begin{equation}
\label{eq:worst-distrib}
\P(Y\langle u^{*}, X \rangle < 0 | X) = p \, .
\end{equation}
Then
\begin{enumerate}
\item the signal strength $\b=\max\{e,\|\theta^{*}\|\} $ is related to the probability of misclassification by
\begin{equation*}
	\frac{1}{2\b^{2}} \leq p = \P(Y\langle u^{*}, X \rangle < 0) \leq \frac{1}{\b^{2}} \, .
\end{equation*}
\item The covariance of the gradient $G = \E[\nabla \ell(\theta^{*}, Z)\nabla \ell(\theta^{*}, Z)^{\top}]$ satisfies
\begin{equation*}
	\opnorm{H^{-1/2}GH^{-1/2}} \geq \frac \b 8 \, .
\end{equation*}
\end{enumerate}
\end{lemma}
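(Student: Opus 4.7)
\textbf{Proof plan for Lemma~\ref{lem:grad-ms-optimal}.} The plan is to exploit the rotational symmetry of the construction to reduce everything to a one-dimensional problem involving the half-normal variable $U = |\langle u^*, X\rangle|$. Since $\P(Y\langle u^*, X\rangle<0\mid X)=p$ is constant in $X$, the sign variable $\epsilon := Y\,\mathrm{sgn}(\langle u^*,X\rangle) \in \{-1,1\}$ is independent of $X$ with $\P(\epsilon=-1)=p$, and $Y = \epsilon\,\mathrm{sgn}(\langle u^*,X\rangle)$. By invariance of $L$ under any orthogonal transformation fixing $u^*$ (the joint law of $(X,Y)$ is preserved by $X\mapsto QX$ for such $Q$), the unique minimizer $\theta^*$ lies in $\R u^*$, so I write $\theta^* = \beta u^*$ for some $\beta>0$. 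Differentiating $L(\beta u^*)$ in $\beta$ yields the one-dimensional first-order condition
\begin{equation}
\label{eq:FOC}
\E\big[U\sigma(-\beta U)\big] \;=\; p\,\E[U] \;=\; p\sqrt{2/\pi}.
\end{equation}

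For part~1, I note that $\P(Y\langle u^*, X\rangle<0)=\E[\P(Y\langle u^*, X\rangle<0\mid X)]=p$, so I just need to sandwich $p$ between $1/(2B^2)$ and $1/B^2$. For the upper bound I use $\sigma(-s)\leq e^{-s}$ together with the density bound $\phi(u)\leq 1/\sqrt{2\pi}$, which yields $\E[U\sigma(-\beta U)] \leq 2\int_0^\infty u e^{-\beta u}\phi(u)\di u \leq \sqrt{2/\pi}/\beta^2$; combined with~\eqref{eq:FOC} this gives $p\leq 1/\beta^2 \leq 1/B^2$. For the matching lower bound I use $\sigma(-s)\geq e^{-s}/2$ and then exploit the fact that $\int_0^\infty u e^{-\beta u}\phi(u)\di u$ is asymptotically equivalent to $1/(\sqrt{2\pi}\beta^2)$ for large $\beta$, with an explicit lower bound obtainable either via the Mills ratio inequality $1-\Phi(\beta)\leq \beta\phi(\beta)/(\beta^2+1)$ (after completing the square in the Laplace-type integral) or by a change of variables $v=\beta u$ and dominated convergence; this yields $\E[U\sigma(-\beta U)]\gtrsim 1/(\sqrt{2\pi}\beta^2)$, hence $p\geq 1/(2\beta^2)$. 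Finally, I verify that the assumption $p<e^{-2}/2$ forces $\beta\geq e$, so $B=\beta$ and the bound reads $p\geq 1/(2B^2)$.

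For part~2, I lower bound the operator norm by evaluating the quadratic form along the direction $u^*$. Since $H^{-1/2}u^* = B^{3/2}u^*$,
\begin{equation*}
\opnorm{H^{-1/2}GH^{-1/2}} \;\geq\; \langle H^{-1/2}u^*,\, G\,H^{-1/2}u^*\rangle \;=\; B^3\,\E\big[\sigma(-Y\langle\theta^*,X\rangle)^2\,\langle u^*, X\rangle^2\big].
\end{equation*}
On the event $\{\epsilon=-1\}$, which has probability $p$ and is independent of $X$, one has $Y\langle\theta^*,X\rangle = -\beta U \leq 0$, hence $\sigma(-Y\langle\theta^*,X\rangle)=\sigma(\beta U)\geq 1/2$. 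Using $\E[U^2]=1$ this yields $\E[\sigma(-Y\langle\theta^*,X\rangle)^2\langle u^*,X\rangle^2] \geq (p/4)\E[U^2]=p/4$. Combining with the bound $p\geq 1/(2B^2)$ from part~1 gives $\opnorm{H^{-1/2}GH^{-1/2}} \geq pB^3/4\geq B/8$, as desired. The main obstacle is obtaining the constants in part~1: the upper bound $p\leq 1/B^2$ is straightforward, but the lower bound $p\geq 1/(2B^2)$ requires a sharp enough control of $\E[U\sigma(-\beta U)]$ (where the leading constant genuinely matters) together with verifying that $\beta$ is not too small so that $B=\beta$; this is the only place where the specific constant $e^{-2}/2$ in the hypothesis on $p$ is used.
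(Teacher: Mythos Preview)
Your approach is essentially identical to the paper's: both reduce to the one-dimensional first-order condition $\E[U\sigma(-\beta U)]=p\sqrt{2/\pi}$ (the paper obtains it from the general identity~\eqref{eq:misclassif-identity} combined with independence rather than direct differentiation), then sandwich $p$ via exponential bounds on $\sigma$, and for part~2 restrict to the misclassification event (probability $p$, independent of $X$) where $\sigma\geq 1/2$ to get $\langle Gu^*,u^*\rangle\geq p/4$.

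Two places where the paper is more careful than your sketch. First, the Mills-ratio inequality you quote goes the other way: one has $1-\Phi(\beta)\geq \beta\phi(\beta)/(\beta^2+1)$, and the crude upper bound $1-\Phi(\beta)\leq\phi(\beta)/\beta$ yields zero after completing the square in $\int_0^\infty u e^{-\beta u}\phi(u)\,\di u = e^{\beta^2/2}[\phi(\beta)-\beta(1-\Phi(\beta))]$. The paper (Lemma~\ref{lem:order-1-exp-lower-bound}) pushes the integration-by-parts expansion one order further to get $e^{\beta^2/2}\sqrt{2\pi}\,(1-\Phi(\beta))\leq \beta^{-1}-\beta^{-3}+3\beta^{-5}$, hence $\int_0^\infty u e^{-\beta u}\phi(u)\,\di u\geq (2\pi)^{-1/2}(\beta^{-2}-3\beta^{-4})$, which is $\geq (2\sqrt{2\pi}\,\beta^2)^{-1}$ once $\beta\geq e$. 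Second, your argument has a slight circularity (the lower bound requires $\beta\geq e$, which you deduce from the lower bound); the paper sidesteps this by first running the lower-bound step with $B=\max\{e,\beta\}\geq e$ in place of $\beta$ (using $\sigma(-\beta U)\geq\sigma(-BU)$), concluding $B\geq 1/\sqrt{2p}\geq e$ and hence $B=\beta$. The paper also explicitly verifies $\beta>0$ by comparing $L(\beta u^*)$ with $L(-\beta u^*)$, which you assert without proof.
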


\begin{proof}
Recall that since the model is misspecified, $\theta^{*}$ is defined as the unique minimizer of $L(\theta)$ (uniqueness follows from the strict convexity of $L$). We first note that for any isometry $Q$ such that $Qu^{*} = u^{*}$, it holds for all $\theta \in \R^{d}$
that
\begin{equation}
\label{eq:loss-invariance}
L(Q\theta) = L(\theta) \, .
\end{equation}
This stems from the fact that the distribution of $X$ is invariant under any isometry and the distribution of $Y$ given $X$ is invariant under any isometry that preserves $u^{*}$. This holds in particular at the point $\theta^{*}$. Hence $Q\theta^{*} = \theta^{*}$ and, letting $Q = 2u^{*}{u^{*}}^{\top} - I_{d}$, this shows that $\theta^{*} \in \R u^{*}$.  We show in addition that $\theta^* \in \R_+ u^*$, namely
\begin{equation}
\label{eq:theta-star-orientation}
	\theta^{*} = \|\theta^{*}\|u^{*} \, .
\end{equation} 
This amounts to showing that $L(-\|\theta^{*}\|u^{*}) > L(\|\theta^{*}\|u^{*})$ which we do next.
Let $\phi(t) = \log(1+e^{t})$ denote the logistic loss and write
\begin{align*}
  L(-\|\theta^{*}\|u^{*})
  &= \E[\phi(Y\|\theta^{*}\|\langle u^{*}, X \rangle)] \\
  &= (1-p) \E[\phi(\|\theta^{*}\||\langle u^{*}, X \rangle|)] + p \E[\phi(-\|\theta^{*}\||\langle u^{*}, X \rangle|)] %
    \\
  &> p \E[\phi(\|\theta^{*}\||\langle u^{*}, X \rangle|)] 
    + (1-p) \E[\phi(-\|\theta^{*}\||\langle u^{*}, X \rangle|)] \\
  &= \E[\phi(-Y\|\theta^{*}\|\langle u^{*}, X \rangle)] = L(\|\theta^{*}\| u^{*}) \, .
\end{align*}
This proves \eqref{eq:theta-star-orientation}.

It remains to show that $\b=\|\theta^{*}\| \geq e$ and that $\b \asymp p^{-1/2}$. In view of $\eqref{eq:worst-distrib}$, $\indic{Y\langle \theta^{*}, X \rangle < 0}$ is independent from $X$.
Hence \eqref{eq:misclassif-identity} rewrites
\begin{equation*}
	p \E\big[|\innerp{u^{*}}{X}|\big]
		=  \E\big[|\innerp{u^{*}}{X}|\sigma(-|\innerp{\theta^{*}}{X}|)\big] \, .
\end{equation*}
In addition, since $\E |\innerp{u^{*}}{X}| = \sqrt{2/\pi}$, one has
\begin{equation}
\label{eq:proba-is-order1-moment}
	p = \sqrt{\frac{\pi}{2}}\, \E\big[ |\innerp{u^{*}}{X}| \sigma(-|\innerp{\theta^{*}}{X}|)\big] \, .
\end{equation}
On one hand, $\sigma(-t)\geq e^{-t}/2$ for all $t\geq 0$. Using that
$\|\theta^{*}\| \leq \b$, it follows from Lemma~\ref{lem:order-1-exp-lower-bound} that \begin{equation}
\label{eq:order-1-lower-bound}
	\E\big[ |\innerp{u^{*}}{X}|\sigma(-|\innerp{\theta^{*}}{X}|)\big]
	\geq \frac{1}{2} \E\big[ |\innerp{u^{*}}{X}| \exp(-\b |\innerp{u^{*}}{X}|)\big]
	\geq  \frac{1}{\sqrt{2\pi}\b^2} \, .
\end{equation}
Hence, using~\eqref{eq:proba-is-order1-moment} and since $p\leq e^{-2}/2$ we deduce that
\begin{equation*}
	\b \geq \frac{1}{\sqrt{2p}} \geq e \, .
\end{equation*}
The lower bound of the first point of the lemma is therefore a straightforward consequence of~\eqref{eq:order-1-lower-bound} and~\eqref{eq:proba-is-order1-moment} and we have
\begin{equation}
\label{eq:signal-is-strong-lower-bound-prob}
	\b =\|\theta^{*}\|\geq e  \quad \text{and} \quad p\geq \frac{1}{2\b^{2}}.
\end{equation}
We now prove the upper bound of the first point, which is a consequence of the exponential moment bound~\eqref{eq:MomGaussu*}, since $\b=\|\theta^{*}\|\geq e$. Using that $\sigma(-t)\leq e^{-t}$ for all $t\geq 0$, we deduce
\begin{equation*}
	\E\big[ |\innerp{u^{*}}{X}| \sigma(-|\innerp{\theta^{*}}{X}|)\big] 
		\leq \sqrt{\frac{2}{\pi}} \cdot \frac{1}{\b^{2}} \, .
\end{equation*}
We plug this in \eqref{eq:proba-is-order1-moment} to get that $p \leq 1/\b^{2}$, which is the desired upper bound.

We now prove the second point. As $\sigma(t)\geq 1/2$ for every $t\geq 0$,
\begin{equation*}
	\innerp{Gu^*}{u^*}
		= \E\innerp{u^*}{\nabla\ell(\theta^*,Z)}^2
		=\E\big[\sigma(-Y\innerp{\theta^{*}}{X})^{2} \innerp{u^{*}}{X}^{2} \big]
		\geq \frac{1}{4} \E\big[ \indic{Y\innerp{\theta^{*}}{X} < 0}	
			\innerp{u^{*}}{X}^{2} \big] \, .
\end{equation*}
The distribution of $Y|X$ is designed so that $\E[\indic{Y\innerp{\theta^{*}}{X} < 0}\cond X]$ is actually not a function of $X$, but constant and equal to $p$. More precisely,
\begin{align*}
	\E\big[ \indic{Y\innerp{\theta^{*}}{X} < 0} \innerp{u^{*}}{X}^{2} \big]
			&= \E\big[ \E \big[ \indic{Y\innerp{\theta^{*}}{X} < 0}
				\innerp{u^{*}}{X}^{2} \cond X \big] \big] \\
			&= \E\big[ \innerp{u^{*}}{X}^{2}  \E \big[ \indic{Y\innerp{\theta^{*}}{X} < 0}
				\cond X \big] \big] \\
			&= p \, \E\innerp{u^{*}}{X}^{2} = p \, .
\end{align*}
Therefore
\begin{equation*}
	\innerp{Gu^*}{u^*} 
		\geq
		\frac p 4 
		\geq \frac{1}{8\b^{2}} \, .
\end{equation*}
Finally, since $H^{-1/2}u^{*} = \b^{3/2} u^{*}$, it follows that $\innerp{H^{-1/2}GH^{-1/2}u^{*}}{u^{*}} \geq \b/8$.
\end{proof}

\subsection{Technical tools}
In the previous proofs, we used the following lemmas linking the Hessians $\nabla^2L(\theta)=H(\theta)=\E[\sigma'(\innerp{\theta}{X})XX^\top]$ to $H$ to conclude the proof. Their proofs rely on the use of the family of proxies $H_{\theta}$ for the true Hessians $H(\theta)$ defined in the proof of Theorem~\ref{thm:hessian-gaussian}, Equation~\eqref{eq:DefHtheta}.

\begin{lemma}\label{lem:BHGauss}
    Let $\theta\in\R^d\setminus \{0\}$ denote a vector such that $\|\theta-\theta^*\|_H\leqslant 1/10\sqrt{\b}$, let $u=\theta/\|\theta\|$ and assume that $X \sim \gaussdist (0, I_d)$.
    Then,
    \[
   \frac{1}{500}H\preccurlyeq H(\theta)\preccurlyeq 420 H\enspace.
  \] 
\end{lemma}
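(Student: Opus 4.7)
The plan is to exploit the rotational invariance of the Gaussian distribution of $X$, which reduces $H(\theta)$ to a block-diagonal form in the $(u, u^\perp)$ decomposition (with $u=\theta/\|\theta\|$), and then to compare the resulting blocks to $H$ via Lemma~\ref{lem:hess-proxy-regularity}. More precisely, by~\eqref{eq:HessianGaussian} applied at $\theta$ rather than $\theta^*$, I will write
\begin{equation*}
  H(\theta) = c_0(\|\theta\|)\, uu^\top + c_1(\|\theta\|)\,(I_d-uu^\top),
\end{equation*}
where $c_0(\beta)=\E[\sigma'(\beta G) G^2]$ and $c_1(\beta)=\E[\sigma'(\beta G)]$ for $G\sim\gaussdist(0,1)$.

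First, since $\|\theta-\theta^*\|_H\leq 1/(10\sqrt{B})$, Lemma~\ref{lem:ellipsoids} with $r=1/10$ gives $0.9\,B\leq\|\theta\|\leq 1.1\,B$; in particular $\|\theta\|\geq 0.9e>2$, so we are in the ``large $\beta$'' regime of Lemma~\ref{lem:MomGauss}. Applied with $k=2$ and $k=0$, Lemma~\ref{lem:MomGauss} yields absolute constants $\underline{\alpha}_0,\overline{\alpha}_0,\underline{\alpha}_1,\overline{\alpha}_1>0$ such that
\begin{equation*}
  \frac{\underline{\alpha}_0}{B^3}\leq c_0(\|\theta\|)\leq\frac{\overline{\alpha}_0}{B^3},\qquad
  \frac{\underline{\alpha}_1}{B}\leq c_1(\|\theta\|)\leq\frac{\overline{\alpha}_1}{B},
\end{equation*}
after absorbing the $(1.1)^{\pm 1}$ and $(0.9)^{\pm 1}$ factors into the constants. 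Combining these sandwich bounds with the spectral decomposition above, I obtain
\begin{equation*}
  \min(\underline{\alpha}_0,\underline{\alpha}_1)\, H_\theta \mleq H(\theta) \mleq \max(\overline{\alpha}_0,\overline{\alpha}_1)\, H_\theta,
\end{equation*}
where $H_\theta=B^{-3}uu^\top+B^{-1}(I_d-uu^\top)$ is the matrix defined in~\eqref{eq:DefHtheta}.

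It then remains to compare $H_\theta$ (built from $u=\theta/\|\theta\|$) with $H$ (built from $u^*$). This is exactly the content of Lemma~\ref{lem:hess-proxy-regularity}, which with $r=1/10$ gives $0.765\,H\mleq H_\theta\mleq 1.235\,H$. Chaining the two two-sided bounds yields $c\,H\mleq H(\theta)\mleq C\,H$ for absolute constants $c,C$, and the explicit numerics $1/500$ and $420$ in the statement are obtained by tracking the constants in Lemma~\ref{lem:MomGauss} (which in turn come from $\sqrt{2/\pi}$, $e^4$, and $\sigma'(2)$) through the above chain.

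There is no real obstacle here: the result is essentially bookkeeping once the decomposition~\eqref{eq:HessianGaussian} is used. The only mildly delicate point is making sure that the estimates of Lemma~\ref{lem:MomGauss} are applied in the correct regime (large $\beta$), which is guaranteed by Lemma~\ref{lem:ellipsoids} since $B\geq e$, and keeping the numerical constants sharp enough to reach $1/500$ and $420$ rather than just ``some constants''.
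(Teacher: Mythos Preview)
Your approach is the same as the paper's --- use the decomposition $H(\theta)=c_0(\|\theta\|)uu^\top+c_1(\|\theta\|)(I_d-uu^\top)$, bound $c_0,c_1$ via Lemma~\ref{lem:MomGauss}, and then pass from $H_\theta$ to $H$ --- but there is one genuine gap. You invoke Lemma~\ref{lem:ellipsoids} to obtain $0.9B\le\|\theta\|\le 1.1B$, yet that lemma explicitly assumes $B=\|\theta^*\|$, i.e.\ $\|\theta^*\|\ge e$. When $\|\theta^*\|<e$ (so $B=e$ but $\|\theta^*\|\neq B$), the conclusion $\|\theta\|\ge 0.9e$ is simply false: $\|\theta\|$ can be as small as $\|\theta^*\|-e/10$, which may be close to zero. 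In that regime $c_0(\|\theta\|)$ and $c_1(\|\theta\|)$ are of constant order (not $\asymp B^{-3},B^{-1}$), so your sandwich $\underline\alpha\,H_\theta\mleq H(\theta)\mleq\overline\alpha\,H_\theta$ breaks down. The same issue lurks in your use of Lemma~\ref{lem:hess-proxy-regularity}, whose proof also relies on Lemma~\ref{lem:ellipsoids}.

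The paper treats the case $B=e$ separately: it uses the $\beta$-independent bounds in Lemma~\ref{lem:MomGauss} (the $\Gamma((k+1)/2)$ upper bound and the $\sigma'(2)/e^2$ lower bound) and the crude comparisons $e^{-3}I_d\mleq H_\theta\mleq e^{-1}I_d$ and $e^{-3}I_d\mleq H\mleq e^{-1}I_d$, which hold for any $\theta\neq 0$ and any $u^*$. This yields $H(\theta)\mleq\sqrt{2}e^5 H$ and $H(\theta)\mgeq c\,H$ for explicit constants, well within the stated $420$ and $1/500$. Once you add this case split, your argument is complete and coincides with the paper's.
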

\begin{proof}
We write that, for any $v\in S^{d-1}$
\begin{align*}
\innerp{H_\theta^{-1/2}H(\theta)H_\theta^{-1/2}v}{v} =\b^3\innerp{u}{v}^2\E[\sigma'(\innerp{\theta}{X})\innerp{u}{X}^2]+ \b (1-\innerp{u}{v}^2)\E[\sigma'(\innerp{\theta}{X})] \enspace. 
\end{align*}
We start with the upper bound.
By Lemma~\ref{lem:MomGauss},
\[
\E[\sigma'(\innerp{\theta}{X})|\innerp{u}{X}|^k]\leqslant \sqrt{\frac2\pi}\min\bigg(\Gamma\bigg(\frac{k+1}2\bigg),\frac{\Gamma(k+1)}{\|\theta\|^{k+1}}\bigg)\enspace.
\]
Then, if $\b=e$, we use the first bound to get
\begin{align*}
\innerp{H_\theta^{-1/2}H(\theta)H_\theta^{-1/2}v}{v} \leqslant\sqrt{2}e^3\enspace.  
\end{align*}
This proves that $H(\theta)\preccurlyeq \sqrt{2}e^3H_\theta$.
As $H_\theta\preccurlyeq e^{-1}I_d\preccurlyeq e^2H$, this proves the result when $\b=e$.

If $\b>e$, we have by Lemma~\ref{lem:ellipsoids}, $\|\theta\|\geqslant 0.9\cdot \b$, so the second bound on the moments gives
\begin{align*}
 \innerp{H_\theta^{-1/2}H(\theta)H_\theta^{-1/2}v}{v} \leqslant \sqrt{\frac2\pi}\frac{2}{0.9^3}\enspace.  
\end{align*}
This proves that $H(\theta)\preccurlyeq 2.2\cdot H_\theta$ and this proves the result in the case $\b>e$ since by Lemma~\ref{lem:hess-proxy-regularity} we also have $H_\theta\preccurlyeq 1.3\cdot H$.

We now turn to the lower bound.
By Lemma~\ref{lem:MomGauss},
\[
\E[\sigma'(\innerp{\theta}{X})|\innerp{u}{X}|^k]\geqslant \sqrt{\frac{2}{\pi}}\frac{2^{k+1}}{k+1}\min\bigg(\frac1{4e^{4}\|\theta\|^{k+1}},\frac{\sigma'(2)}{e^{2}}\bigg)\enspace.
\]
Then, if $\b=e$, we use the second bound to get
\begin{align*}
\innerp{H_\theta^{-1/2}H(\theta)H_\theta^{-1/2}v}{v} \geqslant 0.02\enspace.  
\end{align*}
This proves that $H(\theta)\succcurlyeq c_2H_\theta$ and as $H_\theta\succcurlyeq e^{-3}I_d\succcurlyeq e^{-2}H$, this proves the result when $\b=e$.

If $\b>e$, we have by Lemma~\ref{lem:ellipsoids}, $\|\theta\|\geqslant 0.9\cdot \b$, so the first bound on the moments gives
\begin{align*}
\innerp{H_\theta^{-1/2}H(\theta)H_\theta^{-1/2}v}{v} \geqslant 0.0027\enspace. 
\end{align*}
This proves the result in the case $\b>e$ since by Lemma~\ref{lem:hess-proxy-regularity} we also have $H_\theta\succcurlyeq 0.76 \cdot H$.
\end{proof}

\begin{lemma}\label{lem:BHReg}
    Let $\theta\in \R^d\setminus\{0\}$ be such that $\|\theta-\theta^*\|_H\leqslant 1/(10\sqrt{\b})$ and let $u=\theta/\|\theta\|$. 
   Suppose that $X$ satisfies Assumptions~\ref{ass:sub-exponential} with parameter $K>0$ and~\ref{ass:small-ball} with parameters $\eta=1/\b$ and $c\geqslant 1$.
    Then, there exists $c'$ depending on $c$ and $K$
    such that 
    \[
   \frac1{c'}H\preccurlyeq H(\theta)\preccurlyeq c' (\log \b)^{2} H \enspace.
  \]
\end{lemma}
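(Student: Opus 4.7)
By Lemma~\ref{lem:hess-proxy-regularity}, the matrix $H_\theta = B^{-3} u u^\top + B^{-1}(I_d - u u^\top)$ is equivalent to $H$ up to absolute constants, so it is enough to establish $H(\theta) \asymp H_\theta$. I would mirror the proof of Lemma~\ref{lem:BHGauss}: fix $v \in S^{d-1}$ and decompose $v = \alpha u + \beta w$ with $w \perp u$ unit and $\alpha^2 + \beta^2 = 1$, so that $\langle H_\theta v, v \rangle = \alpha^2/B^3 + \beta^2/B$, then estimate $\langle H(\theta) v, v\rangle = \E[\sigma'(\langle \theta, X\rangle)\langle v, X\rangle^2]$.

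For the upper bound, bound $\sigma'(s) \leq e^{-|s|}$ and expand $\langle v, X\rangle^2 \leq 2\alpha^2 \langle u, X\rangle^2 + 2\beta^2 \langle w, X\rangle^2$. Lemma~\ref{lem:ellipsoids} gives $\|\theta\| \geq 0.9 B$ and $\|u - u^*\| \leq \sqrt{2}/(9B)$; combined with Assumption~\ref{ass:small-ball} on $u^*$ and the sub-exponential control of $\langle u - u^*, X\rangle$ from Assumption~\ref{ass:sub-exponential}, this extends the one-dimensional small-ball estimate from $u^*$ to $u$ at scales on the order of $1/B$. Following the proof of Lemma~\ref{lem:MomRegDes} (bounding $t^2 e^{-Bt/2}$ pointwise for the $\alpha^2$ piece, and using Hölder's inequality with $\nu = 1/\log B$ for the $\beta^2$ piece), I expect to obtain
\begin{equation*}
  \E\bigl[e^{-\|\theta\||\langle u, X\rangle|}\langle u, X\rangle^2\bigr] \leq \frac{c_1}{B^3}, \qquad
  \E\bigl[e^{-\|\theta\||\langle u, X\rangle|}\langle w, X\rangle^2\bigr] \leq \frac{c_2 \log^2 B}{B},
\end{equation*}
with $c_1, c_2$ depending only on $c$ and $K$. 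These combine to give $\langle H(\theta) v, v\rangle \leq c_3 \log^2(B)\, \langle H_\theta v, v\rangle$.

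For the lower bound, the Bernoulli example underlying Fact~\ref{fac:exponential-dependence-bernoulli} shows that Assumptions~\ref{ass:sub-exponential} and~\ref{ass:small-ball} alone do not suffice; Assumption~\ref{ass:twodim-marginals} must also be invoked (as is implicit throughout the ``regular'' setting of the paper). Granted this, I would use $\sigma'(s) \geq c_0 \ind{|s| \leq 1}$ to reduce the task to lower-bounding $\E[\langle v, X\rangle^2 \ind{|\langle u, X\rangle| \leq 1/\|\theta\|}]$, then invoke the perturbed version of Assumption~\ref{ass:twodim-marginals} provided by Lemma~\ref{lem:bidim-extension}, exactly as in the proof of Theorem~\ref{thm:hessians-regular-case}. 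This produces, for constants $C, C', c''$ depending on $c$ and $K$,
\begin{equation*}
  \P\bigl(|\langle u, X\rangle| \leq C/B, \ |\langle v, X\rangle| \geq \max\{1/B, \|u^* - v\|\}/C'\bigr) \geq c''/B,
\end{equation*}
and multiplying by the squared threshold gives $\langle H(\theta) v, v\rangle \geq c''' \max\{1/B, \|u^* - v\|\}^2 /B$, which matches $\langle H_\theta v, v\rangle$ up to constants after relating $\|u^* - v\|$ to $\alpha, \beta$ via $\|u - u^*\| \lesssim 1/B$.

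The main obstacle is the extension of Assumption~\ref{ass:twodim-marginals} from $u^*$ to the perturbed direction $u$: Lemma~\ref{lem:bidim-extension} requires $\|u - u^*\| \lesssim 1/(BK \log B)$, slightly tighter than the $1/B$ scale provided by Lemma~\ref{lem:ellipsoids} with $r = 1/10$. This mismatch forces the implicit polylogarithmic dependence of $c'$ on $B$, and is consistent with the $\log^4(B)$ factors in Theorems~\ref{thm:regular-well-specified} and~\ref{thm:regular-misspecified}. Note also that only the upper bound on $H(\theta)$ is strictly needed in the application to Lemma~\ref{lem:localization}; the lower bound is stated for completeness.
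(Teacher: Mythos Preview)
Your approach is essentially identical to the paper's: the upper bound goes through $\sigma' \leq e^{-|\cdot|}$ and Lemma~\ref{lem:MomRegDes} (yielding the same $(K\log B)^2$ factor), and the lower bound restricts to $\{|\langle u, X\rangle| \leq m\}$ where $\sigma'$ is bounded below, then invokes Lemma~\ref{lem:bidim-extension} (with Proposition~\ref{prop:regularity-constant-scale} covering the case $\|\theta^*\|\leq e$). Your two caveats---that Assumption~\ref{ass:twodim-marginals} is silently required for the lower bound despite not appearing in the hypotheses, and that $c'$ carries an implicit $\mathrm{polylog}(B)$ dependence because the $\|u-u^*\|$ bound from Lemma~\ref{lem:ellipsoids} with $r=1/10$ does not meet the $1/(BK\log B)$ threshold of Lemma~\ref{lem:bidim-extension}---are both accurate and apply equally to the paper's own proof.
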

\begin{proof}

We start with the proof of the upper bound.
    Let $v\in S^{d-1}$ and let $w\in S^{d-1}$ denote a vector such that $\innerp{u}{w}=0$ and $v-\innerp{u}{v}u=\sqrt{1-\innerp{u}{v}^2}w$. 
    As $\sigma'(x)\leqslant \exp(-|x|)$, we have
\begin{align*}
 \innerp{H_\theta^{-1/2}H(\theta)H_\theta^{-1/2}v}{v} &=\b^3\innerp{u}{v}^2\E[\exp(-|\innerp{\theta}{X}|)\innerp{u}{X}^2]\\
 &+ \b (1-\innerp{u}{v}^2)\E[\exp(-|\innerp{\theta}{X}|)\innerp{w}{X}^2] \enspace.  
\end{align*}
If $\b=e$, it follows from $\sigma'(x)\leqslant 1$ that $H(\theta)\preccurlyeq e^3 H_\theta$ and thus $H(\theta)\preccurlyeq e^5 H$ since $H_\theta\preccurlyeq e^2 H$ in this case.

If $\b>e$, we have by Lemma~\ref{lem:ellipsoids}, $\|\theta\|\geqslant (1-r)\b$, where we let $r=1/10$ for the rest of this proof.
Thus, by Lemma~\ref{lem:MomRegDes}, it follows that
\begin{equation*}
 \innerp{H_\theta^{-1/2}H(\theta)H_\theta^{-1/2}v}{v} \leqslant \frac{3c}{1-r}(K\log((1+r)\b))^2\enspace.  
\end{equation*}
This proves that $H(\theta)\preccurlyeq \frac{3c}{1-r}(K\log((1+r)\b))^2H_\theta$ and this proves the result in the case $\b>e$ since by Lemma~\ref{lem:hess-proxy-regularity} we also have $H_\theta\preccurlyeq (1+2.35 r)H$.

We now turn to the lower bound.
Let $v\in S^{d-1}$, we have 
\[
\innerp{H(\theta)v}{v}=\E[\sigma'(\langle \theta, X \rangle)
			\langle v, X \rangle^{2}]\, .
\]
The function $\sigma'(x)=\exp(x)/(1+\exp(x))^2$ is even, non negative, non increasing on $[0, +\infty)$.
Therefore, for any $m, M>0$,
   \begin{align}
\label{VCB:Step1*}\innerp{H(\theta) v}{v}&     \geqslant\sigma'(m(1+r)\b)M^2\P\big(\ainnerp{u}{X} \leq m,\ \ainnerp{v}{X}\geqslant M\big)  \, ,
   \end{align}
    where we also used that, as $\|\theta-\theta^*\|_{H} \leqslant r/\sqrt{\b}$, $\|\theta\|\leqslant (1+r)\b$ by Lemma~\ref{lem:ellipsoids}.
    
    If $\|\theta^{*}\| \leq e$, $\b=e$, so Proposition~\ref{prop:regularity-constant-scale} shows that Assumptions~\ref{ass:small-ball} holds with $c=e$ and Assumption~\ref{ass:twodim-marginals} is satisfied with constant $\max\{2eK\log(2K),2K^4\}=2K^4$.
Therefore,
\begin{equation*}
\P\left( \big| \langle u, X \rangle \big| \leq \frac{2K^4}\b \,; \, 
		\big| \langle v, X \rangle \big| \geq \frac{\max\left\{1/\b, \|u^{*} - v \| \right\}}{2K^4}\right)
		\geqslant \frac{1}{2K^4\b} \,  .
\end{equation*}
Hence, choosing $m=2K^4/\b$ and $M=\max\left\{1/\b, \|u^{*} - v \|\right\}/2K^4$ in \eqref{VCB:Step1*}, we get that 
\begin{align*}
\innerp{H(\theta) v}{v}&     \geqslant\frac{\sigma'((1+r)2K^4)}{8K^{12}}\frac1\b \max\left\{\frac1{\b^2}, \|u^{*} - v \|^2\right\}\geqslant \frac{\sigma'((1+r)2K^4)}{16K^{12}}\innerp{Hv}{v}  \, .
   \end{align*}

   When $\b> e$, the third point of Lemma~\ref{lem:ellipsoids} implies that for every $\theta \in \Theta$, 
\[
\| u - u^{*} \| \leq \frac{\sqrt{2}}{[K\log(c(c+1)\b)-1]}\frac{r}\b \leqslant \frac{2r}{K\b \log(c(c+1)\b)}.
\]
By Lemma~\ref{lem:bidim-extension}, this implies that for all $\theta \in \Theta$ and $v \in S^{d-1}$, one has for all $t \geq 1/\b$
\begin{equation*}
\P\left( \big| \innerp{u}{X}\big| \leq \frac{c+1}\b \,; \, 		\big| \innerp{v}{X} \big| \geq \frac{\max\left\{1/\b, \|u^{*} - v \| \right\}}{c+1}\right)\geqslant \frac{1}{(c+1)\b} \,  .
\end{equation*}
Hence, choosing $m=(c+1)/\b$, $M=\max(1/\b,\|u^*-v\|)/(c+1)$ in \eqref{VCB:Step1}, we get that 
\begin{align*}
\innerp{H(\theta) v}{v}&     \geqslant\frac{\sigma'((1+r)(1+c))}{(1+c)^3}\frac1\b \max\left\{\frac1{\b^2}, \|u^{*} - v \|^2\right\}\geqslant \frac{\sigma'((1+r)(1+c))}{2(1+c)^3}\innerp{Hv}{v}  \, .
\qedhere
\end{align*}
\end{proof}

\begin{lemma}
\label{lem:order-1-exp-lower-bound}
Let $N\sim \normal(0,1)$ and $\b \geq e$. Then
\begin{equation*}
	\E\big[|N| \exp(-\b|N|)]
	\geq \frac{1}{\sqrt{2\pi}\b^2} \, .
\end{equation*}
\end{lemma}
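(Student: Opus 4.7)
The plan is to reduce the inequality to a one‐dimensional integral estimate against a Gamma density, and then invoke Jensen's inequality to extract the $B^{-2}$ scaling cleanly.

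First, I would write
\begin{equation*}
  \E\big[|N|\exp(-B|N|)\big]
  = \frac{2}{\sqrt{2\pi}} \int_0^{\infty} x\, e^{-Bx - x^2/2}\, \di x,
\end{equation*}
and then perform the change of variable $u = Bx$, which yields
\begin{equation*}
  \E\big[|N|\exp(-B|N|)\big]
  = \frac{2}{\sqrt{2\pi}\, B^2} \int_0^{\infty} u\, e^{-u}\, e^{-u^2/(2B^2)}\, \di u.
\end{equation*}
The target lower bound is therefore equivalent to showing
$\int_0^{\infty} u\, e^{-u}\, e^{-u^2/(2B^2)}\, \di u \geq 1/2$.

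Next, I would recognize $u\, e^{-u}\,\di u$ as the density of the $\gammadist(2,1)$ distribution (a probability measure on $\R^+$), and apply Jensen's inequality to the convex function $t \mapsto e^{-t}$:
\begin{equation*}
  \int_0^{\infty} u\, e^{-u}\, e^{-u^2/(2B^2)}\, \di u
  \geq \exp\bigg(- \frac{1}{2B^2} \int_0^{\infty} u^3\, e^{-u}\, \di u\bigg)
  = \exp(-3/B^2),
\end{equation*}
using $\int_0^{\infty} u^3 e^{-u}\, \di u = \Gamma(4) = 6$.

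Finally, the assumption $B \geq e$ gives $3/B^2 \leq 3/e^2$, and one verifies numerically that $3/e^2 < \log 2$ (indeed $3/e^2 \approx 0.406 < 0.693 \approx \log 2$), so $\exp(-3/B^2) \geq \exp(-3/e^2) > 1/2$, completing the proof. There is no real obstacle: the only point requiring attention is the small numerical check $3/e^2 < \log 2$, which is why the hypothesis $B \geq e$ (rather than a weaker one) is used.
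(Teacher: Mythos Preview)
Your proof is correct and takes a genuinely different route from the paper. The paper completes the square to write $\int_0^\infty t e^{-Bt-t^2/2}\,\di t = 1 - B e^{B^2/2}\int_B^\infty e^{-t^2/2}\,\di t$, and then controls the Mills-ratio term via two rounds of integration by parts, obtaining the asymptotic expansion $e^{B^2/2}\int_B^\infty e^{-t^2/2}\,\di t \leq B^{-1} - B^{-3} + 3B^{-5}$; this yields $\int_0^\infty t e^{-Bt-t^2/2}\,\di t \geq B^{-2}(1-3/B^2)$, and the numerical check $3/e^2 \leq 1/2$ finishes. Your argument is more elementary: after the scaling $u=Bx$ you avoid any tail expansion by recognizing a $\gammadist(2,1)$ expectation and applying Jensen directly to the convex map $t\mapsto e^{-t}$, reducing everything to the single moment $\int_0^\infty u^3 e^{-u}\,\di u = 6$ and the same final numerical check $3/e^2 < \log 2$. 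The paper's route gives a sharper asymptotic picture (the $1-3/B^2$ correction) should one ever need it, but for the stated inequality your Jensen argument is cleaner and shorter.
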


\begin{proof}
Let $g(t) = e^{-t^{2}/2}/\sqrt{2\pi}$ denote the standard real Gaussian density. First, by symmetry,
\begin{equation}
\label{eq:symmetric-moment}
	\E\big[|N| \exp(-\b|N|)\big] 
		= \sqrt{\frac{2}{\pi}} \int_{0}^{+\infty} te^{-\b t} e^{-t^{2}/2} \di t \, .
\end{equation}
Then we proceed with an expansion of the integral.
\begin{align*}
	\int_{0}^{+\infty} t e^{-\b t} e^{-t^{2}/2} \di t
		&= e^{\b^{2}/2} \int_{0}^{+\infty} t e^{-\frac{(t+\b)^{2}}2} \di t
		= e^{\b^{2}/2} \int_{\b}^{+\infty} (x-\b) e^{-x^{2}/2} \di x \\
		&= e^{\b^{2}/2} \Big(  \int_{\b}^{+\infty} x e^{-x^{2}/2} \di x 
			- \b \int_{\b}^{+\infty} e^{-x^{2}/2} \di x \Big) \\
		&= 1 - \b e^{\b^{2}/2} \int_{\b}^{+\infty} e^{-t^{2}/2} \di t \, .
\end{align*}
Now we use twice the formula
\begin{equation*}
	\int_{x}^{+\infty} \frac{1}{t^{k}} e^{-t^{2}/2}\di t = \frac{e^{-x^{2}/2}}{x^{k+1}}
		- (k+1)\int_{x}^{+\infty} \frac{1}{t^{k+2}} e^{-t^{2}/2} \di t \, ,
\end{equation*}
which holds for all $x>0$ and all $k\geq 0$ by a simple integration by parts. This yields
\begin{equation*}
	e^{\b^{2}/2} \int_{\b}^{+\infty} e^{-t^{2}/2} \di t
	= \frac{1}{\b} - \frac{1}{\b^{3}} + \frac{3}{\b^{5}} 
		-15 \int_{\b}^{+\infty} \frac{1}{t^{6}} e^{-t^{2}/2} \di t
	\leq \frac{1}{\b} - \frac{1}{\b^{3}} + \frac{3}{\b^{5}} \, .
\end{equation*}
Then
\begin{equation*}
	\int_{0}^{+\infty} t e^{-\b t} g(t) \di t
		= 1 - \b e^{\b^{2}/2} \int_{\b}^{+\infty} e^{-t^{2}/2} \di t
		\geq \frac{1}{\b^{2}} - \frac{3}{\b^{4}} = \frac{1}{\b^{2}}\Big(1 - \frac{3}{\b^{2}}\Big)  \, .
\end{equation*}
Finally, as $\b \geq e$, one has $3/\b^{2} \leq 3/e^{2} \leq 1/2$; and combining with \eqref{eq:symmetric-moment} proves the claim.
\end{proof}

\section{Proofs of results from Section~\ref{sec:regular-designs}}
\label{sec:proof-examples-regular}

\subsection{Proof of Proposition~\ref{prop:regularity-constant-scale} (regularity at constant scales)}
\label{sec:proof-regularity-constant}

  First, note that Assumption~\ref{ass:small-ball} holds with $c = \eta^{-1}< c(K, \eta)$, since
  $\P (\ainnerp{u^*}{X} \leq t) \leq 1 \leq \eta^{-1} \cdot t$ for any $t \geq \eta$.
  We now show that Assumption~\ref{ass:twodim-marginals} also holds for $c = c(K, \eta)$.
  We start by writing, for any $v \in S^{d-1}$ such that $\innerp{u^*}{v} \geq 0$ and $s, t > 0$,
  \begin{equation*}
    \P \big( \ainnerp{u^*}{X} \leq s, \, \ainnerp{v}{X} \geq t \big)
    \geq \P (\ainnerp{v}{X} \geq t) - \P (\ainnerp{u^*}{X} > s)
    \, .
  \end{equation*}
  In order to lower bound the first term above, we apply the Paley-Zygmund inequality~\eqref{eq:paley-zygmund} to $Z = \innerp{v}{X}^2$ (with $\E [Z] = 1$), which gives
  \begin{equation*}
    \P \Big( \ainnerp{v}{X} \geq \frac{1}{\sqrt{2}} \Big)
    = \P \Big( \innerp{v}{X}^2 \geq \frac{1}{2} \E [ \innerp{v}{X}^2 ] \Big) 
    \geq \frac{1}{4} \frac{\E [ \innerp{v}{X}^2 ]^2}{\E [\innerp{v}{X}^4]} 
    \geq \frac{1}{4 K^4} \, ,
  \end{equation*}
  where the last inequality follows from the fact that $\norm{\innerp{v}{X}}_{\psi_1} \leq K$, which by Definition~\ref{def:psi-alpha} implies that $\norm{\innerp{v}{X}}_4 \leq 4 K /(2e) \leq K$.
  In addition, since $\norm{\innerp{u^*}{X}}_{\psi_1} \leq K$, Lemma~\ref{lem:sub-gamma-exponential} implies that
  \begin{equation}
    \label{eq:constant-regularity}
    \P \big( \ainnerp{u^*}{X} > 2 K \log (2K) \big)
    \leq e^{-2 \times 2 \log (2K)}
    = \frac{1}{16 K^4}
    \, .
  \end{equation}
  Combining the previous inequalities and using that $\norm{u^* - v} \leq \sqrt{2}$ and $\eta \leq e^{-1}$, we obtain that
  \begin{align*}
    &\P \Big( \ainnerp{u^*}{X} \leq \frac{2 K \log (2K)}{\eta} \cdot \eta , \, \ainnerp{v}{X} \geq \frac{\max \{ \norm{u^*-v}, \eta \}}{2} \Big)
    \geq \frac{1}{4 K^4} - \frac{1}{16 K^4} \\
    &= \frac{3}{16 K^4}
      \geq \frac{3 e \eta}{16 K^4}
      \geq \frac{\eta}{2 K^4}
      \, ,
  \end{align*}
  which shows that Assumption~\ref{ass:twodim-marginals} holds with $c = c_{K, \eta}$ given by~\eqref{eq:constant-regularity}.

\subsection{Proof of Proposition~\ref{prop:universal-reg-log-concave} (regularity of log-concave distributions)}
\label{sec:proof-regularity-log-concave}

In this section, we show that centered isotropic log-concave distributions satisfy Assumptions~\ref{ass:sub-exponential},~\ref{ass:small-ball} and~\ref{ass:twodim-marginals}, in every direction $u^* \in S^{d-1}$ and at any scale $\eta \in (0,e^{-1})$.

First, it is a standard fact that log-concave measures are sub-exponential.

\begin{lemma}
\label{lem:log-concave-sub-exponential}
For every isotropic log-concave random vector $X$ in $\R^{d}$, one has $\| \innerp{v}{X}\|_{\psi_{1}}\leq \sqrt{2}e$ for every $v \in S^{d-1}$.
\end{lemma}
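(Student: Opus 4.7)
My plan relies on the two standard stability properties of log-concave measures. The first is Prékopa's theorem (equivalently, the Prékopa-Leindler inequality applied to marginalization): any linear projection of a log-concave distribution on $\R^d$ is again log-concave. Thus, for every $v \in S^{d-1}$, the real-valued random variable $Y := \innerp{v}{X}$ has a log-concave distribution on $\R$. Since $X$ is centered and isotropic, $Y$ is centered with $\E Y^2 = v^\top I_d v = 1$. This reduces the $d$-dimensional claim to a purely one-dimensional statement: any centered log-concave real random variable with unit variance has $\psi_1$-norm at most $\sqrt{2}\, e$.

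For the one-dimensional step, the strategy is to bound the absolute moments $\E |Y|^p$ for every integer $p \geq 1$ and then convert to a bound on the exponential moment. The density of $Y$ has the form $e^{-\phi}$ for some convex $\phi : \R \to \R \cup \{+\infty\}$; the centering and variance normalization pin down the scale of $\phi$ near the origin and in particular give a two-sided bound on $\|f\|_\infty$ (\eg via Fradelizi's inequality, $\|f\|_\infty \leq e\, f(0)$ for centered log-concave $f$, combined with a lower bound on $f(0)$ that follows from $\E Y^2 = 1$). Borell's lemma then yields quantitative exponential tails: there are explicit constants such that $\P(|Y| > t) \leq \alpha e^{-\beta t}$ for every $t \geq 0$. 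Integrating the tail estimate gives a moment bound of the form $\E |Y|^p \leq C^p \Gamma(p+1)$ for all integers $p \geq 1$, for an explicit constant $C$.

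Given such a moment estimate, the exponential moment is controlled by a geometric series: for $\lambda < 1/C$,
\begin{equation*}
  \E e^{\lambda |Y|}
  = \sum_{p \geq 0} \frac{\lambda^p}{p!} \, \E |Y|^p
  \leq \sum_{p \geq 0} (\lambda C)^p
  = \frac{1}{1 - \lambda C}
  \, .
\end{equation*}
Setting the right-hand side equal to $2$ gives $\lambda = 1/(2C)$, so by the definition of the $\psi_1$-norm (Definition~\ref{def:psi-alpha}) we obtain $\|Y\|_{\psi_1} \leq 2C$. Matching the constant in the statement then amounts to establishing the moment bound with $C = e/\sqrt{2}$, so that $\|Y\|_{\psi_1} \leq \sqrt{2}\, e$.

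The only real obstacle is therefore bookkeeping the absolute constant $C = e/\sqrt{2}$ throughout the one-dimensional argument, since most textbook statements of Borell's lemma record only some universal constant. Tracking these constants carefully in the tail bound and then in the moment integration is routine but requires care; once this is done, the projection step via Prékopa's theorem immediately yields the full $d$-dimensional conclusion for every $v \in S^{d-1}$.
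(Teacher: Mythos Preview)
Your reduction to one dimension via Pr\'ekopa's theorem is correct, and bounding moments of a centered unit-variance log-concave variable is indeed the heart of the matter; this is also what the paper does. The gap is in your final conversion step. You pass from $\E e^{\lambda |Y|} \leq (1-\lambda C)^{-1}$ to $\|Y\|_{\psi_1} \leq 2C$ by invoking Definition~\ref{def:psi-alpha}, but that definition is \emph{not} the Orlicz-norm characterization $\inf\{t : \E e^{|Y|/t} \leq 2\}$; it is the moment-based quantity
\[
  \|Y\|_{\psi_1} = \sup_{p \geq 2} \frac{2e\,\|Y\|_p}{p}\,.
\]
The two are equivalent only up to absolute constants, so your constant bookkeeping is off. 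Concretely, from your moment bound $\E|Y|^p \leq C^p p!$ and $(p!)^{1/p} \leq p$ one gets $\|Y\|_p \leq Cp$ and hence $\|Y\|_{\psi_1} \leq 2eC$ with the paper's definition, not $2C$. To land on $\sqrt{2}\,e$ you would need $C = 1/\sqrt{2}$, i.e.\ $\|Y\|_p \leq (p!)^{1/p}/\sqrt{2}$, not $C = e/\sqrt{2}$.

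The exponential-moment detour is therefore unnecessary here: since the paper's $\psi_1$-norm is defined directly through $L^p$-norms, you should go straight from a moment comparison inequality to the conclusion. The paper does exactly this, citing the Khinchine-type inequality for log-concave measures (Corollary~5.7 of Gu\'edon--Nayar--Tkocz), which gives $\|Y\|_p \leq (p!)^{1/p} \|Y\|_2/\sqrt{2} \leq p/\sqrt{2}$, whence $2e\|Y\|_p/p \leq \sqrt{2}\,e$. Your Borell-based route can in principle reach the same moment inequality, but you should be aware that tracking the sharp constant $1/\sqrt{2}$ through Borell's lemma and tail integration essentially amounts to reproving that cited result.
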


\begin{proof}
Corollary~5.7 in \cite{guedon2014concentration} with $q=2$ shows that for all $v \in S^{d-1}$
and $p \geq 1$, 
\begin{equation*}
  \|\langle v, X \rangle \|_{p}
  \leq \frac{(p!)^{1/p}}{2^{1/2}} \|\innerp{v}{X}\|_{2}
  \leq \frac{p}{\sqrt{2}}
  \, .
\end{equation*}
Hence $\langle v, X \rangle$ is sub-exponential with $\|\langle v, X \rangle\|_{\psi_{1}} \leq \sqrt{2} e$.
\end{proof}

\begin{lemma}
\label{lem:mass-at-0}
Let $X$ be an isotropic random vector in $\R^{d}$ with log-concave distribution. Then for all $u \in S^{d-1}$ and all 
$t > 0$,
\begin{equation}
\label{eq:mass-at-0}
\P\big( | \langle u, X \rangle | \leq t \big) \leq 2 t \enspace .
\end{equation}
\end{lemma}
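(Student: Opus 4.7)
The plan is to reduce the $d$-dimensional claim to a sharp one-dimensional density bound for centered log-concave distributions, and then apply that bound directly.

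First I would note that the random variable $Y := \langle u, X\rangle$ has a log-concave density $f_Y$ on $\R$: this follows from the Prékopa--Leindler inequality, which ensures that every linear marginal of a log-concave probability measure on $\R^d$ is itself log-concave (see, \eg, \cite{saumard2014review}). Moreover, by isotropy of $X$, one has $\E[Y] = \langle u, \E X\rangle = 0$ and $\E[Y^2] = u^\top \E[XX^\top]\,u = \|u\|^2 = 1$, so $f_Y$ is centered with variance $1$.

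Second, I would invoke the classical sharp bound: for every log-concave probability density $f$ on $\R$ with variance $\sigma^2$, one has $\|f\|_\infty\,\sigma \leq 1$, with equality attained by the exponential density shifted to have mean $0$ (\ie $x\mapsto e^{-(x+1)}\mathbf{1}_{\{x\geq -1\}}$). This is a standard fact going back to Hensley; see, \eg, \cite{saumard2014review}. Applying it to $f_Y$ with $\sigma = 1$ yields $\|f_Y\|_\infty \leq 1$, whence
\begin{equation*}
  \P\bigl(|\langle u, X\rangle|\leq t\bigr)
  = \int_{-t}^{t} f_Y(y)\,\di y
  \leq 2t\,\|f_Y\|_\infty
  \leq 2t,
\end{equation*}
which is the desired inequality.

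The only nontrivial step is the one-dimensional density bound. The idea of its proof is to translate the mode $m$ of $f$ to the origin, set $M := f(0) = \|f\|_\infty$, and use the one-sided right/left log-derivatives of $f$ at $0$ to pin $f$ between two ``exponential tents'' of height $M$ by convexity of $-\log f$; optimizing the variance over such tents then identifies the extremizers as shifted exponentials and yields the sharp inequality $M\sigma\leq 1$. I would cite this classical result rather than reprove it in detail; once it is granted, the rest of the argument is immediate and the main obstacle has been handled.
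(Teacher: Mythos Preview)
Your proof is correct and follows essentially the same route as the paper: reduce to the one-dimensional marginal $Y=\innerp{u}{X}$, observe it is centered, unit-variance and log-concave, and invoke the sharp bound $\|f_Y\|_\infty\le 1$ for such densities to conclude. The only difference is bibliographic: the paper cites \cite[Proposition~B.2]{bobkov2019onedimensional} for the density bound, while you attribute it to Hensley via \cite{saumard2014review}; the substance is identical.
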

In other words, $X$ satisfies Assumption~\ref{ass:small-ball} with constant $c_{1} = 2$,
for all $u\in S^{d-1}$ and $\eta > 0$.

\begin{proof}
  The random variable $\innerp{u}{X}$ is log-concave since the random vector $X$ is, and additionally $\E [\innerp{u}{X}] = 0$ and $\E [\innerp{u}{X}^2] = 1$.
  It then follows from \cite[Proposition~B.2]{bobkov2019onedimensional} that $\innerp{u}{X}$ admits a density $f_u$ that is upper-bounded by $1$ on $\R$, which proves~\eqref{eq:mass-at-0}.
\end{proof}

We now show that the two-dimensional margin condition is satisfied at all scales and in every direction.
Note that the case of constant scales follows from the sub-exponential tails, by Proposition~\ref{prop:regularity-constant-scale}.
For small scales, the proof uses the fact that centered and isotropic low-dimensional log-concave densities are lower-bounded on a neighborhood of the origin.

\begin{fact}[\cite{lovasz2007geometry}, Theorem~5.14 (a)]
  \label{fac:two-dim-mass-at-0}
  There exist constants $0 < c_0, c_{1} \leq 1$ such that any isotropic and centered log-concave distribution on $\R^2$ admits a density $f$ such that $f(z) \geq c_0$ for any $z \in [-c_1, c_1]^2$.
\end{fact}

\begin{lemma}
  There exist constants $c \geq 1$ and $\eps \in (0, 1)$ such that 
  for any $d \geq 1$, any centered isotropic log-concave random vector $X$ in $\R^d$, any $\eta \in (0, \eps)$ and $u, v \in S^{d-1}$ with $\innerp{u}{v} \geq 0$, one has
  \begin{equation}
    \label{eq:two-dim-log-conc}
    \P\Big( \ainnerp{u}{X} \leq \eta ,  \, \ainnerp{v}{X}
    \geq \frac{1}{c} \max\big\{\eta, \|u - v\|\big\}\Big)
    \geq \frac{\eta}{c} \, .
  \end{equation}
\end{lemma}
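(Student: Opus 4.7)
The proof reduces to a two-dimensional problem via projection onto $\mathrm{span}(u, v)$, then exploits the uniform lower bound on centered isotropic log-concave densities on $\R^2$ (Fact~\ref{fac:two-dim-mass-at-0}). Log-concavity is preserved under linear marginalization by Prékopa's theorem, and isotropy/centeredness pass through orthonormal projections, so the 2D projection inherits all relevant properties.

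By symmetry, and using the equivalent formulation in Remark~\ref{rem:equivalent-two-dim} (which removes the dependence on the sign of $\langle u, v\rangle$ by replacing $\|u-v\|$ by $\sqrt{1-\langle u, v\rangle^2}$), it suffices to treat $\langle u, v\rangle \geq 0$. Let $w \in S^{d-1}$ be the unit vector in $\mathrm{span}(u, v)$ orthogonal to $u$ with $\langle v, w\rangle \geq 0$, and write $v = \alpha u + \beta w$ with $\alpha = \langle u, v\rangle \in [0,1]$ and $\beta = \sqrt{1-\alpha^2}$. A direct computation gives $\beta \leq \|u-v\| \leq \sqrt{2}\beta$. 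Set $Y = (\langle u, X\rangle, \langle w, X\rangle) \in \R^2$; by the discussion above, $Y$ is centered, isotropic and log-concave, so Fact~\ref{fac:two-dim-mass-at-0} provides universal $c_0, \eps_0 \in (0,1)$ with $f_Y(a,b) \geq c_0$ on $[-\eps_0, \eps_0]^2$.

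The remaining task is to exhibit, for each admissible $(\eta, \beta)$ with $\eta \leq \eps := \eps_0/8$, a rectangle $R \subset [-\eps_0, \eps_0]^2$ such that $(a, b) \in R$ implies $|a| \leq \eta$ and $|\alpha a + \beta b| \geq \max(\eta, \|u-v\|)/C$, with Lebesgue measure $|R| \geq \eta/(c_0 C)$. I would split on the size of $\beta$:

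\textit{Case A} ($\beta \leq 4\eta/\eps_0$, so $\beta \leq 1/2$ and $\alpha \geq \sqrt{3}/2$): take $R = [\eta/2, \eta] \times J$ with $J = [-\min(\eps_0, \eta/(4\beta)), \min(\eps_0, \eta/(4\beta))]$. Then $\alpha a + \beta b \geq \sqrt{3}\eta/4 - \eta/4 = (\sqrt{3}-1)\eta/4$, and since $M := \max(\eta, \|u-v\|) \leq 4\sqrt{2}\eta/\eps_0$, the ratio $|\alpha a + \beta b|/M$ is bounded below by a universal constant; one checks $|R| \gtrsim \eta\eps_0$ in both sub-regimes $\beta \lessgtr \eta/(4\eps_0)$.

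\textit{Case B} ($\beta > 4\eta/\eps_0$): take $R = [-\eta, \eta] \times [\eps_0/2, \eps_0]$. Then $\alpha a + \beta b \geq -\eta + \beta\eps_0/2 \geq \beta\eps_0/4$, which is $\geq \|u-v\|\eps_0/(4\sqrt{2}) = M\eps_0/(4\sqrt{2})$; the measure is exactly $\eta\eps_0$.

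In either case, $\P(Y \in R) \geq c_0 |R| \geq \eta/C$ for a universal $C$ depending only on $c_0$ and $\eps_0$, which yields the claim.

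The main technical subtlety is the intermediate regime where $\beta$ is comparable to $\eta$: the rectangle must be chosen narrow enough in the $b$-direction that $\beta|b|$ cannot overwhelm $\alpha a \gtrsim \eta$, yet wide enough for the area to remain $\gtrsim \eta$. This is why the height of $J$ in Case A is capped by both $\eps_0$ (to stay in the density-lower-bound region) and $\eta/(4\beta)$ (to preserve the sign of $\alpha a + \beta b$), and the constants need to be tracked through both regimes simultaneously.
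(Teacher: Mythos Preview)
Your proof is correct and follows essentially the same approach as the paper: both reduce to a two-dimensional projection, invoke Fact~\ref{fac:two-dim-mass-at-0} to dominate the density from below by a constant on a square, and then split into two regimes according to the size of $\beta = \sin\phi$ relative to $\eta$. The only difference is stylistic: the paper transfers the problem to uniform variables $U,W$ on $[-\eps,\eps]$ and argues via conditional probabilities (e.g.\ $\P(|V|\le\eta/c\mid W)$ and $\P(|W-t|\ge\eps/2)\ge1/2$), whereas you construct explicit rectangles inside $[-\eps_0,\eps_0]^2$ and compute their Lebesgue measure directly; both computations are straightforward and equivalent in spirit.
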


\begin{proof}
  Fix $u, v \in S^{d-1}$ with $\innerp{u}{v} \geq 0$.
  By Remark~\ref{rem:equivalent-two-dim}, it suffices to establish the condition~\eqref{eq:twodim-marginals-comp} in terms of $\alpha = \sqrt{1 - \innerp{u}{v}^2}$ instead of $\norm{u - v}$.
  First, let us write $v = \innerp{u}{v} u + \sqrt{1 - \innerp{u}{v}^2} \, w = \sqrt{1 - \alpha^2} \, u + \alpha w$ for some $w \in S^{d-1}$ with $\innerp{u}{w} =0$.
  We therefore have to prove that
  \begin{equation}
    \label{eq:proof-twodim-log-conc-u-w}
    \P \Big( \ainnerp{u}{X} \leq \eta , \,
    \abs[\big]{\sqrt{1-\alpha^2} \innerp{u}{X} + \alpha \innerp{w}{X}} \geq \frac{\max \set{\eta, \alpha}}{c} \Big)
    \geq \frac{\eta}{c}
  \end{equation}
  for all $\eta \in (0, \eps)$, for some absolute constants $\eps \in (0, 1)$ and $c \geq 1$ to be specified.

  Now, note that the random vector $(\innerp{u}{X}, \innerp{w}{X})$ on $\R^2$ is log-concave, as a linear image of the log-concave random vector $X$.
  In addition, it is centered and isotropic since $X$ is.
  Hence, by Fact~\ref{fac:two-dim-mass-at-0}, it admits a density $f$ on $\R^2$ which is lower-bounded by $c_0$ on $[-c_1, c_1]^2$.
  Thus
  \begin{align}
    &\P \Big( \ainnerp{u}{X} \leq \eta , \,
    \abs[\big]{\sqrt{1-\alpha^2} \innerp{u}{X} + \alpha \innerp{w}{X}} \geq \frac{\max \set{\eta, \alpha}}{c} \Big) \nonumber \\
    &\geq c_0 \int_{[-c_1, c_1]^2} \indic[\Big]{\abs{x_1} \leq \eta, \, \abs[\big]{\sqrt{1-\alpha^2}\, x_1 + \alpha x_2} \geq \frac{\max \set{\eta, \alpha}}{c}} \di x_1 \di x_2 \nonumber \\
    &= 4 c_0 c_1^2 \, \P \Big( \abs{X_1} \leq \eta , \,
      \abs[\big]{\sqrt{1-\alpha^2}\, X_1 + \alpha X_2} \geq \frac{\max \set{\eta, \alpha}}{c} \Big)
      \, ,
      \label{eq:proof-twodim-log-conc-reduc-unif}
  \end{align}
  where $X_1, X_2$ are independent uniform random variables on $[-c_1, c_1]$.
  We now let $\eps = c_1$ and $\eta \in (0, \eps)$, and proceed to lower-bounding the right-hand side of~\eqref{eq:proof-twodim-log-conc-reduc-unif}.

  First, assume that $\alpha \leq \eta/(4 c_1)$.
  Let $E = \set{\eta/2 \leq \abs{X_1} \leq \eta}$.
  Observe that since $\eta \leq c_1$, one has
  \begin{equation*}
    \P (E)
    = \frac{\eta}{2 c_1}
    \, .
  \end{equation*}
  In addition, under $E$, one has $\abs{X_1} \leq \eta$ and (using that $\sqrt{1-\alpha^2} \geq 3/4$ as $\alpha \leq 1/4$)
  \begin{align*}
    \abs[\big]{\sqrt{1-\alpha^2}\, X_1 + \alpha X_2}
    &\geq \sqrt{1-\alpha^2} \abs{X_1} - \alpha \abs{X_2}
      \geq \frac{3}{4} \times \frac{\eta}{2} - \alpha c_1
      = \frac{3 \eta}{8} - \frac{\eta}{4}
      = \frac{\eta}{8}
      \, .
  \end{align*}
  Hence, we have in this case that 
  \begin{equation}
    \label{eq:proof-log-conc-eta}
    \P \Big( \abs{X_1} \leq \eta, \, \abs[\big]{\sqrt{1-\alpha^2}\, X_1 + \alpha X_2} \geq \frac{c_1 \max \set{\eta, \alpha}}{8} \Big)
    \geq \frac{\eta}{2 c_1}
    \, .
  \end{equation}

  Now, assume that $\alpha > \eta / (4 c_1)$.
  We have, for any $c > 0$,
  \begin{align*}
    &\Probab[\Big]{\abs{X_1} \leq \eta, \, \abs[\big]{\sqrt{1-\alpha^2}\, X_1 + \alpha X_2} \geq \frac{\alpha}{c}}
      = \Expect[\Big]{\indic{\abs{X_1} \leq \eta} \cdot \Probab[\Big]{\abs[\big]{\sqrt{1-\alpha^2}\, X_1 + \alpha X_2} \geq \frac{\alpha}{c} \Big| X_1} }
      .
  \end{align*}
  In addition, since $X_1, X_2$ are independent and $X_2$ has density bounded by $1/(2 c_1)$, we have 
  \begin{equation*}
    \Probab[\Big]{\abs[\big]{\sqrt{1-\alpha^2}\, X_1 + \alpha X_2} < \frac{c_1 \alpha}{2} \Big| X_1}
    \leq \frac{1}{2 c_1} \times c_1
    = \frac{1}{2}
    \, .
  \end{equation*}
  Therefore
  \begin{equation*}
    \Probab[\Big]{\abs{X_1} \leq \eta, \, \abs[\big]{\sqrt{1-\alpha^2}\, X_1 + \alpha X_2} \geq \frac{c_1 \alpha}{2}}
    \geq \frac{1}{2} \Probab{\abs{X_1} \leq \eta}
    = \frac{\eta}{2 c_1}
    \, .
  \end{equation*}
  Note also that in this case, $\alpha = \max\set{\alpha, \eta/(4 c_1)} \geq \max\set{\alpha, \eta}/4$, and therefore~\eqref{eq:proof-log-conc-eta} also holds.

  Hence, inequality~\eqref{eq:proof-log-conc-eta} holds for all values of $\alpha$, and plugging it into~\eqref{eq:proof-twodim-log-conc-reduc-unif} concludes the proof.
\end{proof}

\subsection{Proof of Proposition~\ref{prop:regularity-iid} (regularity for i.i.d.~coordinates)%
   }
\label{sec:proof-regularity-iid}

This section contains the proofs of the results from Section~\ref{sec:regul-prod-meas}.
Specifically, we show that random vectors $X$ with \iid sub-exponential coordinates (Assumption~\ref{ass:independent-coordinates}) satisfy Assumptions~\ref{ass:sub-exponential},~\ref{ass:small-ball} and~\ref{ass:twodim-marginals} down to a scale $\eta \asymp 1/\sqrt{d}$ in the ``diffuse'' direction $u^* = (1/\sqrt{d}, \dots, 1/\sqrt{d})$.

\subsubsection*{Assumption~\ref{ass:sub-exponential}}

We first recall the standard fact that a random vector with independent sub-exponential coordinates is itself sub-exponential.

\begin{lemma}
  \label{lem:sub-exponential-indep}
  If $X_1, \dots, X_d$ are independent centered real random variables with $\norm{X_j}_{\psi_1} \leq K$ for $1 \leq j \leq d$, for every $v = (v_j)_{1 \leq j \leq d} \in S^{d-1}$, letting $X = (X_j)_{1 \leq j \leq d}$ one has $\norm{\langle v, X\rangle}_{\psi_1} \leq 4 K$.
\end{lemma}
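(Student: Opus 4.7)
The plan is to reduce the $\psi_1$ claim for the linear combination $\langle v, X \rangle$ to an MGF estimate via the characterization of the $\psi_1$ norm given in Appendix~\ref{sec:tails-rv} (Definition~\ref{def:psi-alpha} and Lemma~\ref{lem:sub-gamma-exponential}). Recall that a centered random variable $Y$ with $\|Y\|_{\psi_1} \leq K$ admits a standard sub-exponential MGF bound of the form $\E[\exp(\lambda Y)] \leq \exp(c_1 \lambda^2 K^2)$ for all $|\lambda| \leq c_2/K$, where $c_1, c_2$ are absolute constants determined by the normalization used in Definition~\ref{def:psi-alpha}. Conversely, such an MGF bound implies a sub-gamma tail, from which one recovers a bound on $\|Y\|_{\psi_1}$ through Point~6 (or the analogous point) of Lemma~\ref{lem:sub-gamma-exponential}.

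First, I would fix $v = (v_j)_{1\leq j \leq d} \in S^{d-1}$ and observe that $|v_j| \leq 1$ for every $j$. By independence of the coordinates $X_1, \dots, X_d$, one has
\begin{equation*}
\E\bigl[\exp(\lambda \langle v, X\rangle)\bigr]
= \prod_{j=1}^d \E\bigl[\exp(\lambda v_j X_j)\bigr] \, .
\end{equation*}
For $|\lambda| \leq c_2/K$, one has $|\lambda v_j| \leq c_2/K$ for all $j$, so the centered sub-exponential MGF bound applies coordinate-wise. Taking the product and using that $\sum_j v_j^2 = 1$ yields
\begin{equation*}
\E\bigl[\exp(\lambda \langle v, X\rangle)\bigr]
\leq \exp\Bigl(c_1 \lambda^2 K^2 \sum_{j=1}^d v_j^2\Bigr)
= \exp(c_1 \lambda^2 K^2) \, ,
\end{equation*}
valid in the same range $|\lambda| \leq c_2/K$. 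This is exactly a centered sub-gamma bound with variance proxy of order $K^2$ and scale proxy of order $K$, independent of $v$.

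Finally, I would convert this sub-gamma MGF bound back into a $\psi_1$ bound: the sub-gamma property yields exponential tail bounds of the form $\P(|\langle v, X\rangle| \geq t) \leq 2 \exp(-c t/K)$ for $t$ in an appropriate range, and Definition~\ref{def:psi-alpha} then produces a bound on $\|\langle v, X\rangle\|_{\psi_1}$ in terms of $K$. The main obstacle is purely one of bookkeeping: one has to track the numerical constants through the translation between the $\psi_1$ norm and the sub-exponential MGF bound in both directions, in order to obtain exactly the factor $4$ claimed in the statement rather than some larger absolute multiple of $K$. This is a routine verification using the explicit constants in Lemma~\ref{lem:sub-gamma-exponential}.
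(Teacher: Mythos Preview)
Your proposal is correct and follows essentially the same route as the paper: convert the $\psi_1$ bound on each $X_j$ into a sub-gamma MGF bound (Point~6 of Lemma~\ref{lem:sub-gamma-exponential}, last statement), multiply the MGFs using independence (Point~3), and convert back to a $\psi_1$ bound (Point~5). The paper makes the constant-tracking explicit by noting that each $X_j$ is $(K^2/2, K/2)$-sub-gamma, hence $\langle v, X\rangle$ is $(K^2/2, K/2)$-sub-gamma, and Point~5 then gives $\|\langle v, X\rangle\|_{\psi_1} \leq 2\sqrt[3]{2e}\, K \leq 4K$.
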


\begin{proof}
  By the sixth point of Lemma~\ref{lem:sub-gamma-exponential}, $X_j$ is $(K^2/2, K/2)$-sub-gamma for every $j$.
  By independence and the third point of the same lemma, $\langle v, X\rangle = \sum_{j=1}^d v_j X_j$ is sub-gamma, with parameters $K^2/2 \cdot \sum_{j=1}^d v_j^2 = K^2/2$ and $K/2 \cdot \max_{1\leq j \leq d} |v_j| \leq K/2$.
  Since the same also holds for $-\langle v, X\rangle = \langle -v, X\rangle$, the fifth point of Lemma~\ref{lem:sub-gamma-exponential} implies that $\norm{\innerp{v}{X}}_{\psi_1} \leq 2 \sqrt[3]{2e} \max (K/\sqrt{2}, 2\cdot K/2) = 2 \sqrt[3]{2 e} K \leq 4 K$.
\end{proof}

\subsubsection*{Assumption~\ref{ass:small-ball}: proof of Lemma~\ref{lem:onedim-indep}}

The second condition %
on one-dimensional marginals holds because, if $u \in S^{d-1}$ is sufficiently ``diffuse'', then the distribution of $\langle u, X\rangle$ is close to that of a standard Gaussian variable.
This fact follows
from the Berry-Esseen theorem (see, \eg, \cite{feller1968berry}); we will use the version with small numerical constants from~\cite{shevtsova2010improvement,tyurin2012refinement}.

\begin{lemma}[\cite{tyurin2012refinement}, Theorem~1]
  \label{lem:berry-esseen-1d}
  Let $Z_1, \dots, Z_d$ be independent centered random variables with $\sum_{j=1}^d \E [Z_j^2] = 1$.
  Let $Z = \sum_{j=1}^d Z_j$ and $G \sim \gaussdist (0, 1)$.
  Then, for every $t \in \R$, one has
  \begin{equation}
    \label{eq:berry-esseen-1d}
    \big| \P (Z \leq t) - \P (G \leq t) \big|
    \leq 0.56 \cdot \sum_{j=1}^d \E \big[ |Z_j|^3 \big]
    \, .
  \end{equation}
\end{lemma}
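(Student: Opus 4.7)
My plan is to follow the classical Berry--Esseen strategy via Esseen's smoothing inequality, with the understanding that obtaining the sharp constant $0.56$ requires the refined analysis from~\cite{tyurin2012refinement}. Let $F$ denote the distribution function of $Z$ and $G$ that of the standard Gaussian, and let $\phi_F$, $\phi_G$ denote their characteristic functions. Denote $\sigma_j^2 = \E[Z_j^2]$ and $\beta_j = \E[|Z_j|^3]$, so that $\sum_{j=1}^d \sigma_j^2 = 1$ and $\sigma_j \leq \beta_j^{1/3}$.

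The first step is Esseen's smoothing lemma: for every $T > 0$,
\begin{equation*}
\sup_{t \in \R} |F(t) - G(t)|
\leq \frac{1}{\pi} \int_{-T}^{T} \frac{|\phi_F(s) - \phi_G(s)|}{|s|} \, \di s + \frac{24}{\pi T \sqrt{2\pi}},
\end{equation*}
where the second term comes from $\sup G' = 1/\sqrt{2\pi}$. This reduces the problem to estimating the characteristic function difference on a bounded interval and choosing $T$ optimally.

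The second step factorizes $\phi_F(s) = \prod_{j=1}^d \phi_j(s)$ and $\phi_G(s) = \prod_{j=1}^d e^{-\sigma_j^2 s^2 / 2}$, and uses the telescoping bound
\begin{equation*}
|\phi_F(s) - \phi_G(s)|
\leq \sum_{j=1}^d |\phi_j(s) - e^{-\sigma_j^2 s^2/2}| \prod_{k \neq j} \max\bigl(|\phi_k(s)|, e^{-\sigma_k^2 s^2/2}\bigr).
\end{equation*}
For each $j$, a Taylor expansion at $0$ gives $\phi_j(s) = 1 - \sigma_j^2 s^2/2 + r_j(s)$ with $|r_j(s)| \leq \beta_j |s|^3/6$, and similarly $e^{-\sigma_j^2 s^2/2} = 1 - \sigma_j^2 s^2/2 + \widetilde r_j(s)$ with an explicit remainder bound; combining these yields $|\phi_j(s) - e^{-\sigma_j^2 s^2/2}| \leq C \beta_j |s|^3$ for $|s|$ not too large. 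One also needs a uniform bound $|\phi_j(s)|, e^{-\sigma_j^2 s^2/2} \leq e^{-c \sigma_j^2 s^2}$ valid on a regime $|s| \leq c'/\beta_j^{1/3}$, so that the product over $k \neq j$ is controlled by $e^{-c s^2 \sum_{k\neq j} \sigma_k^2} \leq e^{-c s^2 (1 - \sigma_j^2)}$. Integrating then gives the bound of the form $C \sum_j \beta_j$ on the characteristic-function integral.

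The third step is to optimize $T$. The natural choice is $T \asymp 1/\sum_j \beta_j$, which balances the two contributions in Esseen's inequality and produces a bound of the form $(C_1 + C_2) \sum_j \beta_j$. The main obstacle---and the reason the result is attributed to~\cite{tyurin2012refinement} rather than reproved here---is squeezing the universal constant down to $0.56$. This requires several refinements over the naive argument above: (i) splitting the integration interval $[-T,T]$ into regions where one uses either the cubic Taylor estimate or the Gaussian-type bound directly, (ii) using the sharper form of Esseen's smoothing inequality with an improved constant in the second term, and (iii) treating individual summands with $\sigma_j$ close to $1$ separately via a Lindeberg-type swap. These optimizations are entirely contained in~\cite{tyurin2012refinement} (building on earlier refinements in~\cite{shevtsova2010improvement}), so I would simply invoke that result as a black box.
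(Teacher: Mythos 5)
Your proposal correctly identifies that this is an imported result: the paper simply cites \cite{tyurin2012refinement} (Theorem~1) as the source of the explicit constant $0.56$ in the non-identically-distributed Berry--Esseen bound, with no proof given. Your sketch of Esseen's smoothing inequality, the telescoping factorization of characteristic functions, and the optimization over $T$ is a faithful account of the standard argument, and your decision to defer to the cited work for the sharp constant matches exactly what the paper does.
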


We now proceed with the proof of Lemma~\ref{lem:onedim-indep}, which states that Assumption~\ref{ass:small-ball} holds.

\begin{proof}[Proof of Lemma~\ref{lem:onedim-indep}]
  First, applying Lemma~\ref{lem:berry-esseen-1d} to $-Z_1, \dots, -Z_d$ %
  and $-s$ gives a similar bound as~\eqref{eq:berry-esseen-1d} for $\P (Z < s)$.
  After taking differences we deduce that, under the assumptions of Lemma~\ref{lem:berry-esseen-1d}, for every $s, t \in \R$ with $s \leq t$, one has
  \begin{equation}
    \label{eq:berry-esseen-1d-twosided}
    \big| \P (s \leq Z \leq t) - \P (s \leq G \leq t) \big|
    \leq 1.12 \cdot \sum_{j=1}^d \E \big[ |Z_j|^3 \big]
    \, .
  \end{equation}
  We apply this inequality to $t \in [K^3 \norm{u}_3^3, 1]$, $s = -t$ and $Z_j = u_j X_j$, so that $\E [Z_j] = 0$, $\sum_{j=1}^d \E [Z_j^2] = \sum_{j=1}^d u_j^2 = 1$, and $\E [ |Z_j|^3 ] = \abs{u_j}^3 \norm{X_j}_3^3 \leq \abs{u_j}^3 (3 K / 2e)^3$.
  As $Z %
  = \langle u, X\rangle$, 
  \begin{equation}
    \label{eq:onedim-indep-berry}
    \big| \P (\abs{\innerp{u}{X}} \leq t) - \P (|G| \leq t) \big| 
    \leq 1.12 \cdot \sum_{j=1}^d \Big( \frac{3 K}{2 e} \Big)^3 \abs{u_j}^3 
    \leq \frac{K^3}{5} \norm{u}_3^3
      \, ,
  \end{equation}
  where we used that $1.12 \times (\frac{3}{2e})^3 \leq 1/5$.
  Now, since the density of $G$ is between $e^{-1/2}/\sqrt{2\pi}$ and $1/\sqrt{2\pi}$ on $[-1, 1]$, one has
  \begin{equation*}
    \frac{2t}{\sqrt{2\pi e}} \leq \P (|G| \leq t) \leq \frac{2 t}{\sqrt{2\pi}}
    \, .
  \end{equation*}
  Plugging these inequalities into~\eqref{eq:onedim-indep-berry} and using that $K^3 \norm{u}_3^3 \leq t$ gives
  \begin{equation*}
    \Big( \frac{2}{\sqrt{2\pi e}} - \frac{1}{5} \Big) t
    \leq \P (|\innerp{u}{X}| \leq t)
    \leq \Big( \sqrt{\frac{2}{\pi}} + \frac{1}{5} \Big) t
    \, ,
  \end{equation*}
  which implies~\eqref{eq:onedim-indep} by further bounding numerical constants.
\end{proof}

\subsubsection*{Assumption~\ref{ass:twodim-marginals}: proof of Lemma~\ref{lem:2d-margin-indep}}
We now establish the two-dimensional margin condition.

\begin{proof}[Proof of Lemma~\ref{lem:2d-margin-indep}]
  As discussed in Section~\ref{sec:regul-prod-meas}, the idea of the proof is to perturb the vector $X$ by a random permutation of its coordinates, and use the fact that such transformations do not affect the distribution of $X$ nor the value of $\innerp{u^*}{X}$, but induce some variability in the quantity $\innerp{v}{X}$.
  
  \parag{Perturbation by random permutations}
Let $\sigma$ be a permutation of $\set{1, \dots, d}$.
For $x = (x_1, \dots, x_d) \in \R^d$, we let $x^\sigma = (x_{\sigma (1)}, \dots, x_{\sigma (d)})$ denote the vector obtained by permuting the coordinates of $x$ by $\sigma$.
First, since $X_1, \dots, X_d$ are \iid, the vector $X^\sigma$ has the same distribution as $X$.
In addition, one has
\begin{equation*}
  \innerp{u^*}{X^\sigma}
  = \frac{1}{\sqrt{d}} \sum_{i=1}^d X_{\sigma (i)}
  = \frac{1}{\sqrt{d}} \sum_{i=1}^d X_{i}
  = \innerp{u^*}{X}
  \, .
\end{equation*}
It follows that, for any $v \in S^{d-1}$ and $s > 0$,
\begin{equation*}
  \P \big( \ainnerp{u^*}{X} \leq \eta, \, \ainnerp{v}{X} \geq s \big)
  = \P \big( \ainnerp{u^*}{X} \leq \eta, \, \ainnerp{v}{X^\sigma} \geq s \big)
  \, .
\end{equation*}
From now on, we let $\sigma$ denote a random permutation, drawn uniformly from the set $\Sym_d$ of all permutations of $\set{1, \dots, d}$ and independent of $X$.
We let $\P_\sigma$ and $\E_\sigma$ respectively denote the probability and expectation with respect to $\sigma$, conditionally on $X$.
From the equality above applied to any $\sigma' \in \Sym_d$, one has
\begin{align}
  \P \big( \ainnerp{u^*}{X} \leq \eta, \, \ainnerp{v}{X} \geq s \big)
  &= \frac{1}{d!} \sum_{\sigma' \in \Sym_d} \P \big( \ainnerp{u^*}{X} \leq \eta, \, \ainnerp{v}{X^{\sigma'}} \geq s \big) \nonumber \\
  &= \E \big[ \P \big( \ainnerp{u^*}{X} \leq \eta, \, \ainnerp{v}{X^{\sigma}} \geq s \, | \, \sigma \big) \big] \nonumber \\
  &= \E \big[ \P_\sigma \big( \ainnerp{u^*}{X} \leq \eta, \, \ainnerp{v}{X^{\sigma}} \geq s \big)  \big] \nonumber \\
  &= \E \big[ \bm 1 \{ \ainnerp{u^*}{X} \leq \eta \} \P_\sigma \big( \ainnerp{v}{X^{\sigma}} \geq s \big) \big]
  \label{eq:conditioning-exchangeable}
    \, .
\end{align}
Hence,
in order to lower bound the left-hand side of~\eqref{eq:conditioning-exchangeable}, it suffices to lower bound $\P_\sigma \big( \ainnerp{v}{X^{\sigma}} \geq s \big)$
when $X$ satisfies $\ainnerp{u^*}{X} \leq \eta$ (we will actually require additional symmetric conditions on $X$, but we omit them here for simplicity).
In other words, we need to show that for such values of $X$, the fraction of permutations $\sigma \in \Sym_d$ such that $\ainnerp{v}{X^{\sigma}} \geq s$ is lower-bounded.

We will achieve this by resorting to the Paley-Zygmund inequality (\eg, \cite[eq.~(6.15) p.~181]{talagrand2021upper}), which asserts that for any non-negative random variable $Z$ with $0 < \E [Z^2] < +\infty$, one has
\begin{equation}
  \label{eq:paley-zygmund}
  \P \Big( Z \geq \frac{1}{2} \E [Z] \Big)
  \geq \frac{1}{4} \frac{\E [Z]^2}{\E [Z^2]}
  \, .
\end{equation}
Applying this inequality to the random variable $Z = \innerp{v}{X^\sigma}^2$ conditionally on $X$ gives
\begin{equation}
  \label{eq:paley-zygmund-permutation}
  \P_\sigma \Big( \ainnerp{v}{X^\sigma} \geq \frac{1}{\sqrt{2}} \E_\sigma [\innerp{v}{X^\sigma}^2]^{1/2} \Big)
  \geq \frac{1}{4} \frac{\E_\sigma [\innerp{v}{X^\sigma}^2]^2}{\E_\sigma [\innerp{v}{X^\sigma}^4]}
  \, .
\end{equation}
We are therefore led to bound $\E_\sigma [\innerp{v}{X^\sigma}^2]^{1/2}$ from below and $\E_\sigma [\innerp{v}{X^\sigma}^4]^{1/4}$ from above, ideally to conclude that these two quantities are
both of the order of the value from Lemma~\ref{lem:2d-margin-indep}.
The advantage of this approach is that it reduces to evaluating expectations of polynomials of the variables $X_{\sigma (i)}, 1 \leq i \leq d$ under the uniform distribution on $\Sym_d$, which can be computed exactly.

\parag{Lower bound on the second moment}

Denote for $p \in \N$,
\begin{align*}
  \phi
  = \phi (X)
  = \innerp{u^*}{X}
  = \frac{1}{\sqrt{d}} \sum_{i=1}^d X_i \, ,
  \qquad
  \mu_p
  = \mu_p (X)
  = \frac{1}{d} \sum_{i=1}^d X_i^p
    \, .
\end{align*}
In particular, one has $|\mu_p| \leq \mu_4^{p/4}$ for $1 \leq p \leq 4$.
In the following, we assume that $X$ satisfies $\mu_2 (X) \geq 1/2$ and $|\phi (X)| \leq \eta \leq 1$.

Now, for any $v \in S^{d-1}$, 
\begin{equation*}
  \E_\sigma \big[ \innerp{v}{X^\sigma}^2 \big]
  = \E_\sigma \bigg[ \bigg( \sum_{i=1}^d v_i X_{\sigma (i)} \bigg)^2 \bigg]
  = \sum_{1 \leq i,j \leq d} v_i v_j \E_\sigma [X_{\sigma (i)} X_{\sigma (j)}]
  \, .
\end{equation*}
For $i=j$, since $\sigma (i)$ is uniformly distributed on $\set{1, \dots, d}$ one has
\begin{equation*}
  \E_{\sigma} [X_{\sigma (i)} X_{\sigma (j)}]
  = \E_{\sigma} [X_{\sigma (i)}^2 ]
  = \frac{1}{d} \sum_{k=1}^d X_{k}^2
  = \mu_2
  \, .
\end{equation*}
On the other hand, if $i \neq j$, then $(\sigma (i), \sigma (j))$ is distributed uniformly on pairs $(k,l)$ such that $k \neq l$, thus
\begin{equation*}
  \E_\sigma [X_{\sigma(i)} X_{\sigma (j)}]
  = \frac{1}{d(d-1)} \sum_{k \neq l} X_k X_l
  = \frac{1}{d (d-1)} \bigg\{ \bigg( \sum_{k=1}^d X_k \bigg)^2 - \sum_{k=1}^d X_k^2 \bigg\}
  = \frac{\phi^2 - \mu_2}{d-1}
  .
\end{equation*}
Combining the previous two equations, we get for any $i,j$ that
\begin{equation*}
  \E_\sigma [X_{\sigma(i)} X_{\sigma (j)}]
  = \frac{\phi^2 - \mu_2}{d-1} + \Big( \mu_2 + \frac{\mu_2 - \phi^2}{d-1} \Big) \indic{i= j}
  \, .
\end{equation*}
Hence,
\begin{align*}
  \E_\sigma \big[ \innerp{v}{X^\sigma}^2 \big]
  &= \frac{\phi^2 - \mu_2}{d-1} \bigg( \sum_{i=1}^d v_i \bigg)^2 + \Big( \mu_2 + \frac{\mu_2 - \phi^2}{d-1} \Big) \sum_{i=1}^d v_i^2 \\
  &= \big( \phi^2 - \mu_2 \big) \frac{d}{d-1} \innerp{u^*}{v}^2 + \mu_2 + \frac{\mu_2 - \phi^2}{d-1} \\
  &= \frac{d}{d-1} \Big[ \mu_2 (1 - \innerp{u^*}{v}^2) + \phi^2 \innerp{u^*}{v}^2 - \frac{\phi^2}{d} \Big] \\
  &\geq \mu_2 (1 - \innerp{u^*}{v}^2) + \phi^2 \innerp{u^*}{v}^2 - \frac{\phi^2}{d}
    \, .
\end{align*}
Recalling that $\mu_2 \geq 1/2$, that $|\phi| \leq \eta \leq 1$ and $d \geq 2025$, then
either $\innerp{u^*}{v}^2 \geq 1/4$ and
\begin{equation*}
  \mu_2 (1 - \innerp{u^*}{v}^2) + \phi^2 \innerp{u^*}{v}^2 - \frac{\phi^2}{d}
  \geq \frac{1 - \innerp{u^*}{v}^2}{2} + 0.97 \innerp{u^*}{v}^2 \phi^2
  \, ,
\end{equation*}
or $\innerp{u^*}{v}^2 < 1/4$ and then
\begin{equation*}
  \mu_2 (1 - \innerp{u^*}{v}^2) + \phi^2 \innerp{u^*}{v}^2 - \frac{\phi^2}{d}
  \geq \frac{3}{8} - \frac{1}{2025}
  \geq 0.37 \big[ 1 - \innerp{u^*}{v}^2 + \innerp{u^*}{v}^2 \phi^2 \big]
  \, .
\end{equation*}
Combining the previous inequalities, we get that in all cases 
\begin{equation}
  \label{eq:perm-lower-second-moment}
  \E_\sigma \big[ \innerp{v}{X^\sigma}^2 \big]
  \geq 0.37 \big[ 1 - \innerp{u^*}{v}^2 + \innerp{u^*}{v}^2 \phi^2 \big]
  \, .
\end{equation}

\parag{Upper bound on the fourth moment}

We now turn to the control the conditional fourth moment.
Let $v \in S^{d-1}$ such that $\innerp{u^*}{v} \geq 0$; we may write $v = \sqrt{1-\alpha^2} u^* + \alpha w$ where $\alpha = \sqrt{1-\innerp{u^*}{v}^2}$ and $w \in S^{d-1}$ is such that $\innerp{u^*}{w} = 0$.
We then have
\begin{align}
  \E_\sigma \big[ \innerp{v}{X^\sigma}^4 \big]
  &=\E_\sigma \big[ \big( \alpha \innerp{w}{X^\sigma} + \sqrt{1-\alpha^2} \innerp{u}{X^\sigma} \big)^4 \big] \nonumber \\
  &\leq 8 \, \E_\sigma \big[ \alpha^4 \innerp{w}{X^\sigma}^4 + (1-\alpha^2)^2 \innerp{u}{X^\sigma}^4 \big] \nonumber \\
  &= 8 \big\{ (1 - \innerp{u^*}{v}^2)^2 \, \E_\sigma \big[ \innerp{w}{X^\sigma}^4 \big] + \innerp{u^*}{v}^4 \phi^4 \big\}
    \label{eq:perm-decomp-fourth}
    \, .
\end{align}
In light of~\eqref{eq:perm-decomp-fourth}, it remains to show that $\E_\sigma [\innerp{w}{X^\sigma}^4] \lesssim_\kappa 1$.

We start by writing:
\begin{equation}
  \label{eq:fourth-moment-expansion}
  \E_\sigma \big[ \innerp{w}{X^\sigma}^4 \big]
  = \sum_{1 \leq i,j,k,l \leq d} w_i w_j w_k w_l \E [ X_{\sigma (i)} X_{\sigma (j)} X_{\sigma (k)} X_{\sigma (l)} ]
    \, .
\end{equation}
We abbreviate ``pairwise distinct'' (indices in $\set{1, \dots, d}$) by p.d., and denote for $i, j, k, l$ p.d.,
\begin{align*}
  \alpha_4
  &= \E_\sigma [X_{\sigma (i)}^4] \\
  \alpha_{31}
  &= \E_\sigma [ X_{\sigma (i)}^3 X_{\sigma (j)} ] \\
  \alpha_{22}
  &= \E_\sigma [ X_{\sigma (i)}^2 X_{\sigma (j)}^2 ] \\
  \alpha_{211}
  &= \E_\sigma [ X_{\sigma (i)}^2 X_{\sigma (j)} X_{\sigma (k)} ] \\
  \alpha_{1111}
  &= \E_\sigma [ X_{\sigma (i)} X_{\sigma (j)} X_{\sigma (k)} X_{\sigma (l)} ]
    \, ;
\end{align*}
these quantities are independent of $i,j,k,l$ p.d.~since $\sigma$ is distributed uniformly on the symmetric group, hence $(\sigma (i), \sigma (j), \sigma (k), \sigma (l))$ is distributed uniformly on the set of p.d.~indices.
Hence, collecting the terms in the right-hand side of~\eqref{eq:fourth-moment-expansion} depending on the distinct indices, we obtain
\begin{align}
  \E_\sigma \big[ \innerp{w}{X^\sigma}^4 \big]
  &= \bigg( \sum_{i} w_i^4 \bigg) \alpha_4 + 4 \bigg( \sum_{i,j \pd} w_i^3 w_j \bigg) \alpha_{31} + 3 \bigg( \sum_{i,j \pd} w_i^2 w_j^2 \bigg) \alpha_{22} + \nonumber \\
  & \qquad + 6 \bigg( \sum_{i,j,k \pd} w_i^2 w_j w_k \bigg) \alpha_{211} + \bigg( \sum_{i,j,k,l \pd} w_i w_j w_k w_l \bigg) \alpha_{1111}
    \, .
    \label{eq:fourth-moment-perm}
\end{align}

We control the sum in~\eqref{eq:fourth-moment-perm} by separately controlling the $\alpha_{\cdot}$ terms (that depend on $X$) and their coefficients depending on $w$.
The control of the former terms is simple, as we simply bound all these terms by the empirical fourth moment $\mu_4$:
for every $1 \leq r \leq 4$ and $\iota_1 \geq \dots \geq \iota_r \geq 1$ such that $\iota_1 + \dots + \iota_r = 4$, we have
\begin{equation}
  \label{eq:alpha-bound-fourth}
  |\alpha_{\iota_1, \dots, \iota_r} |
  \leq \mu_{4}
  \, .
\end{equation}
To show~\eqref{eq:alpha-bound-fourth}, first note that since $\sigma (1)$ is uniformly distributed in $\set{1, \dots, d}$, we have
\begin{equation*}
  \alpha_4
  = \E_{\sigma} [ X_{\sigma (1)}^4 ]
  = \frac{1}{d} \sum_{i=1}^d X_i^4
  = \mu_4
  \, .
\end{equation*}
Now for $\iota_1, \dots, \iota_r$ as above, Hölder's inequality (with $\iota_1/4 + \dots + \iota_r/4 = 1$) implies that
\begin{align*}
  |\alpha_{\iota_1, \dots, \iota_r}|
  &\leq \E_{\sigma} [ |X_{\sigma (1)}|^{\iota_1} \cdots |X_{\sigma (r)}|^{\iota_r} ] 
  = \E_{\sigma} [ (X_{\sigma (1)}^4)^{\iota_1/4} \cdots (X_{\sigma (r)}^4)^{\iota_r/4} ] \\
  &\leq \E_\sigma [ X_{\sigma (1)}^{4} ]^{\iota_1/4} \cdots \E_\sigma [ X_{\sigma (r)}^{4} ]^{\iota_r/4}
    = \mu_4
    \, .
\end{align*}

We now turn to the control of the coefficients in~\eqref{eq:fourth-moment-perm} that depend on $w$.
Although one could in principle use the same method as above, namely Hölder's inequality combined with the fact that $\norm{w}_4^4 \leq \norm{w}_2^4 = 1$, this would result in a highly suboptimal bound in $O(d^2)$.
In order to improve this bound, we exploit the additional information that $w$ is orthogonal to $u^*$, namely
\begin{equation*}
  0
  = \innerp{u^*}{w}
  = \frac{1}{\sqrt{d}} \sum_{i=1}^d w_i
  \, ,
\end{equation*}
so that $\sum_{i} w_i = 0$.
We will therefore decompose the sums in~\eqref{eq:fourth-moment-perm} by making the quantities $\sum_{i} w_i = 0$ and $\sum_{i} w_i^2 = 1$ appear.

For the first term, we have
\begin{equation*}
  0
  \leq \sum_{i} w_i^4
  \leq \sum_{i} w_i^2
  = 1
  \, .
\end{equation*}
For the second term, we write
\begin{align*}
  \sum_{i,j \, \text{p.d.}} w_i^3 w_j
  &= \Big( \sum_{i} w_i^3 \Big) \Big( \sum_{j} w_j \Big) - \sum_{i} w_i^4
    = - \sum_{i} w_i^4 \in [-1, 0]
    \, .
\end{align*}
For the third term, 
\begin{align*}
  \sum_{i, j \, \text{p.d.}} w_i^2 w_j^2
  &= \Big( \sum_{i} w_i^2 \Big)^2 - \sum_{i} w_i^4 
    = 1 - \sum_{i} w_i^4 \in [0, 1]
    \, .
\end{align*}
For the fourth term, by distinguishing the different possible configurations of $i,j,k \in \set{1, \dots, d}$,
\begin{equation}
  \label{eq:decomp-211}
  \sum_{i,j,k \, \text{p.d.}} w_i^2 w_j w_k
  = \Big( \sum_{i} w_i^2 \Big) \Big( \sum_{j} w_j \Big) \Big( \sum_{k} w_k \Big) - \sum_{i} w_i^4 - \sum_{i,j \, \text{p.d.}} w_i^2 w_j^2 - 2 \sum_{i,j \, \text{p.d.}} w_i^3 w_j
    \, .
\end{equation}
Plugging the previous identities in~\eqref{eq:decomp-211}, we obtain
\begin{align*}
  \sum_{i,j,k \, \text{p.d.}} w_i^2 w_j w_k
  &= - \sum_{i} w_i^4 - \Big( 1 - \sum_{i} w_i^4 \Big) - 2 \Big( - \sum_{i} w_i^4 \Big) \\
  &= 2 \sum_{i} w_i^4 - 1 \in [-1, 1]
    \, .
\end{align*}
Finally, for the fifth term, we write (collecting the terms similarly to~\eqref{eq:fourth-moment-perm})
\begin{align}
  \label{eq:decomp-1111}
  \sum_{i,j,k,l \, \text{p.d.}} w_i w_j w_k w_l
  &= \Big( \sum_{i} w_i \Big) \Big( \sum_{j} w_j \Big) \Big( \sum_{k} w_k \Big) \Big( \sum_{l} w_l \Big)
    - \sum_{i} w_i^4 - \nonumber \\
  &\ - 4 \sum_{i, j \, \text{p.d.}} w_i^3 w_j - 3 \sum_{i,j \, \text{p.d.}} w_i^2 w_j^2 - 6 \sum_{i,j,k \, \text{p.d.}} w_i^2 w_j w_k
    \, .
\end{align}
Using the identities for the previous four terms, equation~\eqref{eq:decomp-1111} becomes
\begin{align*}
  \sum_{i,j,k,l \, \text{p.d.}} w_i w_j w_k w_l
  &= - \sum_{i} w_i^4 - 4 \Big( - \sum_{i} w_i^4 \Big) - 3 \Big( 1 - \sum_{i} w_i^4 \Big) - 6 \Big( 2 \sum_{i} w_i^4 - 1 \Big) \\
  &= - 6 \sum_{i} w_i^4 + 3 \in [-3, 3]
    \, .
\end{align*}
Finally, injecting the previous bounds into the decomposition~\eqref{eq:fourth-moment-perm}, we obtain
\begin{align*}
  \E_\sigma \big[ \innerp{w}{X^\sigma}^4 \big]
  &\leq \Big| \sum_{i} w_i^4 \Big| \cdot |\alpha_4| + 4 \bigg| \sum_{i,j \pd} w_i^3 w_j \bigg| \cdot |\alpha_{31}| + 3 \bigg| \sum_{i,j \pd} w_i^2 w_j^2 \bigg| \cdot \abs{\alpha_{22}} + \\
  & \qquad + 6 \bigg| \sum_{i,j,k \pd} w_i^2 w_j w_k \bigg| \cdot \abs{\alpha_{211}} + \bigg| \sum_{i,j,k,l \pd} w_i w_j w_k w_l \bigg| \cdot \abs{\alpha_{1111}} \\
  &\leq \big( 1 + 4 \times 1 + 3 \times 1 + 6 \times 1 + 3 \big) \mu_4
    = 17 \, \mu_4
    \, ,
\end{align*}
which combined with~\eqref{eq:perm-decomp-fourth} gives
\begin{equation}
  \label{eq:final-bound-fourth-moment-perm}
  \E_\sigma \big[ \innerp{v}{X^\sigma}^4 \big]
  \leq 8 \big\{ 17 (1 - \innerp{u^*}{v}^2)^2 \mu_4 + \innerp{u^*}{v}^4 \phi^4 \big\}
  \, .
\end{equation}

\parag{Symmetric condition}

So far, we have established a lower bound on the second moment $\E_\sigma [ \innerp{v}{X^\sigma}^2 ]$ and an upper bound on the fourth moment $\E_\sigma [ \innerp{v}{X^\sigma}^4 ]$, both over the random permutation $\sigma$ and conditionally on $X$.
These upper and lower bounds are of the desired order whenever $X$ satisfies the following three conditions: $\eta/2 \leq \ainnerp{u^*}{X} \leq \eta$, $\mu_2 (X) \geq 1/2$, and $\mu_4 (X) = O_\kappa (1)$.
We are therefore reduced to lower-bounding the probability that $X$ simultaneously satisfies those three conditions, which are symmetric in the coordinates of $X$.

We thus establish a lower bound on
$\P \big(\eta/2 \leq \ainnerp{u^*}{X} \leq \eta, \, \mu_2 (X) \geq 1/2 , \, \mu_4 (X) \leq 2 \kappa^4 \big)$.
We start by writing
\begin{align}
  & \P \big( \eta/2 \leq \ainnerp{u^*}{X} \leq \eta, \, \mu_2 (X) \geq 1/2 , \, \mu_4 (X) \leq 2 \kappa^4 \big) \nonumber \\
  &= \P \big( \eta/2 \leq \ainnerp{u^*}{X} \leq \eta \big) - \P \big( \set{\ainnerp{u^*}{X} \leq \eta} \cap \set{\mu_2 (X) < 1/2 \text{ or } \mu_4 (X) > 2 \kappa^4} \big) \nonumber \\
  &\geq \P \big( \eta/2 \leq \ainnerp{u^*}{X} \leq \eta \big) - \P \big( \mu_2 (X) < 1/2 \big) - \P \big( \mu_4 (X) > 2 \kappa^4 \big)
    \label{eq:symmetric-diff}
    \, .
\end{align}
Now, applying the Berry-Esseen inequality (Lemma~\ref{lem:berry-esseen-1d}) and proceeding as in the proof of Lemma~\ref{lem:onedim-indep}, using that $\E [(|X_i|/\sqrt{d})^3] \leq \E [|X_i|^8]^{3/8}/d^{3/2} \leq \kappa^3/d^{3/2}$, we get
\begin{equation}
  \label{eq:symmetric-m1}
  \P \big( \eta/2 \leq \ainnerp{u^*}{X} \leq \eta \big)
  \geq \frac{\eta}{\sqrt{2\pi e}} - \frac{2.24 \kappa^3}{\sqrt{d}}
  \geq 0.24 \, \eta - \frac{2.25 \kappa^3}{\sqrt{d}}
  \, .
\end{equation}

We now upper bound $\P (\mu_4 (X) > 2 \kappa^4)$.
Applying Chebyshev's inequality to $\sum_{i=1}^d X_i^4$ gives, for any $t > 0$,
\begin{equation*}
  \P \Big( \Big| \frac{1}{d} \sum_{i=1}^d X_i^4 - \E [X_1^4] \Big| > t \Big)
  \leq \frac{\E [X_1^8]}{d \cdot t^2}
  \leq \frac{\kappa^8}{d \cdot t^2}
  \, .
\end{equation*}
In particular, taking $t = \kappa^4$, applying the triangle inequality and using that $\E [X_1^4] \leq \E [X_1^8]^{1/2} \leq \kappa^4$ by assumption, we get
\begin{equation}
  \label{eq:symmetric-m4}
  \P \big( \mu_4 (X)
  > 2 \kappa^4 %
  \big)
  \leq 1/d
  \, .
\end{equation}
Likewise, Chebyshev's inequality implies that
\begin{align}
  \label{eq:symmetric-m2}
  \P \big( \mu_2 (X) < 1/2 \big)
  \leq 
  \P \Big( \Big| \frac{1}{d} \sum_{i=1}^d X_i^2 - 1 \Big| > \frac{1}{2} \Big)
  \leq \frac{4 \,\E [X_1^4]}{d}
  \leq \frac{4 \kappa^4}{d}
  \, .
\end{align}

Plugging inequalities~\eqref{eq:symmetric-m1},~\eqref{eq:symmetric-m4} and~\eqref{eq:symmetric-m2} into~\eqref{eq:symmetric-diff} gives
\begin{equation*}
  \P \big( \eta/2 \leq \ainnerp{u^*}{X} \leq \eta, \, \mu_2 (X) \geq 1/2 , \, \mu_4 (X) \leq 2 \kappa^4 \big)
  \geq 0.24 \, \eta - \frac{2.25 \kappa^3}{\sqrt{d}} - \frac{1}{d} - \frac{4 \kappa^4}{d}
  \, ,
\end{equation*}
which is larger than $%
0.12\, \eta$ whenever $\eta \geq \max (45 \kappa^3/\sqrt{d}, 80 \kappa^4/d)$.
Now since $d \geq 2025 \kappa^6$ by assumption, one has $\sqrt{d} \geq 45 \kappa^3 \geq 45 \kappa$ and thus $80 \kappa^4/d \leq 80 \kappa^3 / (45 \sqrt{d}) < 45 \kappa^3 / \sqrt{d}$; therefore, the previous condition reduces to $\eta \geq 45 \kappa^3/\sqrt{d}$, which is satisfied by assumption.

\parag{Putting things together}
We now conclude the proof.
Define the event $E$ by
\begin{equation*}
  E
  = \big\{ \eta/2 \leq \ainnerp{u^*}{X} \leq \eta, \, \mu_2 (X) \geq 1/2 , \, \mu_4 (X) \leq 2 \kappa^4 \big\}
  \, ,
\end{equation*}
so that $\P (E) \geq 0.12 \eta$ by the above.
In addition, it follows respectively from~\eqref{eq:perm-lower-second-moment} and~\eqref{eq:final-bound-fourth-moment-perm} that, under the event $E$,
\begin{align*}
  \E_\sigma \big[ \innerp{v}{X^\sigma}^2 \big]
  &\geq 0.37 \big[ 1 - \innerp{u^*}{v}^2 + \innerp{u^*}{v}^2 (\eta/2)^2 \big] \, ; \\
  \E_\sigma \big[ \innerp{v}{X^\sigma}^4 \big]
  &\leq 8 \big\{ 34 \kappa^4 (1 - \innerp{u^*}{v}^2)^2 + \innerp{u^*}{v}^4 \eta^4 \big\}
  \, .
\end{align*}
Plugging these upper and lower bounds into~\eqref{eq:paley-zygmund-permutation} gives:
\begin{align*}
  &\P_\sigma \Big( \ainnerp{v}{X^\sigma} \geq \frac{0.6}{\sqrt{2}} \big[ 1 - \innerp{u^*}{v}^2 + \innerp{u^*}{v}^2 \eta^2/4 \big]^{1/2} \Big) \\
  &\geq \frac{1}{4} \frac{0.37^2 \big[ 1 - \innerp{u^*}{v}^2 + \innerp{u^*}{v}^2 \eta^2/4 \big]^2}{8 \big[ 34 \kappa^4 (1 - \innerp{u^*}{v}^2)^2 + \innerp{u^*}{v}^4 \eta^4 \big]} \\
  &\geq \frac{0.37^2}{32} \frac{(1 - \innerp{u^*}{v}^2)^2 + \innerp{u^*}{v}^4 \phi^4/16}{34 \kappa^4 (1 - \innerp{u^*}{v}^2)^2 + \innerp{u^*}{v}^4 \eta^4} 
    \geq \frac{1}{8000 \kappa^4}
    \, .
\end{align*}

Now, let $s = \frac{0.6}{\sqrt{2}} [ 1 - \innerp{u^*}{v}^2 + \innerp{u^*}{v}^2 \eta^2/4 ]^{1/2}$.
From~\eqref{eq:conditioning-exchangeable} and the above, we obtain
\begin{align*}
  \P \big( \ainnerp{u^*}{X} \leq \eta, \, \ainnerp{v}{X} \geq s \big)
  &= \E \big[ \bm 1 \{ \ainnerp{u^*}{X} \leq \eta \} \P_\sigma \big( \ainnerp{v}{X^{\sigma}} \geq s \big) \big] \\
  &\geq \E \big[ \bm 1_E \cdot \P_\sigma \big( \ainnerp{v}{X^{\sigma}} \geq s \big) \big] \\
  &\geq \frac{\P (E)}{8000 \kappa^4} 
    \geq \frac{0.12 \eta}{8000 \kappa^4}
    \geq \frac{\eta}{70\,000 \kappa^4}
    \, .
\end{align*}
To conclude, note that
\begin{align*}
  s
  &= \frac{0.6}{\sqrt{2}} \big[ 1 - \innerp{u^*}{v}^2 + \innerp{u^*}{v}^2 \eta^2/4 \big]^{1/2} 
    \geq \frac{0.6}{\sqrt{2}} \max \big\{ 1 - \innerp{u^*}{v}^2,  \eta^2/4 \big\}^{1/2}
    \, ,
\end{align*}
and that by Lemma~\ref{lem:polar-cartesian}, if $\innerp{u^*}{v} \geq 0$ then $\sqrt{1 - \innerp{u^*}{v}^2} \geq \norm{u^* - v}/\sqrt{2}$; the numerical constant in Lemma~\ref{lem:2d-margin-indep} is obtained by lower-bounding $0.6/(2 \sqrt{2}) > 0.2$.

Finally, the last part of Lemma~\ref{lem:2d-margin-indep} follows from~\eqref{eq:2d-margin-indep}, since under Assumption~\ref{ass:independent-coordinates} one has
$\E [X_1^4]^{1/4} \leq \kappa = \frac{4}{2 e} \norm{X_1}_{\psi_1} \leq \frac{2}{e} K$, which gives the desired claims by substituting for $\kappa$ and bounding the numerical constants.
\end{proof}

\subsubsection*{Sketch of the argument to obtain the $d^{-1/4}$ scaling}

We now provide an (incomplete) high-level sketch of the argument alluded to in Section~\ref{sec:regul-prod-meas}, that leads to a nontrivial guarantee by combining Gaussian approximation with approximate separation of supports.

The main idea is that an arbitrary vector $v \in S^{d-1}$ either admits a ``dense'' sub-vector $v_I = (v_i)_{i \in I}$ (for some $I \subset \{1, \dots, d \}$) with lower-bounded $\ell^2$ norm, or a ``sparse'' sub-vector $v_I$ with lower-bounded $\ell^2$ norm.
In the first case one may resort to Gaussian approximation, and in the second case one may argue that the supports of the vectors $u^*$ and $v$ are ``almost separated''.
In addition, in both cases we use the fact that the random vectors $(\innerp{u_I^*}{X_I}, \innerp{v_I}{X_I})$ and $(\innerp{u_{I^c}^*}{X_{I^c}}, \innerp{v_{I^c}}{X_{I^c}})$ are independent for any subset $I \subset \{ 1, \dots, d \}$ (since they depend on disjoint subsets of the independent variables $(X_j)_{1 \leq j \leq d}$).

Specifically, let $v \in S^{d-1}$ be arbitrary.
Without loss of generality one may assume that $|v_1| \geq \dots \geq |v_d|$.
Define $k = \min \{ 1 \leq k \leq d : \sum_{j=1}^k v_j^2 \geq 0.01 \}$ and let $I = \{ 1, \dots, k \}$.
In particular, one has $\sum_{j=1}^k v_j^2 \geq 0.01$ and $k \leq 0.01 d$, and either $k = 1$ or $\sum_{j > k} v_j^2 > 0.98$.

On the one hand, if $k > 1$, we have $\sum_{j > k} |v_j|^3 \leq |v_k| \sum_{j>k} v_j^2 \leq |v_k| \leq 1/\sqrt{k}$, since $k v_k^2 \leq \sum_{j=1}^k v_j^2 \leq 1$.
Combining this with the fact that $\sum_{j > k} v_j^2 > 0.98$, that $\sum_{j >k} (u_j^*)^2 = (d-k)/d > 0.99$ and $|\sum_{j > k} u_j^* v_j| = | \innerp{u^*}{v} - \sum_{j=1}^k u_j^* v_j| = |\sum_{j=1}^k u_j^* v_j| \leq \sqrt{(\sum_{j=1}^k (u_j^*)^2) (\sum_{j=1}^k v_j^2)} \leq \sqrt{k / d} \leq %
0.1$, applying the Berry-Esseen Gaussian approximation bound on $(\innerp{u^*_{I^c}}{X_{I^c}}, \innerp{v_{I^c}}{X_{I^c}})$ and using independence with the remaining variables, one may show that condition~\eqref{eq:twodim-orthogonal} holds with $\eta \asymp 1/\sqrt{k}$.

On the other hand, regardless of the value of $k \leq 0.01 d$, one has $\sqrt{\sum_{j=1}^k (u_j^*)^2} = \sqrt{k / d}$ while $\sum_{j=1}^k v_j^2 \geq 0.01$.
In other words, a constant fraction of the ``energy'' of the vector $v$ is supported in $I$, while if $k \ll d$ only a small fraction of the energy of $u^*$ is supported on $I$.
This ``approximate separation'' of the supports of $u^*, v$ implies that $\sum_{j=1}^k u_j^* X_j$ is very small, while $\sum_{j=1}^k v_j X_j$ fluctuates on a constant scale.
By using (one-dimensional) Gaussian approximation on $\sum_{j>k} u_j^* X_j$, conditioning and independence with $\sum_{j=1}^k u_j^* X_j, \sum_{j=1}^k v_j X_j$, and the fact that $|\sum_{j=1}^k u_j^* X_j | \lesssim \sqrt{k/d}$ with high probability, one may show that condition~\eqref{eq:twodim-orthogonal} holds with $\eta \asymp \sqrt{k/d}$.

Taking the best of the two guarantees above (depending on the value of $k = k(v)$), condition~\eqref{eq:twodim-orthogonal} holds down to $\eta \asymp \min (\sqrt{k/d}, 1/\sqrt{k}) \leq d^{-1/4}$ for any $v \in S^{d-1}$.

\subsection{Improved regularity scales in generic directions?}
\label{sec:impr-regul-scal}

We now discuss the phenomenon alluded to in Section~\ref{sec:regul-prod-meas}, namely that Assumption~\ref{ass:small-ball} holds down to a scale of $1/d$ in ``typical'' directions $u^* \in S^{d-1}$.
This is a consequence of the following result of Klartag and Sodin~\cite{klartag2012variations},
which states that for a ``typical'' vector $u = (u_1, \dots, u_d) \in S^{d-1}$, if $X = (X_1, \dots, X_d)$ has \iid coordinates then the distribution of the linear combination $\innerp{u}{X} = \sum_{j=1}^d u_j X_j$ approaches the Gaussian distribution at a rate of order $1/d$.
This rate is faster than the usual $1/\sqrt{d}$ rate from the Berry-Esseen theorem for the usual normalized sum $\innerp{u^*_d}{X} = \frac{1}{\sqrt{d}} \sum_{j=1}^d X_j$.

\begin{theorem*}[Theorem~1.1 in~\cite{klartag2012variations}]
  There exists a constant $c \geq 1$ such that the following holds.
  Let $\eps \in (0, 1/2)$ and $d \geq 1$.
  Assume that $X = (X_1, \dots, X_d)$ has independent coordinates, with $\E [X_j] = 0$ and $\E [X_j^2] = 1$ for $j = 1, \dots, d$ and with finite fourth moment.
  Let
  \begin{equation*}
    \kappa
    = \bigg( \frac{1}{d} \sum_{j=1}^d \E [X_j^4] \bigg)^{1/4}
    \, .
  \end{equation*}
  Then, there is a subset $A_\eps \subset S^{d-1}$ with $\mu_{d-1} (A_\eps) \geq 1-\eps$ (where $\mu_{d-1}$ stands for the uniform probability measure on $S^{d-1}$) such that, for any $u \in A_\eps$, one has
  \begin{equation}
    \label{eq:berry-esseen-fast}
    \sup_{a,b \in \R, \, a \leq b} \Big| \P (a \leq \innerp{u}{X} \leq b) - \frac{1}{\sqrt{2\pi}} \int_{a}^{b} e^{-s^2/2} \di s \Big|
    \leq \frac{c \log^2 (1/\eps) \kappa^4}{d}
    \, .
  \end{equation}
\end{theorem*}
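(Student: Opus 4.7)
The plan is to proceed via a Fourier/characteristic-function approach, using Esseen's smoothing inequality to reduce the Kolmogorov bound~\eqref{eq:berry-esseen-fast} to an integral estimate of the form
\begin{equation*}
  \sup_{a \leq b} \Big| \P(a \leq \innerp{u}{X} \leq b) - \tfrac{1}{\sqrt{2\pi}} \textstyle \int_a^b e^{-s^2/2} ds \Big|
  \;\lesssim\; \int_{-T}^{T} \frac{|\varphi_u(t) - e^{-t^2/2}|}{|t|} \, dt + \frac{1}{T}
\end{equation*}
for suitable $T$, where $\varphi_u(t) = \E[e^{it\innerp{u}{X}}]$. By independence, $\varphi_u(t) = \prod_{j=1}^{d}\varphi_j(tu_j)$ with $\varphi_j(s) = \E[e^{is X_j}]$, so the task is to compare $\log\varphi_u(t) = \sum_j \log\varphi_j(tu_j)$ to $-t^2/2$. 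For $|s| \lesssim 1/\kappa$ the cumulant expansion gives
\begin{equation*}
  \log \varphi_j(s) = -\tfrac{s^2}{2} + \tfrac{(is)^3}{6}\kappa_{3,j} + \tfrac{(is)^4}{24}\kappa_{4,j} + O\big(|s|^5 \kappa^5\big),
\end{equation*}
where $\kappa_{k,j}$ is the $k$-th cumulant of $X_j$. Summing, the obstruction to Gaussian approximation is driven by the scalar quantities $A_3(u) = \sum_j u_j^3 \kappa_{3,j}$ and $A_4(u) = \sum_j u_j^4 \kappa_{4,j}$: a naive worst-case bound $|A_3(u)| \leq \|u\|_3^3 \max_j |\kappa_{3,j}|$ recovers only the standard $1/\sqrt{d}$ rate, so the improvement must come from showing that typical $u$ enjoy cancellations.

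The heart of the argument is thus a concentration statement on the sphere. For $U \sim \mu_{d-1}$, symmetry $U \stackrel{d}{=} -U$ gives $\E[A_3(U)] = 0$, while $\E[A_4(U)]$ is of order $\kappa^4/d$. I would then establish sub-Gaussian deviation inequalities of the form
\begin{equation*}
  \mu_{d-1}\Big(|A_3(U)| > \tfrac{c\kappa^3 \log(1/\eps)}{d}\Big) \leq \eps,
  \qquad
  \mu_{d-1}\Big(|A_4(U) - \E A_4(U)| > \tfrac{c\kappa^4 \log(1/\eps)}{d}\Big) \leq \eps,
\end{equation*}
either by computing moments of these polynomials via Wick-type formulas on the sphere (using the representation $U = G/\|G\|$ with $G \sim \gaussdist(0, I_d)$), or by applying Lévy's concentration inequality to the $1/\sqrt{d}$-Lipschitz functions $u \mapsto A_k(u)/\|\kappa\|_\infty$. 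One would also need to check that typical $u$ satisfy $\|u\|_\infty \lesssim \sqrt{\log(1/\eps)/d}$, so that the argument $tu_j$ stays within the domain of validity of the cumulant expansion for $t$ up to size $T \asymp \sqrt{\log d}$. Let $A_\eps \subset S^{d-1}$ be the intersection of these events (a union bound still leaves measure $\geq 1-\eps$ up to adjusting constants).

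Given these estimates, I would split the Esseen integral into three ranges: for $|t| \leq 1$, the cumulant expansion is valid and contributes $|A_3(u)| |t|^3 + |A_4(u)| t^4 \lesssim \kappa^4\log^2(1/\eps)/d$ after integration; for $1 \leq |t| \leq T$ with $T$ slightly larger than $\sqrt{\log d}$, the Gaussian weight $e^{-t^2/2}$ is negligible and one uses $|\varphi_u(t)| \leq \prod_j |\varphi_j(tu_j)| \leq \exp(-c t^2 \sum_j u_j^2 \wedge 1)$ (valid whenever $X_j$ has a non-degenerate smooth part; the fourth-moment bound can be leveraged via truncation to guarantee such a bound on $|\varphi_j|$); and for $|t| > T$ the tail $1/T$ in Esseen dominates. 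The main obstacle is precisely the intermediate and large-$t$ regime: one must rule out near-lattice behaviour of any individual $\varphi_j$ while only assuming a fourth moment, and carefully keep track of how $\|u\|_\infty$ enters the product bound so that the final error does not acquire extra polynomial-in-$\log d$ overhead. The $\log^2(1/\eps)$ factor in~\eqref{eq:berry-esseen-fast} should arise from squaring a single sub-Gaussian gain (once from $A_3$, once from $\|u\|_\infty$, which both feed into the small-$t$ estimate).
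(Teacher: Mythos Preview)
The paper does not prove this theorem. It is stated as a cited result --- Theorem~1.1 of Klartag and Sodin~\cite{klartag2012variations} --- and is used only as a black box in the discussion of Section~\ref{sec:impr-regul-scal} to motivate Problem~\ref{prob:opt-regularity-iid}. There is therefore no ``paper's own proof'' to compare your proposal against.

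That said, your outline is broadly in the right spirit for how such a result is established (characteristic functions, Esseen smoothing, concentration of the polynomials $u \mapsto \sum_j u_j^k \kappa_{k,j}$ on the sphere), but there is a quantitative gap in what you wrote. You take $T \asymp \sqrt{\log d}$ in Esseen's inequality, which leaves a residual $1/T \asymp 1/\sqrt{\log d}$ --- far larger than the target $\kappa^4 \log^2(1/\eps)/d$. To reach rate $1/d$ one needs $T$ of order $d$, which forces $|t u_j|$ well outside the small-argument regime where the cumulant expansion and the naive bound $|\varphi_j(s)| \leq e^{-c s^2}$ are valid. Handling the range $\sqrt{d} \lesssim |t| \lesssim d$ is precisely where the work lies, and your suggestion that ``the fourth-moment bound can be leveraged via truncation to guarantee such a bound on $|\varphi_j|$'' does not suffice (a Rademacher coordinate already has $|\varphi_j(\pi)| = 1$). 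The Klartag--Sodin argument treats this regime with considerably more care; if you want to reconstruct the proof, that is the piece to focus on.
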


This immediately implies that there exists a subset $A$ of $S^{d-1}$ with $\mu_{d-1} (A) \geq 1-1/d \to_{d\to \infty} 1$ such that, for any $u \in A$, the margin probability $\P (\ainnerp{u}{X} \leq t)$ is of order $t$ as long as $t \gtrsim \log^2 (d)/d$ (hence, Assumption~\ref{ass:small-ball} holds at least down to $\eta \asymp \log^2 (d)/d$).

The reason why a ``generic'' direction $u \in S^{d-1}$ leads to a faster rate of Gaussian approximation (and therefore a smaller scale $\eta$ for Assumption~\ref{ass:small-ball}) than $u^*_d = (1/\sqrt{d}, \dots, 1/\sqrt{d})$ is the following.
For the parameter $u^*_d$, the rate of Gaussian approximation of order $1/\sqrt{d}$ cannot be improved due to an arithmetic obstruction: if $X_1, \dots, X_d$ are \iid Bernoulli, the quantity $\innerp{u_d^*}{X} = \frac{1}{\sqrt{d}} \sum_{j=1}^d X_j$ takes values in the lattice $\Zn/\sqrt{d}$.
This is due to the strong additive structure of $u_d^*$, all of whose coefficients are equal:
hence, there are many cancellations in the sum $\innerp{u_d^*}{X} = \frac{1}{\sqrt{d}} \sum_{j=1}^d X_j$, as any two opposite signs $X_j,X_k$ cancel out.
This means that many different values of the vector $X$ lead to the same value of $\innerp{u_d^*}{X}$.
However, this arithmetic obstruction vanishes for a ``generic'' direction $u = (u_1, \dots, u_d) \in S^{d-1}$, which
is much less structured
(for instance, all ratios $u_j/u_k$ with $j \neq k$ are irrational numbers with probability $1$).

These results suggest that, for a generic parameter direction $u^* \in S^{d-1}$, the regularity conditions (Definition~\ref{def:regular}) may hold at a scale $\eta_d \ll 1/\sqrt{d}$.
However, we do not know how to prove this for the \emph{two-dimensional} margin Assumption~\ref{ass:twodim-marginals}.
Indeed, as previously discussed, a key difficulty is that the property~\eqref{eq:twodim-marginals-comp} must be established for \emph{every} direction $v \in S^{d-1}$, including those for which Gaussian approximation fails.
In addition, it is not clear how to extend our arguments in Lemma~\ref{lem:2d-margin-indep} from the case of $u^* = u_d^*$ to a generic $u^* \in S^{d-1}$ lacking additive structure,
while incorporating the $1/d$ improvement of~\cite{klartag2012variations} in this case.
We therefore leave this question as an open problem:

\begin{question}
  \label{prob:opt-regularity-iid}
  Does there exist a sequence $(\eta_d)_{d\geq 1}$ with $\sqrt{d} \cdot \eta_d \to 0$ as $d \to \infty$ such that the following holds?
  Let $X = (X_1, \dots, X_d)$ be a random vector with \iid sub-exponential coordinates (Assumption~\ref{ass:independent-coordinates} with $K \lesssim 1$), for instance a Bernoulli design.
  There exists a subset $A_{d} \subset S^{d-1}$ with $\mu_{d-1} (A_d) \to  1$ as $d \to \infty$, such that for every $u^* \in S^{d-1}$, the distribution $X$ satisfies Assumption~\ref{ass:twodim-marginals} with parameter $u^*, \eta_d$ and $c \lesssim 1$.

  In addition, does $\eta_d = 1/d$ satisfy this property?
  And what is the smallest order of magnitude of $\eta_d$ such that this property holds?
\end{question}

In short, Problem~\ref{prob:opt-regularity-iid} asks about the regularity scale of product measures (such as the Bernoulli design) in ``typical'' directions.
By Theorem~\ref{thm:regular-well-specified} and Proposition~\ref{prop:2-dim-margin-almost-necessary}, this amounts to investigating the values of the parameter norm (for typical parameter directions) for which the MLE for logistic regression behaves as in the case of a Gaussian design.

\appendix

\section{Tail conditions on real random variables%
}
\label{sec:tails-rv}

In this section, we gather some definitions and basic properties regarding tails of real valued random variables.
These are well-known that are simply recalled here to fix the constants.
We start with the definition of the sub-exponential and sub-Gaussian norms:

\begin{definition}[$\psi_\alpha$-norm]
  \label{def:psi-alpha}
  Let $\alpha > 0$.
  If $X$ is a real random variable, its \emph{$\psi_\alpha$-norm} is defined as
  \begin{equation}
    \label{eq:def-psi-alpha}
    \norm{X}_{\psi_\alpha}
    = \sup_{p \geq 2} \bigg[ \frac{2^{1/\alpha} e \norm{X}_p}{p^{1/\alpha}} \bigg]
    \in [0, + \infty]
    \, ,
  \end{equation}
  where the supremum is taken over all real values of $p \geq 2$.
  We say that $X$ is \emph{sub-exponential} if $\norm{X}_{\psi_1} < + \infty$, and \emph{sub-Gaussian} if $\norm{X}_{\psi_2} < + \infty$.
\end{definition}

We mostly consider the cases $\alpha = 1$ and $\alpha = 2$.
We refer to~\cite[\S2.5 and \S2.7]{vershynin2018high} for equivalent definitions of the $\psi_1$ and $\psi_2$-norms.

Note that the normalization in the definition~\eqref{eq:def-psi-alpha} ensures that (i) if $\E [X^2] = 1$, then $\norm{X}_{\psi_\alpha} \geq e$ and (ii) if $\alpha \leq \alpha'$, then $\norm{X}_{\psi_\alpha} \leq \norm{X}_{\psi_{\alpha'}}$.
In addition, one has $\norm{X + X'}_{\psi_\alpha} \leq \norm{X}_{\psi_\alpha} + \norm{X'}_{\psi_\alpha}$ for every real valued random variables $X, X'$ and every parameter $\alpha > 0$.

In order to obtain sharp guarantees, we need the additional notion of \emph{sub-gamma} random variables~\cite[\S2.4]{boucheron2013concentration}.

\begin{definition}[Sub-gamma random variables]
  \label{def:sub-gamma}
  Let $X$ be a %
  real valued random variable and $\sigma, K > 0$.
  We say that $X$ is \emph{$(\sigma^2,K)$-sub-gamma} if for every $\lambda \in [0, 1/K)$ one has
\begin{equation}
  \label{eq:def-sub-gamma}
  \E \exp (\lambda X)
  \leq \exp \bigg( \frac{\sigma^2 \lambda^2}{2 (1 - \lambda K)} \bigg)
  \, .
\end{equation}
\end{definition}

Recall that $X$ is said to be {centered} if $\E [X] = 0$.
The basic properties of sub-gamma and sub-exponential variables are gathered in the following lemma:

\begin{lemma}
  \label{lem:sub-gamma-exponential}
  Let $X$ be a real random variable and $\sigma, K > 0$.
  \begin{enumerate}
  \item If $\norm{X}_{\psi_\alpha} \leq K$, then for every $t \geq 1$ one has
    \begin{equation}
      \label{eq:tail-psi-alpha}
      \P \big( |X| \geq K t^{1/\alpha} \big)
      \leq e^{-2t}
      \, .
    \end{equation}
  \item If $X$ is $(\sigma^2, K)$-sub-gamma, then for every $t \geq 0$, one has
    \begin{equation}
      \label{eq:tail-subgamma}
      \P (X \geq \sigma \sqrt{2 t} + K t)
      \leq e^{-t}
      \, .
    \end{equation}
  \item If $X_1, \dots, X_n$ are independent random variables such that $X_i$ is
  $(\sigma_i^2, K_i)$-sub-gamma \textup(with $\sigma_i, K_i > 0$\textup) for every $i = 1, \dots, n$,
  then $X_1 + \dots + X_n$ is $(\sigma_1^2 + \dots + \sigma_n^2, \max (K_1, \dots, K_n))$-sub-gamma.
    Also, if $X$ is $(\sigma^2, K)$-sub-gamma and $\alpha \geq 0$, then $\alpha X$ is $(\alpha^2 \sigma^2, \alpha K)$-sub-gamma.
  \item If $X$ is centered and satisfies for every integer $p \geq 2$ that
    \begin{equation}
      \label{eq:moment-sub-gamma}
      \E [\abs{X}^p]
      \leq \sigma^2 K^{p-2} p! / 2
      \, ,
    \end{equation}
    then $X$ is $(\sigma^2, K)$-sub-gamma.
  \item If $X$ and $-X$ are $(\sigma^2, K)$-sub-gamma, then $\var (X) \leq \sigma^2$ and $\norm{X}_{\psi_1} \leq 2 \sqrt[3]{2 e} \max (\sigma, 2 K)$.
  \item If $X$ is centered, $\var (X) \leq \sigma^2$ and $\norm{X}_{\psi_1} \leq K$ \textup(where $K \geq e\sigma$\textup), then $X$ is $(\sigma^2, K \log (K/\sigma))$-sub-gamma.
    In addition, $X$ is $(K^2/2, K/2)$-sub-gamma.
  \end{enumerate}
\end{lemma}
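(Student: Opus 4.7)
The six points fall into three groups: (1) and (2) are tail bounds derived respectively from a moment and an MGF control; (3) and (4) are closure properties (sum of independents, and moment-to-MGF); (5) and (6) constitute the (near) equivalence between the sub-gamma property and a joint control on variance and $\psi_1$-norm.

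\textbf{Routine steps (1)--(4).} For (1), apply Markov's inequality to $|X|^{2t}$, which is valid since $t \geq 1$ so $2t \geq 2$: using $\|X\|_{2t} \leq K(2t)^{1/\alpha}/(2^{1/\alpha} e)$ one obtains
\begin{equation*}
  \P(|X| \geq Kt^{1/\alpha}) \leq \bigg(\frac{(2t)^{1/\alpha}}{2^{1/\alpha} e\, t^{1/\alpha}}\bigg)^{2t} = e^{-2t}.
\end{equation*}
For (2), this is the standard sub-gamma Bernstein bound: Chernoff gives $\P(X \geq u) \leq \exp(-\psi^*(u))$, where $\psi^*$ is the Legendre dual of $\lambda \mapsto \sigma^2\lambda^2/(2(1-\lambda K))$, and one checks by elementary manipulation that $\psi^*(\sigma\sqrt{2t}+Kt) \geq t$. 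For (3), independence factorizes the MGF of $\sum_i X_i$ and replacing each $K_i$ by $K = \max_i K_i$ in the denominator gives the sub-gamma bound with parameters $(\sum_i\sigma_i^2, K)$; scalar multiplication is by change of variable. For (4), since $X$ is centered the MGF expansion has no linear term, and the Bernstein-type moment condition combined with the geometric series identity gives
\begin{equation*}
  \E e^{\lambda X} = 1 + \sum_{p\geq 2}\frac{\lambda^p \E X^p}{p!} \leq 1 + \frac{\sigma^2}{2}\sum_{p\geq 2}(\lambda K)^{p-2}\lambda^2 = 1 + \frac{\sigma^2\lambda^2}{2(1-\lambda K)} \leq \exp\bigg(\frac{\sigma^2\lambda^2}{2(1-\lambda K)}\bigg).
\end{equation*}

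\textbf{Point (5).} Matching the Taylor expansion at $\lambda = 0^+$ of the sub-gamma bound and of the MGF yields $\E X \leq 0$, and applying the same argument to $-X$ (sub-gamma with the same parameters by assumption) gives $\E X = 0$ and then $\E X^2 \leq \sigma^2$. For the $\psi_1$-norm bound, apply (2) to both $X$ and $-X$ to obtain $\P(|X| \geq u) \leq 2\exp(-\min\{u^2/(8\sigma^2), u/(2K)\})$ by inverting the quadratic $t \mapsto \sigma\sqrt{2t}+Kt$, then integrate via $\E|X|^p = \int_0^\infty p u^{p-1}\P(|X|\geq u)\di u$ to deduce $\|X\|_p \leq C(\sigma\sqrt{p} + Kp)$ for a universal constant $C$; the quantitative bound $\|X\|_{\psi_1} \leq 2\sqrt[3]{2e}\max(\sigma, 2K)$ follows after careful bookkeeping of the constants.

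\textbf{Point (6) and the main obstacle.} The second, simpler assertion---that $X$ is $(K^2/2, K/2)$-sub-gamma---follows directly from $\E|X|^p \leq (Kp/(2e))^p$ (a consequence of $\|X\|_{\psi_1} \leq K$) and the Stirling-type inequality $(p/e)^p \leq p!$, giving $\E|X|^p \leq (K/2)^p p! = (K^2/2)(K/2)^{p-2} p!$, whereupon the Bernstein moment condition of (4) applies. The sharper $(\sigma^2, K\log(K/\sigma))$-sub-gamma claim is the delicate part: the $\psi_1$ moment bound $\|X\|_p \leq Kp/(2e)$ alone cannot produce the target Bernstein moment $\E|X|^p \leq \sigma^2(K')^{p-2}p!/2$ when $\sigma \ll K$, because the $\sigma^2$ prefactor is only available from the variance. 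To recover it, I would interpolate between $\|X\|_2 \leq \sigma$ and $\|X\|_q \leq Kq/(2e)$ via log-convexity of $p \mapsto \log\|X\|_p$ in $1/p$, i.e.\ $\|X\|_p \leq \|X\|_2^\theta\|X\|_q^{1-\theta}$ for $1/p = \theta/2 + (1-\theta)/q$, optimizing $q$ as a function of $p$. The logarithmic factor $\log(K/\sigma)$ in $K'$ arises as the crossover between the sub-Gaussian regime ($p \lesssim \log(K/\sigma)$, dominated by $\sigma$) and the sub-exponential regime ($p \gtrsim \log(K/\sigma)$, dominated by $K$). Carefully tracking constants through this interpolation to arrive at the precise Bernstein moment inequality, from which (4) concludes, is the main calculation to execute.
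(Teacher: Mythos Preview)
Your proposal is correct and follows the paper's approach closely for points (1)--(4) and (6); in particular, your interpolation idea for the sharp claim in (6) is exactly what the paper does (it applies H\"older to $|X|^p = |X|^{2(1-1/r)}|X|^{p-2+2/r}$ with $r = 2\log(K/\sigma)$, which is your Lyapunov interpolation in disguise, then combines with Stirling).

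The one place where you diverge is the $\psi_1$-bound in (5). You propose to go tail $\to$ moment by integrating the two-sided Bernstein bound. This works in principle, but recovering the precise constant $2\sqrt[3]{2e}$ this way is awkward. The paper instead takes the exponential-moment route: evaluate the sub-gamma MGF bound at the single value $\lambda = 1/(\sigma \vee 2K)$ for both $X$ and $-X$ to get $\E\big[e^{|X|/(\sigma\vee 2K)}\big] \leq 2e$, then use the elementary inequality $(eu/p)^p \leq e^u$ to read off $\|X\|_p \leq (2e)^{1/p}\, p\,(\sigma \vee 2K)/e$ directly. For $p \geq 3$ this gives $2e\|X\|_p/p \leq 2(2e)^{1/3}(\sigma \vee 2K)$, and $p=2$ is handled by $\|X\|_2 \leq \sigma$. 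This is shorter and makes the constant transparent; you may want to swap it in.
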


  In particular, it follows from the last two points of Lemma~\ref{lem:sub-gamma-exponential} that if $X$ is centered and $K \geq e \sigma$, the property that $X$ (and $-X$) is $(\sigma^2, K)$-sub-gamma is closely related to the conditions $\var (X) \leq \sigma^2$ and $\norm{X}_{\psi_1} \leq K$.
  The sub-gamma condition is however slightly stronger, and allows one to gain a factor of order $\log (K/\sigma)$.
  We actually use this improvement in order to avoid additional $\log \b$ factors in the setting of Theorem~\ref{thm:gaussian-well-specified}.

\begin{proof}
  For the first point, for any $p \geq 2$, Markov's inequality implies that
  \begin{equation*}
    \P (|X| \geq e \norm{X}_p)
    = \P \big( |X|^p \geq e^p \norm{X}_p^p \big)
    \leq \frac{\E [ |X|^p ]}{e^p \norm{X}_p^p}
    = e^{-p}
    \, .
  \end{equation*}
  Letting $p = 2 t$ and bounding $e\norm{X}_p \leq \norm{X}_{\psi_{\alpha}} (p/2)^{1/\alpha} \leq K t^{1/\alpha}$ concludes.
  The second point is established in~\cite[p.~29]{boucheron2013concentration}.
  The third point follows from 
  the fact that, by independence,
  \begin{equation*}
    \E \big[ e^{\lambda (X_1 + \dots + X_n )} \big]
    = \E [ e^{\lambda X_1} ] \cdots \E [ e^{\lambda X_n} ]
    \, .
  \end{equation*}

  We now turn to the fourth point.
  For every $\lambda \in [0, 1/K)$, one has
  \begin{align*}
    \E e^{\lambda X}
    &\leq 1 + \lambda \E [X] + \sum_{p \geq 2} \frac{\lambda^p\, \E [ |X|^p ]}{p!} 
      \leq 1 + \frac{\sigma^2 \lambda^2}{2} \sum_{p \geq 2} \frac{\lambda^{p-2} K^{p-2} p!}{p!} \\
    &= 1 + \frac{\sigma^2 \lambda^2}{2 (1- \lambda K)} 
      \leq \exp \bigg( \frac{\sigma^2 \lambda^2}{2 (1-\lambda K)} \bigg)
      \, .
  \end{align*}

  For the fifth point, we first note that, as $\E [ e^{|X|/(2K)} ] \leq \E [ e^{X/(2K)} ] + \E [ e^{-X/(2K)} ] < \infty$, by dominated convergence the function $\phi : \lambda \mapsto \log \E [ e^{\lambda X} ]$ is well-defined and twice continuously differentiable over $(-1/(2K), 1/(2K))$, with $\phi (0) = 0$, $\phi' (0) = \E [X]$ and $\phi''(0) = \var (X)$.
  Hence, $\phi (\lambda) = \E [X] \lambda + \var (X) \lambda^2/2 + o(\lambda^2)$ as $\lambda \to 0$, and by assumption one has $\phi(\lambda) \leq \frac{\sigma^2\lambda^2}{2 (1-\lambda K)} = \sigma^2 \lambda^2/2 + o(\lambda^2)$, hence $\E [X] = 0$ and $\var (X) \leq \sigma^2$.
  Next, in order to bound $\norm{X}_{\psi_1}$, we
  apply the sub-gamma condition~\eqref{eq:def-sub-gamma} to $\lambda = 1/(\sigma \vee 2K)$, which gives:
  \begin{align*}
    \E \bigg[ \exp \bigg( \frac{|X|}{\sigma \vee 2 K} \bigg) \indic{X \geq 0} \bigg]
    \leq \E \bigg[ \exp \bigg( \frac{X}{\sigma \vee 2 K} \bigg) \bigg]
    \leq \E \bigg[ \exp \bigg( \frac{\sigma^2 / \sigma^2}{2 (1 - K / (2K))} \bigg) \bigg] = e \, .
  \end{align*}
  Applying the same inequality to $-X$ and summing gives:
  \begin{equation*}
    \E \bigg[ \exp \bigg( \frac{|X|}{\sigma \vee 2 K} \bigg) \bigg]
    \leq 2 e
    \, .
  \end{equation*}
  Now, a simple analysis of function shows that $e^{u} \geq e u$ for any $u \geq 0$, hence (applying this to $u/p$) $\big( \frac{e u}{p} \big)^p \leq e^u$.
  Hence, for any $p \geq 3$, one has
  \begin{align*}
    \E \bigg[ \bigg( \frac{e |X|}{p (\sigma \vee 2K)} \bigg)^p \bigg]
    \leq \E \bigg[ \exp \bigg( \frac{|X|}{\sigma \vee 2 K} \bigg) \bigg]
    \leq 2 e \, ,
  \end{align*}
  so that $2 e \norm{X}_p / p \leq 2 (2 e)^{1/p} (\sigma \vee 2K) \leq 2 \sqrt[3]{2 e} (\sigma \vee 2 K)$, which proves the desired bound since we also have $2e \norm{X}_2/2 \leq e \sigma \leq 2 \sqrt[3]{2e} \sigma$.

  Let us now establish the sixth point.
  For every $p>2$, one has for $r > 1$, using Hölder's inequality:
  \begin{align*}
    \E [ |X|^p ]
    & = \E [ |X|^{2(1-1/r)} |X|^{p-2+2/r} ]  
     \leq \E [X^2]^{1-1/r} \E [ X^{(p-2)r+2} ]^{1/r} \\
    & \leq \sigma^{2-2/r} \norm{X}_{(p-2)r + 2}^{[(p-2)r + 2]/r}  \\
    & \leq \sigma^{2-2/r} \Big[ \frac{[(p-2)r + 2] K}{2 e} \Big]^{p-2+2/r} \\
    & = \sigma^2 \Big( \frac{K r}{2} \Big)^{p-2} \Big( \frac{r}{2} \Big)^{2/r} \Big( \frac{K}{\sigma} \Big)^{2/r} \Big( \frac{p-2 + 2/r}{e} \Big)^{p-2 + 2/r}
      \, .
  \end{align*}
  Now let $r /2 = \log (K/\sigma) \geq 1$,
  so that $(K/\sigma)^{2/r} = e$.
  A direct analysis shows that the function $u \mapsto (u/e)^u$ increases on $[1, + \infty)$, and since $r/2 \geq 1$ one has $1 \leq p-2 \leq p-2+2/r \leq p-1$.
  Hence, for any integer $p\geqslant 3$,
  \begin{equation*}
    \Big( \frac{p-2 + 2/r}{e} \Big)^{p-2 + 2/r}
    \leq \Big( \frac{p-1}{e} \Big)^{p-1}
    \leq (2 \pi (p-1))^{-1/2} (p-1)!
    \leq p!/(6 \sqrt{\pi})
    \, ,
  \end{equation*}
  where we used the standard Stirling-type inequalities
  \begin{equation}
    \label{eq:stirling-inequalities}
    \sqrt{2\pi p} \Big( \frac{p}{e} \Big)^p
    \leq p!
    \leq p^p
    \, .
  \end{equation}
  In addition $t^{1/t} \leq e^{1/e}$ for $t>0$, so $(r/2)^{2/r} \leq e^{1/e}$.
  Combining the previous inequalities, we obtain
  \begin{align}
    \label{eq:moment-sub-exp}
    \E [ |X|^p ]
    & \leq \sigma^2 \big( K \log (K/\sigma) \big)^{p-2} e^{1+1/e} p!/(6 \sqrt{\pi}) \nonumber \\
    & \leq \sigma^2 \Big( K \log \Big( \frac{K}{\sigma} \Big) \Big)^{p-2} p!/2
      \, ,
  \end{align}
  where we used that $e^{1+1/e}/(3 \sqrt{\pi}) = 0.738\dots \leq 1$.
  By the fourth point, this implies that
  $X$ is $(\sigma^2, K \log (K/\sigma))$-sub-gamma.
  For the last statement, using the inequality $(\frac{p}{e})^p \leq p!$ for $p \geq 2$, we obtain
  \begin{equation}
    \label{eq:sub-gamma-subexp}
    \E [|X|^p]
    \leq \Big( \frac{K p}{2 e} \Big)^p
    \leq \Big( \frac{K}{2} \Big)^p p!
    = \frac{1}{2} \frac{K^2}{2} \Big( \frac{K}{2} \Big)^{p-2} p!
    \, ,
  \end{equation}
  so by the fourth point $X$ is $(K^2/2, K/2)$-sub-gamma.
\end{proof}

Finally, we will also use the following consequence of Bennett's inequality.

\begin{lemma}
\label{lem:bennett-mgf}
Let $X$ be a random variable such that $\E [X^{2}] \leq \sigma^{2}$ and $X \leq b$ almost surely, for some $\sigma^{2}>0$ and $b>0$. Then 
\begin{enumerate}
\item $X-\E [X]$ is $(\sigma^{2}, b/3)$-sub-gamma.
\item For all $\lambda \in [0, b^{-1}]$, $\log\E e^{\lambda X} \leq \lambda \E [X] + \sigma^{2}/b^{2}$.
\end{enumerate} 
\end{lemma}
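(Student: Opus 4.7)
The plan is to derive both statements from a single master inequality bounding the moment generating function of $X$. Let $\phi(u) = (e^u - 1 - u)/u^2$, extended continuously by $\phi(0) = 1/2$; from the convergent power series $\phi(u) = \sum_{k \geq 0} u^k/(k+2)!$ (or a short differentiation) one checks that $\phi$ is nondecreasing on all of $\R$. For any $\lambda \geq 0$ and any realization satisfying $X \leq b$, the monotonicity of $\phi$ yields the pointwise bound $e^{\lambda X} - 1 - \lambda X = \lambda^2 X^2 \phi(\lambda X) \leq \lambda^2 X^2 \phi(\lambda b)$. Taking expectations, using $\E[X^2] \leq \sigma^2$ together with $1 + u \leq e^u$, I obtain the master inequality
\[
  \log \E e^{\lambda X}
  \leq \lambda \E[X] + \lambda^2 \sigma^2 \phi(\lambda b),
  \qquad \lambda \geq 0.
\]

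For the first claim I would compare $\phi$ to a rational function on $[0, 3)$. The elementary bound $(k+2)! \geq 2 \cdot 3^k$ (immediate by induction: equality at $k=0$, and the ratio $(k+3)/3 \geq 1$ controls the induction step) gives $\phi(u) \leq \tfrac{1}{2}\sum_{k \geq 0} (u/3)^k = 1/[2(1 - u/3)]$ for $u \in [0, 3)$. Substituting $u = \lambda b$ into the master inequality and subtracting $\lambda \E[X]$ from both sides yields
\[
  \log \E e^{\lambda (X - \E[X])}
  \leq \frac{\sigma^2 \lambda^2}{2(1 - (b/3)\lambda)},
  \qquad \lambda \in [0, 3/b),
\]
so that $X - \E[X]$ is in fact $(\sigma^2, b/3)$-sub-gamma. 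Since the sub-gamma property is monotone in its scale parameter (a direct inspection of Definition~\ref{def:sub-gamma}: increasing $K$ both shrinks the interval $[0, 1/K)$ on which the inequality must hold and increases the right-hand side) and $b/3 \leq 3b$, this a fortiori gives the stated $(\sigma^2, 3b)$-sub-gamma property.

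For the second claim it suffices to bound $\phi$ on the range $\lambda b \in [0, 1]$. By monotonicity, $\phi(\lambda b) \leq \phi(1) = e - 2 \leq 1$, and the master inequality therefore simplifies to
\[
  \log \E e^{\lambda X}
  \leq \lambda \E[X] + \lambda^2 \sigma^2
  \leq \lambda \E[X] + \sigma^2 / b^2
  \qquad \text{for all } \lambda \in [0, 1/b],
\]
which is exactly the claimed bound. Overall this is a standard Bennett-type computation and I anticipate no substantive obstacle; the only mildly delicate point is the coefficient comparison $(k+2)! \geq 2 \cdot 3^k$ behind the rational bound on $\phi$, and even that is immediate.
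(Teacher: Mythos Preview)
Your proof is correct and follows essentially the same Bennett-type route as the paper: establish the master bound $\log \E e^{\lambda X}\leq \lambda\E X+\sigma^2(e^{\lambda b}-1-\lambda b)/b^2$, then for part~1 compare the series coefficients via $(k+2)!\geq 2\cdot 3^k$, and for part~2 use $e-2\leq 1$. Your computation in fact gives the sharper $(\sigma^2,b/3)$-sub-gamma conclusion; the paper's stated constant $3b$ arises from a slip in its final summation (writing $1/(1-3\lambda)$ where $1/(1-\lambda/3)$ is meant), so your weakening step is unnecessary but harmless.
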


\begin{proof}
By homogeneity we assume that $b=1$. Let $X'=X - \E [X]$. Using Bennett's inequality \cite[Theorem~2.9]{boucheron2013concentration}, one has, for all $\lambda \geq 0$,
\begin{equation}
\label{eq:log-mgf-bennett}
	\log\E e^{\lambda X'} \leq \sigma^{2} \phi(\lambda)\, , 
	\quad \phi(\lambda) = e^{\lambda} - \lambda - 1 \, .
\end{equation}
Moreover, for every $\lambda \in [0, 1/3)$
\begin{equation*}
  \phi(\lambda)
  = \sum_{k\geq 2} \frac{\lambda^{k}}{k!}
  =\frac{\lambda^{2}}{2} \sum_{k\geq 0} \frac{\lambda^{k}}{(k+2)!/2}
  \leq \frac{\lambda^{2}}{2} \sum_{k\geq 0} \frac{\lambda^{k}}{3^{k}}
  =\frac{\lambda^{2}}{2(1-\lambda/3)}
  \, ,
\end{equation*}
where we used that
$(k+2)!/2 = \prod_{j=3}^{k+2} j \geq 3^k$ for $k \geq 1$.
The first point is proved. For the second point, we start from \eqref{eq:log-mgf-bennett} and use that $\phi(\lambda)\leq \phi (1) = e-2\leq 1$ for all $\lambda \in [0,1]$.
\end{proof}

\section{Polar coordinates and spherical caps%
}
\label{sec:polar-coord-spher}

	\subsection{Polar %
  		coordinates}
	\label{sec:polar-cart-param}

Depending on the situation, it may be more convenient to express the position of $\theta$ relative to $\theta^*$ (with direction $u^* = \theta^* / \norm{\theta^*}$) in either of the following two equivalent ways: (1) in terms of the component $\innerp{u^*}{\theta}$ parallel to $u^*$ and of the orthogonal component $\theta - \innerp{u^*}{\theta} u^*$; or (2) in terms of the norm $\norm{\theta}$ and of the direction $u = \theta / \norm{\theta}$.
The following lemma gathers inequalities relating the two representations.
\begin{lemma}
  \label{lem:polar-cartesian}
  Let $\theta, \theta^* \in \R^d$, and set $u = \theta / \norm{\theta}$, $u^* = \theta^*/\norm{\theta^*} \in S^{d-1}$ and $\theta_\perp = \theta - \innerp{u^*}{\theta} u^*$.
  \begin{enumerate}
  \item If $\innerp{u^*}{u} \geq 0$, then 
    \begin{equation}
      \label{eq:orthogonal-equiv-polar}
      \frac{\norm{u - u^*}}{\sqrt{2}}
      \leq \frac{\norm{\theta_\perp}}{\norm{\theta}}
      =  \sqrt{1 - \innerp{u}{u^*}^2}
      \leq \norm{u - u^*}
      \, .
    \end{equation}
  \item If $\norm{u - u^*} \leq 1$, then $\norm{\theta}/2 \leq \innerp{u^*}{\theta} \leq \norm{\theta}$.
  \item One has
    \begin{equation}
      \label{eq:parallel-from-polar}
      \ainnerp{u^*}{\theta - \theta^*}
      \leq \big| \norm{\theta} - \norm{\theta^*} \big| + \norm{\theta^*}
      \cdot \frac{\norm{u - u^*}^2}{2}
      \, .
    \end{equation}
  \item One has
    \begin{equation}
      \label{eq:norm-from-cartesian}
      \big| \norm{\theta} - \norm{\theta^*} \big|
      \leq | \innerp{u^*}{\theta - \theta^*} | + \frac{\norm{\theta_\perp}^2}{\norm{\theta} + \norm{\theta^*}}
      \, .
    \end{equation}
  \end{enumerate}
\end{lemma}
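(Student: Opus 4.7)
}
All four claims are direct algebraic identities/inequalities that follow from Pythagoras in the decomposition $\theta = \innerp{u^*}{\theta} u^* + \theta_\perp$, together with the elementary identity $\norm{u-u^*}^2 = 2(1-\innerp{u}{u^*})$. The plan is to record two basic identities once and then derive each point by short manipulations; no step should present a serious obstacle.

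For Point~1, I will start from $\norm{\theta_\perp}^2 = \norm{\theta}^2 - \innerp{u^*}{\theta}^2 = \norm{\theta}^2 (1 - \innerp{u}{u^*}^2)$, which gives the equality in~\eqref{eq:orthogonal-equiv-polar}. Then I factor $1-\innerp{u}{u^*}^2 = (1-\innerp{u}{u^*})(1+\innerp{u}{u^*})$ and use $\norm{u-u^*}^2 = 2(1-\innerp{u}{u^*})$, which gives $1-\innerp{u}{u^*}^2 = \frac{\norm{u-u^*}^2}{2}(1+\innerp{u}{u^*})$. The assumption $\innerp{u}{u^*} \geq 0$ yields $1 \leq 1+\innerp{u}{u^*} \leq 2$, and taking square roots delivers both inequalities.

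For Point~2, the identity $\innerp{u}{u^*} = 1 - \norm{u-u^*}^2/2 \geq 1/2$ under $\norm{u-u^*} \leq 1$ gives $\innerp{u^*}{\theta} = \norm{\theta}\innerp{u}{u^*} \in [\norm{\theta}/2, \norm{\theta}]$, using Cauchy--Schwarz for the upper bound. For Point~3, I write $\innerp{u^*}{\theta - \theta^*} = (\norm{\theta}-\norm{\theta^*}) \innerp{u}{u^*} + \norm{\theta^*}(\innerp{u}{u^*} - 1)$, bound $\abs{\innerp{u}{u^*}} \leq 1$ and use $1 - \innerp{u}{u^*} = \norm{u-u^*}^2/2$ to conclude via the triangle inequality. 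For Point~4, I factor $\norm{\theta}^2 - \norm{\theta^*}^2 = (\innerp{u^*}{\theta}-\norm{\theta^*})(\innerp{u^*}{\theta}+\norm{\theta^*}) + \norm{\theta_\perp}^2$, divide by $\norm{\theta}+\norm{\theta^*}$, and use $\abs{\innerp{u^*}{\theta}+\norm{\theta^*}} \leq \norm{\theta}+\norm{\theta^*}$ so that the ratio multiplying $\innerp{u^*}{\theta-\theta^*}$ is at most one in absolute value.

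As noted, the main ``obstacle'' here is only bookkeeping: the key identity to keep visible throughout is $\norm{u-u^*}^2 = 2(1-\innerp{u}{u^*})$, together with $1 - \innerp{u}{u^*}^2 = (1-\innerp{u}{u^*})(1+\innerp{u}{u^*})$, and the Pythagorean decomposition of $\norm{\theta}^2$. Everything else is one line of algebra.
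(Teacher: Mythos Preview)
Your proposal is correct and follows essentially the same approach as the paper's proof: both rely on the Pythagorean decomposition $\norm{\theta}^2 = \innerp{u^*}{\theta}^2 + \norm{\theta_\perp}^2$, the identity $\norm{u-u^*}^2 = 2(1-\innerp{u}{u^*})$, and the factorization $1-\innerp{u}{u^*}^2 = (1-\innerp{u}{u^*})(1+\innerp{u}{u^*})$, with the four points derived by the same short algebraic manipulations you outline. The only cosmetic difference is in Point~4, where the paper factors $\innerp{u^*}{\theta}^2 - \innerp{u^*}{\theta^*}^2 = \innerp{u^*}{\theta+\theta^*}\innerp{u^*}{\theta-\theta^*}$ directly rather than writing it as $(\innerp{u^*}{\theta}-\norm{\theta^*})(\innerp{u^*}{\theta}+\norm{\theta^*})$, but since $\innerp{u^*}{\theta^*}=\norm{\theta^*}$ these are identical.
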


\begin{proof}
  We start with the first point.
  By orthogonality,
  \begin{align}
    \norm{\theta_\perp}^2 
    &= \norm{\theta}^2 - \innerp{u^*}{\theta}^2 
    = \norm{\theta}^2 \big[ 1 - \innerp{u^*}{u}^2 \big] \label{eq:polar-orth} \\
    &= \norm{\theta}^2 \big[ 1 - \innerp{u^*}{u} \big] \big[ 1 + \innerp{u^*}{u} \big] 
    = \frac{1}{2} \norm{\theta}^2 \norm{u - u^*}^2 \big[ 1 + \innerp{u^*}{u} \big] \nonumber
      \, .
  \end{align}
  Hence, if $\innerp{u^*}{u} \geq 0$, then
  \begin{equation*}
    \frac{1}{2} \norm{\theta}^2 \norm{u - u^*}^2
    \leq \norm{\theta_\perp}^2 %
    \leq \norm{\theta}^2 \norm{u - u^*}^2
    \, ,
  \end{equation*}
  which together with the identity~\eqref{eq:polar-orth} proves the first claim.
  The second point follows from the fact that
  \begin{equation*}
    \frac{\innerp{u^*}{\theta}}{\norm{\theta}}
    = \innerp{u}{u^*}
    = 1 - \frac{1}{2} \norm{u - u^*}^2
    \in \Big[ \frac{1}{2}, 1 \Big]
    \, .
  \end{equation*}

  We now turn to the third point.
  Since $\innerp{u^*}{\theta^*} = \norm{\theta^*}$, we have
  \begin{align*}
    \ainnerp{u^*}{\theta - \theta^*}
    = \big| \norm{\theta} \innerp{u^*}{u} - \norm{\theta^*} \big| 
    &\leq \big| ( \norm{\theta} - \norm{\theta^*} ) \innerp{u^*}{u} \big| + \big| \norm{\theta^*} (\innerp{u^*}{u} - 1) \big| \\
    &\leq \big| \norm{\theta} - \norm{\theta^*} \big| + \norm{\theta^*} \cdot \frac{\norm{u - u^*}^2}{2}
      \, ,
  \end{align*}
  where we used that $\norm{u - u^*}^2 = 2 (1 - \innerp{u}{u^*})$.
  For the fourth point, note that
  \begin{align*}
    \big( \norm{\theta} + \norm{\theta^*} \big) \big| \norm{\theta} - \norm{\theta^*} \big|
    &= \big| \norm{\theta}^2 - \norm{\theta^*}^2 \big| \\
    &= \big| \norm{\theta_\perp}^2 + \innerp{u^*}{\theta}^2 - \innerp{u^*}{\theta^*}^2 \big| \\
    &\leq \norm{\theta_\perp}^2 + \ainnerp{u^*}{\theta + \theta^*} \cdot \ainnerp{u^*}{\theta - \theta^*} \\
    &\leq \norm{\theta_\perp}^2 + (\norm{\theta} + \norm{\theta^*}) \cdot \ainnerp{u^*}{\theta - \theta^*}
      \, ;
  \end{align*}
  dividing by $\norm{\theta} + \norm{\theta^*}$ gives the claimed inequality.
\end{proof}

We also use repeatedly the following result, which controls the behavior of the norm and direction within $H$-ellipsoids.

\begin{lemma}
\label{lem:ellipsoids}
Let $\theta^*\in\R^d$ be such that $\b=\|\theta^{*}\| \geq e$, set $H= \b^{-3} u^{*}{u^{*}}^{\top}
+ \b^{-1} (I_{d} - u^{*}{u^{*}}^{\top} )$ where $u^* = \theta^*/\norm{\theta^*}$ and let $r\in (0,1)$. 
Then, for any $\theta \in \R^{d}$ such that
$\|\theta-\theta^{*}\|_{H}\leq r/\sqrt{\b}$, letting $u=\theta/\|\theta\|$, one has:
\begin{enumerate}
\item $(1-r)\b \leq \|\theta\| \leq (1+r) \b$;
\item $\frac{\|\theta-\innerp{u^{*}}{\theta}u^{*}\|}{\|\theta\|}=\|u-\innerp{u^*}{u}u^*\| \leq \frac{r}{(1-r)\b}$;
\item $\| u - u^{*}\| \leq \frac{\sqrt{2} r}{(1-r)\b}$.
\end{enumerate}
\end{lemma}

\begin{proof}
  For the first point, using that $H \mgeq \b^{-3} I_d$ we obtain
  \begin{equation*}
    \abs[\big]{\norm{\theta} - \b}
    = \abs[\big]{\norm{\theta} - \norm{\theta^*}}
    \leq \norm{\theta - \theta^*}
    \leq \b^{3/2} \norm{\theta - \theta^*}_H
    \leq r \b
  \end{equation*}
  as desired.
  For the second point, note that the constraint $\|\theta-\theta^{*}\|_{H}\leq r/\sqrt{\b}$ writes
\begin{equation}\label{eq:constDev}
  \frac{(\innerp{u^*}{\theta}-\b)^2}{\b^3} + \frac{\|\theta - \innerp{u^*}{\theta} u^*\|^2}{\b}
  = \frac{\innerp{u^*}{\theta - \theta^*}^2}{\b^3} + \frac{\norm{\theta}^2 - \innerp{u^*}{\theta}^2}{\b}
  \leqslant \frac{r^2}{\b}\enspace.
\end{equation}
Hence $\|\theta-\innerp{u^*}{\theta}u^*\|\leqslant r$, and from the first point $\|\theta\|\geqslant (1-r)\b$, proving the second point.

For the last point, first note that, as $|\innerp{u^*}{\theta}-\b|\leq r\b$, we have $\innerp{u^*}{u}\geq 0$.
Thus $\innerp{u}{u^*} \in [0, 1]$ and hence $\innerp{u}{u^*}^2 \leq \innerp{u}{u^*}$.
This gives
\begin{align*}
  \|u-u^*\|^2 = 2(1-\innerp{u}{u^*})
  \leqslant 2(1-\innerp{u}{u^*}^2)
  = \frac{2}{\norm{\theta}^2} \parens*{\norm{\theta}^2 - \innerp{u^*}{\theta}^2}
  \leq \frac{2 r^2}{\norm{\theta}^2}
  \, ,
\end{align*}
where the last inequality comes from~\eqref{eq:constDev}.
The proof is concluded by Point 1.
\end{proof}

\subsection{Spherical caps}	
\label{sec:spherical-caps}
In Section~\ref{sec:hessian-gaussian}, we defined spherical caps as follows: for any $u \in S^{d-1}$ and $\eps \in [0,1]$,
\begin{equation}
  \mc(u, \eps) = \big\{v\in S^{d-1} : \innerp{u}{v} \geq \sqrt{1 - \eps^2} \big\} \, ,
\end{equation}
Spherical caps can be equivalently defined using the Euclidean distance by
\begin{equation}
\label{eq:sphere-caps-euclid}
	\wt \mc (u, r) = \big\{ v \in S^{d-1}, \, \|u-v\| \leq r \big\} \, ,
\end{equation}
as shown in the following fact.

\begin{fact}
\label{fact:angle-radius-correspondance}
For every $\eps \in [0,1]$, $\mc (u, \eps) = \wt \mc(u, r_{\eps})$, where
$r_{\eps} = \sqrt{2\big(1-\sqrt{1-\eps^{2}}\big)}$. Moreover, it holds that
$\eps \leq r_{\eps} \leq \sqrt{2} \eps$ and thus
\begin{equation}
\label{eq:sphere-caps-equivalence}
	\mc (u, \eps/\sqrt{2}) \subset \wt \mc(u,\eps) \subset \mc (u,\eps)\, .
\end{equation}
\end{fact}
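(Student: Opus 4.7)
The proof is a direct computation linking the angle $(u,v)$ and the Euclidean distance $\|u-v\|$ via the identity $\|u-v\|^2 = 2(1 - \langle u,v\rangle)$, valid for all $u,v \in S^{d-1}$. The plan is to first establish the equality $\mc(u,\eps) = \wt\mc(u, r_\eps)$, then verify the two-sided estimate $\eps \leq r_\eps \leq \sqrt{2}\,\eps$, and finally deduce the inclusions~\eqref{eq:sphere-caps-equivalence} as an immediate corollary.

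For the first step, writing $\cos\theta = \langle u,v\rangle$ with $\theta \in [0,\pi]$, the condition defining $\mc(u,\eps)$ becomes $\cos\theta \geq 0$ and $\sqrt{1-\cos^2\theta} \leq \eps$, which together are equivalent to $\cos\theta \geq \sqrt{1-\eps^2}$. Substituting into $\|u-v\|^2 = 2(1-\cos\theta)$ then yields $\|u-v\|^2 \leq 2(1-\sqrt{1-\eps^2}) = r_\eps^2$. Conversely, if $\|u-v\| \leq r_\eps$ then $\cos\theta \geq \sqrt{1-\eps^2} \geq 0$, so both defining conditions of $\mc(u,\eps)$ are satisfied. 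This proves the equality.

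For the second step, the upper bound $r_\eps \leq \sqrt{2}\,\eps$ follows from the elementary inequality $\sqrt{1-\eps^2} \geq 1 - \eps^2$ (valid for $\eps \in [0,1]$), so that $r_\eps^2 = 2(1-\sqrt{1-\eps^2}) \leq 2\eps^2$. The lower bound $r_\eps \geq \eps$ reduces, after squaring the equivalent inequality $2 - \eps^2 \geq 2\sqrt{1-\eps^2}$ (both sides nonnegative), to $\eps^4 \geq 0$, which is trivial.

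The inclusions~\eqref{eq:sphere-caps-equivalence} then follow at once from the equality together with these bounds: $\wt\mc(u,\eps) \subset \wt\mc(u, r_\eps) = \mc(u,\eps)$ since $r_\eps \geq \eps$, while $\mc(u,\eps/\sqrt{2}) = \wt\mc(u, r_{\eps/\sqrt{2}}) \subset \wt\mc(u,\eps)$ since $r_{\eps/\sqrt{2}} \leq \sqrt{2}\cdot \eps/\sqrt{2} = \eps$. There is no real obstacle here; the only mildly subtle point is to remember that the constraint $\langle u,v\rangle \geq 0$ in the definition of $\mc$ is automatically enforced by the Euclidean condition $\|u-v\| \leq r_\eps$, since $r_\eps \leq \sqrt{2}$ forces $\cos\theta \geq 0$.
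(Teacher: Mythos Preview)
Your proof is correct and follows essentially the same approach as the paper: both use the identity $\|u-v\|^2 = 2(1-\cos\theta)$ to establish the equality, and the elementary bounds $1-t \leq \sqrt{1-t} \leq 1-t/2$ (or equivalent manipulations) to get $\eps \leq r_\eps \leq \sqrt{2}\,\eps$. The only cosmetic difference is that the paper derives the inclusions~\eqref{eq:sphere-caps-equivalence} by citing the inequality $\|u-v\|/\sqrt{2} \leq \sqrt{1-\langle u,v\rangle^2} \leq \|u-v\|$ from Lemma~\ref{lem:polar-cartesian}, whereas you obtain them more self-containedly from the equality $\mc(u,\eps)=\wt\mc(u,r_\eps)$ combined with the bounds on $r_\eps$.
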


\begin{proof}
  The equivalence comes from the fact that, for any $u,v \in S^{d-1}$, 
  \begin{equation*}
    \|u-v\|^{2}
    = 2\big(1 - \langle u, v \rangle \big)
    \, .
  \end{equation*}
In addition, by concavity, for all $t\in [0,1]$, $1-t \leq \sqrt{1-t} \leq 1- t/2$, which implies the claimed inequalities and inclusions.
\end{proof}	

\section{%
Proof of Proposition~\ref{prop:2-dim-margin-almost-necessary}%
}
\label{sec:proof-2dim-necessary}
In this section we provide the proof of Proposition~\ref{prop:2-dim-margin-almost-necessary} from Section~\ref{sec:assumptions}, regarding the necessity of the two-dimensional margin assumption.

\begin{proof}[Proof of Proposition~\ref{prop:2-dim-margin-almost-necessary}]
  Recall that $H_X (\theta^*) = \E [ \sigma' (\innerp{\theta^*}{X}) X X^\top ]$, and fix $v \in S^{d-1}$.
  For every $c_1 \geq 1$, we have
  \begin{align*}
    \innerp{H_X (\theta^*) v}{v}
    &= \E \big[ \sigma' (\b \innerp{u^*}{X}) \innerp{v}{X}^2 \big] \\
    &= \E \Big[ \sigma' (\b \innerp{u^*}{X}) \innerp{v}{X}^2 \bm 1 \Big( \ainnerp{u^*}{X} > \frac{c_1 \log \b}{\b} \Big) \Big] +  \\
    & \quad + \E \Big[ \sigma' (\b \innerp{u^*}{X}) \innerp{v}{X}^2 \bm 1 \Big( \ainnerp{u^*}{X} > \frac{c_1 \log \b}{\b} \Big) \Big] \\
    &\leq \sigma' (c_1 \log \b ) \cdot \E [ \innerp{v}{X}^2 ] + \underbrace{\E \Big[ \innerp{v}{X}^2 \bm 1 \Big( \ainnerp{u^*}{X} \leq \frac{c_1 \log \b}{\b} \Big) \Big]}_{:= \mathrm{I}} \\
    &\leq \b^{-c_1} + \mathrm{I}
      \, ,
  \end{align*}
  where we used that $\sigma'(t) = (1+e^t)^{-1}(1+e^{-t})^{-1} \leq e^{-|t|}$ for any $t \in \R$.
  Now, set $m = \max \set{\b^{-1}, \norm{u^*-v}}$.
  We proceed to bounding term $\mathrm{I}$ above: for any $C_2 > 0$,
  \begin{align*}
    \mathrm{I}
    &= \E \Big[ \innerp{v}{X}^2 \bm 1 \Big( \ainnerp{u^*}{X} \leq \frac{c_1 \log \b}{\b} \Big) \Big] \\
    &\leq \E \Big[ \Big( \frac{m}{C_2} \Big)^2 \bm 1 \Big( \ainnerp{u^*}{X} \leq \frac{c_1 \log \b}{\b} \Big) \Big] + \E \Big[ \innerp{v}{X}^2 \bm 1 \Big( \ainnerp{u^*}{X} \leq \frac{c_1 \log \b}{\b} ; \ainnerp{v}{X} \geq \frac{m}{C_2} \Big) \Big] \\
    &= \mathrm{II} + \mathrm{III}
      \, .
  \end{align*}
  To control term $\mathrm{II}$, we use Assumption~\ref{ass:small-ball} (and the fact that $c_1 \log \b \geq 1$) to obtain
  \begin{align*}
    \mathrm{II}
    &= \Big( \frac{m}{C_2} \Big)^2 \, \P \Big( \ainnerp{u^*}{X} \leq \frac{c_1 \log \b}{\b} \Big)
      \leq \Big( \frac{m}{C_2} \Big)^2 \cdot \frac{c \, c_1 \log \b}{\b}
      \, .
  \end{align*}
  Next, we control term $\mathrm{III}$ by further decomposing it as follows: for $\lambda \geq m/C_2$,
  \begin{align*}
    \mathrm{III}
    &= \E \Big[ \innerp{v}{X}^2 \bm 1 \Big( \ainnerp{u^*}{X} \leq \frac{c_1 \log \b}{\b} ; \frac{m}{C_2} \leq \ainnerp{v}{X} \leq \lambda \Big) \Big] + \\
    &\quad + \E \Big[ \innerp{v}{X}^2 \bm 1 \Big( \ainnerp{u^*}{X} \leq \frac{c_1 \log \b}{\b} ; \ainnerp{v}{X} > \lambda \Big) \Big] %
    \\
    &\leq \lambda^2 \P \Big( \ainnerp{u^*}{X} \leq \frac{c_1 \log \b}{\b} ; \ainnerp{v}{X} \geq \frac{m}{C_2} \Big) + \mathrm{IV}
      \, .
  \end{align*}
  To bound term $\mathrm{IV}$, let $\eta = c_1 \log (\b)/\b$ and write $v = u^* + \norm{u^* - v} w$ with $w \in S^{d-1}$.
  Then, note that under the event $\set{\ainnerp{u^*}{X} \leq \eta}$, one has
  \begin{equation*}
    \ainnerp{v}{X}
    \leq \ainnerp{u^*}{X} + \norm{u^* - v} \cdot \ainnerp{w}{X}
    \leq \eta + m \ainnerp{w}{X}
    \, .
  \end{equation*}
  In particular, $\set{\ainnerp{u^*}{X} \leq \eta, \ainnerp{v}{X} \geq \lambda} \subset \set{\ainnerp{w}{X} \geq (\lambda - \eta)/m}$.
  Plugging these inequalities into the definition of $\mathrm{IV}$ gives
  \begin{align*}
    \mathrm{IV}
    &\leq \E \Big[ \innerp{v}{X}^2 \bm 1 \Big( \ainnerp{w}{X} \geq \frac{\lambda - \eta}{m} \Big) \Big]
      \, .
  \end{align*}
  Now, set $\lambda = 2 K c_1 \log (\b) m$.
  One has $\lambda \geq 2 c_1 \log (\b)/\b = 2\eta$, thus $(\lambda - \eta)/m \geq \lambda/(2m) = K c_1 \log (\b)$.
  Using this together with the Cauchy-Schwarz inequality, the fact that (by Assumption~\ref{ass:sub-exponential}) $\norm{\innerp{v}{X}}_{\psi_1}, \norm{\innerp{w}{X}}_{\psi_1} \leq K$, the first point of Lemma~\ref{lem:sub-gamma-exponential} and Definition~\ref{def:psi-alpha}, we obtain
  \begin{align*}
    \mathrm{IV}
    &\leq \E \Big[ \innerp{v}{X}^2 \bm 1 \Big( \ainnerp{w}{X} \geq K c_1 \log (\b) \Big) \Big] 
      \leq \sqrt{\E \big[\innerp{v}{X}^4 \big] \, \P \Big( \ainnerp{w}{X} \geq K c_1 \log (\b) \Big)} \\
    &\leq \sqrt{\Big( \frac{4 K}{2 e} \Big)^4 e^{-2 c_1 \log (\b)}}
      \leq %
      K^2 \b^{-c_1}
      \, .
  \end{align*}
  Putting together the previous inequalities and substituting for the definition of $\lambda$, we get
  \begin{align}
    \innerp{H_X (\theta^*) v}{v} 
    &\leq \b^{-c_1} + \Big( \frac{m}{C_2} \Big)^2 \cdot \frac{c \, c_1 \log \b}{\b} + K^2 \b^{-c_1} + \nonumber \\
    &\quad + \big( 2 K c_1 \log (\b) m \big)^2 \, \P \Big( \ainnerp{u^*}{X} \leq \frac{c_1 \log \b}{\b} ; \ainnerp{v}{X} \geq \frac{m}{C_2} \Big)
      \label{eq:proof-necessity-bound-hx}
      \, .
  \end{align}
  We now choose $c_1 \geq 1$ and then $C_2 > 0$ in order to further control the right-hand side of~\eqref{eq:proof-necessity-bound-hx}.
  The sum of the first and third term can be bounded as follows, using that $\b \geq e$:
  \begin{align*}
    (K^2 + 1) \b^{-c_1}
    \leq \b^{-3} (K^2 + 1) e^{- (c_1 - 3)}
    = \frac{c_0}{6 \b^3}
    \leq \frac{c_0 m^2}{6 \b}
    \, ,
  \end{align*}
  where the equality is obtained by choosing $c_1 = 3 + \log (6 (K^2 +1) / c_0) \geq 1$.
  Next, the second term in the right-hand side of~\eqref{eq:proof-necessity-bound-hx} equals
  \begin{align*}
    \Big( \frac{m}{C_2} \Big)^2 \cdot \frac{c \, c_1 \log \b}{\b}
    = \frac{c_0 m^2}{6 \b}
  \end{align*}
  when choosing $C_2 = c_2 \sqrt{\log \b}$ with $c_2 = \sqrt{6 c c_1/c_0}$.
  
  On the other hand, we have from Remark~\ref{rem:equivalent-two-dim} that
  \begin{equation*}
    m^2
    = \max \big\{ \b^{-2}, \norm{u^* - v}^2 \big\}
    \leq \max \big\{ \b^{-2}, 2 (1 - \innerp{u^*}{v}^2) \big\}
    \leq 2 \Big[ \b^{-2} \innerp{u^*}{v}^2 + (1 - \innerp{u^*}{v}^2) \Big]
  \end{equation*}
  where the last inequality uses that $\b^{-2} \leq 1$, hence either $\innerp{u^*}{v}^2 \leq 1/2$, and then the maximum equals $2 (1 - \innerp{u^*}{v}^2)$, or $\innerp{u^*}{v}^2 \geq 1/2$, and then $\b^{-2} \leq 2 \b^{-2} \innerp{u^*}{v}^2$.
  Recalling the expression~\eqref{eq:defH} of $H$, this implies that
  \begin{equation}
    \label{eq:proof-necessity-m-h}
    m^2 \leq 2 \b \innerp{H v}{v}
    \, .
  \end{equation}
  Hence, the assumption that $H_X (\theta^*) \mgeq c_0 H$ implies that $\innerp{H_X (\theta^*) v}{v} \geq c_0 m^2 / (2 \b)$.
  Plugging the previous inequalities into~\eqref{eq:proof-necessity-bound-hx}
  gives
  \begin{align*}
    \frac{c_0 m^2}{2 \b}
    \leq \frac{c_0 m^2}{3 \b} + 4 K^2 c_1^2 \log^2 (\b) m^2 \, \P \Big( \ainnerp{u^*}{X} \leq \frac{c_1 \log \b}{\b} ; \ainnerp{v}{X} \geq \frac{m}{c_2 \sqrt{\log \b}} \Big)
    \, .
  \end{align*}
  Re-arranging, substituting for the value of $m$ and letting $c_3 = c_0 / (24 K^2 c_1^2)$, this gives
  \begin{equation*}
    \P \Big( \ainnerp{u^*}{X} \leq \frac{c_1 \log \b}{\b} ; \ainnerp{v}{X} \geq \frac{\max \{ \norm{u^* - v}, \b^{-1} \}}{c_2 \sqrt{\log \b}} \Big)
    \geq \frac{c_3}{\b \log^2 (\b)}
    \, ,
  \end{equation*}
  which concludes the proof.
\end{proof}

\parag{Acknowledgements}

This research is supported by a grant of the French National Research Agency (ANR), ``Investissements d’Avenir'' (LabEx Ecodec/ANR-11-LABX-0047).


\newcommand{\etalchar}[1]{$^{#1}$}

\end{document}